\newtheorem{thm}{Theorem}[section]  
\newtheorem{cor}[thm]{Corollary}
\newtheorem{defin}[thm]{Definition} 
\newtheorem{lemma}[thm]{Lemma} 
\newtheorem{prop}[thm]{Proposition} 
\newtheorem{ass}[thm]{Assumption} 
\newtheorem{rrule}[thm]{Rule}
\newtheorem*{defin*}{Definition}
\newcommand{\calA}{\mathcal A}
\newcommand{\calB}{\mathcal B}
\newcommand{\calC}{\mathbb C}
\newcommand{\calD}{\mathcal D}
\newcommand{\calE}{\mathcal E}
\newcommand{\ocalD}{\overline{\calD}}
\newcommand{\diamY}{{\rm diam}(Y)}
\newcommand{\bdd}{\mbox{$\partial$}}
\newcommand{\inter}{\mbox{${\rm int}$}}
\newcommand{\genus}{\mbox{${\rm genus}$}}
\newcommand{\Img}{\operatorname{Img}}
\newcommand{\Diff}{\operatorname{Diff}}
\newcommand{\Mod}{\operatorname{Mod}}
\newcommand{\Emb}{\operatorname{Emb}}
\def\zed{{\mathbb Z}}
\def\R{{\mathbb R}}
\newcommand{\frb}{\mathfrak{b}}
\newcommand{\ofrc}{\mbox{$\overline{\frc}$}}
\newcommand{\frc}{\mbox{$\mathfrak{c}$}} 
\newcommand{\fri}{\mathfrak{i}}
\newcommand{\frs}{\mathfrak{s}}
\begin{document}  

\title{Generating the Goeritz group of $S^3$}   

%

\author{Martin Scharlemann}
\address{\hskip-\parindent
        Martin Scharlemann\\
        Mathematics Department\\
        University of California\\
        Santa Barbara, CA 93106-3080 USA}
\email{mgscharl@math.ucsb.edu}

\thanks{Michael Freedman has been an avid source of support for this project; his notion of a `cycle of weak reductions' underpins it all.  Aoife McCormick inspired the notion of `chamber complex' - the method here for translating Freedman's idea into a recipe for Heegaard surface reduction}

\date{\today}

\begin{abstract}  In 1980 J. Powell \cite{Po} proposed that five specific elements sufficed to generate the Goeritz group for any genus Heegaard splitting of $S^3$.  Here we prove 
that a natural expansion of Powell's proposed generators, to include all eyeglass twists and all topological conjugates of Powell's generators, does suffice. 
\end{abstract}

\maketitle

\setcounter{tocdepth}{2}

\tableofcontents

\section{Introduction} \label{sect:intro}

Suppose that $M$ is a closed orientable 3-manifold and $M = A \cup_T B$ is a Heegaard splitting of $M$.  
Following \cite{JM} the {\em Goeritz group $G(M, T)$} is the group of isotopy classes of diffeomorphisms $(M, T) \to (M, T)$ for which the induced diffeomorphism $M \to M$ is isotopic to the identity.  An element of the Goeritz group can be viewed as the final result of a (possibly non-unique) loop $T_\theta, 0 \leq \theta \leq 2\pi$ of embeddings of $T$ in $M$, that is an element in $\pi_1(\Diff(M)/\Diff(M, T))$ \cite[Theorem 1]{JM}.  That is the viewpoint we will take. 

Little is known about the Goeritz group, even in the case that $M = S^3$ and the Heegaard splittings are fully described \cite{Wa}.  In \cite{Po} J. Powell proposed (indeed he believed he had proven) that five specific isotopies generate the Goeritz group $G(S^3, T)$ of the $3$-sphere.  (In \cite{Sc2} one is found to be redundant, and so the number is reduced to four.)   Figures \ref{fig:bubble} to \ref{fig:eyeglass1} give important examples of Goeritz elements acting on the standard Heegaard splitting of $S^3$.  

Powell's conjecture has been verified for genus $\leq 3$ splittings of $S^3$ \cite{FS1}, but even the question of whether $G(S^3, T)$ is finitely generated remains open when $\genus(T) \geq 4$.  Here we define a generalization, shown in Figure \ref{fig:eyeglass1} and called an {\em eyeglass twist}, of Powell's proposed generator $D_{\theta}$. We ultimately show (Corollary \ref{cor:finale}) that if  $D_{\theta}$ is replaced by the collection of all eyeglass twists, and we include also all topological conjugates of the other three Powell generators (i. e. conjugates by elements of $G(S^3, T)$), the subgroup $\calE \subset G(S^3, T)$ thereby generated is all of $G(S^3, T)$.    

In \cite{Sc3} we show that this leads to the following observation:
Suppose $T$ is the standard genus $g+1$ Heegaard surface in $S^3$.  Any element of $G(S^3, T)$ that acts trivially on a standard genus $1$ summand of $T$ is a consequence of the Powell generators acting on $T$.  In other words, the Powell Conjecture is true, stably.  
\bigskip

The strategy for the proof of Corollary \ref{cor:finale} is rather simple: Suppose $T$ is the standard genus $g$ Heegaard splitting of $S^3$, and $\tau \in G(S^3, T)$.   In \cite{FS1} it is shown that there is a `cycle of weak reductions' capturing $\tau$.  That is, given a loop of embeddings $T_{\theta}, 0 \leq \theta \leq 2\pi$ of $T$ in $S^3$ that represents $\tau$, there is a natural topological way to extract, for generic $\theta$, a pair of properly embedded disjoint essential disks $a_{\theta} \subset A, b_{\theta} \subset B$ that thereby weakly reduce $T_{\theta}$.  Moreover, at the finite number of non-generic points, the chosen weakly reducing pair does not change much.  Since the method of choosing the pair $(a_{\theta}, b_{\theta})$ is topological, it follows relatively easily that $(a_{2\pi}, b_{2\pi}) = (\tau(a_{0}), \tau(b_{0}))$.  A landmark result of Casson and Gordon \cite{CG} shows that a weakly reducing pair gives rise to a reducing sphere for $T$; one might naturally hope that one could track such a reducing sphere $K$ around $\theta$, as was done for the weakly reducing pairs, and thereby be able to meaningfully compare $K$ with $\tau(K)$ and so understand the action of $\tau$.

Sadly, the transition from a weakly reducing pair $(a, b)$ to a reducing sphere involves much choice, so there is no naturally derived reducing sphere $K$ for $T$ as hoped for above.  The program here is to find one, via this natural topological method:  Let $F \subset S^3$ be the surface obtained from $T$ by weakly reducing along the pair $(a, b)$.  There is a natural and oft-used way to sweep out $S^3$ by level $2$-spheres $S_s$ and, given $F$, a natural value of $s$ to pick: a value so that the genus of the part of $F$ lying below $S_s$ matches the genus of the part above.  Could $S_s$ be turned into a viable, topologically defined reducing sphere for $T$ that could play the role of $K$ above?  

It turns out that although $S_s$ itself may not play that role, it does give a recipe for weak reduction that is robust: it doesn't change abruptly as $T$ moves through $S^3$.  And as the weak reduction proceeds (as guided by $S_s$) and $F$ becomes more complicated, its complementary components in $S^3$ (called the {\em chambers}) 
are more likely to contain reducible components.  Reducing spheres in these reducible components naturally cut off summands of the original Heegaard surface, and these (inductively) determine an isotopy to the standard picture that is unique, up to action by $\calE$.  If no complementary component becomes reducible, then at the end of the process we fall back on $S_s$ as the required reducing sphere for $T$ (see the second bullet in the statement of Proposition \ref{prop:balance}).

One can think of the progression Proposition \ref{prop:toyexist} $\to$ Corollary \ref{cor:naturality} $\to$ Corollary \ref{cor:calCcert} $\to$ Corollary \ref{cor:vecCcert} $\to$ Corollary \ref{cor:calCcert2} $\to$ Corollary \ref{cor:finale} as guideposts for the argument. 

Although the program is easy to describe, the technical difficulties encountered below are complex since, in effect, they will involve 4 dimensions of sweep-outs. Much of the machinery is new, and then so is the terminology.  We call the attention of the reader to the Index at the end of the paper for a guide to this new terminology. A plausible strategy for a proof is presented in Section \ref{sect:motiv}, including a a rough overview of the final proof in Subsection \ref{subsect:overview} Some of the argument is not restricted to $M = S^3$, so perhaps some of the methodology can be useful in understanding stabilized Heegaard splittings of other $3$-manifolds as well.  

{\bf General Remarks:}  All manifolds will be orientable and, unless obviously not, compact.  We mostly work in the TOP category (locally flat embeddings, homeomorphisms) to avoid discussion of corner rounding, etc.  But at several points (see especially Section \ref{sect:balisotopy}) we need results from smooth topology.  At those points we will work in the DIFF category.  In these dimensions (two and three) the difference in categories is immaterial, see for example \cite{Moi}.  Unexplained notation in the statement of a Lemma, Proposition, etc. may well be found in the discussion that immediately precedes it.  

\section{The eyeglass subgroup} \label{sect:Heegaard2}

Powell's proposed generators are expressed in terms of {\em standard genus 1 summands} in a canonical picture of a genus $g$ Heegaard surface in $S^3$.  In \cite[Section 2]{FS1} some consequences of these Powell generators are presented that include the Powell generators themselves, but also include more general moves on the standard genus 1 summands.  These more general moves would make sense on any stabilized Heegaard splitting of any closed orientable manifold, if we drop the requirement that the summands are `standard', which is not meaningful in the context of arbitrary stabilized splittings.  Below is a brief description of those generalized moves; more detail can be found in \cite{FS1}.  In addition, note that the last move described (an eyeglass twist) is a significant generalization of Powell's move $D_{\theta}$.

For a Heegaard splitting $M = A \cup_T B$ define a {\em bubble} \index{Bubble} for $T$ to be a $3$-ball $\mathfrak{b}$ in $M$ whose boundary intersects $T$ in a single essential circle.  Via Waldhausen \cite{Wa} we know that any bubble is the boundary sum of a collection of genus $1$ bubbles.  

\begin{enumerate}
\item A {\em bubble move} \index{Bubble move} is an isotopy of $\frb$ along a closed path in $T-\frb$ that  begins and ends at $\frb$, returning $(\frb, \frb \cap T)$ to itself. See Figure \ref{fig:bubble}.  Note that if the closed path is null-homotopic in $M$, which is automatic if $M = S^3$, the bubble move is isotopic to the identity on $M$, so it represents an element of $G(M, T)$. 


 \begin{figure}[ht!]
    \centering
    \includegraphics[scale=0.5]{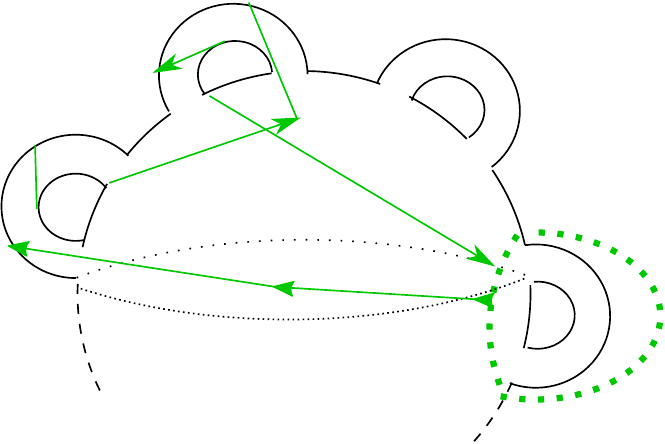}
    \caption{A bubble move} \label{fig:bubble}
    \end{figure}

\item Let $\frb$ be a genus 1 bubble.  A {\em flip} \index{Flip} is the homeomorphism $(\frb, \frb \cap T) \to (\frb, \frb \cap T)$ that reverses orientation of both the meridian and longitude of the summand, as shown in Figure \ref{fig:flip}.  Powell's \cite{Po} label for a flip on the first standard summand is $D_\omega$.  In a genus $1$ splitting, where $T$ is a torus, regard the hyperelliptic involution $(S^3, T) \to (S^3, T)$ as (a degenerate case of) a flip.

 \begin{figure}[ht!]
    \centering
    \includegraphics[scale=0.5]{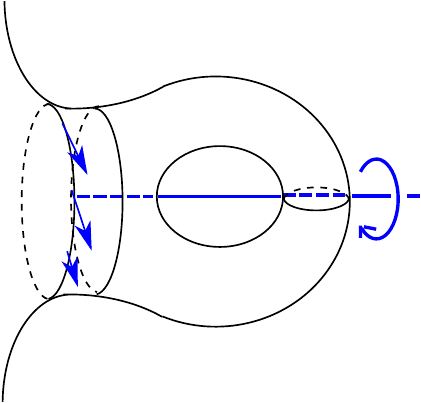}
    \caption{A flip}  \label{fig:flip}
    \end{figure}

\item Let $\frb_1, \frb_2$ be disjoint genus 1 bubbles, and let $v \subset T - (\frb_1 \cup \frb_2)$ be an arc connecting them. Let $\frb$ be the genus $2$ bubble obtained by tubing together the genus 1 bubbles along $v$.  A {\em bubble exchange} \index{Bubble exchange} exchanges the two genus 1 bubbles within $B$, as shown in Figure \ref{fig:switch}.  Powell's label for a bubble exchange on the first two standard summands is $D_{\eta_{12}}$. 

 \begin{figure}[ht!]
    \centering
    \includegraphics[scale=0.5]{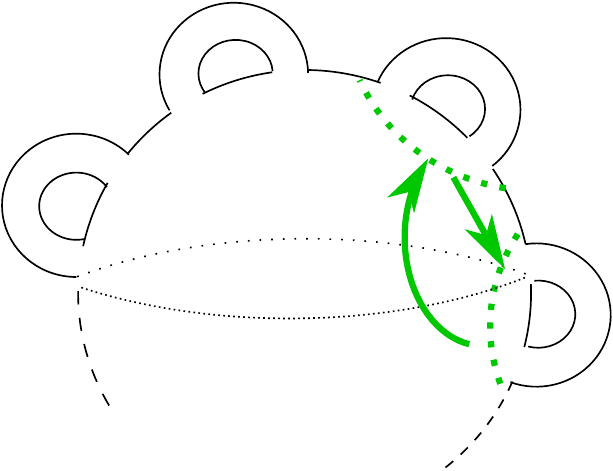}
    \caption{A bubble exchange}  \label{fig:switch}
    \end{figure}

\item An {\em eyeglass} \index{Eyeglass} is the union of two disks, $\ell_a, \ell_b$ ( the {\em lenses} ) with an arc $v$ (the {\em bridge}) connecting their boundaries.  Suppose an eyeglass $\eta$ is embedded in $M$ so that the $1$-skeleton of $\eta$ (called the {\em frame}) lies in $T$, one lens is properly embedded in $A$, and the other lens is properly embedded in $B$.  The embedded $\eta$ defines a natural automorphism $(M, T) \to (M, T)$,  as illustrated in Figure \ref{fig:eyeglass1}, called an {\em eyeglass twist}.  \index{Eyeglass twist} Powell's generator $D_\theta$ is an example, but topologically special because the lenses are primitive disks in each handlebody.  

 \begin{figure}[ht!]
    \centering
    \includegraphics[scale=0.5]{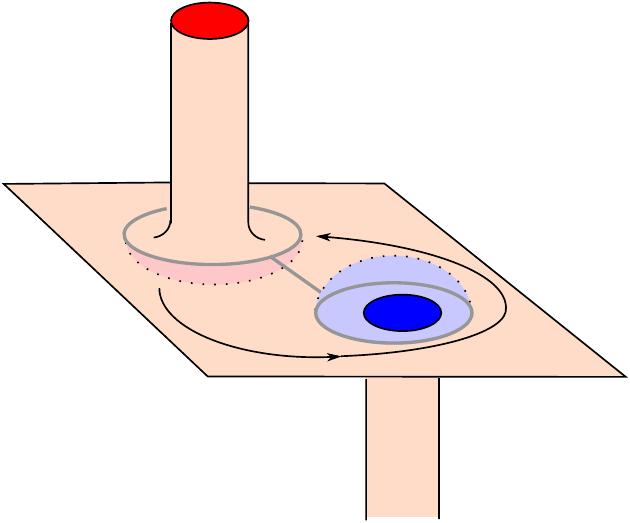}
    \caption{An eyeglass twist}  \label{fig:eyeglass1}
    \end{figure}

\end{enumerate}

Let $\Mod(M, T)$ be the group of path components of $\Diff(M, T)$; it is the Goeritz group $G(M, T)$ when, as is the case with $S^3$, $\Mod(M)$ is trivial \cite{JM}.

\begin{defin}  \label{defin:eyeglass} Suppose $M = A \cup_T B$ is a Heegaard splitting.  The subgroup of $\Mod(M, T)$ generated by the four types of moves just described is called the {\em eyeglass group}  \index{Eyeglass group} $\calE \subset \Mod(M, T)$. 
Any finite composition of these eyeglass generators will be called an {\em eyeglass move}.  

When $M = S^3$ call the subgroup of $G(S^3, T) = \Mod(S^3, T)$ generated by Powell's proposed generators the {\em Powell group}. \index{Powell group}  Any finite composition of these generators will be called a {\em Powell move}.  
\end{defin}

{\bf Remarks:}  It is shown in \cite[Section 2]{FS1} that the Powell group can also be described as the subgroup of $G(S^3, T)$ generated by $D_{\theta}$ together with those flips, bubble moves, and bubble exchanges that act on the standard genus 1 bubbles, not on arbitrary genus 1 bubbles.

It follows that any Powell move (indeed any topological conjugate of a Powell move) is an eyeglass move.  Note also that whereas the Powell group is not known to be normal in $G(S^3, T)$, the eyeglass group is normal, since any topological conjugate of a generator is a generator.
%

The Powell Conjecture is that the Powell group is the entire Goeritz group $G(S^3, T)$; we will eventually show the weaker result that the eyeglass group is the entire Goeritz group.   If one could show that each eyeglass twist and any topological conjugate of a Powell generator is in the Powell group, the Powell Conjecture would follow.  This seems unlikely.  

\bigskip

Let $M = A \cup_T B$ be a Heegaard splitting; we briefly review terminology, and make some elementary observations:

\begin{defin} \label{defin:aligned}  A sphere $S$ in $M$ is {\em aligned} with the Heegaard splitting if $S \cap T$ is at most one circle.  Similarly, a properly embedded disk $(D, \bdd D) \subset (M, \bdd M)$ is aligned \index{Aligned disk or sphere} with the splitting if $D \cap T$ is at most one circle, and, if it is one circle, the annulus component of $D - T$ is a spanning annulus in the compression body in which it lies.  See \cite{Sc1}, \cite{FS2}.
\end{defin}

For example, the boundary of a bubble $\frb$ is an aligned sphere.

Suppose disjoint spheres $S$ and $S'$ are aligned, and $\alpha$ is a properly embedded arc in $T$ so that $\alpha$ intersects $S$ exactly in one end of $\alpha$ and intersects $S'$ only in the other end of $\alpha$.  A thin tubular neighborhood $N \cong D^2 \times I$ of $\alpha$ will intersect $S \cup S'$ in the two disks $D^2 \times \{0, 1\}$.  Delete those disks from $S \cup S'$ and glue on the annulus $\bdd D^2 \times I$. 

\begin{defin} \label{defin:tubesum}  \index{Tube sum} Call the resulting aligned sphere the {\em tube sum} of $S$ and $S'$ along $\alpha$.  

The same construction on $S$ and a disjoint aligned disk $D$ results in another aligned disk, called the tube sum of $D$ and $S$.  

If $\frb$ and $\frb'$ are disjoint bubbles, say of genus $p$ and $q$ then the tube sum of $\bdd \frb$ and $\bdd \frb'$ naturally bound a genus $p + q$ bubble, called the tube sum of $\frb$ and $\frb'$.  
\end{defin}


Since a bubble move is an eyeglass move, the tube sum of a sphere or disk with a bubble, and, in particular, the tube sum of two bubbles, is well-defined 
up to eyeglass moves; it does not depend on the choice of $\alpha$.  

\begin{defin} \label{defin: bubble pass} \index{Bubble pass} For $D$ an aligned disk in $M$ and $\frb$ a disjoint bubble, the tube sum of $D$ and $\frb$ is an aligned disk $D'$ properly isotopic to $D$ in $M$.  Replacing $D$ with $D'$ is called a {\em bubble pass} of $\frb$ through $D$.  
\end{defin} 

\begin{lemma} \label{lemma:nonseppass}  Suppose  $D$ is an aligned {\em non-separating} disk in $M$, $\frb$ is a disjoint bubble, and $D'$ is the disk obtained from $D$ by a {\em bubble pass} of $\frb$ through $D$.  Then there is an eyeglass move which isotopes $D$ to $D'$ in $M$.
\end{lemma}

\begin{proof} Let $\alpha$ be the arc between $D$ and $\bdd \frb$ along which the bubble pass is made, and let $D'$ be the resulting aligned disk.  Since $D$ is assumed non-separating, there is another arc $\beta \subset T$ disjoint from $\alpha$ so that $\bdd \beta = \bdd \alpha$ but the end of $\beta$ at $D$ is on the opposite side of $D$ as the end of $\alpha$ at $D$.  Thus the union $\gamma = \alpha \cup \beta$ is a path in $T - \frb$ that passes exactly once through $D$.  Then a bubble move of $\frb$ via $\gamma$ will isotope $D$ to $D'$ as required.  (See Figure \ref{fig:nonseppass}.  The bubble move of $\frb$ to itself that carries $D$ to $D'$ is along the concatenation loop $\beta\alpha$.) 
\end{proof}

 \begin{figure}[ht!]
\labellist
\small\hair 2pt
\pinlabel  $A$ at 25 80
\pinlabel  $B$ at 25 50
\pinlabel  $T$ at 10 68
\pinlabel  $D$ at 165 80
\pinlabel  $D'$ at 210 80
\pinlabel  $\frb$ at 220 30
\pinlabel  $\beta$ at 280 20
\pinlabel  $\beta$ at 80 20
\pinlabel  $\alpha$ at 195 20
\endlabellist
    \centering
    \includegraphics[scale=1]{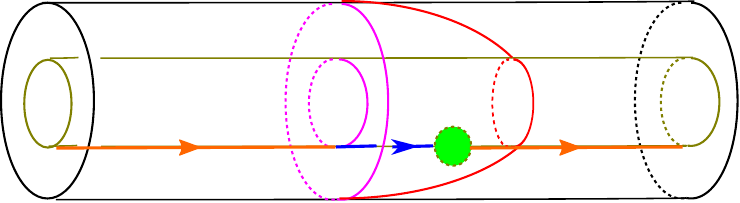}
     \caption{A bubble move of $\frb$ to itself that carries $D$ to $D'$ } \label{fig:nonseppass}
    \end{figure}

\section{The search for a plausible strategy} \label{sect:motiv}

The proof that $G(S^3, T) = \calE$ will be by induction on the genus of $T$; it is known for genus $\leq 3$ by \cite{FS1}.  In this section we lay some groundwork and describe (see Corollary \ref{cor:toy}) a special circumstance that makes the inductive step straightforward.  The goal of the rest of the long and technical argument in Sections  \ref{sect:chamberintro} through \ref{sect:balisotopy} is simply to show that conditions sufficiently similar to this special circumstance always arise.  

\subsection{A technical lemma}

The following technical lemma is needed almost immediately for an inductive step:

Let $(S^3, T)$ be a genus $g \geq 1$ Heegaard splitting and $B_p$ be a small ball around a point $p \in T$ that intersects $T$ in a single disk $D_p$. Let $B_-$ be the $3$-ball that remains when $\inter(B_p)$ is removed, and $T_- = T \cap B_- = T - \inter(D_p)$.  $T_-$ is  once-punctured genus $g$ surface properly embedded in $B_-$. Let $\Mod(B_-, T_-)$ be the group of path components of $\Diff(B_-, T_-)$. (We do not require that a diffeomorphism $f: (B_-, T_-) \to (B_-, T_-) \in \Diff(B_-, T_-)$ be the identity on $\bdd B_p = \bdd B_-$.)

 Echoing the definition of bubble move above, define a {\em $B_p$-bubble move} $\beta_p: (S^3, T) \to (S^3, T)$ to be the result of an isotopy of $(B_p, D_p)$ along a closed path in $T_-$ that returns $(B_p, D_p)$ to itself.  It is understood that $D_p$ remains in $T$ throughout the isotopy, so $T$ is preserved throughout the isotopy.  This means that the final map $\beta_p:(S^3, T) \to (S^3, T)$ 
still represents the identity in $\Mod(S^3, T)$.  However, the restriction $\beta_p|B_-: (B_-, T_-) \to (B_-, T_-)$ may not be isotopic to the identity and so may represent a non-trivial element of $\Mod(B_-, T_-)$
 
Define $\calE_- \subset \Mod(B_-, T_-)$ to be the subgroup generated by $B_p$-bubble moves, together with the flips, bubble moves, bubble exchanges and eyeglass twists that generate $\calE$ and whose associated paths, bubbles, lenses and bridges are disjoint from $B_p$, so they all lie in $B_-$.  

\begin{lemma} \label{lemma:Tminus}  If $\Mod(S^3, T) = \calE$ then $\Mod(B_-, T_-) = \calE_-$.
\end{lemma}

This is a natural statement, and is relevant here because $G(S^3, T) = \Mod(S^3, T)$.

\begin{proof}  
Let $h_-: (B_-, T_-) \to (B_-, T_-)$ represents an element of $\Mod(B_-, T_-)$; after an isotopy we can assume that $h_-$ is the identity near $\bdd B_p = \bdd B_-$.  Extend $h_-$ to a diffeomorphism $h:(S^3, T) \to (S^3, T)$ representing a class in  $\Mod(S^3, T)$ by setting $h|B_p$ to be the identity.  Under the assumption  $\Mod(S^3, T) = \calE$ there is an eyeglass move $h':(S^3, T) \to (S^3, T)$ so that $h'h^{-1}:(S^3, T) \to (S^3, T)$ is isotopic to the identity.  We may as well take the generators of $h'$ to lie in $B_-$, since each of these generators has support near a $1$-complex in $T$ and so, by general position, can be chosen to be disjoint from $B_p$.  For example, for an eyeglass twist choose the frame of the eyeglass disjoint from $B_p$ and it becomes a frame in $B_-$; in a bubble move, regard the bubble as a thin neighborhood in $T$ of the bubble's $1$-skeleton, which we can also take to be disjoint from $B_p$.  Once this is done, the generators give a diffeomorphism $h'_-: (B_-, T_-) \to (B_-, T_-)$ representing an element of $\calE_-$.  (There is no claim that $h'_-$ is well-defined.  In fact a particular choice of $h_-$ depends on how the supporting $1$-complexes of the generators of $h'$ have been isotoped away from $B_p$.) 


By construction, the diffeomorphisms $h'h^{-1}$ and $id_{(S^3,T)}$, both of which are the identity on $B_p$, are isotopic, but such an isotopy might well move $B_p$, so it does not induce an isotopy from $h'_-h_-^{-1}: (B_-, T_-) \to (B_-, T_-)$ to the identity.  Our intention is to fix this, by suitably altering $h'$.  Denote by $\theta_t$ the isotopy from $\theta_0 = h'h^{-1}: (S^3, T) \to (S^3, T)$ to $\theta_1 = id_{(S^3, T)}$ and view $\theta_t$ as a path in $\Diff(S^3, T)$.  The isotopy $\theta_t$ determines a $p$-based loop $\alpha: [0, 1] \to T$ in $T$ via $\alpha(t) = \theta_t(p)$.  Now alter $h'$ to $h'':(S^3, T) \to (S^3, T)$ by post-composing with a $B_p$-bubble move along the loop $\overline{\alpha}$ defined by $\overline{\alpha}(t) = \alpha(1-t)$. By definition of $\calE_-$, the $B_p$-bubble move $h''_-: (B_-, T_-) \to (B_-, T_-)$ still lies in $\calE_-$. Furthermore, the maps $h'$ and $h''$ are isotopic as diffeomorphisms on $(S^3, T)$, but the isotopy $\theta_t$ is replaced by an isotopy $\theta'_t$ from $h''h^{-1}$ to the identity for which the loop $\theta'_t(p)$ in $T$ becomes the concatenation $\alpha \overline{\alpha}$, and so is nullhomotopic in $T$.  

Consider the evaluation map $e_p: \Diff(S^3, T) \to T$ given by $e_p(f) = f(p)$.  We can regard $T$ as the space of embeddings $\Emb(p, T)$ and deduce from \cite{Pa} that $e_p$ is a fibration.  In particular, the null-homotopy of $\alpha \overline{\alpha}$ to $p \in T$ just described lifts to a homotopy rel end points from the arc $\theta'_t$ in $\Diff(S^3, T)$ to an arc in $\Diff(S^3, T)$ that lies entirely over $p$.  This arc then defines an isotopy $\theta''_t$ from $h''h^{-1}:(S^3, T) \to (S^3, T)$ to the identity, but now an isotopy in which the point $p$ is fixed throughout.  That is, for every $0 \leq t \leq 1$ $\theta''_t(p) = p$.

Finally, we will alter below the isotopy $\theta''$ so that for every $0 \leq t \leq 1$, $\theta''_t(B_p) = B_p$.  This will imply that the restriction $\theta''_- = \theta''|(B_-, T_-)$ is an isotopy from $h''_-h_-^{-1}: (B_-, T_-) \to (B_-, T_-)$ to the identity, so $h''_-$ and $h_-$ represent the same element of $\Mod(B_-, T_-)$; since $h'' \in \calE_-$ this will conclude the proof.

Here then is a brief sketch of how, using standard tools of differential topology \cite{GP}, the isotopy $\theta''$ can be altered so that not just $p$, but all of $B_p$ is mapped diffeomorphically to itself during the isotopy.  The first step is to observe that the derivative of any diffeomorphism $f: (S^3, T, p) \to  (S^3, T, p)$ determines a linear automorphism $Df_p$ of the tangent space of $S^3$ at $p$ which also sends the tangent space of $T$ at $p$ to itself.  The space of such linear automorphisms deformation retracts to the subspace in which the automorphism is orthogonal.  (Such a deformation retraction can be easily constructed, for example, from the proof of \cite[Theorem 4.2]{Sp}.)  Via this deformation retraction, continuously alter $\theta''$ so that for any $0 \leq t \leq 1$ the derivative $D(\theta''_t)_p$ is orthogonal.  Once this is done, $\theta''$ can be further altered near $p$ so that each map $\theta''_t$ becomes itself the orthogonal map $D(\theta''_t)_p$ near $p$.  In particular it takes a small ball around $p$ diffeomorphically to itself while continuing to preserve the Heegaard surface $T$ .  The final step is to deform $\theta''$ so that the ball on which this is true contains the original $B_p$.  
%

(Aside:  when $genus(T) = 1$, $B_p$-bubble moves are not needed, per \cite[Theorem 1b]{EE}.)
\end{proof}

A bubble exchange between genus 1 bubbles $\frb_1$ and $\frb_2$, as shown in Figure \ref{fig:switch}, takes place in the neighborhood of $\frb_1 \cup \frb_2 \cup \nu$, where $\nu$ is an embedded arc in $T$ that connects the bubbles. A more general operation, which we will call a {\em generalized bubble exchange}, isotopes $\frb_1$ to $\frb_2$ along an arc $\nu_1 \subset T$ while simultaneously isotoping $\frb_2$ to $\frb_1$ along an arc $\nu_2 \subset T$ disjoint from $\nu_1$.  A generalized bubble exchange is a simple bubble exchange if the arcs $\nu_1, \nu_2$ are parallel in $T$.

\begin{lemma} \label{lemma:genexch}  Any generalized bubble exchange is an eyeglass move.  
\end{lemma}  

\begin{proof}  A generalized exchange of bubbles $\frb_1$ and $\frb_2$ using arcs $\nu_1$ and $\nu_2$ (first column of  Figure \ref{fig:genexch}) can be written (see second column of  Figure \ref{fig:genexch}) as a composition of the simple bubble exchange along $\nu_1$ and a bubble move of $\frb_2$ along the closed path $\nu_1 \cup \nu_2$.  Each of the latter is an eyeglass move.  \end{proof}

Following Lemma \ref{lemma:genexch} we will use the term ``bubble exchange'' to include generalized bubble exchange, unless the distinction is important.

 \begin{figure}[ht!]
\labellist
\small\hair 2pt
\pinlabel  $\frb_1$ at 20 220
\pinlabel  $\frb_2$ at 110 220
\pinlabel  $\nu_1$ at 60 155
\pinlabel  $\nu_2$ at 60 95
\pinlabel  $\nu_1\cup\nu_2$ at 260 140
\endlabellist
    \centering
    \includegraphics[scale=0.75]{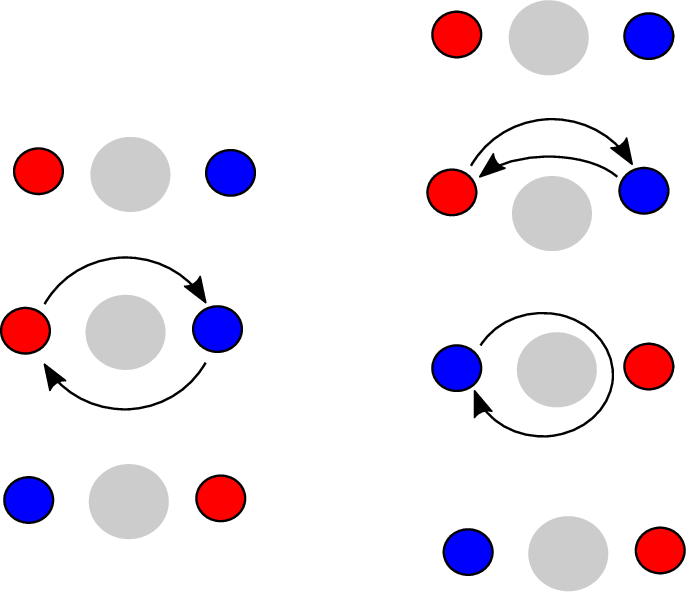}
     \caption{Generalized exchange as an eyeglass move} \label{fig:genexch}
    \end{figure}

\subsection{Resuming the search for a strategy}

We begin by setting up a standard picture for the genus $g$ Heegaard splitting of $S^3$ that is somewhat different than the one used by Powell.  
Let $T_g \subset S^3$ be the standard genus $g$ Heegaard surface in $S^3$, dividing $S^3$ into the genus $g$ handlebodies $A_g$ and $B_g$. 

Let $\{c_1, ..., c_{g-1}\}$ be the disjoint separating circles on $T_g$ shown in Figure \ref{fig:circlesinT}, with each $c_i$ separating the first $i$ standard summands from the last $g-i$ standard summands.  Note that each $c_i$ bounds a disk in both $A$ and $B$ and so defines a reducing sphere $S_i$ for $T_g$.   Let $\frb_i$ be the genus $i$ Heegaard split $3$-ball component of $S^3 - S_i$ containing $S_1$.  Both $\frb_i$ and its complement $S^3 - \frb_i$ are bubbles.  

 \begin{figure}[ht!]
\labellist
\small\hair 2pt
\pinlabel  $c_1$ at 110 140
\pinlabel  $c_2$ at 167 133
\pinlabel  $c_{g-1}$ at 240 135
\endlabellist
    \centering
    \includegraphics[scale=0.75]{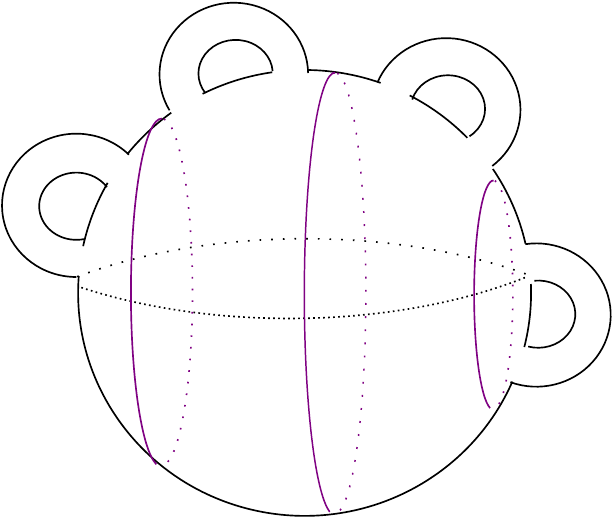}
     \caption{The standard Heegaard surface $T_g \subset S^3$} \label{fig:circlesinT}
    \end{figure}
    
\medskip

It is easy to construct a sequence of (generalized) bubble exchanges (and perhaps a flip)
 that ultimately moves each bubble $\frb_i, 1 \leq i \leq g-1$ to the bubble $S^3 - \frb_{g - i}$. The sequence is best seen in the elaboration Figure \ref{fig:circlesinT2} of Figure \ref{fig:circlesinT} that we now describe.  The unit sphere $S$ in $S^3$ is shown intersecting the $x$-$y$-plane, with the $z$-axis coming out of the page.  Denote by $C$ the unit circle in which $S$ intersects the $x$-$y$-plane.  For $g = 2n$ or $2n-1$ (depending on the parity of $g$) consider $n$ planes $Q_1, ..., Q_n$ parallel to the $x$-$z$ plane (horizontal planes in the figure) starting at $Q_1$ the $x$-$z$ plane itself and ascending from there.  Each $Q_i, 1 \leq i \leq n-1$ intersects $S$ in a circle $d_i$; $d_n = S \cap Q_n$ is either a circle or a point, depending on the parity of $g$.   Place a genus 1 bubble (shown in green in the figure) on $S$ at each point of $d_i \cap C$.  The curves $c_i$ that separate the bubbles, as shown in Figure \ref{fig:circlesinT} may be taken to be the intersection of vertical planes (that is, planes parallel to the $y$-$z$-plane) $P_1, ..., P_{g-1}$.  These planes are distributed symmetrically across the $y$-$z$ plane and are shown in red in Figure \ref{fig:circlesinT2}.  In this picture, simple $\pi$-rotation around the $y$-axis will take each $c_i$ to $c_{g-i}$ and each genus $i$ bubble $\frb_i$ to the bubble $S^3 - \frb_{g - i}$, as we seek.

  \begin{figure}[ht!]
\labellist
\small\hair 2pt
\pinlabel  $P_1$ at 85 40
\pinlabel  $P_3$ at 135 40
\pinlabel  $c_3$ at 157 133
\pinlabel  $P_{g-1}$ at 430 40
\pinlabel  $Q_{1}$ at 60 185
\pinlabel  $Q_{2}$ at 60 235
\pinlabel  $d_{2}$ at 210 200
\pinlabel  $Q_{n}$ at 60 335
\pinlabel  $x$ at 450 185
\pinlabel  $y$ at 260 355
\endlabellist
    \centering
    \includegraphics[scale=0.6]{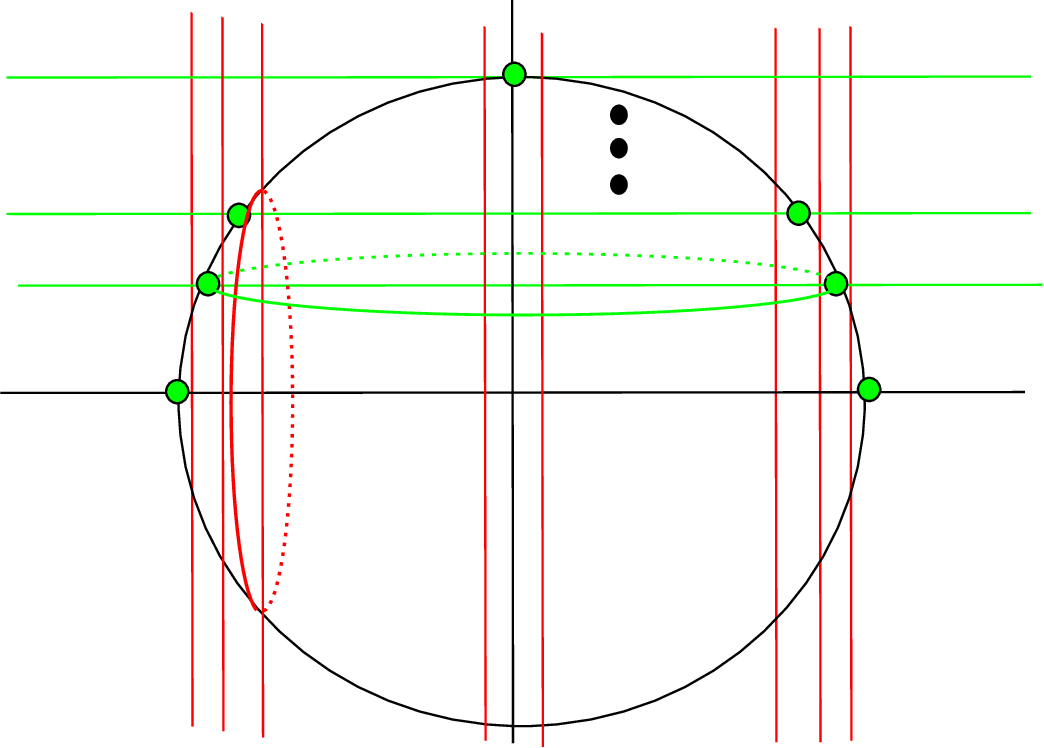}
     \caption{ $h_\rho$ as sequence of bubble exchanges ($g$ odd) } \label{fig:circlesinT2}
    \end{figure}

 It remains to show that (up to pairwise isotopy of ($S^3, T_g$)) this $\pi$ rotation, which we will denote $h_\rho$, can be accomplished by generalized bubble exchanges and perhaps a flip.  This is easy to see; for each $1 \leq i \leq n$ do a generalized bubble exchange between the two bubbles that lie in $d_i$ (or a flip on the single bubble in $d_n$ if $g$ is odd) using the subarc of $d_i$ having $z$ positive (the front face of the sphere $S$) for one arc of the exchange and the subarc of $d_i$ on the back face of $S$ ($z$ negative) for the other arc.  Since each generalized bubble exchange (and the flip) is an eyeglass move, $h_\rho$ is an eyeglass move.  (In fact, since the bubbles are standard, $h_\rho$ is a Powell move.)
    
\medskip


 Suppose $T \subset S^3$ is a genus $g$ Heegaard surface, and $h, h': (S^3, T) \to (S^3, T_g)$ are two orientation-preserving homeomorphisms.  

\begin{defin}  \label{defin:eyeequiv} $h, h'$ are \em{eyeglass equivalent} (written $h \sim h'$) if the composition $h' h^{-1}: (S^3, T_g) \to (S^3, T_g)$ is isotopic in $(S^3, T_g)$ to an eyeglass move.  \index{$\sim$}
\end{defin}

For example, the argument above shows that $h_{\rho}$ is eyeglass equivalent to the identity.  


Throughout the remainder of this section we continue to make this inductive assumption:
\begin{ass} \label{ass:inductive}
$G(S^3, T') = \calE$ whenever $\genus(T') \leq g-1$.
\end{ass}  

\begin{lemma} \label{lemma:preexistunique}  Suppose $S$ is a reducing sphere for a genus $g$ Heegaard splitting $(S^3, T)$.  Then there is an orientation preserving homeomorphism $h_S: (S^3, T) \to (S^3, T_g)$ so that $h_S(S) \in \{S_i, i = 1, ..., g-1\}$.

Suppose that $S'$ is a reducing sphere for $(S^3, T)$ that is disjoint from $S$ and $h_{S'}: (S^3, T) \to (S^3, T_g)$ is similarly defined, so that $h_{S'}(S') \in \{S_i, i = 1, ..., g-1\}$.  Under Assumption \ref{ass:inductive}, the homeomorphisms $h_S$ and $h_{S'}$ are eyeglass equivalent.  In particular $h_S$ is well-defined up to eyeglass equivalence.
%
\end{lemma}

\begin{proof}  The first statement follows almost immediately from \cite{Wa}, as we now describe.  $S$ divides $T$ into surfaces $T_{\pm}$ of genus $g_{\pm} \geq 1$ respectively, with $g_+ + g_- = g$.   Choose any orientation-preserving homeomorphism $f: S^3 \to S^3$ that carries $S$ to $S_{g_+}$ and the complementary ball component of $S$ that contains $T_+$ to the ball $\frb_{g_+}$.  Adjust so that the circle $T \cap S$ is sent to $\frc_{g_+} \subset S_{g_+}$.  Then $f(T_+)$ and $T_g \cap \frb_{g_+}$ each determine a genus $g_+$ Heegaard splitting of $\frb_{g_+}$ and \cite{Wa} then implies that the surfaces are isotopic rel $\frc_{g_+}$.  Similarly isotope $f(T_-)$ to $T_g \cap (S^3 - \frb_{g_+})$ in $(S^3 - \frb_{g_+})$.  The resulting homeomorphism $h_S$ takes $S$ to $S_{g_+}$ and $T$ to $T_g$, as required.  

Note that this construction subtly depends on a choice: if we had reversed the labels $g_{\pm}$ then the requirement that $T_+$ be sent to $\frb_{g_+}$ would have sent $S$ to $S_{g_-}$ instead of $S_{g_+}$.  But this could also be accomplished by composing with the homeomorphism $h_{\rho}$ defined just before this lemma, and we have shown that $h_{\rho} \sim id_{(S^3, T_g)}$. So the choice of labelling makes no difference in the construction of $h_S$, up to eyeglass equivalence.  

Proceeding then with the second statement, we first claim that $h_S$ is well-defined up to eyeglass equivalence.  To that end, suppose another orientation preserving homeomorphism $h': (S^3, T) \to (S^3, T_g)$ has $h'(S) \in \{S_i, i = 1, ..., g-1\}$.  Using possibly $h_{\rho}$ as above, we may as well assume $h_Sh'^{-1}(\frb_{g_+}, T_{g_+}) = (\frb_{g_+}, T_{g_+})$.   Noting that $g_+ < g$, apply Lemma \ref{lemma:Tminus} and Assumption \ref{ass:inductive} to 
the pair $(\frb_{g_+}, T_{g_+})$ with the goal of showing that there is an eyeglass move on $(S^3, T_g)$ which, when composed with $h_Sh'^{-1}
$, is the identity on $\frb_{g_+}$ and remains $h_Sh'^{-1}$ on $S^3 - \frb_{g_+}$ .  Lemma \ref{lemma:Tminus} says that, under the inductive assumption, this is true for some move $h_{g+}:(\frb_{g_+}, T_{g_+}) \to (\frb_{g_+}, T_{g_+})$  in $\calE_-$.  By definition, each flip, bubble move, bubble exchange or eyeglass twist used in the construction of $h_{g+}$ is also an eyeglass move on $(S^3, T_g)$.  On the other hand, a move in $\calE_-$ corresponding to a $B_p$ bubble move in the proof of Lemma \ref{lemma:Tminus} here corresponds to a bubble move on the bubble $S^3 - \frb_{g_+}$ in $(S^3, T_g)$, and this is also an eyeglass move.  So indeed there is an eyeglass move as we seek.  Now apply the same argument on the complementary ball $S^3 - \frb_{g_+}$ and deduce that $h_Sh'^{-1}$ is an eyeglass move on $(S^3, T_g)$, so  $h'$ is eyeglass equivalent to $h_S$, as desired.  


Now consider the reducing sphere $S'$. We may as well choose labels $g_{\pm}$ so that  $h_S(S') \subset \frb_{g_+}$.  Let $\frb_S = h_S^{-1}(\frb_{g_+})$, a genus $g_+$ bubble for $(S^3, T)$.  Apply the first statement again, this time to $h_S|\frb_S:(\frb_S, T_+) \to (\frb_{g_+}, T_{g_+})$, and deduce that  $h_S|\frb_{g_+}$ could have been chosen so that $h_S(S') = S_i$ for some $i \leq g_+$, that is $h_S(S') \in \{S_i, i = 1, ..., g_+\}$.   The same is true, by definition, for the given $h_{S'}$.  The argument we have just given that $h_S$ is well-defined up to eyeglass equivalence, repeated now for $h_{S'}$, shows that $h_S \sim h_{S'}$ as required.  

The last comment, that under the inductive assumption $h_S$ is well-defined up to eyeglass equivalence, follows simply by taking $S'$ to be a parallel copy of $S$ in the previous argument.
\end{proof}

Our goal is to show that for any genus $g$ Heegaard surface $T$ in $S^3$, $G(S^3, T) = \calE$.  Another way of expressing this is that any two orientation preserving homeomorphisms $(S^3, T) \to (S^3, T_g)$ are eyeglass equivalent. 

Suppose $F \subset S^3$ is a possibly disconnected closed surface, dividing $S^3$ into possibly disconnected $3$-manifolds $M_A$ and $M_B$.  Suppose further
\begin{enumerate}
\item Each component $C$ of $M_A$ has a Heegaard splitting $C = A_C \cup_{T_C} B_C$ in which $A_C$ is a  handlebody.  In particular, $\bdd C = \bdd_- B_C$.  Here we follow \cite{FS2} and \cite{Sc1} in allowing sphere components of $\bdd_- B_C$, so the compression body $B_C$ may be reducible.  
\item The symmetric statement is true for each component of $M_B$.
\item The splitting of each ball component of $M_A$ or $M_B$ has genus $\geq 1$.  Hence any component that has a genus 0 splitting is a punctured $3$-sphere (since it has genus 0) with more than one boundary component (since it is not a ball).  
\item  The Heegaard splitting $(S^3, T)$ that is obtained by amalgamating all of these Heegaard splittings is of genus $g$. (See \cite[Section 3]{La} for a description of Heegaard surface amalgamation.)
\end{enumerate}

The {\em defining surface} $F \subset S^3$, together with the Heegaard splittings of $M_A$ and $M_B$ as described above, will be called a {\em Heegaard split chamber complex} that {\em supports} the Heegaard splitting $(S^3, T)$.  Each component of $M_A$ or $M_B$ is called a {\em chamber}.  
Heegaard split chamber complexes are formally defined and explored more extensively in section \ref{sect:Heegaard1}.  (See for example Definition \ref{defin:hsChamcom})

Before proceeding we list some Elementary Facts about surfaces and amalgamation that will be useful:
\begin{itemize}
\item  EF1: A compact surface is non-planar if and only if it contains two simple closed curves that intersect in a single point
\item EF2: Consequently, if $F$ is subsurface of a compact surface $F'$ and $F$ is non-planar, then so is $F'$.  
\item EF3: If $F$ is a non-planar surface and $F'$ is obtained by removing a finite number of disjoint disks from the interior of $F$ (that is $F'$ is a {\em punctured} $F$), then $F'$ is non-planar.  
\item  EF4: Suppose Heegaard splittings $(M_1, T_1)$, $(M_2, T_2)$ of compact orientable 3-manifolds $M_1$ and $M_2$ are amalgamated along a closed surface $F$ that is a boundary component of each.  Then a punctured copy of $F$ lies in each of $T_1, T_2$ and the amalgamated Heegaard surface $T \subset  M = M_1 \cup_F M_2$. See \cite[Figure 12]{La}. In particular, if $F$ is non-planar, so are $T_1, T_2$ and $T$.
\item EF5: Under the amalgamation just described, a punctured copy of each $T_i, i = 1, 2$ lies in $T$.  In particular, if either $T_i$ is non-planar, so is $T$.  Again see \cite[Figure 12]{La}.
\end{itemize}

\begin{lemma}  \label{lemma:Sreduces} Suppose, in a Heegaard split chamber complex in $S^3$ as described above, $S$ is a sphere component of $F$.  Then, after amalgamation, a slight push-off of $S$ becomes a reducing sphere for the splitting $(S^3, T)$, intersecting $T$ in a single circle that is essential in $T$.
\end{lemma}

\begin{proof} We first show that after amalgamation the part of $T$ lying in each of the ball components of $S^3 - S$ must contain a non-planar surface.  There are two cases: 

If every chamber within a ball $B$ bounded by $S$ is a punctured 3-sphere, then every component of $F$ within $B$ is a sphere.  Pick a sphere (possibly $S$ itself) that is innermost among these components.  The ball it bounds contains no other component of $F$ and therefore is a ball chamber.  By the third property of Heegaard split chamber complexes described above, the splitting surface for this ball chamber has genus $\geq 1$ and so is non-planar.  It follows that $T \cap B$ is non-planar, using EF5 above.  

On the other hand, if there is a chamber in $B$ that is not a punctured $3$-sphere then a boundary component of the chamber is non-planar, so again $T \cap B$ is non-planar, using EF4 above.

We now follow the methodology of \cite{CG}:  after amalgamation $S$ becomes a punctured sphere lying in $T$ with some of its boundary components bounding disks in $A$ and others bounding disks in $B$.  Choose a circle $c \subset S$ that divides $S$ into two disks: $D_A$ that contains all disks of $S - T$ that lie in $A$ and $D_B$ that contains all disks of $S - T$ that lie in $B$.  Push $\inter(D_A)$ slightly into $A$ and $\inter(D_B)$ slightly into $B$.  The resulting sphere $S'$ intersects $T$ only in the circle $c$.  As we have just shown, each of the two components of $T-S'$ contains a non-planar surface, namely the part of $T$ lying in a ball component of $S^3 - S$.  Thus neither component of $T-S'$ is planar so, in particular, neither component is a disk.  Hence $S'$ is a reducing sphere for $T$ and $c = S' \cap T$ is essential in $T$.
\end{proof}

\begin{prop} \label{prop:toyexist}  Suppose, for the Heegaard split chamber complex above, $S$ is an incompressible sphere in a chamber of $M_A$ or $M_B$.  Then there is an orientation preserving homeomorphism $h_S: (S^3, T) \to (S^3, T_g)$ so that $h_S(S) \in \{S_i, i = 1, ..., g-1\}$.

Moreover, suppose for $\tau \in G(S^3, T)$ a homeomorphism $h_{\tau(S)}$ is similarly defined, for the Heegaard split chamber complex whose defining surface is $\tau(F)$. Then, under Assumption \ref{ass:inductive}, 
$h_{\tau(S)} \tau \sim h_S.$

%
\end{prop}

\begin{proof} Let $C$ be the chamber in which $S$ lies, say $C \subset M_A$, so $C = A_C \cup_{T_C} B_C$ with $A_C$ a handlebody.  

The main theorem of \cite{Sc1} says that the Heegaard surface $T_C$ in $C$ may be isotoped so that it is aligned with $S$, that is so that it intersects $S$ in at most one circle. 

{\em Claim:} When $S$ is aligned with $T_C$ in $C$ and the Heegaard surface $T \subset S$ is created by amalgamation of the splittings of the chambers, $S$ becomes a reducing sphere for the splitting $(S^3, T)$. 

There are three cases to consider:
\medskip


{\em Case 1:} $T_C$ cannot be isotoped in $C$ to be disjoint from $S$.  

$S$ is aligned with $T_C$, and in this case $T_C \cap S$ cannot be empty, so this intersection must be a single circle $c$.  Furthermore $c$ must be essential in $T_C$:  Indeed, if $c$ bounded a disk $D_T \subset T_C$ then the union of $D_T$ and the disk $S \cap A_C$ would be a sphere in the (irreducible) handlebody $A_C$, and the ball the sphere bounds could be used to isotope $D_T$ through $S \cap A_C$ and so off of $S$, contradicting the hypothesis of this case.  

Since $S$ is separating in $S^3$, the curve $c$ is separating in $T_C$.  Since $c$ is essential in $T_C$, neither component of $T_C - c$ is a disk, so both components are non-planar surfaces.  By the argument of EF5 above, each side of $c$ in $T$ is then non-planar, so $c$ is essential in $T$.  Under amalgamation, the disk $S \cap A_C$ becomes a disk in $A$.  

It is easy to arrange that, similarly, the disk $S \cap B_C$ remains a disk in $B$ after the amalgamation:  isotope the disk $S_B = S \cap B_C$ so that it intersects the defining $B-$disks for the amalgamation $\calB \subset B_C$  only in arcs.  Then $S_B$ intersects $B_C - \eta(\calB)$, which is a collar of $\bdd C$ in $B_C$, only in disks, namely the complement in $S_B$ of the arcs $S_B \cap \calB$.  Finally, properly isotope the disks $S_B - \eta(\calB)$ so that they avoid the $A$-disks of the amalgamation, so $S_B$ ends up entirely in $B$.  Then $S$ is the union of two disks, $S_A \subset A$ and $S_B \subset B$ along their common boundary $c$, an essential circle in $T$.  Hence $S$ is a reducing sphere for $(S^3, T)$.  

 \begin{figure}[ht!]
\labellist
\small\hair 2pt
\pinlabel  $S$ at 160 80
\pinlabel  $\Sigma$ at 280 50
\pinlabel  $\bdd_-B_C$ at 240 -5
\pinlabel  $\bdd_-B_C$ at 240 55
\pinlabel  $\bdd_+B_C=T_C$ at 240 130
\pinlabel  $\Sigma$ at 130 15
\pinlabel  $S_C$ at 115 70

\endlabellist
    \centering
    \includegraphics[scale=0.75]{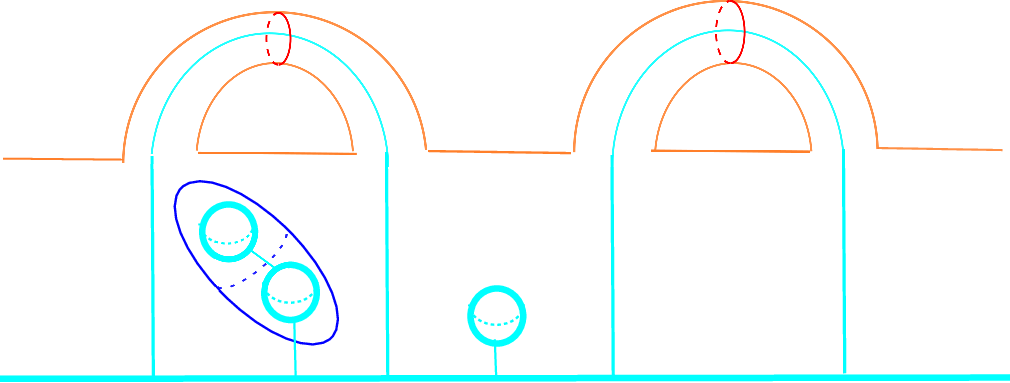}
     \caption{A spine $\Sigma$ (in aqua) of $B_C$ intersecting $S$ in a single point.} \label{fig:toyexist1}
    \end{figure}

{\em Case 2:} $T_C$ can be isotoped in $C$ to be disjoint from $S$ and $\genus(T_C) \geq 1$.

Since the chamber $C \subset M_A$, $A_C$ is a handlebody and so is irreducible.  Hence if $S$ is disjoint from $T_C$ it must lie entirely in the compression body $B_C$.  This implies that $B_C$ is reducible, so $\bdd_- B_C$ contains spheres, some of which lie on the other side of $S$ from $T_C = \bdd_+ B_C$ in $B_C$. Let $S_C \subset \bdd_- B_C$ be one of them.  Since $S$ separates $S_C$ from $T_C$ in $B_C$, any spine of $B_C$ intersects $S$, and there is some spine $\Sigma$ of $B_C$ that intersects $S$ in a single point (as pictured in Figure \ref{fig:toyexist1}).
Now isotope $T_C$ to be a neighborhood of that spine, so it intersects $S$ in a single circle $c$.   Amalgamation happens well away from $S$, so $S$ also ends up intersecting $T$ in $c$.  The circle $c$ divides $T_C$ into a disk, on the side of $S$ containing $S_C$,
and a positive genus surface on the other, since by assumption $\genus(T_C) \geq 1$.  By Lemma \ref{lemma:Sreduces} the part of $T$ that lies in the ball in $S^3$ bounded by $S_C$ (not visible in Figure \ref{fig:toyexist1}) also has genus $\geq 1$.  Hence each of the surfaces into which $S$ divides $T$ has genus $\geq 1$, so $S$ is a reducing sphere for $T$. Moreover the spine $\Sigma$ has been chosen so that the disk $S \cap B_C$ is one of the defining disks for the amalgamation, so it automatically persists as a disk in $B$ after amalgamation.  Thus $S$ becomes a disk intersecting $T$ in the single essential circle $c$ and so a reducing sphere for  $(S^3, T)$.
\medskip

{\em Case 3:} $T_C$ can be isotoped in $C$ to be disjoint from $S$ and $\genus(T_C) = 0$.

By assumption (3) on $F$ above, $C$ is a multiply punctured $3$-sphere.  Since $S$ is incompressible in $T_C$, each side of $S$ in $T_C$ contains some of the punctures, i. e. some sphere components of $\bdd_- C$.  Just as in Case 2, applying Lemma \ref{lemma:Sreduces}, $T_C$ can be isotoped to intersect $S$ in a single circle and the part of $T$ lying on each side of $S$ must have positive genus, so $S$ is a reducing sphere.  

\medskip

This establishes the Claim; the proof of the first statement then follows from \cite{Wa}.  

The proof of the second statement follows from the observation that $h_S \tau^{-1}(\tau(S))\in \{S_i, i = 1, ..., g-1\}$ so for $h_{\tau(S)}$ we could take $h_S\tau^{-1}$.  So, under the inductive assumption, Lemma \ref{lemma:preexistunique} says that any other choice of $h_{\tau(S)}$ is eyeglass equivalent to $h_S\tau^{-1}$, or $h_{\tau(S)} \tau \sim h_S$.
\end{proof}


\begin{prop} \label{prop:toyunique}  Suppose $S$ and $S'$ are not necessarily disjoint spheres, each in possibly different chambers of $M_A$ or $M_B$ and each is incompressible in the chamber in which it lies.  Suppose $h = h_S: (S^3, T) \to (S^3, T_g)$ and $h' = h_{S'}: (S^3, T) \to (S^3, T_g)$ are homeomorphisms as described in Proposition \ref{prop:toyexist}, so, in particular, $S$ and $S'$ are both reducing spheres for $(S^3, T)$.  
Under Assumption \ref{ass:inductive}, the homeomorphisms $h$ and $h'$ are eyeglass equivalent.
\end{prop}

\begin{proof} The proof is an examination of interlocking special cases.

{\bf Case 1:} $S$ and $S'$ are isotopic rel $T$.

In this case $S$ and $S'$ both divide $T$ into components $T_{\pm}$ of genus $g_{\pm} \geq 1$ where $g_+ + g_- = g$.  It follows that $h(S) = S_{g_+}$ or $S_{g_-}$, say the former.  After perhaps composing with the homeomorphism $h_{\rho}: (S^3, T_g) \to (S^3, T_g)$ defined before Lemma \ref{lemma:preexistunique}, we may as well assume that also $h'(S') = S_{g_+}$, so $h' h^{-1}$ leaves $S_{g_+}$ invariant. Apply Lemma \ref{lemma:preexistunique} to two copies of $S_{g_+}$; the homeomorphism $h' h^{-1}$; and the identity homeomorphism $id_{(S^3, T_g)}$.   Deduce that $h' h^{-1} \sim id_{(S^3, T_g)}$.  Hence $h \sim h'$.

\medskip

{\bf Case 2:} $S$ and $S'$ both lie in the same chamber $C$ of $S^3 - F$ and there is an eyeglass move of $T_C$ in $C$ which carries $S$ to $S'$.

As discussed in the proof of \cite[Theorem 5.1]{FS2}, the handle slides that define the eyeglass move in $C$ determine, up to eyeglass twists, parallel handle slides in $T$ after amalgamation.  So in this case there is also an eyeglass move of $T$ in $S^3$ that carries $S$ to a sphere isotopic to $S'$ rel $T$.  The proof of Case 2 then follows from Case 1.
\medskip

{\bf Case 3:}  $S$ and $S'$ are disjoint.

Apply Lemma \ref{lemma:preexistunique}.  

\medskip

{\bf Case 4:} $S$ and $S'$ both lie in the same chamber $C$ and are isotopic in $C$.

The main theorem of \cite{FS2} says that there is a sequence of reducing spheres for $T_C$ starting at $S$ and ending at $S'$ so that each successive pair is either isotopic rel $T_C$, differ by an eyeglass move in $(C, T_C)$, or are disjoint, because they differ by a bubble pass in $T_C$.  The result then follows from Cases 1, 2 and 3.
\medskip

{\bf Case 5:} $S$ and $S'$ are isotopic in $S^3 - F$ to disjoint spheres.

If $S$ and $S'$ lie in different chambers of $S^3 - F$ the result follows from Case 3.  Suppose $S$ and $S'$ lie in the same chamber $C$ of $S^3 - F$ and can be isotoped {\em in $C$}, but not necessarily rel $T_C$, to be disjoint.  According to the main theorem of \cite{Sc1} the Heegaard surface $T_C$ can be aligned with the sphere set $S \cup S'$, so that the $S$ and $S'$ become disjoint reducing spheres for $T_C$.  This new alignment of $S$ and $S'$ may change $h$ and $h'$, but by Case 4 the eyeglass equivalence class of the homeomorphisms will not change.  
The result then follows from Lemma \ref{lemma:preexistunique}.  
\medskip

{\bf The general case}
If $S$ and $S'$ lie in different chambers of $S^3 - F$ the result follows from Case 3.  Suppose they lie in the same chamber $C$.  Ignoring $T_C$ for the moment, recall that in classic 3-manifold theory a standard innermost disk argument shows that there is a sequence of incompressible spheres in $C$ beginning with $S$ and ending with $S'$ so that sequential spheres are isotopic in $C$ to disjoint spheres.  The result then follows from Case 5.
\end{proof}

The following corollary is then a summary:

\begin{cor}   \label{cor:naturality} Suppose in a Heegaard split chamber complex (with notation as in conditions (1)-(4) preceding Lemma \ref{lemma:Sreduces}), 
the manifold $S^3 - F$ contains an incompressible sphere, then under Assumption \ref{ass:inductive}, $F$ determines a natural eyeglass equivalence class $\tilde{h}_F$ of homeomorphisms $(S^3, T) \to (S^3, T_g)$.

Moreover, for $\tau \in G(S^3, T)$, $\tilde{h}_{\tau(F)} \tau = \tilde{h}_F$.
%
\end{cor} 

\begin{proof}  Choose any incompressible sphere $S$ in $S^3 - F$ and let $h_S: (S^3, T) \to (S^3, T_g)$ be a homeomorphism as given by Proposition \ref{prop:toyexist}.  Then Proposition \ref{prop:toyunique} shows that, up to eyeglass equivalence, $h_S$ is independent of any choice, including the choice of incompressible sphere $S$.  The last sentence follows from the last sentence of Proposition \ref{prop:toyexist}.  
\end{proof}

To simplify (but slightly abuse) notation, we will often use $h_F:  (S^3, T) \to (S^3, T_g)$ to denote any representative of the eyeglass equivalence class $\tilde{h}_F$.  With this notation the last sentence of \ref{cor:naturality} would be $h_{\tau(F)} \tau \sim h_F$.
\medskip


\begin{defin}  \label{defin:cocertify} Suppose there are two Heegaard split chamber complexes in $S^3$ with corresponding associated surfaces $F, F' \subset S^3$ and each Heegaard split chamber complex supports the same Heegaard splitting surface $T$ for $S^3$.  Suppose each of $S^3 - F$ and $S^3 - F'$ contains an incompressible sphere, so  (eyeglass equivalence classes of) homeomorphisms $h_F,  h_{F'}:  (S^3, T) \to (S^3, T_g)$ are defined.  If 
$ h_F \sim h_{F'}$ then $F$ and $F'$ {\em cocertify}.
\end{defin}

(This definition will be expanded later, see comments following Definition \ref{defin:flagcertify}.)
\medskip

\begin{lemma} \label{lemma:cocert1}
Suppose, in Definition \ref{defin:cocertify} $F \subset F'$ and a component $C'$ of $S^3 - F'$ contains an incompressible sphere $S$ that is also incompressible in the component $C \supset C'$ of $S^3 - F$ in which it lies.  Then 
$F$ and $F'$ cocertify.  
\end{lemma}

\begin{proof} Via \cite{Sc1} isotope in $C$ the incompressible sphere to a sphere $S \subset C$ that is aligned with $T_C$.  By the Claim in the proof of Proposition \ref{prop:toyexist} $S$ becomes a reducing sphere for the splitting $(S^3, T)$ after amalgamation.  Similarly isotope $S$ in $C'$ to a sphere $S' \subset C'$ that is aligned with $T_{C'}$ and again conclude that $S'$ upon amalgamation becomes a reducing sphere for $(S^3, T)$.  Then by definition $h_{S'} \in \tilde{h}_{F'}$ and $h_{S} \in \tilde{h}_{F}$.  Since $C' \subset C$, the isotopies of both $S$ and $S'$ to alignment take place in $C$ so, by Proposition \ref{prop:toyunique}, $h_{S'}$ also represents $\tilde{h}_{F}$.  Since $ \tilde{h}_F$ and $ \tilde{h}_{F'}$ have the representative $h_{S'}$ in common, they cocertify.
\end{proof}

\begin{cor}  \label {cor:toy} Suppose $F \subset S^3$ is the surface associated with a Heegaard split chamber complex that supports $T$.  Suppose $\tau \in G(S^3, T)$.  If the Heegaard split chamber complexes given by $F$ and $\tau(F)$ cocertify, then $\tau \in \calE$.  
\end{cor}

\begin{proof} Corollary  \ref{cor:naturality} 
shows that $h_F \sim h_{\tau(F)} \tau$.  On the other hand, the assumption that $F$ and $\tau(F)$ cocertify implies that $h_{\tau(F)} \tau \sim h_F \tau$.  Hence $h_F \sim h_F \tau$ or $id_{(S^3, T)} \sim \tau$, as required.  
\end{proof}

\subsection{An overview of the argument} \label{subsect:overview}
Corollary \ref{cor:naturality} suggests a line of attack towards proving $G(S^3, T) = \calE$: For each $\tau \in G(S^3, T)$, find a Heegaard split chamber complex in $S^3$ with associated surface $F$ so that 
\begin{itemize}
\item  the Heegaard split chamber complex supports $T$
\item some chamber contains an incompressible sphere and 
\item $h_F \sim h_{\tau(F)}$.  
\end{itemize}

There are three conditions required for this example to work that are difficult to realize:
\begin{enumerate}
\item The Heegaard splitting of each ball chamber has genus $\geq 1$, as required by the definition of a Heegaard split chamber complex.  This property is needed to show, as in Lemma \ref{lemma:Sreduces} and Proposition \ref{prop:toyexist} above, that an incompressible sphere in a chamber determines a reducing sphere for $T$.
\item Some chamber contains an incompressible sphere
\item  The Heegaard split chamber complexes derived from $F$ and $\tau(F)$ cocertify.
\end{enumerate}

Section \ref{sect:intro} gave a brief description of how the {\bf second requirement} is addressed in the general case: given a position of $T$ in $S^3$, a pair of weakly reducing disks is found for $T$, with weak reduction defining a chamber complex.  Next a particularly useful level sphere (a `guiding sphere', see Section \ref{sect:guiding}) $S$ is found, and innermost disks in $S - T$ are used to decompose the chamber complex into a typically more complex one.  The process is repeated until all circles of intersection are removed.  It will be shown that at some stage in this {\em disk decomposition} process a chamber will contain an incompressible sphere or a sphere that is equally useful for determining an eyeglass equivalence class of homeomorphisms from $(S^3, T) \to (S^3, T_g)$.  (See Definition \ref{defin:flagcertify} of a {\em certificate}.)

This process raises many technical questions:  How to find a guiding sphere whose associated disk decomposition sequence must issue a certificate, Section \ref{sect:balance}.  How to ensure, for a given sphere, that the certificates issued during this process cocertify, Section \ref{sect:certify}.  The most difficult problem is showing that a different choice of such a sphere will result in an equivalent certificate.  This takes many steps, occupying Sections \ref{sect:deflate} through Section \ref{sect:balisotopy}.  The philosophy behind these steps borrows heavily from \cite{FS1} (see next paragraph): the guiding spheres are isotopic in $S^3$ through appropriate guiding spheres (Section \ref{sect:balisotopy}) and such an isotopy between them can be broken into small steps, each of which either adds to, subtracts from, or delays a single disk in the associated disk decomposition sequence.  We then examine the effect of each such minimal change.  The upshot is that, up to cocertification, the certificate issued depends only on the Heegaard split chamber complex itself, and not on the choice of guiding sphere.  

To address the {\bf third requirement}, that is, to show that the Heegaard split chamber complexes derived from $F$ and $\tau(F)$ cocertify, we construct a sequence of Heegaard split chamber complexes beginning with $F$ and ending with $\tau(F)$ so that each successive pair cocertify, see Section \ref{sect:finale}.    The heavy lifting for this part of the argument was done in \cite{FS1}.  Recall that a Heegaard split chamber complex is derived from a Heegaard splitting $(S^3, T)$ by weak reduction, see Section \ref{sect:Heegaard1}.  In \cite{FS1} a series of pairs of weakly reducing disks is found so that, roughly, the resulting Heegaard split chamber complexes are a series beginning with some $F$ and ending with $\tau(F)$, see Section \ref{sect:finale}.  Moreover, successive pairs of weakly reducing disks are related in such a way that Proposition \ref{prop:heegaddisk} guarantees the resulting Heegaard split chamber complexes cocertify.  So in the end $F$ and $\tau(F)$ cocertify and the third requirement above is satisfied.  
%

This leaves the {\bf first requirement}, that each chamber in the chamber complex comes with a splitting surface of genus $\geq 1$.  Establishing this is particularly vexing, and adds great technical complexity to the argument, for reasons we now describe: 

It is necessary to eliminate, whenever they arise, any ball chambers that have genus $0$ splittings.  In the argument below these will be 
the ball chambers (called {\em goneballs}) that are absorbed into the surrounding chamber and so their bounding spheres disappear from $F$.  Goneballs can arise in troublesome ways: a chamber that is a handlebody at one stage can be cut up by a disk decomposition to become a collection of balls; if the handlebody had trivial Heegaard splitting, the resulting balls will be goneballs and their boundaries need to be deleted from $F$.  Since chambers may be deleted in this way, what is to prevent $F$ from disappearing entirely, as pieces are cut up in this manner? We will show that unless the chamber complex is of a particularly simple type, called {\em tiny}, its defining surface will never completely disappear, see Subsection \ref{subsect:tiny}.

There is an additional but related complication: it is a classical result that a non-trivial Heegaard splitting of a ball or handlebody chamber is stabilized, that is it will contain bubbles (standard genus one summands).  It follows from \cite{FS2} that disk decomposition of a Heegaard split chamber gives a Heegaard splitting of the resulting chambers, but it is defined only up to eyeglass moves and passing bubbles through the decomposing disks. As noted above, it will be important to know if a newly created handlebody chamber could have trivial Heegaard splitting, so we want to flag handlebody chambers that, because of how they arise, are known to have non-trivial splittings.  This gives rise to the notion of {\em flagged chamber complexes}: the exact Heegaard splitting of each chamber is typically unknown, but certain handlebody chambers (including ball chambers) are flagged as necessarily having non-trivial splittings.  See Section  \ref{sect:prefdisk}.  

Following this informal overview, we  begin the more formal argument.

\section{Chamber complexes and disk decomposition} \label{sect:chamberintro}

Suppose $M$ is a compact orientable $3$-manifold. 
Let $F \subset S^3$ be a (typically disconnected) closed surface in $\inter(M)$ that divides $M$ into two typically disconnected $3$-manifolds $M_A$ and $M_B$, with each component of $F$ incident on one side to $M_A$ and on the other to $M_B$.  Call the collection $\calC = (F, M_A, M_B)$ a {\em chamber complex} \index{Chamber complex} with defining surface $F = F(\calC)$.  Each component of $M - F$ is called a {\em chamber}, with those in $M_A$ called $A$-chambers and those in $M_B$ called $B$-chambers.   

A Heegaard splitting $M = A \cup_T B$ is a familiar example of a chamber complex in $M$, with $\calC = (T, A, B)$.  A Heegaard splitting belongs to a class of chamber complexes, called {\em tiny}, that, for our purposes, contain too little information to be useful.  However, a Heegaard splitting together with a pair of weakly reducing disks (see Section \ref{sect:Heegaard1} below) does produce a possibly useful chamber complex, through a process called {\em disk decomposition}, or, more formally, {\em chamber complex decomposition}, which we now describe. \index{Disk decomposition} \index{Chamber complex decomposition}
\bigskip  

{\bf Phase one of disk decomposition:}  For $\calC$ a chamber complex in $M$, let $\calD$ be a collection of disjoint properly embedded disks, some perhaps inessential, in the chambers of $\calC$, with $\bdd \calD$ disjoint from $\bdd W$, so $\bdd \calD \subset F$.    We will call $\calD$ a {\em disk set} \index{Disk set} in $\calC$.   Let $F_{\calD} \subset M$ \index{$F_{\calD} \subset M$} be the surface obtained by doing surgery on $F(\calC)$ along the disks $\calD$, and denote by $\hat{\calC}_{\calD}$ \index{$\hat{\calC}_{\calD}$} the associated chamber complex  $M - F_{\calD}$. (The reason for the circumflex will be given shortly.) In particular, the surgery alters $M_A$ by deleting a bicollar neighborhood of each disk in $\calD \cap M_A$ and adding a bicollar neighborhood of each disk in $\calD \cap M_B$.  The symmetric statement applies to $M_B$.  

Each disk $D$ in $\calD$ is the core of a $2$-handle used in the surgery as just described.  The bicollar neighborhood of $\bdd D$ in $F$ will be called the {\em belt annulus} of the surgery; the two copies of $D$ in $F_{\calD}$ that are the result of the surgery are called the {\em scars} of $D$.  \index{Scars, internal and external} Let $C'$ be a chamber of $\hat{\calC}_{\calD}$ and $D \in \calD$ a disk which leaves a scar or two on $\bdd C'$.  If the disk $D$ lies outside the new chamber $C'$ then so does the associated belt annulus and we call the scars {\em external} scars on $\bdd C'$.  If the disk lies inside $C'$ we call the scars {\em internal} scars on $\bdd C'$.  Clearly if $D$ leaves one internal scar on $\bdd C'$ it will leave two, since the associated belt annulus lies in $C'$, but the two internal scars may be on different components of $\bdd C'$.   

One can think of a chamber $\hat{C}$ of $\hat{\calC}_{\calD}$ as obtained in two stages: it starts as the complement $C_-$ of a bicollar of $\calD \cap C$ in a chamber $C$ of $\calD$, a result of surgery on $\calD \cap C$.  Then $2$-handles are added along those disks in $\calD$ that are incident to $\bdd C$ but lie outside $C$.  The chamber $\hat{C}$ is called a {\em remnant} \index{Remnant of chamber} of $C$, even though strictly speaking it doesn't lie entirely inside of $C$.

{\bf Phase two of disk decomposition:}  Among the components of the surgered surface $F_{\calD}$ may be spheres that bound balls in $M$.  In the second phase of disk decomposition a collection of these balls is chosen and, for each ball $G \subset M$ in the collection (henceforth called a {\em goneball}) \index{Goneball} all components of $F_{\calD}$ that lie in $G$ , including the sphere $\bdd G$, are removed from $F_{\calD}$.    Whether a ball in $M$ bounded by a sphere in $F_{\calD}$ will be chosen to be a goneball of the decomposition is a delicate and important part of the theory. (See, for example, Rules \ref{rule:disky1} and \ref{rule:diskyHeeg}.) 
After eliminating all components of $F_{\calD}$ that lie in goneballs, call the resulting chamber complex $\calC_{\calD}$ (note: no circumflex).

Any chamber $C_D$ in the chamber complex $\calC_{\calD}$ is obtained from a chamber $\hat{C}_D$ in $\hat{\calC}_{\calD}$ by attaching balls, each corresponding to a goneball for the decomposition.  We then say that $C_D$ is a remnant of the chamber $C$ in $\calC$ if $\hat{C}_D$ is a remnant of $C$ in $\hat{\calC}_{\calD}$.  Notice that a chamber $C$ in $\calC$ may have no remnants in $\calC_{\calD}$.  For example, $C$ might be handlebody for which $\calD$ contains a complete collection of meridian disks, so the remnants in $\hat{\calC}_{\calD}$ are all balls.  If these all happen to be declared goneballs, then there are no remnants of $C$ in $\calC_{\calD}$.  
\bigskip

%
 
For $\calC, \calD, F, F_{\calD}, \calC_{\calD}$ and $\hat{\calC}_{\calD}$ as above, 
suppose $F'$ is a component of the surface $F_{\calD}$ such that one of the complementary components $W$ of $F'$ in $M$ is a handlebody. (A ball is considered a genus $0$ handlebody).  $W$ is not necessarily a chamber of $\hat{\calC}_{\calD}$, but rather the union of those chambers that lie within it.

\begin{defin} \label{defin:diskyhandle}
The handlebody $W$ is {\em disky} \index{Disky} if each component of $F \cap \inter(W)$ is a disk.  
\end{defin}

\begin{lemma} \label{lemma:disky} If $W$ is disky then 
\begin{enumerate}
\item No component of $F$ lies entirely in the interior of $W$.
\item Each component of $F_{\calD}$ that lies in the interior of $W$ is a sphere.
\item Suppose $D \in \calD$ lies in the interior of $W$.  Then the corresponding two scars do not lie on the same component of the surface $F_{\calD}$.  
\end{enumerate}
\end{lemma}

\begin{proof} Take the statements in order: 

1) Each component of $F$ is a closed surface, so none can be a disk.

2) Suppose $F''$ is a component of $F_{\calD}$ that lies in the interior of $W$, and let $F''_-$ be the compact surface obtained from $F''$ by deleting the interior of all scars of the disk surgery that lie in $F''$.  Then $F''_-$ lies in a component of $F \cap \inter(W)$ and, by assumption, this component is a disk.  Hence the surface $F''_-$ has no genus, so neither can $F''$.  See Figure \ref{fig:disky}.

3) Suppose the two scars left by surgery on a disk $D \in (\calD \cap \inter(W))$ were on the same component $F''$ of $F_{\calD}$.  If $F'' = F'$ then the belt annulus for the surgery is a component of  $F \cap \inter(W)$ that is not a disk, contradicting our assumption that $W$ is disky.  Similarly, suppose $F''$ were in the interior of $W$, and let $\alpha$ be an arc in $F''$ running between the two scars.  Then the union of the belt annulus and a collar of $\alpha$ would be a punctured torus lying in some component of $F \cap \inter(W)$, so that component cannot be a disk, again contradicting our assumption that $W$ is disky. 
\end{proof}

\begin{figure}[ht!]
\labellist
\small\hair 2pt
\pinlabel  $W$ at 120 190
\pinlabel  $F'$ at 200 220
\pinlabel  $\calD_0$ at 180 125
\pinlabel  $\calD$ at 260 150
\pinlabel  $F''_-$ at 380 140
\pinlabel  scars at 350 120
\endlabellist
    \centering
    \includegraphics[scale=0.7]{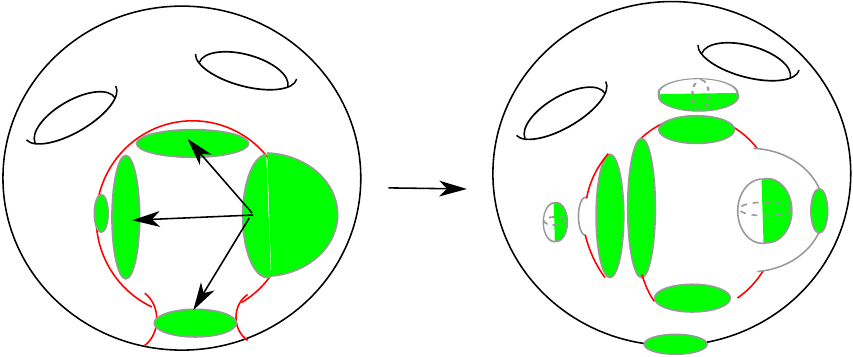}
     \caption{Construction of $F''_-$ in proof of Lemma \ref{lemma:disky} (2)} \label{fig:disky}
    \end{figure}

\begin{lemma} \label{lemma:subdisky} Suppose the handlebody $W$ is disky, and $F''$ is a component of $F_{\calD}$ that lies in the interior of $W$. Then $F''$ bounds a disky ball in $W$.\end{lemma}


\begin{proof}  By Lemma \ref{lemma:disky} any such component $F''$ is a sphere and, since the handlebody $W$ is irreducible, it will bound a ball in $W$.  The issue is to show that such a ball is disky.  By assumption, $F \cap \inter(W)$ consists of a collection $E$ of disks; if $E = \emptyset$ then there are no components of $F_{\calD}$ in the interior of $W$ and there is nothing to prove.  

Otherwise, since $E$ lies inside $W$, the annulus in $E$ adjacent to any circle $c \in \bdd E$ is a belt annulus for a surgery disk $D \in \calD$ that lies in $W$, with one of its scars on $\bdd W$.  Let $\calD_0 \subset \calD$ be the subset of disks that lie in $W$; we now induct on $n = |\calD_0| >0$.

Let $D_0 \in \calD_0$ be a disk whose boundary is innermost in $E$ among the circles $\bdd \calD_0$; let $E_0$ be the subdisk of $E$ that it bounds.  Since $W$ is irreducible, the disks $D_0$ and $E_0$ together bound a ball $B_0$ in $W$.  The ball is disjoint from $F$ by Lemma \ref{lemma:disky}, so we can think of $D_0$ as just being parallel to $E_0$, via the ball $B_0$.  Inductively apply the lemma to $\calD' = \calD_0 - D_0$, creating a collection $S'$ of spheres, each of which, by inductive assumption, bounds a disky ball in $W$.  

Finally, do surgery on the disk $D_0$.  The result of the surgery is two-fold: The ball $B_0$ bounded by $E_0$ and a copy of $D_0$ becomes a ball chamber that is disky because it is empty.  And the sphere $S'_0$ in $S'$ that contains $E_0$ is altered, in effect, by isotoping $E_0$ across $B_0$ to the other copy of $D_0$.  Depending on which side of $S'_0$ the disk $D_0$ lies, this change in $S'_0$ may add the disk $E_0 \subset F$ to the ball that $S'_0$ bounds in $W$. But since $E_0$ is a disk, it does not change the fact that the ball is disky. 
See Figure \ref{fig:subdisky}.  
\end{proof}

\begin{figure}[ht!]
\labellist
\small\hair 2pt
\pinlabel  $D_0$ at 140 70
\pinlabel  $D_0$ at 200 90
\pinlabel  $E_0$ at 340 90
\pinlabel  $B_0$ at 370 50
\endlabellist
    \centering
    \includegraphics[scale=0.7]{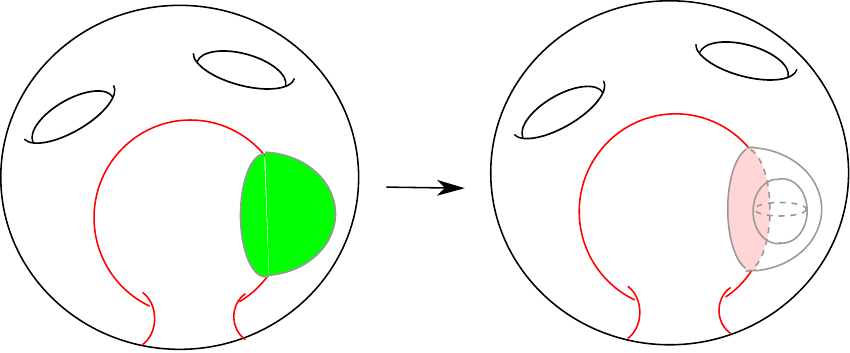}
     \caption{$E_0 \subset F$ in the interior of ball bounded by $S'_0$. (Figure not incorporating decomposition by $\calD'= \calD_0 - D_0$.) } \label{fig:subdisky}
    \end{figure}
    
    The decision process that will be used to decide if sphere components of $F_\calD$ in a chamber complex decomposition bound goneballs will have certain properties.  The ultimate decision process is subtle, but some of those properties can be described already.  At this point, view them as rules that disk decompositions follow; later it will be shown that the decision process that will be used satisfies these rules.  To that end, declare the rule:
    
\begin{rrule} \label{rule:disky1} In a chamber complex decomposition, only disky balls can be goneballs.
    \end{rrule}
    
    Following Lemma \ref{lemma:subdisky} we then have
    
\begin{cor} \label{cor:allballs} In the second phase of a chamber complex decomposition, any component that is removed from $F_{\calD}$ is a sphere that bounds a disky ball.
\end{cor}

Continue with $\calC, \calD, F, F_{\calD}, \hat{\calC}_{\calD}$ and $\calC_{\calD}$ be as above.  Denote the two-stage operation of chamber complex decomposition just described by $$\calC \xrightarrow{\calD} \calC_{\calD}.$$

The next rule limits the amount of disturbance caused by adding or removing a single disk from the chosen disk set.

\begin{rrule} \label{rule:persist} Suppose $\calD$ is a disk set in $\calC$, $D \in \calD$, and $\calD_-$ is the disk set $\calD - D$.   Let $S$ be a sphere component of $F_{\calD}$ that does not contain either scar from $D$.  Then $S$ bounds a goneball in $\calC_{\calD}$ if and only if it bounds a goneball in $\calC_{\calD_-}$.
\end{rrule}
    
    \begin{cor} \label{cor:persist} Suppose $\calD$ consists of a single disk. Then a sphere in $F_{\calD}$ that does not contain a scar of $D$ does not bound a goneball.  
\end{cor}
\begin{proof}  Since $\calD_- = \emptyset$ such a sphere would necessarily be a component of $F = F_{\calD_-}$ and so does not bound a goneball in $\calC = \calC_{\calD_-}$.  By Rule \ref{rule:persist} it does not bound a goneball in $\calC_\calD$.
\end{proof}

An example of a decision process that satisfies both rules is to simply declare each disky ball to be a goneball.  For example, it satisfies Rule \ref{rule:persist} because a sphere $S \subset F_\calD$ not containing either scar will also be in $F_{\calD_-}$ and a ball it bounds is disky in either $\calC_\calD$ or $\calC_{\calD_-}$ if and only if $F$ intersects the ball only in disks.   Unfortunately declaring each disky ball to be a goneball would be too broad for our purposes; too much information would be lost.  But the rules so far do suffice to prove an important property:

\begin{prop}  \label{prop:remnant} Suppose $C$ is a chamber of $\calC$ so that every remnant of $C$ in $\calC_{\calD}$ is a disky handlebody.  Then $C$ is a handlebody.
\end{prop}

\begin{proof} We proceed by induction on $|\calD|$, noting that if $\calD = \emptyset$ there is nothing to prove.  

{\em Case 1}:  There is a disk in $\calD$ whose boundary is inessential in $\bdd C$.

In this case, let $D$ be a disk whose boundary is innermost among all such disks, let $E$ be the disk in $\bdd C - \calD$ bounded by $\bdd D$.  Push $\inter(E)$ slightly into the chamber of $\calC$ on the opposite side of $D$.  That chamber is either $C'$ if $D$ lies in $C$ (the top row of Figure \ref{fig:remnant1}) or $C$ itself, if $D$ lies in a chamber $C'$ adjacent to $C$ (the bottom row of Figure \ref{fig:remnant1}).   
Consider the sphere $S = D \cup E$, lying either in a remnant of $C'$ or a remnant of $C$, and in either case parallel in that remnant to a sphere component $S'$ of $F_{\calD}$.  
\medskip

{\em Claim 1}: With the proper choice of $D$ for the construction above, $S$ bounds a ball $B_S$ in $M$ containing $S'$.  Moreover, if $D$ lies in $C'$, so $S$ is in a remnant of $C$, then the subball of $B_S$ bounded by $S'$ is a goneball.

{\em Proof of Claim 1:}  Suppose first that $S$ lies in a remnant $C'_r$ of $C'$, so $D$ lies inside $C$, as in the top panel of Figure \ref{fig:remnant1}.  Then $S'$ is a sphere in the boundary of a chamber $C_r$ of $\hat{\calC}_{\calD}$ that is a remnant of $C$.  If $C_r$ is a goneball the claim follows.  If instead 
$C_r$ remains as a chamber in $\calC_{\calD}$, $C_r$ must be a ball, since every remnant of $C$ in $\calC_{\calD}$ is a handlebody, proving the claim in this case.  

Suppose instead that $S$ lies in a remnant $C_r$ of $C$, so $D$ lies inside $C'$, as in the bottom panel of Figure \ref{fig:remnant1}.   Then by hypothesis $C_r$ is a handlebody, and so is irreducible. Hence $S$ bounds a ball $B_S$ in $C_r$. If $B_S$ lies on the side of $S$ containing $S'$ the claim is shown.  Suppose $B_S$ lies on the other side of $S$, so $S'$ bounds a ball in $C_r$ and that ball contains the punctured handlebody $\bdd C - E$.  Since each remnant of $C$ is disky, it follows that $\bdd C - E$ must be a disk, so $C$ is a ball.  In this case use instead of $D$ a disk $D'$ in $\calD$ whose boundary is innermost among those in the disk $\bdd C - E$ and repeat the construction.  The new sphere corresponding to $S$ lies in $B_S$ so it bounds a ball entirely in $C_r$, a ball that contains the disk $E'$ in $\bdd C - E$ bounded by $\bdd D'$, i. e. the one that now corresponds to $E$.  This proves Claim 1, using the disk $D'$ instead of $D$.  


\begin{figure}[ht!]
\labellist
\small\hair 2pt
\pinlabel  $C$ at 40 350
\pinlabel  $C'$ at 110 380
\pinlabel  $D$ at 70 300
\pinlabel  $D$ at 160 100
\pinlabel  $E$ at 110 300
\pinlabel  $C_r$ at 420 320
\pinlabel  $C_r$ at 290 120
\pinlabel  $C'_r$ at 310 390
\pinlabel  $S$ at 340 320
\pinlabel  $S'$ at 380 320
\endlabellist
    \centering
    \includegraphics[scale=0.6]{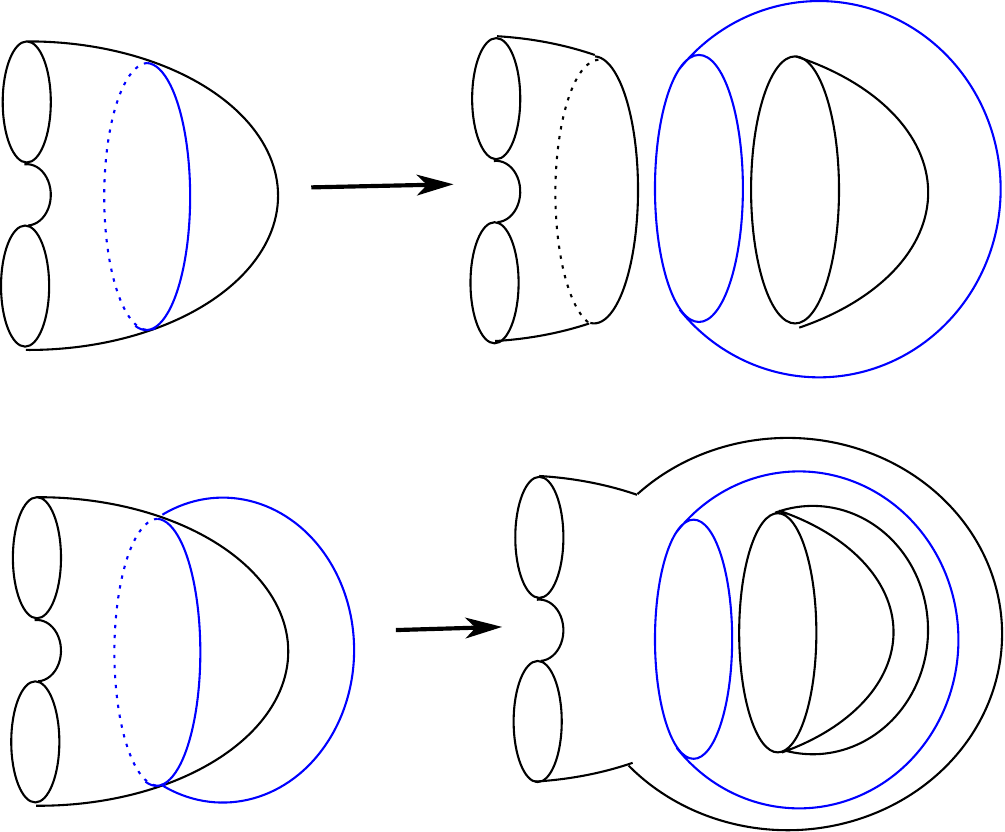}
     \caption{When a disk $D \in \calD$ has $\bdd D$ inessential in $\bdd C$} \label{fig:remnant1}
    \end{figure}

{\em Claim 2}: The disk $D$ of Claim 1 is inessential in the chamber (either $C$ or $C'$) in which it lies.  

{\em Proof of Claim 2:}  Suppose instead that $D$ is essential in the chamber in which it lies, so there are closed components of $F$ lying between $D$ and $E$, i. e. in the ball $B_S$.  By Lemma \ref{lemma:disky} the subball of $B_S$ bounded by $S'$ could not then be a disky ball,  so by Rule \ref{rule:disky1}, $S'$ remains as a sphere component of $F(\calC_{\calD})$.  Claim 1 then implies that $D$ lies in $C$ so $S$ is in a remnant of $C'$.  Then, as in Claim 1, $S'$ is a sphere in the boundary of a chamber of $\hat{\calC}_{\calD}$ that is a remnant of $C$. Since $S'$ remains in $F(\calC_{\calD})$ the ball it bounds in $B_S$ must be a ball remnant $B'$ in $\calC_{\calD}$ since every remnant of $C$ is a handlebody.  But even then there is a contradiction, for every remnant of $C$ is assumed to be disky, and here $B'$ contains closed components of $F$, contradicting Lemma \ref{lemma:disky}, and so proving Claim 2.  


\medskip

Let $\calD_- = \calD - D$.  Observe that since $D$ is inessential by Claim 2, $\calD_-$ still satisfies the hypotheses of Proposition \ref{prop:remnant}:  Indeed, from Case 1 and Rule \ref{rule:persist}, the only difference in the remnants of $C$ if we add $D$ back to $\calD_-$ before decomposing is:
\begin{itemize}
\item possibly adding a disky ball bounded by $S'$ to the set of remnants of $C$ or
\item isotoping $E$ to $D$, if the ball that $S'$ bounds is a goneball.  This is the case when $D \in C'$ and may be the case when $D \in C$.
\end{itemize}
Neither will affect whether each remnant of $C$ is a disky handlebody.  
So, by inductive assumption, replacing $\calD$ with $\calD_-$ leads to the conclusion of Proposition \ref{prop:remnant}, namely $C$ is a handlebody. This concludes the proof of Proposition \ref{prop:remnant} for Case 1.

{\em Case 2:}  Any disk incident to $\bdd C$ has essential boundary in $\bdd C$.  (In particular, any disk incident to $\bdd C$ is essential.)  

First consider a remnant $C_r$ of $C$ in $\calC_{\calD}$, which by hypothesis of the Proposition is a disky handlebody.  
Since $C_r$ is a remnant of $C$, each scar on $\bdd C_r$ is incident to $\bdd C$, so by the hypothesis of this case its boundary is essential in $\bdd C$.  
But since $C_r$ is disky, this implies it can have no internal scars, i. e. $F$ is disjoint from $\inter(C_r)$, since the belt annulus for an internal scar lies on a component of $\bdd C \cap \inter(C_r)$ and these are all disks.  This in turn implies that $\inter(C_r)$ contains no goneballs, so $C_r$ is a remnant of $C$ in $\hat{\calC}_{\calD}$, before goneballs are absorbed. Thus the remnants of $C$ in $\hat{\calC}_{\calD}$ consist of a union of disky handlebodies, though among these may be balls that become goneballs in $\calC_{\calD}$. See left side of Figure \ref{fig:remnant2}.

\begin{figure}[ht!]
\labellist
\small\hair 2pt
\pinlabel  $C$ at 280 220
\pinlabel  $D$ at 380 240
\pinlabel  $\calD_-$ at 265 295
\pinlabel  $\calD_-$ at 360 220
\pinlabel  $\calD_-$ at 290 240
\pinlabel  $\calC$ at 220 200
\pinlabel  $\calD$ at 220 160
\pinlabel  $\calD_-$ at 360 160
\pinlabel  $\calC_{\calD}$ at 40 40
\pinlabel  goneball at 210 40
\pinlabel  $\calC_{\calD_-}$ at 360 40
\pinlabel  $C_r$ at 100 90
\endlabellist
    \centering
    \includegraphics[scale=0.7]{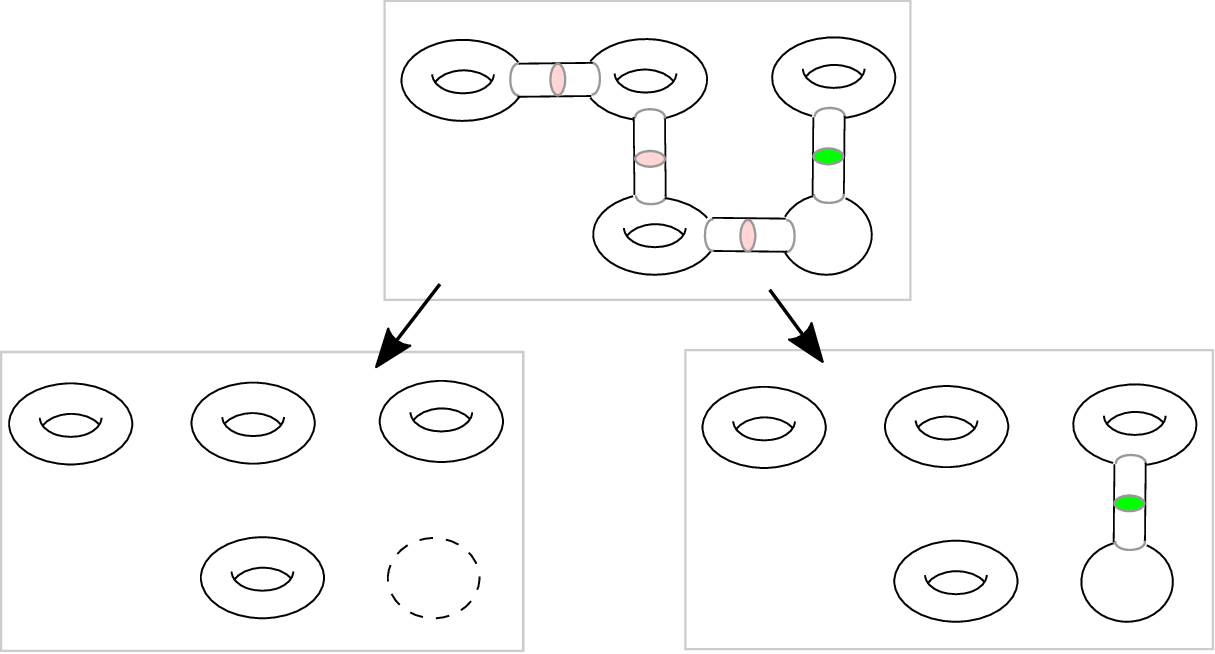}
     \caption{When each disk $D \in \calD$ has $\bdd D$ essential in $\bdd C$} \label{fig:remnant2}
    \end{figure}

Let $D$ be any disk in $\calD$ and, as in Case 1, let $\calD_- = \calD - D$.  Inductively, it suffices to show that $\calD_-$ still satisfies the hypotheses of the Proposition, namely that every remnant of $C$ in $\calC_{\calD_-}$ is a disky handlebody.  If $D$ is not incident to $\bdd C$ then removing $D$ from $\calD$ has no effect on the remnants of $C$ in $\hat{\calC}_{\calD_-}$, so the remnants are the same as those in $\hat{\calC}_{\calD}$.  By Rule \ref{rule:persist},   the remnants of $C$ in $\calC_{\calD_-}$, are then also the same as those in $\calC_{\calD}$, namely a union of disky handlebodies. 
Thus in this case $\calD_-$ satisfies the hypothesis of Proposition \ref{prop:remnant} as required.  

What remains is the case that $D$ is incident to $\bdd C$.   See right side of Figure \ref{fig:remnant2}. Since there are no internal scars on remnants of $C$, the two scars in $\hat{\calC}_{\calD}$ left by $D$ are external scars, so $D$ lies in $C$.  Thus $\hat{\calC}_{\calD_-}$ is obtained from $\hat{\calC}_{\calD}$ by simply attaching a $1$-handle dual to $D$ to a component, or between two components, of $\hat{\calC}_{\calD}$.  Since each such component is a disky handlebody, the result of adding the $1$-handle is also a handlebody, and it is disky because its interior is still disjoint from $F$.  Hence $\hat{\calC}_{\calD_-}$ is a union of disky handlebodies.  $\calC_{\calD_-}$ is obtained from $\hat{\calC}_{\calD_-}$ by removing some ball components (the goneballs).  Hence $\calC_{\calD_-}$ is also a union of disky handlebodies, so it satisfies the hypothesis of Proposition \ref{prop:remnant} as required.
\end{proof}

\begin{defin}  \label{defin:tiny1} A chamber complex $\calC$ in $M$, with defining surface $F = F(\calC)$, is {\em tiny} \index{Tiny chamber complex} if either
\begin{itemize}
\item $F = \emptyset$ or
\item there is a chamber $C$ of $\calC$ so that $M - C$ consists of handlebody chambers.  These chambers are called the {\em designated handlebody chambers}. \index{Designated handlebody chamber}
\end{itemize}
\end{defin}


A Heegaard surface in which at least one complementary component is a handlebody is a familiar example of a defining surface of a tiny chamber complex.  On the other hand, a weak reduction of a Heegaard surface will yield a chamber complex that is not tiny.  That fact (see Proposition \ref{prop:weaknottiny}) will be the entryway to our study of Heegaard splittings below.  The word ``tiny" is used because the tree dual to a chamber complex is particularly small for tiny chamber complexes: at its most complicated, it is a star graph with central vertex corresponding to the chamber $C$. 

\bigskip

\begin{prop}[Tinyness pulls back]  \label{prop:tiny1} Suppose $\calC \xrightarrow{\calD} \calC_{\calD}$ is a chamber complex decomposition in which 
the chamber complex $\calC_{ \calD}$ is tiny.  Suppose further, if the defining surface of $\calC_D$ is not empty,  that each of the handlebody chambers named in Definition \ref{defin:tiny1} is disky.  Then $\calC$ is tiny.
\end{prop}

\begin{proof}  We first consider the case in which the defining surface of $\calC_D$ is not empty.  That is, per Definition \ref{defin:tiny1}, there is a single chamber $C'$ of $\calC_{ \calD}$ so that $M - C'$ consists of a union $W'$ of disky handlebody chambers.  
Without loss, assume $C'$ is a $B$-chamber, so each component of $W'$ is an $A$-chamber adjacent to $C'$ in $\calC_{\calD}$.  See Figure \ref{fig:tiny1}.

Since each component of $W'$ is a disky handlebody, any disk $D \in \calD$ that lies in $\inter(W')$ lies on a disk in $F$ and so has inessential boundary on $F$; let $D$ be one whose boundary is innermost on $F$.  Surgery on $D$ creates a ball in $\inter(W')$, which must be a goneball since $W'$ is irreducible.  The effect then of surgery on $D$ followed by declaring the ball gone is merely to isotope $F \cap \inter(W')$; it has no effect on either the hypothesis of the proposition or the conclusion.  Hence we may inductively assume that $\calD \cap \inter(W') = \emptyset$, so $W'$ has only external scars.

Let $B_{\calC}$ denote the collection of $B$-chambers in $\calC$.  $C'$ is a $B$-chamber in $\calC_{\calD}$, in fact the only $B$-chamber in $\calC_{\calD}$. Recall that it is obtained from $B_{\calC}$ in three steps: 
\begin{itemize}
\item $B_{\calC}$ is cut along those disks in $\calD$ that lie in $B_{\calC}$.
\item Two-handles are added along those disks in $\calD$ that lie in the $A$-chambers
\item Goneballs are eliminated.
\end{itemize}

Towards understanding the third step, eliminating goneballs, notice that no goneball can lie in $\inter(W')$ since $W'$ has no disks of $\calD$ in its interior and, in particular, only external scars.  Suppose then that $U$ is a maximal goneball in the $B$-chamber $C'$.  Since $U$ is maximal and lies in the $B$-chamber $C'$, the chamber of $\hat{\calC}_{\calD}$ immediately inside $U$ is an $A$-chamber. Rule \ref{rule:disky1} says that $U$ is disky.  Applying the same argument as just applied to $W'$, we may as well then assume that the only scars on $\bdd U$ are external scars, so $U$ itself is an $A$-chamber in $\hat{\calC}_{\calD}$.  With this simplification, so that the only goneballs of the decomposition are $A$-chambers, none of the three steps in the process above decreases the number of $B$-chambers (though the first step may increase it).  Since, by hypothesis, there is only one $B$-chamber in $\calC_{\calD}$ we conclude that $B_{\calC}$ is also a single chamber in $\calC$.   

\begin{figure}[ht!]
\labellist
\small\hair 2pt

\pinlabel  $\calD$ at 280 70
\pinlabel  $B_{\calC}$ at 40 40
\pinlabel  $C'$ at 360 40
\pinlabel  $C$ at 120 120
\pinlabel  $\calC$ at 120 -10
\pinlabel  $\calC_{\calD}$ at 430 -10
\pinlabel  $W'$ at 430 48
\pinlabel  $W'$ at 430 123
\pinlabel  $W'$ at 340 123
\pinlabel  $W'$ at 520 123
\endlabellist
    \centering
    \includegraphics[scale=0.7]{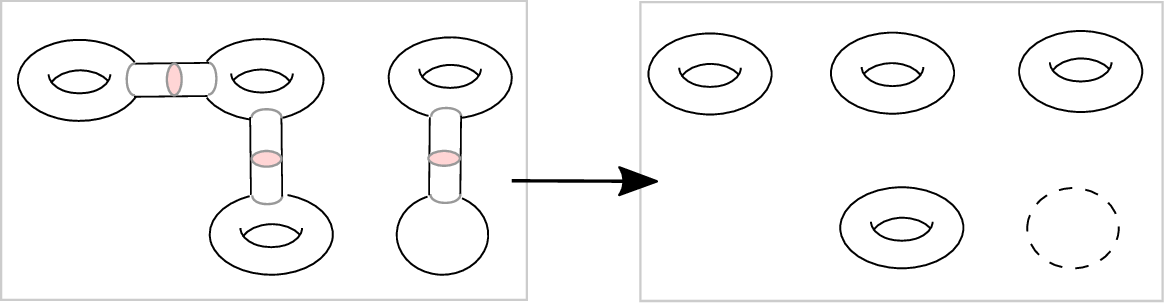}
     \caption{Tininess when the defining surface of $\calC_D$ is not empty} \label{fig:tiny1}
    \end{figure}


Consider any  chamber $C$ of $\calC$ other than $B_{\calC}$. $C$ is then an $A$-chamber in $\calC$ and so has the property that each of its remnants in $\calC_{\calD}$ is in $W'$, so each is a disky handlebody.  It then follows from Proposition \ref{prop:remnant} that $C$ itself is a handlebody.  Hence $\calC$ consists of a union of handlebodies (the $A$-chambers) whose complement $B_{\calC}$ is a single $B$-chamber.  That is, $\calC$ is tiny.
\medskip

The other possibility from Definition \ref{defin:tiny1} is that $\calC_{\calD}$ is tiny because the defining surface for $\calC_{\calD}$ is empty.  This means that in $\calC_{\calD}$, $M$ is entirely a $B$-chamber, say.  This implies that either $F$ was empty (completing the argument) or surgery on $\calD$ resulted only in spheres bounding goneballs.    In that case, the argument is basically the same: Rule \ref{rule:disky1} says that each goneball is disky.  Then, examining maximal goneballs as above we can assume without loss that each goneball is an $A$-chamber with only external scars. 
This implies that every $A$-chamber in $\calC$ can be recovered from a collection of balls (the maximal goneballs) by attaching $1$-handles, dual to the disks that leave external scars.  The result is visibly a collection of handlebodies.   So again $\calC$ is tiny.  \end{proof}

\section{Heegaard splittings and chamber complexes} \label{sect:Heegaard1}

\subsection{Review of weak reduction and amalgamation} Chamber complexes are naturally relevant to weakly reducible Heegaard splittings, as we now describe.   This first subsection is essentially a review of relevant known results; see, for example, \cite[Section 3]{La} to which we refer for relevant notation.  


Suppose, in a Heegaard splitting $M = A \cup_T B$ of a compact 3-manifold $M$, $\calA, \calB$ are disjoint families of 
disks, properly embedded in $A, B$ respectively, with $\bdd \calA, \bdd \calB \subset T$ and at least one member of each disk family an essential disk.  Such a pair of families is called a {\em weakly reducing} disk family, \index{weakly reducing disk family} a notion with a long and important history in the study of Heegaard splittings (cf \cite{CG}).  Surger $T$ along $\calA \cup \calB$ and call the resulting surface $\hat{F}$.  

$\hat{F}$ defines a chamber complex $\hat{\calC}$ in $M$ as follows.  One set of chambers, called the $A$-chambers, 
are derived from $A$ in two stages: first bicollars of disks in $\calA$ are removed (so these disks lie {\em outside} the resulting $A$-chambers) and then bicollar neighborhoods of $\calB$ are attached (so these are disks that lie {\em inside} the resulting $A$-chambers).  The remaining chambers, the $B$-chambers are derived from $B$ in the symmetric fashion.  The result is the perhaps counter-intuitive fact that after the surgery, each $A$-chamber may contain some disks from $\calB$, but none from $\calA$, and symmetrically for the $B$-chambers.  In any pair of adjacent chambers, one is an $A$-chamber and the other is a $B$-chamber, since the surface between them is (except for some disks) a submanifold of $T$.  

Each $A$-chamber $C$ (and symmetrically for $B$-chambers) inherits a natural Heegaard splitting $C = A_C \cup_{T_C} B_C$.  See Figure \ref{fig:TtoTC}.  This can be demonstrated by considering the two-stage construction just described, see Figure \ref{fig:TtoTC} :  In the first stage, (NE to SW in Figure \ref{fig:TtoTC}) the surface $T$ is compressed in $A$ along $\calA$ and a component $C'$ is chosen.  Since $A$ was a compression body, so is $C'$.  Let $T_C \subset C'$ be the end of a collar of $\bdd_+ C'$ in $C'$, so $T_C \subset \inter(C')$ (shown in red in the figure).  $T_C$ is a trivial Heegaard surface for $C'$, dividing it into the collar of $\bdd_+ C'$ and a copy $A_C$ of $C'$ lying entirely in $\inter(C')$.  In the second stage of the construction of $C$ (bottom row of Figure \ref{fig:TtoTC}) $2$-handles lying in $\calB$ are attached to $C'$ along $\bdd_+ C'$, turning the collar of $\bdd_+ C'$ into a compression body $B_C$.  Thus $T_C$ remains a Heegaard surface for $C$.  (Note that, as in the outermost surface in the final panel of Figure \ref{fig:TtoTC}, $C$ may have spherical boundary components, so $A_C$ and $B_C$ would classically be described as punctured compression bodies, as discussed on \cite{Sc1} and \cite{FS2}.). The new boundary components of $C$, that is $\bdd C - \bdd M = \bdd C - \bdd_- A$ are, except for the scars of the surgery, subsurfaces of $T$.  

\begin{figure}[ht!]
\labellist
\small\hair 2pt
\pinlabel  $\bdd M$ at 120 380
\pinlabel  $A$ at 120 410
\pinlabel  $T$ at 12 400
\pinlabel  $B$ at 220 410
\pinlabel  $\in\calA$ at 375 350
\pinlabel  $\in\calB$ at 450 340
\pinlabel  $C'$ at 220 10
\pinlabel  $T_C$ at 123 160
\pinlabel  $\bdd_+C'$ at 220 190
\pinlabel  $C$ at 510 10
\pinlabel  $B_C$ at 420 13
\pinlabel  $\rm{scar}$ at 350 40
\endlabellist
    \centering
    \includegraphics[scale=0.6]{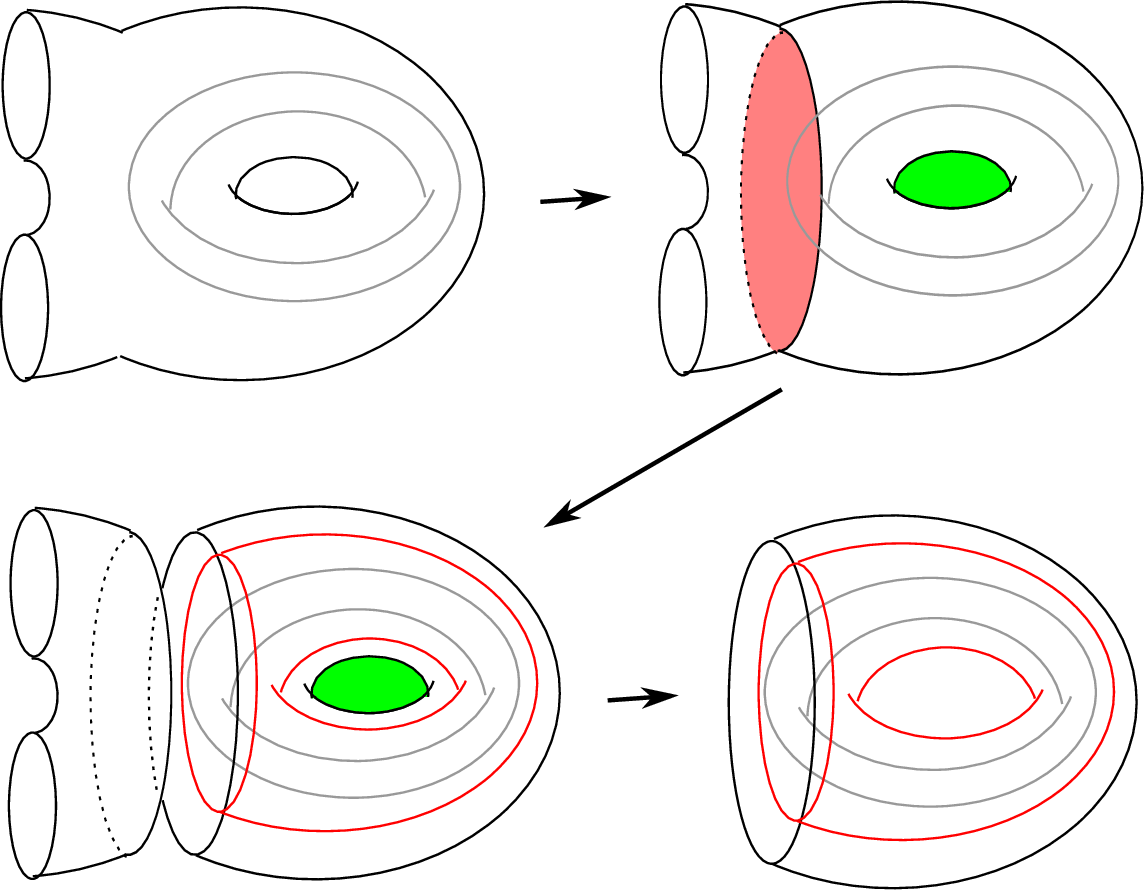}
     \caption{Inherited Heegaard splittings in disk decomposition} \label{fig:TtoTC}
    \end{figure}

Note that in this construction, the result is technically not a Heegaard split chamber complex as defined before Lemma \ref{lemma:Sreduces} because that definition requires that in each $A$-chamber $C$, $A_C$ is a handlebody, whereas in the construction above $\bdd_- A_C$ may contain components of $\bdd_-A$.  We will be mostly concerned with a special form of Heegaard splitting in which this is not an issue:

\begin{defin} \label{defin:pure}  A Heegaard splitting $M = A \cup_T B$ is called a {\em pure} \index{Pure Heegaard splitting} Heegaard splitting if all the components of $\bdd M$ lie in $\bdd B$ (or all in $\bdd A$).  Put another way, it is pure if either $A$ or $B$ is a handlebody.
\end{defin}

Obviously if $\bdd M$ has at most one boundary component then the splitting is pure.  Moreover,

\begin{prop} \label{prop:weak=pure} Suppose $\calC$ is the chamber complex resulting from weak reduction on a Heegaard splitting of a closed $3$-manifold.   Then the Heegaard splitting of each chamber in $\calC$ is pure, so the result of the weak reduction is a Heegaard split chamber complex.  
\end{prop}

\begin{proof} 
Since $M = A \cup_T B$ is closed, both $A$ and $B$ are handlebodies. 
Suppose $C$ is an $A$-chamber and consider the construction of its Heegaard structure above.  (In Figure \ref{fig:TtoTC} take $\bdd M = \emptyset$.)
 In the first stage, $A$ is cut up by $\calA$ into handlebodies and a component $C'$ of the result is chosen.  Next $T_C$ is defined as the end of a collar of $\bdd C'$ in $C'$.  $A_C$ is the complement of the collar, so in particular the surface $\bdd C'$ is entirely disjoint from the handlebody $A_C$.  That doesn't change when $B_C$ is created by adding 2-handles to $\bdd C'$ to create $\bdd C$.  Thus $\bdd C$ lies entirely in $\bdd_- B_C$, as required.
\end{proof}
\medskip

There is a natural construction that is inverse to weak reduction:  Suppose we are given a chamber complex $\hat{\calC}$ with defining surface $\hat{F}$ in which we are able to alternately label the chambers $A$-chambers and $B$-chambers, for example when each component of $\hat{F}$ is separating in $M$.  Pick a pure Heegaard splitting $C = A_C \cup_{T_C} B_C$ for each chamber $C$.
These Heegaard splittings induce a Heegaard splitting $M = A \cup_T B$ by amalgamating the Heegaard splittings along $\hat{F}$, and the amalgamated Heegaard splitting is unique up to isotopy, cf \cite[Proposition 3.1]{La}.  

As described in \cite{La}, 
amalgamation in this setting proceeds as follows: for each $A$-chamber $C = A_C \cup_{T_C}B_C$ choose a complete set of meridian disks $\calB_C$ for the compression body $B_C$.  That is, if $B_C$ is cut along $\calB_C$ the result is a collar of $\bdd_- B_C = \bdd C - \bdd M$.  The disks $\calB_C$ define a natural spine $\gamma_C$ for $B_C \subset C$, namely the union of the surface $\bdd C - \bdd M$ and a collection of arcs  in the interior of $C$, specifically the arc cocores of the $2$-handle neighborhoods of $\calB_C$.  The arcs are then extended down through the collar using its product structure.  Do the dual construction in each $B$-chamber.  Then $A$ can be recovered from $\hat{\calC}$ by deleting from each $A$-chamber $C$ a neighborhood of the graph $\gamma_C$ and attaching a neighborhood of the graph constructed in each $B$-chamber.  (And, as usual, symmetrically for $B$.)  

An important point for our purposes is that it is possible to choose a {\em different} spine $\gamma'_C$ for $B_C$  in each $A$-chamber $C$, by originally making a different choice $\calB'_C$ of meridian disks.  This will result in a different set of arcs and so a different set of of $1$-handles 
but will not change the recovered Heegaard splitting (and dually for $B$-chambers).  As argued more formally in \cite{La} one can get from the arcs of $\gamma_C$ to those of $\gamma'_C$ by edge slides, which correspond to handle-slides in $B$ that replace $\calB_C$ with $\calB'_C$ (and dually for $B$-chambers).  

{\em Two cautionary notes on pure Heegaard splittings}: 

a). The result of amalgamating pure Heegaard splittings may not be pure.  For example, suppose $F$ is a closed surface and $F \times I$ is given a pure Heegaard splitting.  Take two copies of this pure Heegaard split $F \times I$ and attach them along one end of each to get a combined copy of $F \times I$.  The Heegaard splitting of $F \times I$ obtained by amalgamating the two splittings will not be pure.  See top panel of Figure \ref{fig:caution}.

\begin{figure}[ht!]
\labellist
\small\hair 2pt
\pinlabel  $A$ at 80 208
\pinlabel  $B$ at 80 190
\pinlabel  $A$ at 80 245
\pinlabel  $B$ at 80 265
\pinlabel  $A$ at 250 240
\pinlabel  $B$ at 250 190

\endlabellist
    \centering
    \includegraphics[scale=0.9]{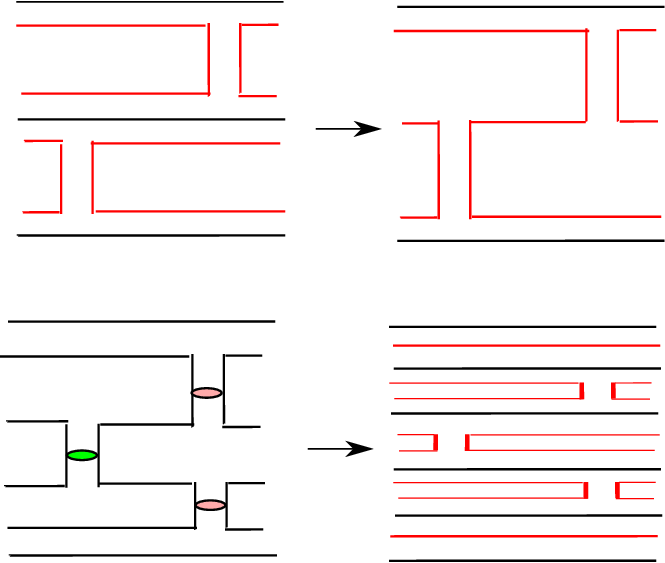}
     \caption{Amalgamation and weak reduction may destroy purity} \label{fig:caution}
    \end{figure}

b) Weak reduction, even of a pure Heegaard splitting, may not result in pure Heegaard splittings in every chamber.  For example, in $M = F \times I$ consider the following natural splitting: begin with four horizontal copies of $F$ in the interior of $M$ labeled in order $F_1, ..., F_4$, breaking $M$ into 5 copies of $F \times I$, which we will call collars and which we alternately label $A$ and $B$, so that both the top and bottom collar are labeled $A$. In each of the three collars between $F_i$ and $F_{i+1}, i = 1, 2, 3$ tube $F_i$ to $F_{i+1}$ by a vertical tube.  The result is a pure Heegaard splitting $M = A \cup_T B$ of genus $4 \cdot \genus(F)$ in which each collar labeled $A$ becomes part of $A$, and symmetrically.  Now weakly reduce along the meridians of the 3 tubes,  two of them disks in $A$ and one of them in $B$, creating a chamber complex $\calC$ with 5 chambers, alternately $A$-chambers and $B$-chambers and each a copy of $F \times I$.  The middle 3 chambers all have pure splittings, but the chambers incident to $\bdd_- M$ are each copies of $F \times I$ split into two copies by a single copy of $F$, and so are not pure.  See bottom panel of Figure \ref{fig:caution}.

%
\bigskip 

Here are some useful preliminary results.  We use the standard convention that the genus of a surface is the sum of the genera over all components.  

\begin{lemma}  \label{lemma:genusFvsT} Suppose $M = A \cup_T B$ is a pure Heegaard splitting of a $3$-manifold $M$.  
Then for any collection of components $F \subset \bdd M$
\begin{itemize}
\item $\genus(T) \geq \genus(F)$.
\item  $\genus(T) = \genus(F)$ only if each component of $\bdd M - F$ is a sphere.  
\end{itemize}
\end{lemma}

\begin{proof}  With no loss assume $\bdd M = \bdd_- B$.  
A spine of $B$ consists of the union of $\bdd_- B = \bdd M$ and a graph $\gamma$ that intersects $\bdd_- B$ exactly in some valence 1 vertices.  Since $B$ is connected there has to be at least one such end vertex in each component of $\bdd_- B$ and, after edge slides, we can assume there is exactly one in each, and that $\gamma$ is connected. Recall that for $\eta$ a regular neighborhood  of a graph $\gamma$ in a 3-manifold, $\chi(\bdd\eta) = 2\chi(\gamma)$.   $T$ is the boundary of a regular neighborhood of the spine, so it follows immediately from this construction that if $\bdd M = \bdd_-B$ has $n$ boundary components
\begin{eqnarray*}
\chi(T) = (\chi(\bdd_- B) - n) + (2\chi(\gamma) - n) & = & (\chi(\bdd_- B) - 2n) + 2\chi(\gamma) \\ & = &  -2\genus(\bdd_- B) +  2\chi(\gamma) . 
\end{eqnarray*}
Hence \[2\genus(T) = 2 - \chi(T) = 2 + 2\genus(\bdd_- B) - 2\chi(\gamma) \geq 2\genus(\bdd_- B)\] (with equality if and only if $\chi(\gamma) = 1$, i. e. $\gamma$ is a tree).  Hence
\[\genus(T) \geq \genus(\bdd_- B) = \genus(\bdd M) = 
\genus(F) + \genus(\bdd M - F)\] 
so
$\genus(T) \geq \genus(F) $ with equality only if $\genus(\bdd M - F) = 0,$ that is $\bdd M - F$ consists of spheres.  
\end{proof}

\begin{lemma} \label{lemma:chiamalg} Suppose a closed surface $F$ divides a 3-manifold $M$ into two components $C_1$ and $C_2$, and each is given a Heegaard splitting $C_i = A_i \cup_{T_i} B_i$.  Let $T$ be the Heegaard surface for  the splitting of $M$ obtained by amalgamating the two splittings.  Then \[ \chi(T) = \chi(T_1) + \chi(T_2) - \chi(F).\]  In particular, if $F$ is connected, then 
\[\genus(T) = \genus(T_1) + \genus(T_2) - \genus(F)\]
\end{lemma}

\begin{proof}  With no loss let $A_1$ (resp $B_2$) be the compression body in the splitting by $T_1$ (resp $T_2$) that is incident to $F$.  In amalgamation, the construction of the splitting surface $T$ of the amalgamated splitting replaces a $p$-punctured copy of $F$ in $T_1$ and a $q$-punctured copy of $F$ in $T_2$ with a single $(p +q)$-punctured copy of $F$.  The $p$ punctures correspond to scars parallel to $F$ of the chosen set of meridian disks in $A_1$, after $F_1$ is compressed along these disks.  A similar and symmetric statement is true for the $q$ punctures.  See \cite[Figure 12]{La}.

Computing Euler characteristics from this description we have 
\[ \chi(T) = \chi(T_1) + \chi(T_2) - (\chi(F) + p) - (\chi(F) + q) + (\chi(F) + p + q)\] or, more succinctly
\[ \chi(T) = \chi(T_1) + \chi(T_2) - \chi(F).\]  Then, if $F$ is connected, \[\genus(T)  + \genus(F) = \genus(T_1) + \genus(T_2)\] follows since all surfaces are connected.
\end{proof}

%

\begin{prop} \label{prop:genusamalg}  Suppose $M$ is 
a $3$-manifold with connected boundary in which each closed surface separates.  Suppose $\calC$ is a chamber complex in $M$ with defining surface $F$ and each chamber in $\calC$ has a pure Heegaard splitting.  Let $C = A_C \cup_{T_C} \cup B_C$ be the chamber whose boundary contains $\bdd M$.  Suppose $M = A \cup_T B$ is the Heegaard splitting of $M$ obtained by amalgamating the Heegaard splittings of all the chambers.  Then 
\begin{enumerate}[label=\alph*)]
\item $\genus(T_C) \leq \genus(T)$
\item If $\genus(T_C) = \genus(T)$ then $T_C$ is isotopic to $T$ and 
\item If $\genus(T) = 
\genus(\bdd M)$ then $M = A \cup_T B$ is a trivially split handlebody, and $F$ consists entirely of spheres bounding balls in $M$.  Moreover, amalgamating the Heegaard splittings in each ball bounded by a component of $F$ gives a trivial splitting of the ball. 
\end{enumerate}
\end{prop}

\begin{proof} 
With no loss, assume $\bdd M \subset \bdd_- B_C$.   

It will be useful to consider the natural graph $\Gamma$ associated to the chamber complex $\calC$, in which each vertex corresponds to a chamber in $\calC$ and two such vertices are connected by an edge if they are adjacent, that is there is a component of the defining surface $F = F(\calC)$ that is incident to both.  Since every closed surface in $M$ separates, $\Gamma$ can contain no cycles, so it is a tree; we can take the vertex $v_C$ corresponding to $C$ as the root of the tree.
\medskip

{\em Case 1:} $M$ is a single chamber, so $A$ is a handlebody and $\Gamma$ is a single vertex.

In this case $T_C$ is $T$, so a) and b) are automatic.  For c), assume $\genus(T) = \genus(\bdd M)$.  Then no $2$-handles could be attached in the construction of $B_C$ from $\bdd_+ B_C = T$, for if any attachment circle were non-separating then $\genus(T) > \genus(\bdd M)$ and if any were separating on $T$ then each component of the result would be the Heegaard surface for a separate chamber, contradicting the assumption that $M$ has a single chamber.  We conclude that $B_C$ is a collar of $\bdd M$.  Since $A_C$ is a handlebody, so then is $M$.
\medskip

{\em Case 2:} $M$ has two chambers, $C \supset \bdd M$ and $C'$. 

Since each component of $F$ separates, $F = C \cap C'$ is connected.  By Lemma \ref{lemma:chiamalg}
\[\genus(T) = \genus(T_C) + \genus(T_{C'}) - \genus(F).\] 
By Lemma \ref{lemma:genusFvsT} $\genus(T_{C'}) - \genus(F) \geq 0$, so $\genus(T) \geq \genus(T_C)$, establishing a).  Moreover, if $\genus(T) = \genus(T_C)$ then $\genus(T_{C'}) = \genus(F)$ and, by Case 1 applied to $C'$, $C'$ is a trivially split handlebody.  But if $C'$ is a trivially split handlebody then amalgamating it with $T_C$ leaves $T_C$ unchanged, so $T$ is isotopic to $T_C$, establishing b) in this case.  

For c) observe that we so far have the inequalities 
\[\genus(T) \geq \genus(T_C) \geq \genus(\bdd M).\] 
Hence if $\genus(T) = \genus(\bdd M)$ then both inequalities are equalities.  The first equality, by the second claim, shows $T_C$ is isotopic to $T$.  The second equality, by Lemma \ref{lemma:genusFvsT} (applied to $C$ and reversing the roles of $F$ and $\bdd M$) says $F$ is a sphere, and we have already established that $C'$ is a trivially split handlebody.  Thus $C'$ is a trivially split ball, as claimed.  Moreover, Case 1 applied to the amalgamated splitting of $M$ shows $M$ is a trivially split handlebody, completing the proof of c) in this case.

\medskip

{\em Case 3:}  Every chamber in $M$ other than $C$ is adjacent to the chamber $C$.  That is, $\Gamma$ is a star graph: a collection of edges each of which has a single end incident to $v_C$.  

The proof is by induction on the number $n$ of chambers not $C$.  Cases 1 and 2 above apply when $n = 0, 1$, so we can assume $n \geq 2$, that is there are at least 2 chambers other than $C$.  Let $C_1, C_2$ be two of them, separated from $C$ by surfaces $F_1, F_2$ respectively.  The chamber complex $\calC_1$ obtained from $\calC$ by amalgamating along $F_1$ satisfies the hypotheses of Case 3, but with $n$ reduced.  So we inductively assume that the Proposition is true for $\calC_1$.  But further amalgamation of all of $\calC_1$ gives Heegaard surface $T$ and does not change $\bdd M$ so conclusion c) applies: If $\genus(T) = \genus(\bdd M)$ then $M$ is a trivially split handlebody and each  $C_i, i \neq 1$ is a trivial split ball.  Repeating the argument for the chamber complex $\calC_2$ obtained from $\calC$ by amalgamating along $F_2$ shows that $\calC_1$ is also a trivially split ball.  Thus c) is true in this case.

To prove a) observe that the inductive hypothesis applied to $\calC_1$ shows that the Heegaard surface $T_1$ of the chamber of $\calC_1$ containing $\bdd M$ has $\genus(T_1) \leq \genus(T)$.  But that chamber is obtained from $C$ and $C_1$ by amalgamation along $F_1$ so by Case 2) $\genus(T_C) \leq \genus(T_1)$.  Hence $\genus(T_C) \leq \genus(T)$ as required to prove a).  

Finally, if $\genus(T_C) = \genus(T)$ then we have just shown $\genus(T_1) = \genus(T)$ and $\genus(T_C) = \genus(T_1)$.  The first equality shows, by inductive hypothesis, that $T_1$ and $T$ are isotopic; the second shows, by Case 2), that $T_C$ is isotopic to $T_1$.  Thus $T_C$ and $T$ are isotopic, proving b) in this case.

\medskip

{\em Case 4:} There is an edge in $\Gamma$ that is not incident to $v_C$.  

There is then such an edge 
so that deleting the edge from $\Gamma$ leaves two components: one containing $v_C$ and the other a star graph, as in Case 3.  (For example, in a path from $v_C$ to a most distant vertex in $\Gamma$ choose the second to last edge traversed.). The component $F'$ of $F$ corresponding to that edge then bounds a submanifold $N \subset M$ containing a chamber complex $\calC_N$ that satisfies the hypothesis of Case 3. That is, the chamber adjacent to $F'$ in $N$ is adjacent to every other chamber in $\calC_N$.   
Let $T_N$ by the Heegaard surface obtained by amalgamating the chambers of $N$.  The resulting splitting of $N$ is pure, since $F' = \bdd N$ is connected.  

We induct on the number of chambers in $\calC$.  Let $\calC'$ be the chamber complex in $M$ obtained by amalgamating the chambers of $N$. The inductive hypothesis applies then to $\calC'$ and, since $F' \neq \bdd M$, the chamber $C$ is unchanged by the amalgamation within $N$.  Moreover, the result of amalgamating the Heegaard splittings of all the chambers in $\calC'$ is the same as amalgamating all those in $\calC$, so by inductive hypothesis $\genus(T) \geq \genus(T_C)$ and equality implies that $T$ is isotopic to $T_C$.  This verifies a) and b).

For c), suppose $\genus(T) = \genus(\bdd M)$.  Again applying the inductive hypothesis to $\calC'$ we deduce that $M = A \cup_T B$  is a trivially split handlebody, and all components of $F$, other than those in the interior of $N$, are spheres bounding balls in which the amalgamated Heegaard splitting is trivial.  In particular, $N$ is a ball trivially split by $T_N$.  Finally, apply Case 3 to $\calC_N$: all components of $F$ in the interior of $N$ are also spheres bounding trivially split balls.
\end{proof}

\subsection{From weak reduction to chamber complex decomposition}

There is a lot of inefficiency in the chamber complex $\hat{\calC}$ obtained by surgery on the weakly reducing family $\calA, \calB$.  For example, if two disks in $\calA$ are parallel, then surgery as above turns the collar between them into its own $A$-chamber, one that is simply a ball with trivial Heegaard splitting.  This problem is easily addressed by viewing surgery on $\calA, \calB$ as the first step in a disk-decomposition of the chamber complex $A \cup_T B$ along $\calA, \calB$, and adopting the following rule:

\begin{rrule} \label{rule:diskyHeeg} Following the weak reduction of $A \cup_T B$ along $\calA, \calB$, let $W$ be a ball in $M$ bounded by a component of $\hat{F}$.  
\begin{itemize}
\item Declare $W$ to be a goneball if and only if the Heegaard splitting of $W$ obtained by amalgamating the Heegaard splittings of all the chambers of $\hat{\calC}$ in $\inter(W)$ gives a trivial Heegaard splitting of $W$.   
\item For each goneball $G$, amalgamate the Heegaard splittings of all the chambers of $\hat{\calC}$ in $G$ (including amalgamation along $\bdd G$).  
\end{itemize}
\end{rrule}



Denote the resulting chamber complex $\calC$ (circumflex removed) and say that it is obtained from the splitting $A \cup_T B$ by {\em weak reduction} \index{Weak reduction} along $\calA \cup \calB$.   In Proposition \ref{prop:Heegdecomp} below we show that Rules \ref{rule:disky1} and \ref{rule:diskyHeeg} do not conflict, so weak reduction can be viewed as disk decomposition of the chamber complex determined by $T$. Each chamber in $\calC$ is Heegaard split, and furthermore no chamber in $\calC$ is a trivially split ball.   

When a goneball $G$ is absorbed into the adjacent $A$-chamber $C$ (or symmetrically), the effect on the spine $\gamma_C$ of $B_C$ is to replace a collar of $\bdd G$ in $C$ by a single vertex whose neighborhood is the ball $G$.  As a result, each $A$ chamber $C$ in $\calC$ continues to inherit a natural spine $\gamma_C$ for $B_C$, but instead of just a collection of arcs attached to $\bdd C - \bdd M$ the spine is a graph attached to $\bdd C - \bdd M$, containing vertices as well as arcs.  As before, slides of edges in the graph  over other edges and over $\bdd C - \bdd M$ can be used to replace one spine by another and so a chamber-derived alteration of the disks $\calB$.

%

\begin{prop}  \label{prop:triv=disky} Following the weak reduction of $A \cup_T B$ along $\calA, \calB$, let $W$ be a handlebody in $M$ bounded by a component of $\hat{F}$.  Suppose the Heegaard splitting $W = A_W \cup_{T_W} B_W$ obtained by amalgamating the splittings of all the chambers in $\inter(W)$ is trivial.  Then 
\begin{itemize}
\item $T \cap \inter(W)$ consists only of disks and
\item If $W$ is not a ball after the absorption of the goneballs given by Rule \ref{rule:diskyHeeg}, then it becomes a single chamber in $\calC$.  If $W$ is a ball, it becomes a goneball in $\calC$.  
\end{itemize}
\end{prop}

\begin{proof}   With no loss assume that the chamber in $W$ adjacent to $\bdd W$ is an $A$-chamber, so $\bdd W = \bdd_- B_W$.  Since weak reduction is inverse to amalgamation, it is easy to derive this natural description of the splitting surface $T_W$:  Delete from $\bdd W$ the $p$ disks corresponding to internal scars and call the resulting surface $\bdd W_-$.  Then attach to $\bdd W_-$ the surface $T \cap \inter(W)$, which has $p$ boundary components, each corresponding to an internal scar in $\bdd W$.  Then push the resulting surface into $\inter(W)$ via a collar of $\bdd W$.  

The compression body $B_W$ in $W$ has a natural spine: the union of a graph $\gamma \subset \inter(W)$ with $p$ end-points attached to $\bdd_- B_W = \bdd W$ at the centers of the internal scars.  We then calculate as in the proof of  Lemma \ref{lemma:genusFvsT}:
\[\chi(T_W) = (\chi(\bdd W) - p) + (2\chi(\gamma) - p)  =  \chi(\bdd W) + (2\chi(\gamma) - 2p)\]

By assumption $T_W$ and $\bdd W$ are homeomorphic, and this implies $\chi(\gamma) = p$.  Let $n$ be the number of components of $\gamma$.  Since $T_W$, hence $B_W$, hence the spine is connected, each component of $\gamma$ must contain one of the end points of $\gamma$ on $\bdd W$.  This implies $n \leq p$.  On the other hand, each component of $\gamma$ has Euler characteristic $\leq 1$.  Hence $p = \chi(\gamma) \leq n$.  Together, these inequalities that $p = n$ and each component of $\gamma$ has Euler characteristic $= 1$.  In other words, each component of $\gamma$ is a tree.  This in turn implies that each component of $T \cap \inter(W)$ is a disk, verifying the first assertion of the Proposition.  

The last assertion c) of Proposition \ref{prop:genusamalg} further says that each component of $\hat{F}$ lying in the interior of $W$ bounds a goneball, establishing the second claim.
\end{proof}

\begin{cor} \label{cor:triv=disky} The interior of each goneball, as defined in Rule \ref{rule:diskyHeeg}, intersects $T$ only in disks.   \end{cor}

\begin{proof} Rule \ref{rule:diskyHeeg} says that each goneball has trivial Heegaard splitting, so Proposition \ref{prop:triv=disky} applies. \end{proof}

The discussion above leads to

\begin{defin} \label{defin:hsChamcom} A {\em Heegaard split chamber complex} $\calC$ for a compact manifold $M$ is a chamber complex so that:
\begin{itemize}
\item Each chamber is labeled as either an $A$-chamber or $B$-chamber.
\item Adjacent chambers are labeled $A$ and $B$
\item Each chamber has a pure Heegaard splitting
\item The Heegaard splitting of each ball chamber is non-trivial.
\end{itemize}
\end{defin}
 
Note that the last property guarantees that a Heegaard split chamber complex satisfies the troublesome third requirement of the chamber complexes discussed preceding Lemma \ref{lemma:Sreduces}. 

\begin{prop} \label{prop:Heegdecomp}  Let $M = A \cup_T B$ be a Heegaard split closed manifold and  $\calA, \calB$ be a weakly reducing family of disks for the splitting.  Let $\calC_T$ denote the chamber complex in $M$ defined by $T$.  That is, the chambers are $A$ and $B$.  Then surgery on $\calA \cup \calB$ followed by goneball removal,  as just described, defines a chamber complex decomposition $$\calC_T \xrightarrow{\calA \cup \calB} \calC,$$ and $\calC$ is a Heegaard split chamber complex.
\end{prop}

\begin{proof} Surgery on $\calA \cup \calB$ is the first step in chamber complex decomposition.  The next step is goneball absorbtion, where Rule \ref{rule:disky1} requires that any ball that is declared a goneball must be disky.  That Rule \ref{rule:diskyHeeg} implies this follows immediately from Corollary \ref{cor:triv=disky}.  The construction above describes how each chamber is endowed with a pure Heegaard splitting. 

 It remains to show that the Heegaard splitting of each ball chamber in $\calC$ is non-trivial.  Suppose $S$ is a sphere in $\hat{F}$ that bounds a ball chamber  $C$ in $\calC$.  Since the ball contains no other chambers, the only components of $\hat{F}$ that lie in the interior of $C$ must be goneballs.  Rule \ref{rule:diskyHeeg} says that these goneballs are absorbed, so their boundary spheres are removed, by amalgamation of Heegaard splittings.  In particular, the resulting Heegaard splitting of $C$ is obtained by amalgamation of all the chambers in $\inter(C)$.  Since $C$ is not itself a goneball, it follows again from Rule \ref{rule:diskyHeeg} that this Heegaard splitting is non-trivial, as required. 
 \end{proof} 

\subsection{Heegaard split chamber complex decomposition} \label{subsec:diskdecomp}

Suppose $\calC$ is a Heegaard split chamber complex in $M$ and $M = A \cup_T B$ is the Heegaard splitting of $M$ obtained by amalgamation of the Heegaard splittings of the chambers of $\calC$.  Then we say that $\calC$ {\em supports} the splitting $M = A \cup_T B$.  So a weak reduction of $ A \cup_T B$ as above produces a Heegaard split chamber complex that supports it.  (See the proof of Proposition \ref{prop:prepersist} below.)


The process of decomposing chamber complexes, as described in Section \ref{sect:chamberintro}, extends naturally to Heegaard split chamber complexes, as we now describe.  Suppose $\calC$ is a Heegaard split chamber complex with defining surface $F$, and $\calD$ is a 
disk set in $\calC$ with $\bdd \calD \subset F$.  In each $A$-chamber $C = A_C \cup_{T_C}B_C$ (and dually for the $B$-chambers), isotope the splitting surface $T_C$ until it is aligned with the disk set $\calD_C = \calD \cap C$, as described in \cite{Sc1}.  That is, each disk $D \in  \calD_C$ will either
\begin{itemize}
\item  intersect $T_C$ in a single circle, and the complementary component $D \cap B_C$ of the circle in $D$ will be a vertical annulus in $B_C$, or 
\item lie entirely in $B_C$, as $D_1, D_2$ do in Figure \ref{fig:toyexist2}. 
\end{itemize}
($D$ cannot lie in $A_C$ since in an $A$-chamber $A_C$ is disjoint from $F$.)  In either case $B_C$ has a spine $\Sigma$ (illustrated in Figure \ref{fig:toyexist2}) that intersects the disks $\calD_C$ exactly in $\bdd \calD_C$. 
$B_C$ is isotopic to a thin regular neighborhood of the spine $\Sigma$, and after the isotopy every 
disk $D \in  \calD_C$ intersects $T_C$ in a single circle, with the annulus $D \cap B_C$ vertical in $B_C$. 
We say then that $\calD$ is {\em aligned} with $\calC$.  

 \begin{figure}[ht!]
\labellist
\small\hair 2pt
\pinlabel  $D_1$ at 165 85
\pinlabel  $\Sigma$ at 280 80
\pinlabel  $\bdd_-B_C$ at 240 -5
\pinlabel  $D_2$ at 240 65
\pinlabel  $\bdd_+B_C=T_C$ at 240 130
\pinlabel  $\Sigma$ at 130 15

\endlabellist
    \centering
    \includegraphics[scale=0.75]{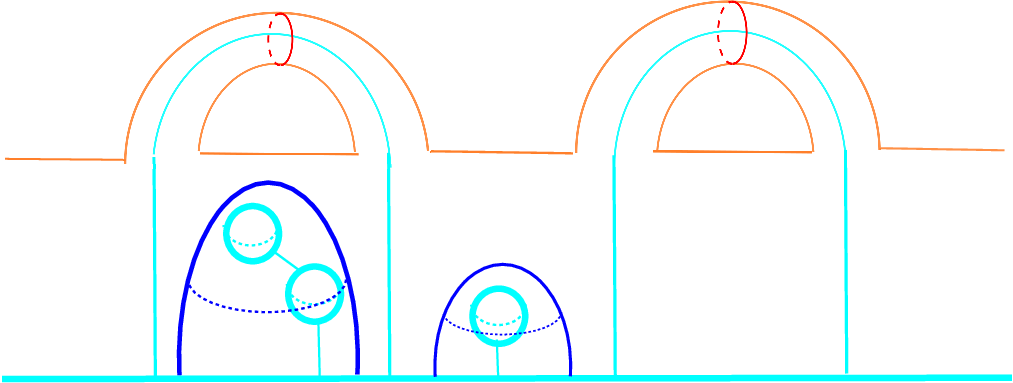}
     \caption{Alignment of disks in $\calD_C$ that lie entirely in $B_C$} \label{fig:toyexist2}
    \end{figure}

The first step in chamber complex decomposition along $\calD$ is to surger $F$ along $\calD$, making the surface $F_{\calD}$.  When the Heegaard splittings of the chambers are aligned with $\calD$ as above, this has two effects on an $A$-chamber $C$: 
\begin{itemize}
\item Surgery on the $B$-disks incident to $\bdd C$, lying in adjacent $B$-chambers, adds $2$-handles to $C$ and $B_C$ along $\bdd_- B_C$; call the results respectively $C_+$ and $B_{C_+}$.  The manifold $B_{C_+}$ is still a compression body so, in particular, $T_C$ remains a Heegaard splitting surface for $C_+$.  Note that a $B$-disk incident to $\bdd C$, lying in an adjacent $B$-chamber and inessential in that chamber may become part of an essential sphere in $C_+$, cutting off a punctured ball from $C_+$
\item Further surgery on the 
disks $\calD_C$ creates a surface $T_{\hat{C}_D}$, each of whose components is a Heegaard surface for one of the chambers $\hat{C}_{D}$ obtained from $C_+$ by the surgery on $\calD_C$.  For essential disks, this observation is familiar in Heegaard theory as $\bdd$-reduction of a Heegaard surface (see \cite{Sc1}). 
Thus $T_{\hat{C}_D}$
 is a collection of Heegaard surfaces for the remnants of $C$ in $\hat{\calC}_{\calD}$.  
\end{itemize}

The result of the operation just described, done simultaneously on all the chambers of $\calC$ defines a pure Heegaard splitting of each chamber in $\hat{\calC}_{\calD}$.  

\begin{defin}   \label{defin:Heegdecomp} Let $\calC_{\calD}$ be the Heegaard split chamber complex in $M$ obtained from  $\hat{\calC}_{\calD}$ by the above process, following Rule \ref{rule:diskyHeeg} to declare goneballs.  Then $\calC_{\calD}$ is obtained from $\calC$ by {\em Heegaard split chamber complex decomposition} and we write $$\calC \xrightarrow{\calD} \calC_{\calD}.$$
\index{Heegaard split chamber complex decomposition}
\end{defin}

This choice of language implies that the process is indeed a chamber complex decomposition, that is that  following Rule \ref{rule:diskyHeeg} to declare goneballs is consistent 
%
%
%
with Rules \ref{rule:disky1} and \ref{rule:persist}.  For example, we need to show that if $G$ is a goneball as declared by Rule \ref{rule:diskyHeeg} then each component of $F \cap G$ is a disk, as was shown for the case $F = T$ in Corollary \ref{cor:triv=disky}.  That indeed both rules \ref{rule:disky1} and \ref{rule:persist} are satisfied will follow from the next two more general propositions, see Corollaries \ref{cor:disky1} and \ref{cor:prepersist}.  

\begin{prop}  \label{prop:Heegdisky2} Let $\calC$ be a Heegaard split chamber complex in a 3-manifold $M$ and $F = F(\calC)$ be its defining surface.  Suppose $\calD$ is a disk set in $\calC$ and $$\calC \xrightarrow{\calD} \calC_{\calD}$$ is a Heegaard split chamber complex decomposition.  Suppose a component of $F_{\calD}$ bounds a handlebody $W$ in $M$. If the Heegaard splitting of $W$ obtained by amalgamating the splittings of all the chambers contained in $W$ is trivial then each component of $F_W = F \cap \inter(W)$ is a disk.
\end{prop}

\begin{proof}  
Denote by $ M = A \cup_T B$ the Heegaard splitting of $M$ obtained by amalgamating all the splittings of $\calC$.  Following the argument in Proposition \ref{prop:triv=disky} we 
know that $T_W = T \cap \inter(W)$ consists of disks.  

We now exploit the fact that when Heegaard splittings of adjacent chambers are amalgamated to the defining surface $F$, 
$F$ can be viewed as a subsurface of $T$ to which disks are attached.  Hence $F_W$ is a subsurface of $T_W$ with possibly disks attached, that is $F_W$ is a planar surface with at most one boundary circle per component.  $F_W$ then consists of properly embedded disks and spheres. 

An only slightly more complicated argument rules out the existence of spheres.  Suppose there were closed components of $F_W$, necessarily all spheres.  In this case, let $F_0$ be an innermost such sphere, so that it bounds a ball chamber $C$ containing no other components of $F_W$.  Since $\calC$ is a Heegaard split chamber complex, $C$ has non-trivial Heegaard splitting, so its Heegaard surface $T_C$ contains a non-separating circle.  But when the Heegaard splittings of the chambers are amalgamated, a punctured copy of $T_C$ persists into $T$, and the non-separating circle can be taken to be disjoint from those punctures.  Again this would contradict the fact that $T_W$ consists of disks.  We deduce that $F_W$ consists entirely of disks.
\end{proof}

\begin{cor} \label{cor:disky1} Let $\calC$ be a Heegaard split chamber complex in $M$, $\calD$ be a 
 disk set in $\calC$ and 
 $\hat{\calC}_{\calD}$ be the chamber complex obtained by the process described before Definition \ref{defin:Heegdecomp}.  Then following Rule \ref{rule:diskyHeeg} to declare goneballs is consistent with Rule \ref{rule:disky1}.  
 \end{cor}
 
 \begin{proof} Suppose $G$ is a ball  in $M$ that is bounded by a sphere in $F_{\calD}$.   If $G$ is a goneball under Rule \ref{rule:diskyHeeg} then the Heegaard splitting obtained by amalgamating all the splittings of chambers in $\hat{\calC}_{\calD}$  that lie in $G$ is trivial.  But by Proposition \ref{prop:Heegdisky2} this implies that $F = F(\calC)$ intersects $\inter(G)$ only in disks, that is $G$ is disky.  Thus $G$ is also a goneball under Rule \ref{rule:disky1}.  
\end{proof}

\begin{prop} \label{prop:prepersist} Suppose $\calC$ is a Heegaard split chamber complex for $M$ that supports the Heegaard splitting $M = A \cup_T B$.  Suppose $\calD$ is an aligned
disk set in $\calC$ .
Let $\hat{\calC}_{\calD}$ be the chamber complex obtained by the process described before Definition \ref{defin:Heegdecomp}. Then amalgamating all the induced Heegaard splittings of the chamber complex $\hat{\calC}_{\calD}$ yields the original Heegaard splitting $M = A \cup_T B$.
\end{prop}

\begin{proof}  We are given that amalgamating all the Heegaard splittings of the chambers in $\calC$ gives $M = A \cup_T B$.  Our assumption is that each disk in $\calD$ is aligned with the splitting of the chamber in which the disk lies.  Suppose $D \in \calD$ lies, without loss, in an $A$-chamber $C = A_C \cup_{T_C} B_C$ and $C'  = A_{C'} \cup_{T_{C'}} B_{C'}$ is the adjacent $B$-chamber on whose boundary $\bdd D$ lies.  The process described before Definition \ref{defin:Heegdecomp} affects a bicollar neighborhood $D \times [-1, 1]$ of $D$ as follows (see Figure \ref{fig:prepersist}):
\begin{itemize}
\item The ball $\inter(D) \times (-1, 1)$ is moved from $\inter(C)$ to $\inter(C')$.
\item The annulus $\bdd D \times [-1, 1]$ is deleted from the spines of $A_{C'}$ and $B_C$.
\item The disks $D \times \{\pm 1\}$ are added to the spines of $A_{C'}$ and $B_C$.
\item The arc $\{0\} \times [-1, 1]$ is added to the spine of $A_{C'}$.
\end{itemize}
On the other hand, amalgamation of the splittings of $\hat{\calC}_{\calD}$ does exactly the opposite near $D \times [-1, 1]$: a tube around $\{0\} \times [-1, 1]$, which we can take to be $\bdd D \times [-1, 1]$, is added to $F_\calD$ after deleting the disks $D \times \{\pm 1\}$, undoing each of the items above.  Extending this observation now to all the disks in $\calD$, the result of amalgamating all the Heegaard splittings after the process described before Definition \ref{defin:Heegdecomp} is the same as amalgamating before the process, namely $M = A \cup_T B$.
\end{proof}

 \begin{figure}[ht!]
\labellist
\small\hair 2pt
\pinlabel  $D$ at 90 110
\pinlabel  $C$ at 60 140
\pinlabel  $C'$ at 60 170
\pinlabel  $C'$ at 400 170
\pinlabel  $C$ at 310 170
\pinlabel  $\subset {\rm spine}(A_{C'})$ at 410 100
\endlabellist
    \centering
    \includegraphics[scale=0.75]{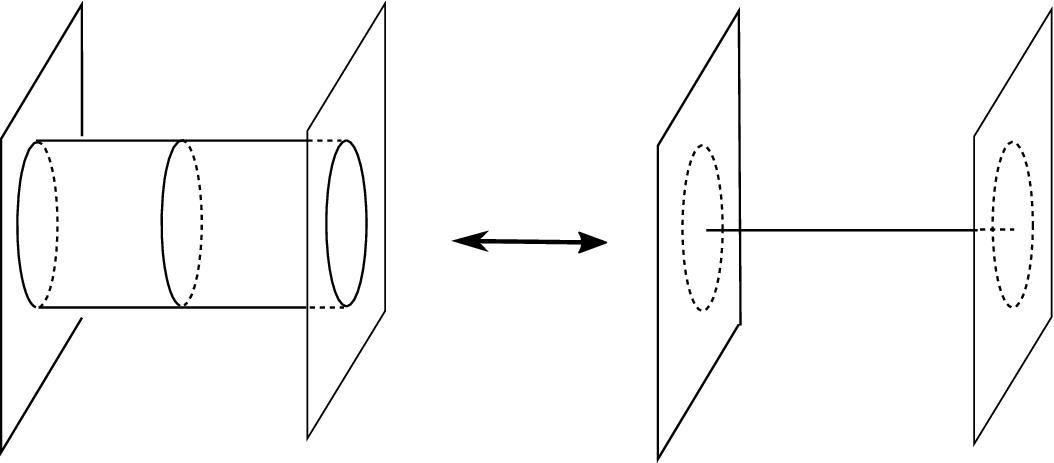}
     \caption{Decomposition near $D \in \calD$} \label{fig:prepersist}
    \end{figure}

\begin{cor} \label{cor:prepersist}  Let $\calC$ be a Heegaard split chamber complex in $M$, $\calD$ be a 
 disk set in $\calC$ and 
 $\hat{\calC}_{\calD}$ be the chamber complex obtained by the process described before Definition \ref{defin:Heegdecomp}.  Then following Rule \ref{rule:diskyHeeg} to declare goneballs is consistent with Rule \ref{rule:persist}.  
 \end{cor}       
 
 \begin{proof}  Suppose $G$ is a ball  in $M$ that is bounded by a sphere in $F_{\calD}$.  Suppose further that the disk $D \in \calD$ leaves no scar on $\bdd G$, and define $\calD_- = \calD - D$.   Since $D$ leaves no scar on $\bdd G$, the sphere is also a component of $F_{\calD_-}$. Moreover, if we let $\calD_{\bdd} \subset \calD_-$ be the set of disks in $\calD$ that {\em do} leave scars on $\bdd G$, then $\bdd G$ is also a component of $F_{\calD_{\bdd}}$ and, as described before Definition \ref{defin:Heegdecomp}, $G$ inherits a chamber complex structure $\hat{\calC}_{\bdd}$, in which each chamber is Heegaard split.   Let $T_G$ be the Heegaard splitting of the ball $G$ obtained by amalgamating all these splittings.  That is, $\hat{\calC}_{\bdd}$ supports $T_G$.  
 
 According to Proposition \ref{prop:prepersist} the chamber complexes in $G$ obtained from $\hat{\calC}_{\bdd}$ by decomposition along $\calD \cap G$ or along $\calD_- \cap G$ also support $T_G$.  In particular, under Rule \ref{rule:diskyHeeg},  $G$ is a goneball in $\calC_{\calD}$ if and only if $T_G$ is a trivial splitting of $G$ and this is true if and only if $G$ is also a goneball in $\calC_{\calD_-}$.  Thus Rule \ref{rule:persist} holds.
 \end{proof}   
 
 \begin{thm} \label{thm:Heegdisky}   Suppose $\calC$ is a Heegaard split chamber complex for $M$ that supports the Heegaard splitting $M = A \cup_T B$ and $\calD$ is a
disk set in $\calC$.
After perhaps a proper isotopy of $\calD$ within the chambers, not moving $T$, the chamber complex decompositon \[ \calC \xrightarrow{\calD} \calC_{\calD}\] is a Heegaard split chamber complex decomposition and the resulting Heegaard split chamber complex $\calC_{\calD}$ also supports $T$.  
\end{thm}

\begin{proof}  This would seem to be a straightforward consequence of Proposition \ref{prop:prepersist} and the discussion at the beginning of Subsection \ref{subsec:diskdecomp}.  Namely:
\begin{enumerate}
\item In each chamber $C$ isotope the splitting surface $T_C$ until it is aligned with $\calD$, as is shown to be possible in \cite{Sc1}.
\item Mimic the handle slides used in the isotopies within each chamber by handleslides on $T$ itself, as discussed for example in \cite{La}.
\item Now that $T$ and $\calD$ are aligned, apply Proposition \ref{prop:prepersist}.
\end{enumerate}
This (ultimately successful) strategy has an obvious conceptual weakness:  The process requires isotoping $T$ by various handleslides, so $T$ will typically end up in a different position in $M$ than it began.
We will show that the process above can be reframed so that $T$ is fixed throughout and only the disks $\calD$ and 
 the chambers $\calC$ are allowed to move.  We must further ensure that the disks $\calD$ stay disjoint, in particular that the boundaries of disks in $\calD$ that lie in adjacent chambers don't end up intersecting, after their proper isotopies, in the component of $F = F(\calC)$ that lies between them.  That is the goal of the argument that follows. 
\bigskip

{\em Claim:} There is an isotopy of $\calD$, fixed on $\bdd \calD \subset F$, so that afterwards $\calD$ is aligned with the splitting surface of each chamber.  

The proof of the Claim is straightforward and well-known:  Let $C$ be a chamber, $\calD_C$ be the set of disks in $\calD$ that lie in $C$, and $T_C \subset \inter(C)$ be the Heegaard splitting surface of the chamber.  Per \cite{Sc1} there is an isotopy of embeddings $\phi_t: T_C \to C$ so that $\phi_0$ is the original embedding and $\phi_1(T_C)$ is aligned with $\calD_C$.  By the isotopy extension theorem, $\phi_t$ can be extended to an isotopy $\theta_t: C \to C$ in which $\theta_0$ is the identity.  Since $T_C \subset \inter(C)$, we can also take $\theta_t$ to be fixed on $\bdd C$.  

Define an isotopy of embeddings $\rho_t: \calD_C \to C$ by $\rho_t = (\theta_t)^{-1}|\calD_C$.  Observe that $\rho_0$ is the original embedding (since $\theta_0$ is the identity).  Moreover, it is easy to check that $\rho_1(\calD)$ is aligned with $T_C$.  Namely, observe that \[\theta_1(\rho_1(\calD_C) \cap T_C) = \calD_C \cap \theta_1(T_C) = \calD_C \cap \phi_1(T_C)\] and the last term consists, by construction, of at most one circle in each component of $\calD_C$.  Hence  $\rho_1(\calD_C) \cap T_C$ consists of at most one circle in each component of $\calD_C$, that is the disks $\rho_1(\calD_C)$ are aligned with $T_C$.  This proves the claim.
\bigskip

We continue our effort to bring $\calD$ into alignment with $T$, not by handleslides that move $T$ (mimicking handleslides within chambers), but by proper isotopies of $\calD$.  In doing so, we do not use the Claim {\em per se}, but rather note that the proof of the claim implies that we only need to show how to keep $T$ fixed, during the alignment, in a collar of the boundary of each chamber (i. e. near the defining surface $F$).  Then a way to keep $T$ also fixed  outside the collar, that is away from $F$, is provided by the proof of the Claim.  So we focus on how  handle slides of the original splitting $M = A \cup_T B$ that mimic handle slides in the  splitting of a chamber $C = A_C \cup_{T_C} B_C$ can be replaced {\em near $F$} by proper isotopies of $\calD$.  

Consider a bicollar neighborhood $F_0 \times [-1, 1]$ of a component $F_0$ of $F$ that separates an $A$-chamber $C$ of $\calC$ from a $B$-chamber $C'$.  Here we take $F_0 \times [0, 1] \subset C$ and $F_0 \times [-1, 0] \subset C'$. $F_0$ itself is a subsurface of $T$ whose boundary is capped off by disk scars coming from surgery on a collection of disks $(\calB, \bdd \calB) \subset (B_C, \bdd B_C)$ and disk scars coming from surgery on a collection of disks $(\calA, \bdd \calA) \subset (A_{C'}, \bdd A_{C'})$.  $B$ intersects $F_0 \times [0,1]$ in vertical cylinders, the ``legs" of $1$-handles in $B_C$, one for each scar of $\calB$ on $F_0$.  We denote the leg correspoinding to a scar $b$ by $b \times [0, 1]$.  (The symmetric statement is true for $A \cap (F_0 \times [-1,0])$.)


Let $\calD_C = \calD \cap C$ and $\calD_{C'} = \calD \cap C'$.  Also spanning the collar $F_0 \times [0, 1]$ are vertical annuli, each of them a collar neighborhood of the boundary of a disk in $\calD_C$.  The symmetric statement is true in  the collar $F_0 \times [-1, 0]$.  By general position in the surface $F_0$ we can take take the legs to be disjoint from the annuli, and, since $\calD$ is embedded, we have that $\bdd \calD_C$ and $\bdd \calD_{C'}$ are disjoint in $F_0$.    

Consider how a handle slide in $B$ that mimics a handle slide of $B_C$ in $C$ appears in this collar.  As it begins, the end of a leg $b_1 \times [0, 1]$ is slid via a path $\gamma \subset F_0$ to the end of a leg $b_2 \times [0, 1]$ and then up and out of view.  At the end of the handle slide the same thing occurs elsewhere in the collar.  So it is only this initial move that needs to be understood.  By general position we may assume that the path $\gamma$ is disjoint from all scars.  Here are possibilities:
\begin{enumerate}
\item the path $\gamma$ is disjoint from both $\bdd \calD_C$ and $\bdd \calD_{C'}$, as in the left side of Figure \ref{fig:Heegdisky1}. 
\item the path $\gamma$ is disjoint from $\bdd \calD_C$ but not $\bdd \calD_{C'}$.
\item the path $\gamma$ intersects $\bdd \calD_C$ and possibly also $\bdd \calD_{C'}$.
\end{enumerate}

In the first case, where $\gamma$ is disjoint from $\bdd \calD$, this first stage of the handle slide can be accomplished, leaving $T$ fixed, by just replacing the disk in $\calB$ whose scar is $b_2$ by the disk $b'$ in $B$ obtained by band-summing $b_1$ to $b_2$ along $\gamma$.  This new disk is disjoint from other scars and, by the assumption of this case, still disjoint from $\bdd D$.  See right side of Figure \ref{fig:Heegdisky1}.  

 \begin{figure}[ht!]
\labellist
\small\hair 2pt
\pinlabel  $F_0$ at 170 70
\pinlabel  $F_0\times \{-1\}$ at 170 20
\pinlabel  $a$ at 65 75
\pinlabel  $b_2$ at 85 60
\pinlabel  $b_1$ at 155 60
\pinlabel  $b'$ at 400 50
\pinlabel  $\gamma$ at 120 60
\endlabellist
    \centering
    \includegraphics[scale=0.75]{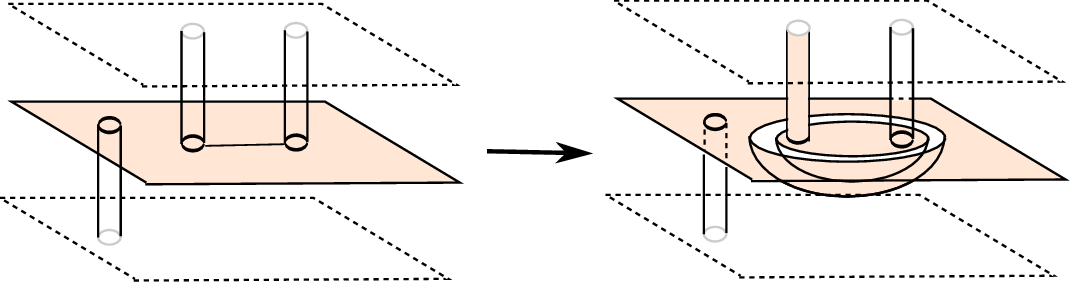}
     \caption{When $\gamma$ is disjoint from both $\bdd \calD_C$ and $\bdd \calD_{C'}$} \label{fig:Heegdisky1}
    \end{figure}

In the second case, $\gamma$ may intersect only  $\bdd \calD_{C'}$.  These intersection points can be removed by isotoping $\bdd \calD_{C'}$ along $\gamma$ towards and then across the disk $b_1 \subset F_0 \subset \bdd C'$.  See Figure \ref{fig:Heegdisky2}.  (Another description is that we band sum $\calD_{C'}$ at the points it crosses $\gamma$ to copies of $b_1$ by bands around subsegments of $\gamma$).  Such a proper isotopy of $\calD_{C'}$ is allowed, and reduces this case to the previous case.  

 \begin{figure}[ht!]
\labellist
\small\hair 2pt
\pinlabel  $D\in \calD_{C'}$ at 130 30
\endlabellist
    \centering
    \includegraphics[scale=0.75]{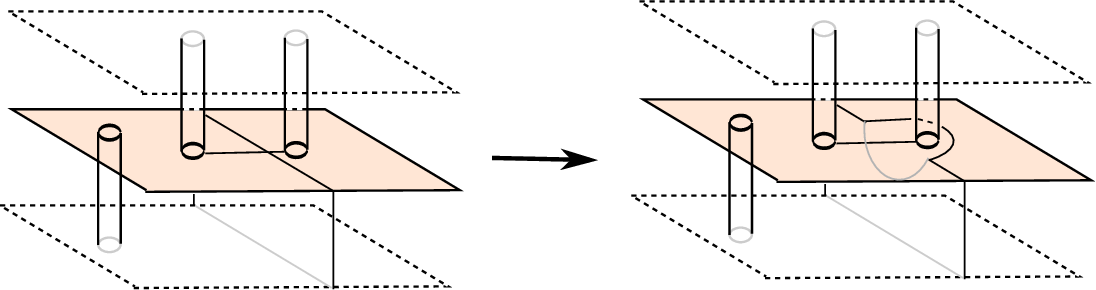}
     \caption{When $\gamma$ is disjoint from $\bdd \calD_C$ but not $\bdd \calD_{C'}$} \label{fig:Heegdisky2}
    \end{figure}

In the third case, the slide of $b_1 \times [0, 1]$ can be broken up into a series of slides along $\gamma$, each successive one across a single intersection of $\gamma$ with $\bdd \calD_C$.  For example, let $\gamma_1$ be the segment of $\gamma$ lying between $b_1$ and the closest point $p$ of $\gamma \cap \bdd \calD_C$. Just as in step 2, $\calD_{C'}$ can be properly isotoped across $b_1$ so that it is disjoint from $\gamma_1$.  The slide of the leg of $b_1 \times [0, 1]$ along $\gamma_1$ creates a disk intersection of the leg with $\calD_C$; as the leg is then straightened from below the disk intersection moves upwards out of view.  There is an obvious reinterpretation of this step that keeps the leg (hence $T$) fixed: instead slide $p \in \bdd \calD_C$ and neighboring points of $\calD_C$ along $\gamma_1$ and across the leg $b_1 \times [0, 1]$. See Figure \ref{fig:Heegdisky3}. Again this creates an intersection disk and again, as the annulus that contains $p$ is made vertical from below, the disk ascends out of view.  Note that because we have already cleared all points of $\bdd \calD_{C'}$ from $\gamma_1$, $\bdd \calD_{C'}$ and $\bdd \calD_C$ remain disjoint, as required.  Continue the slide of the leg along the rest of $\gamma$, proceeding as just described across each successive segment of $\gamma - \bdd \calD_C$, eventually reducing this case to the second case, which we have already considered.   
\end{proof}

 \begin{figure}[ht!]
\labellist
\small\hair 2pt
\pinlabel  $D\in \calD_{C}$ at 105 110
\pinlabel  $\gamma_1$ at 128 75
\pinlabel  $p$ at 120 55
\pinlabel  $p$ at 463 73
\endlabellist
    \centering
    \includegraphics[scale=0.75]{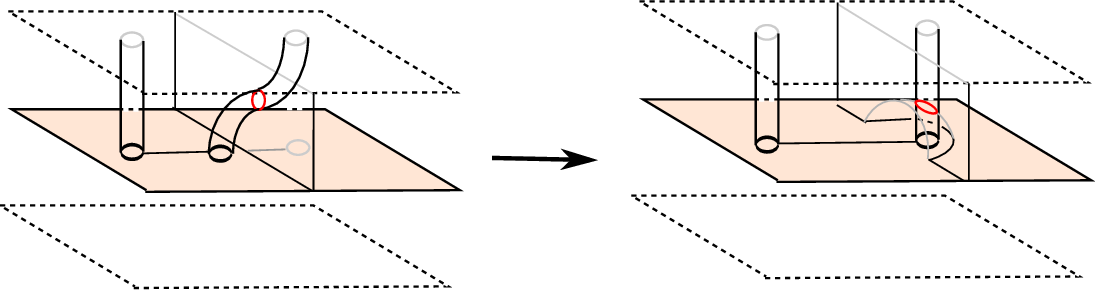}
     \caption{When $\gamma$ intersects $\bdd \calD_C$ and possibly also $\bdd \calD_{C'}$} \label{fig:Heegdisky3}
    \end{figure}

\subsection{Tiny Heegaard split chamber complexes} \label{subsect:tiny}

Proposition \ref{prop:tiny1} has an analogue in Heegaard split chamber complex decompositions, as we now describe:

\begin{defin}  \label{defin:tinyHeeg} A Heegaard split chamber complex $\calC$ is {\em tiny} \index{Tiny Heegaard split chamber complex} if $\calC$ is tiny as a chamber complex, and each designated handlebody has a trivial Heegaard splitting.
%
\end{defin}

Note that in a tiny Heegaard split chamber complex no designated handlebody can be a ball, since, by definition, no chamber in a Heegaard split chamber complex is a trivially split ball.  Also, since each 
designated handlebody has only one boundary component, each must be adjacent to the unique chamber $C$ that is not a designated handlebody.  Hence if $C$ is a $B$-chamber, all the designated handlebodies are $A$-chambers, and symmetrically.  

\begin{prop} \label{prop:weaknottiny}  Suppose $\calC$ is a Heegaard split chamber complex obtained from a Heegaard splitting $M = A \cup_T B$ by weak reduction.  Then $\calC$ is not tiny.
\end{prop}
\begin{proof}  Let $\calA, \calB$ be the weakly reducing family of disks, and $D_A$ be an essential disk in $\calA$.  Then after surgery on $\calA \cup \calB$, $D_A$ lies in a $B$-chamber $C$, with $\bdd D_A$ on a component of $T \cap \inter(C)$.  But that component can't be a disk, since $\bdd D_A$ is essential on $T$.  Hence $C$ is not a disky handlebody and so, by 
Proposition \ref{prop:triv=disky} it is not a trivially split handlebody. To summarize: some $B$-chamber of $\calC$ is not a trivially split handlebody.  

The symmetric argument on a disk $D_B \in \calB$ shows that there is some $A$ chamber of $\calC$ that is not a trivially split handlebody.  This immediately implies that the defining surface $F(\calC) \neq \emptyset$ and also contradicts the consequence of  Definition \ref{defin:tinyHeeg} that either all $A$-chambers or all $B$-chambers are designated handlebodies, and so must be trivially split.  
%
\end{proof}

\begin{prop}[Tinyness pulls back]  \label{prop:tinyHeeg} Let $\calC \xrightarrow{\calD} \calC_{\calD}$ be a Heegaard split chamber complex decomposition as given in Definition \ref{defin:Heegdecomp}, If $\calC_{ \calD}$ is a tiny Heegaard split chamber complex, so is $\calC$.
\end{prop}

\begin{proof}  Proposition \ref{prop:tiny1} and most of its proof apply in this situation: We are given that the designated handlebodies of $\calC_{\calD}$ have trivial Heegaard splitting so, per Proposition \ref{prop:Heegdisky2}, each such chamber is disky, and Proposition \ref{prop:tiny1} applies.  That is, $\calC$ is tiny as a chamber complex.  What remains to be shown is that the designated handlebodies of $\calC$ have trivial Heegaard splittings.  The proof of Proposition \ref{prop:tiny1} is also valid in this setting; that proof invokes Proposition \ref{prop:remnant} to conclude that all but one chamber of $\calC$ is a handlebody, and these become the designated handlebodies of $\calC$.  Then this strengthened version of Proposition \ref{prop:remnant} implies, in our setting, that each such designated handlebody is trivially split:

\begin{lemma}  \label{lemma:trivremnant} Suppose $C$ is a chamber of $\calC$ so that every remnant of $C$ in $\calC_{\calD}$ is a handlebody with trivial Heegaard splitting.  Then $C$ is a handlebody with trivial Heegaard splitting.
\end{lemma}

\begin{proof}  Proposition \ref{prop:remnant} suffices to conclude that $C$ is a handlebody and the proof of that proposition continues to apply here.  According to that proof, all that is required to complete the proof of Lemma  \ref{lemma:trivremnant} is this additional claim:
\medskip

{\em Claim:} Suppose $C$ is a handlebody chamber in $\calC$ and the disks $\calD_C$ in $\calD$ that are incident to $\bdd C$ all lie within $C$. 
Then $C$ is trivially split.  

{\em Proof of Claim:}  Without loss suppose $C$ is an $A$-chamber and denote by $C_r$ the collection of remnants of $C$, by hypothesis each a trivially split handlebody $A$-chamber.  Let $F = \bdd C$ and let $T_C$ be the chamber's Heegaard surface.  Then, by Propositiono \ref{prop:genusamalg}c),  $\chi(T_C) \leq \chi(F)$, with equality only if the Heegaard splitting of $C$ is trivial.

Let $\hat{T}_{\calD}$ be the union of the Heegaard surfaces for the chambers 
of $\hat{\calC}_{\calD}$ that are remnants of $C$ (some of which may be goneballs).  When aligned, as described at the beginning of Subsection \ref{subsec:diskdecomp}, each disk in $\calD_C$ intersects $F$ in a single circle (its boundary) and intersects $T_C$ either in a single circle or not at all.  In the latter case, the disk lies entirely in $B_C$ and, since $\bdd_- B_C$ is incompressible in the compression body $B_C$, the boundary of the disk is inessential in $\bdd_-C$.  Hence surgery on such a disk creates a new sphere component of $F_\calD$ bounding a trivially split ball, so such a surgery adds a sphere component both to $\hat{T}_{\calD}$ and to $F_{\calD}$ .  Similarly, surgery on a disk in $\calD$ that intersects $T$ in a single circle raises the Euler characteristic of both $\hat{T}_{\calD}$ and $F$ by 2.  So in the end, surgery on $\calD$ raises the Euler characteristic of both $\hat{T}_{\calD}$ and $F$ by $2|\calD_C|$.  (That is, $\chi(F_\calD) - \chi(F) = 2|\calD_C|$.)
Consequently, 
$\chi(\hat{T}_{\calD}) \leq \chi(F_{\calD})$, again with equality only if $C$ is trivially split.

Consider next what happens when the bounding sphere of a goneball is removed from $F_{\calD}$, and $\hat{T}_{\calD}$ is altered by amalgamation along that sphere.  Since goneballs are exactly those balls that are trivially split, the result is to remove exactly a sphere from both $F_{\calD}$ and $\hat{T}_{\calD}$, lowering the Euler characteristic of both by $2$.  Once all spheres bounding goneballs are removed, we then have $\chi(T_{\calD}) \leq \chi(\bdd C_r)$, again with equality only if $\calC$ is trivially split.  (Here $T_{\calD}$ is the union of all the Heegaard surfaces of the chambers of $C_r$.)  By assumption, each component of $\calC_r$ is trivially split.  This means that $\chi(T_{\calD}) = \chi(\bdd C_r)$, so $\chi(T_C) = \chi(F)$ and indeed $C$ is trivially split.  This proves the Claim, hence the Lemma and so the Proposition.
\end{proof}
\end{proof}

\section{Sequences of aligned chamber complex decompositions in $S^3$} \label{sect:sequence}

There is an important caveat about the structure of a Heegaard split chamber complex decomposition:  Typically the Heegaard splitting of a specific chamber of $\calC_{\calD}$ will not be well-defined.  Indeed, even the genus of the chamber may be ambiguous, since, in the process described before Definition \ref{defin:Heegdecomp} 
 there is a choice of how to align $\calD_C$ with $T_C$ in a chamber $C$ of $\calC$.  One aspect of this choice (see \cite{FS2}) is that a bubble may be moved across a disk in $\calD_C$; if, for example, the disk is separating, this changes the genus of the chambers on each side of the disk.  One result of this ambiguity is that $\calC_{\calD}$ may not be well-defined even as a chamber complex: depending on alignment, a ball chamber in $\hat{\calC}_{\calD}$ may or may not be trivially split, and therefore may or may not appear as a chamber in $\calC_{\calD}$.
 
Soon (see Section \ref{sect:prefdisk}) this ambiguity will be directly addressed, by describing a {\em preferred} way of aligning disks, a way that will eventually suffice to certify unambiguously a chamber complex obtained by weak reduction of a Heegaard splitting of $S^3$.  In this section we avoid much of this ambiguity by oversimplifying the full theory.
The section is motivational and transitional: We present an application of the theory of disk decompositions developed in Sections \ref{sect:chamberintro} and \ref{sect:Heegaard1}, one that extends Proposition \ref{prop:toyunique} and so connects to our previous discussions.  It also is a model for later arguments we will need in a more complicated setting.  We begin with a definition that will only be used in this section, as a way of illustrating and simplifying the full argument that will eventually follow.

  \bigskip
 

\begin{defin}   \label{defin:aligndecomp} In the setting of Definition \ref{defin:Heegdecomp}, suppose $\calC_{\calD}$ is a Heegaard split chamber complex obtained from  $\hat{\calC}_{\calD}$ by declaring as goneballs all  disky balls.  Then $$\calC \xrightarrow{\calD} \calC_{\calD}$$ is an {\em aligned chamber complex decomposition}.  \index{Aligned chamber complex decomposition}
\end{defin}

\begin{defin} \label{defin:bubquot}   Suppose $\calC$ is a Heegaard split chamber complex, 
that supports a genus $g$ Heegaard splitting $M = A \cup_T B$; 
 $C$ is a chamber in $\calC$; and $\frb$ is a genus $g' < g$ 
bubble in the Heegaard splitting $C = A_C \cup_{T_C} B_C$.  Let $\calC/\frb$ be the chamber complex, with Heegaard splittings of each chamber, obtained by destabilizing $C = A_C \cup_{T_C} B_C$ along $\frb$, replacing $\frb$ with a neighborhood of a point $*$ in the splitting surface.  
\end{defin}

Amalgamating all Heegaard splittings of $\calC/\frb$ gives a genus $g - g'$ Heegaard splitting of $M$ obtained by destabilizing $T$ along $\frb$.

Although every chamber in $\calC/\frb$ is Heegaard split, $\calC/\frb$ may fail to be a Heegaard split chamber complex for a somewhat technical reason: when $C$ is a ball and $\frb$ is a maximal bubble in $C$, the chamber $C$ in $\calC/\frb$ becomes a trivially split ball chamber and this is not allowed in a Heegaard split chamber complex.  Nonetheless, any disk set $\calD$ in $\calC/\frb$ can be aligned with the Heegaard splittings of the chambers, the construction described in Subsection \ref{subsec:diskdecomp} carried out, and all disky balls declared goneballs.  Denote this aligned chamber complex decomposition $$\calC/\frb \xrightarrow{\calD} (\calC/\frb)_{\calD}.$$

\begin{lemma} \label{lemma:bubquot}  Suppose, in the setting of Definition \ref{defin:bubquot}, $\calD$ is a disk set in $\calC$.  Then $\calD$ can be aligned in $\calC$ and in $\calC/\frb$ so that $\frb$ remains a bubble in a chamber of $\calC_{\calD}$ and $\calC_{\calD}/\frb = (\calC/\frb)_{\calD}$.
\end{lemma}
\begin{proof}  This is immediate from the definitions: Align the disks $\calD$ in $\calC/\frb$ so that, by general position, they do not contain the point $*$.
This defines also an alignment of $\calD$ in $\calC$.  Following surgery on $\calD$ absorb goneballs in both $\hat{\calC}$ and $\hat{\calC/\frb}$.  They are the same goneballs because in both cases the goneballs are exactly the disky balls.  These are defined at the level of chamber complexes, and do not depend on the Heegaard splittings of the chambers.  
\end{proof}

Call an alignment as in Lemma \ref{lemma:bubquot} a {\em $\frb$-wise alignment}. \index{$\frb$-wise alignment}


Suppose a Heegaard split chamber complex $\calC_0$ supports $T$.  For example $\calC_0$ could be the Heegaard splitting $A \cup_T B$ itself.  Let  \[
\vec{\calC}:\quad 
\calC_0 \xrightarrow{\calD_0} \calC_1 \xrightarrow{\calD_1} \calC_2 \xrightarrow{\calD_2} ... \xrightarrow{\calD_{n-1}}\calC_n\] be a sequence of chamber complex decompositions.

\begin{prop} \label{prop:seqcertifyexist}
Suppose chambers $C_0 \in \calC_0$ and $C_n \in \calC_n$ contain, respectively, incompressible spheres $S$ and $S'$. 
Then, for iteratively $0 \leq i \leq n-1$ each disk set $\calD_i$ can be  aligned in $\calC_i$,
so that $\calC_0$ and the resulting Heegaard split chamber complex structure on $\calC_n$ cocertify.


\end{prop}

\begin{proof}  We induct on $n \geq 1$. 

A standard innermost disk argument shows that there is an incompressible sphere $S_0$ in the chamber $C_0$ that is disjoint from the disks $\calD_0$.  This implies that $S_0$ will lie in a single chamber $C_{1}$ of the chamber complex $\calC_1$. 
  Align the disks $\calD_0$ as in Theorem \ref{thm:Heegdisky} so that $$\calC_0 \xrightarrow{\calD_0} \calC_{1}$$ is an aligned disk decompositon. More specifically, in chamber $C_0$ choose an alignment, per \cite{Sc1}, so that the incompressible sphere $S_0$ is also aligned with the Heegaard splitting surface of the chamber and hence, after amalgamation, also aligned with $T$, per Proposition \ref{prop:toyexist}. 
Following Proposition  \ref{prop:toyunique} we may as well substitute $S_0$ for $S$.

If $S_0$ is incompressible in $C_{1}$, then, according to Propositions \ref{prop:toyexist} and Corollary \ref{cor:naturality}, $h_{S_0}: (S^3, T) \to (S^3, T_g)$ defines both $h_{F(\calC_0)}$ and $h_{F(\calC_1)}$.  Thus $h_{F(\calC_0)} \sim h_{F(\calC_1)}$.  By inductive assumption $h_{F(\calC_1)} \sim h_{F(\calC_n)}$, completing the proof in this case.  

However, it is possible that the sphere $S_0$ is compressible in the chamber $C_{1}$.  (For example, the interior of a ball $W$ that $S_0$ bounds in $C_1$ could have contained a single handlebody chamber of $\calC_0$, one 
that disappears  in $\calC_{1}$ because it is decomposed by $\calD_0$ into a ball that is a goneball.)  If $S_0$  is compressible in $C_{1}$ then it bounds a non-trivial bubble $\frb$ in $(S^3, T)$, by \cite{Wa}, one that already appears as a bubble in the induced Heegaard splitting of $\calC_{1}$. (The bubble is non-trivial because the presence in $\inter(W)$ of other chambers means that $W$ is not a trivially split ball, per Proposition \ref{prop:Heegdisky2}.) In this case, we argue as follows:


For the rest of the decompositions
\[\calC_1 \xrightarrow{\calD_1} \calC_2 \xrightarrow{\calD_2} ... \xrightarrow{\calD_{n-1}}\calC_n\] 
in the sequence, choose $\frb$-wise alignments.  Then the bubble $\frb$ (bounded still by $S_0$) arrives intact in a single chamber of $\calC_n$.  In $\calC_n/\frb$, $S'$ can be taken to be disjoint from $*$,
simply by general position.  Then in $\calC_n$, $\frb$ is disjoint from $S'$.    Then Lemma \ref{lemma:preexistunique} shows that $h_{S_0}$ and $h_{S'}$ are eyeglass equivalent.  Hence $ h_{F(\calC_0)}  \sim h_{S_0} \sim h_{S'} \sim h_{F(\calC_n)}$ as required.  
\end{proof}

Having shown there exists a choice of alignment for which $\calC_0$ and $\calC_n$ cocertify, we next show that any choice of alignments will do.  The critical point is that we understand how different alignments of disk-sphere sets in a Heegaard split 3-manifold are related: By \cite{FS2} they differ by a sequence of eyeglass moves and passing bubbles through the disk-sphere set.  We begin with a lemma and proposition that are not specific to Heegaard splittings of $S^3$.  

\begin{lemma} \label{lemma:singlecertunique}  Suppose $\calC_0$ is a Heegaard split chamber complex in $M$ that supports the splitting $M = A \cup_T B$, and $$\calC_0 \xrightarrow{\calD_0} \calC_{1}$$ is a chamber complex decomposition.  Suppose $E \subset \calC_{1}$ is a disk-sphere set.  

Let $\calD^x_0$ and $\calD^y_0$ be possibly different alignments of the disk set $\calD_0$ in $\calC_0$, with resulting Heegaard split chamber complexes $\calC^x_1$ and $\calC^y_1$ respectively, and similarly let $E^x, E^y$ be alignments of the disk-sphere set $E$ in $\calC^x_1$ and $\calC^y_1$ respectively.  Then a sequence of bubble passes and eyeglass moves will change the alignment $\calD^x_0$ to $\calD^y_0$ and $E^x$ to $E^y$.
\end{lemma}

\begin{proof}  By \cite{FS2} there is a series of bubble passes and eyeglass moves that change the alignment $\calD^x_0$ to $\calD^y_0$.  The proof of the Lemma is by induction on $p$, the number of bubble passes required.  If $p = 0$ then, after an eyeglass move, we can take $\calD^x_0 = \calD^y_0$, so $\calC^x_1 = \calC^y_1$ and then apply  \cite{FS2} to the disk sets $E^x$ and $E^y$ in $\calC^x_1 = \calC^y_1$.

So suppose $p \geq 1$ and assume the Lemma is true whenever the number of bubble passes needed to change the alignment $\calD^x_0$ to $\calD^y_0$ is less than $p$.  Let $\frb$ be the 
first ball that is passed, through a disk $D \in \calD^x_0$, in changing the alignment $\calD^x_0$ to $\calD^y_0$.  Say $\frb$ is passed from a chamber $C \in \hat{\calC}^x_1$ to a chamber $C' \in  \hat{\calC}^x_1$, with possibly $C = C'$.  (Enthusiasts will note that if $C = C'$ then the bubble pass is also an eyeglass move, by Lemma \ref{lemma:nonseppass}, so $p$ can be reduced and we are done.)

Consider the subset $E^C$ of the aligned disk set $E^x$ that lies in $C$.   As aligned, $E^C$ may well intersect the interior of $\frb$.  However there is {\em some} alignment $E^{\frb}$ of $E^C$ in $C$ that is disjoint from the bubble $\frb$.  This follows from general position and \cite{Sc1} applied to $E^C$ in $ \hat{\calC}_1/\frb$.  
Then by \cite{FS2} the alignment $E^{\frb}$ can be obtained from $E^C$ by eyeglass moves and bubble passes through $E$ in $C$. Thus we may as well assume that $E^C = E^{\frb}$.  

By further bubble passes of $\frb$ through $E^C$ we can move $\frb$ adjacent to the disk $D \in \calD^x_0$ and complete the pass of $\frb$ through $D$.  Then only $p-1$ further bubble passes are needed to move $\calD^x_0$ to $\calD^y_0$, completing the inductive step.  
\end{proof}

This argument generalizes, though the statement and argument become more complex.   Let $\calC_0$ be a Heegaard split chamber complex in $M$ that supports the Heegaard splitting $M = A \cup_T B$.  Suppose
\[\vec{\calC}:\quad \calC_0 \xrightarrow{\calD_0} \calC_1 \xrightarrow{\calD_1} \calC_2 \xrightarrow{\calD_2} ... \xrightarrow{\calD_{n-1}}\calC_n\]
is a sequence of chamber complex decompositions and
$E$ is a disk-sphere set in $\calC_n$.  

Suppose that the disks $\calD_i$ are aligned at each successive stage of the decomposition,  that
\[\vec{\calC^x}:\quad \calC_0 \xrightarrow{\calD^x_0} \calC^x_1 \xrightarrow{\calD^x_1} \calC^x_2 \xrightarrow{\calD^x_2} ... \xrightarrow{\calD^x_{n-1}}\calC^x_n\] is the resulting aligned chamber complex decomposition sequence, and that  $E$ is aligned with the resulting Heegaard splitting of $\calC^x_n$.  Suppose further that for some $0 \leq i \leq n$ there is a bubble $\frb$ in one of the chambers $C$ of $\calC_i$ and the bubble $\frb$ remains disjoint from each set of disks $\calD_j, j \geq i$ and from $E \subset \calC_n$.  That is, the alignment of the disks in the remainder of the sequence is $\frb$-wise.    
Then say that $\frb$ is an {\em intact bubble} for the sequence of aligned chamber complex decompositions.  

Suppose next that the alignment of some disk set $\calD_i$ is altered simply by a bubble pass of $\frb$ to another chamber $C'$ of $\calC_{i+1}$, and each successive alignment of the $\calD_j, j \geq i+1$ and the alignment of $E$ all are $\frb$-wise, 
so that $\frb$ remains an intact bubble for the resulting aligned chamber complex decomposition
 \[\vec{\calC^y}:\quad \calC_0 \xrightarrow{\calD^y_0} \calC^y_1 \xrightarrow{\calD^y_1} \calC^y_2 \xrightarrow{\calD^y_2} ... \xrightarrow{\calD^y_{n-1}}\calC^y_n\] 
 and alignment of $E$. (By construction $\calC^x_k = \calC^y_k$ and $\calD^x_{k-1} = \calD^y_{k-1}$ for $1 \leq k \leq i$.)  Denote the two alignments of $E$ by $E^x$ and $E^y$ respectively.  

\begin{defin} \label{defin:ancbubblepass}  In the construction above, the aligned chamber complex decomposition sequence $\vec{\calC^y}$ and the alignment $E^y$ of $E$ are said to be obtained from  
$\vec{\calC^x}$ and $E^x$ by an {\em ancestral bubble pass}.  \index{Ancestral bubble pass}
This includes the degenerate case, in which $\vec{\calC^x} = \vec{\calC^y}$ but the alignments $E^x, E^y$ differ by a bubble pass in $\calC_n$.   
\end{defin}

\begin{prop}  \label{prop:seqbubblepass} Suppose aligned chamber complex decomposition sequences $\vec{\calC^x}$ and $\vec{\calC^y}$ and alignments $E^x, E^y$ are chosen for the chamber complex decomposition sequence $\vec{\calC}$ and the disk-sphere set $E$ in $\calC_n$.   Then there is a sequence of ancestral bubble passes and eyeglass moves that changes the pair $(\vec{\calC}^x, E^x)$ to $(\vec{\calC}^y, E^y)$.
\end{prop}

\begin{proof} The proof begins much as the proof of Lemma \ref{lemma:singlecertunique}.  By \cite{FS2} there is a series of bubble passes and eyeglass moves that change the alignment $\calD^x_0$ to $\calD^y_0$.  The proof is by induction on the pair $(n, p)$, lexicographically ordered, where $p$ is the minimal number of bubble passes in such a series. Lemma \ref{lemma:singlecertunique} covers the case $n = 1$, so assume $n \geq 2$.   In case $p = 0$, we can take $\calD^x_0 = \calD^y_0$, so $\calC^x_1 = \calC^y_1$ and then apply the inductive assumption to the shorter sequences 
\[\calC^x_1 \xrightarrow{\calD^x_1} \calC^x_2 \xrightarrow{\calD^x_2} ... \xrightarrow{\calD^x_{n-1}}\calC^x_n \supset E^x\] 
and 
\[ \calC^y_1 \xrightarrow{\calD^y_1} \calC^y_2 \xrightarrow{\calD^y_2} ... \xrightarrow{\calD^y_{n-1}}\calC^y_n \supset E^y.\]

So we may suppose $p \geq 1$ and, as in the proof of Lemma \ref{lemma:singlecertunique}, let $\frb$ be the first bubble that is passed, through a disk $D \in \calD^x_0$, from a chamber $C \in \hat{\calC}^x_1$ to a chamber $C' \in \hat{\calC}^x_1$.  Let $\calD^w_0$ be the disk set $\calD_0$ as realigned by this bubble pass.  Let
\[\calC^x_1 \xrightarrow{\calD^z_1} \calC^z_2 \xrightarrow{\calD^z_2} ... \xrightarrow{\calD^z_{n-1}}\calC^z_n \supset E^z\] 
and 
\[\calC^w_1 \xrightarrow{\calD^w_1} \calC^w_2 \xrightarrow{\calD^w_2} ... \xrightarrow{\calD^w_{n-1}}\calC^w_n\supset E^w\] be {\em $\frb$-wise} alignments of the aligned chamber complex sequences, with $\frb \subset C$ in $\calC^x_1$ (as is given) and $\frb \subset C'$ in $\calC^w_1$.  (Here the significant difference between the given $\calD^x_i$ and the new $\calD^z_i$, $i \geq 1$, is that each $\calD^z_i$ is $\frb$-wise aligned.)
By definition, the two sequences differ by an ancestral bubble pass and by inductive assumption (on $n$) the former sequence differs from \[\calC^x_1 \xrightarrow{\calD^x_1} \calC^x_2 \xrightarrow{\calD^x_2} ... \xrightarrow{\calD^x_{n-1}}\calC^x_n\supset E^x\] by a sequence of ancestral bubble passes and eyeglass moves.  Extending to the left we then have sequences 
\[\vec{\calC^x}:\quad \calC_0 \xrightarrow{\calD^x_0} \calC^x_1 \xrightarrow{\calD^x_1} \calC^x_2 \xrightarrow{\calD^x_2} ... \xrightarrow{\calD^x_{n-1}}\calC^x_n\supset E^x,\]
\[\vec{\calC^z}:\quad \calC_0 \xrightarrow{\calD^x_0} \calC^z_1 \xrightarrow{\calD^z_1} \calC^z_2 \xrightarrow{\calD^z_2} ... \xrightarrow{\calD^z_{n-1}}\calC^z_n\supset E^z,\]
\[\vec{\calC^w}:\quad \calC_0 \xrightarrow{\calD^w_0} \calC^w_1 \xrightarrow{\calD^w_1} \calC^w_2 \xrightarrow{\calD^w_2} ... \xrightarrow{\calD^w_{n-1}}\calC^w_n\supset E^w\]
\[\vec{\calC^y}:\quad \calC_0 \xrightarrow{\calD^y_0} \calC^y_1 \xrightarrow{\calD^y_1} \calC^y_2 \xrightarrow{\calD^y_2} ... \xrightarrow{\calD^y_{n-1}}\calC^y_n\supset E^y\]
and we have shown that any pair of the first three ($\vec{\calC^x}, \vec{\calC^z}$, and $\vec{\calC^w}$) differ by a sequence of ancestral bubble passes and eyeglass moves.  
Now notice that, by construction, the two aligned chamber complex decompositions  $\calC_0 \xrightarrow{\calD^w_0}  \calC^w_1$ and $\calC_0 \xrightarrow{\calD^y_0}  \calC^y_1$ differ by a sequence of eyeglass moves and at most $p-1$ bubble passes.  Hence, by inductive assumption (on $p$), the last two sequences (hence also the first and the last) differ by a sequence of ancestral bubble passes and eyeglass moves, as required. 
\end{proof}

We now turn to $S^3$.  Throughout the remaining part of this section, make the inductive Assumption \ref{ass:inductive}.  That is, the Goeritz group of $S^3$ is the eyeglass group on splittings of genus $<g$. 

Suppose a Heegaard split chamber complex $\calC_0$ supports the genus $g$ Heegaard splitting $S^3 = A \cup_T B$ and 
\[\vec{\calC}:\quad \calC_0 \xrightarrow{\calD_0} \calC_1 \xrightarrow{\calD_1} \calC_2 \xrightarrow{\calD_2} ... \xrightarrow{\calD_{n-1}}\calC_n\]
is a sequence of chamber complex decompositions.  Suppose further that $\calC_n$ contains an incompressible sphere $S$.

\begin{lemma} \label{lemma:passcocert} Suppose 
\[\vec{\calC^x}:\quad \calC_0 \xrightarrow{\calD^x_0} \calC^x_1 \xrightarrow{\calD^x_1} \calC^x_2 \xrightarrow{\calD^x_2} ... \xrightarrow{\calD^x_{n-1}}\calC^x_n \supset S^x\]  
and  
\[\vec{\calC^y}:\quad \calC_0 \xrightarrow{\calD^y_0} \calC^y_1 \xrightarrow{\calD^y_1} \calC^y_2 \xrightarrow{\calD^y_2} ... \xrightarrow{\calD^y_{n-1}}\calC^y_n \supset S^y\] 
are sequences of aligned chamber complex decompositions resulting from possibly different choices of alignment at each stage. Suppose further that the sequences differ by an ancestral bubble pass, of the bubble $\frb$.  (In particular $S$ is aligned with $\calC^x_n$ and $\calC^y_n$ and is disjoint from $\frb$.) Denote $S$ as so aligned $S^x$ and $S^y$ respectively.  
Then $\calC^x_n$ and $\calC^y_n$ cocertify.
\end{lemma}

\begin{proof} We sketch the proof; more detail appears in the earlier proof of Proposition \ref{prop:toyunique}. 

 Let $S_{\frb}$ be the sphere $\bdd \frb$, defining, after amalgamation, a reducing sphere for the original Heegaard splitting $A \cup_T B$ .  
 By Lemma \ref{lemma:preexistunique} $h_{S^x} \sim h_{S_\frb} \sim h_{S^y}: (S^3, T) \to (S^3, T_g)$ as required.
 \end{proof}

%

Now return to the setting of Proposition \ref{prop:seqcertifyexist}: Suppose $S^3 = A \cup_T B$ is a genus $g$ Heegaard splitting of $S^3$,  a Heegaard split chamber complex $\calC_0$ supports $T$ and  \[
\vec{\calC}:\quad: 
\calC_0 \xrightarrow{\calD_0} \calC_1 \xrightarrow{\calD_1} \calC_2 \xrightarrow{\calD_2} ... \xrightarrow{\calD_{n-1}}\calC_n\] is a sequence of chamber complex decompositions. 

\begin{prop} \label{prop:seqcertifyunique}
Suppose chambers $C_0 \in \calC_0$ and $C_n \in \calC_n$ each contain incompressible spheres. 
Then, for {\em any} alignment of each disk set $\calD_i$ in $\calC_i$, the Heegaard split chamber complexes
$\calC_0$ and $\calC_n$ cocertify.
\end{prop}

\begin{proof} By Proposition \ref{prop:seqcertifyexist} there is some alignment $\vec{\calC^x}$ for the sequence so that $\calC_0$ and $\calC^x_n$ cocertify.  By Proposition \ref{prop:seqbubblepass} any other alignment $\vec{\calC^y}$ can be obtained from $\vec{\calC^x}$ by a sequence of ancestral bubble passes and eyeglass moves.  Lemma \ref{lemma:passcocert} shows that then $\calC^x_n$ and $\calC^y_n$ cocertify.  Hence $\calC_0$ and $\calC^y_n$ cocertify.
\end{proof}

\begin{cor} \label{cor:seqcertify} Suppose $(S^3, T)$ is a genus $g$ Heegaard splitting of $S^3$ and \[
\calC_0 \xrightarrow{\calD_0} \calC_1 \xrightarrow{\calD_1} \calC_2 \xrightarrow{\calD_2} ... \xrightarrow{\calD_{n-1}}\calC_n\]  is a sequence of aligned chamber complex decompositions of Heegaard split chamber complexes supporting $T$.
If $S$ and $S'$ are each incompressible spheres in possibly different chambers of the sequence, then those Heegaard split chamber complexes cocertify.  \qed
\end{cor} 

\bigskip

Unfortunately, declaring every disky ball to be a goneball, as is done throughout this Section, erases too much information.  A critical example is this: consider a sequence of two aligned chamber complex decompositions of Heegaard split chamber complexes  
\[\calC_0 \xrightarrow{\calD_0} \calC_1 \xrightarrow{\calD_1} \calC_2 \]
and suppose that a chamber $C_1$ of $\calC_1$ is a handlebody that is not disky in the first decomposition.  We know from Proposition \ref{prop:Heegdisky2} that the Heegaard splitting of $C_1$ is non-trivial.  Now suppose that the disks of $\calD_1$ that are incident to the handlebody $C_1$ are a complete set of meridians for $C_1$.  The result is a ball chamber of $\calC_2$ that has non-trivial Heegaard splitting.  As shown in the proof of Proposition \ref{prop:toyexist}, this is (under the inductive Assumption \ref{ass:inductive}) enough information to determine an eyeglass equivalence class $(S^3, T) \to (S^3, T_g)$.  But if we take all disky balls to be goneballs, this ball would be absorbed because it is disky under the decomposition $\calC_1 \xrightarrow{\calD_1} \calC_2$.  Thus we lose the information it contains.

Motivated by this example, we will use a more complicated rule to declare that, under certain conditions, a disky ball should {\em not} be declared a goneball.  The rule will focus on how handlebodies are treated in a disk decomposition.  The critical step is to be more restrictive in how we allow disks to be aligned.

\section{Preferred alignment and flagged chamber complexes} \label{sect:prefdisk}

Return now to the general case, which we briefly review:  $M = A \cup_T B$ is a Heegaard splitting of a compact $3$-manifold; $\calC$ is a Heegaard split chamber complex in $M$ that supports $T$; $\calD$ is an aligned disk set in $\calC$; $\hat{\calC}_{\calD}$ is the chamber complex obtained by surgery on $\calD$; and $\calC_{\calD}$ is the Heegaard split chamber complex obtained by using Rule \ref{rule:diskyHeeg} to declare goneballs.  Thus \[\calC \xrightarrow{\calD} \calC_{\calD}\] is a Heegaard split chamber complex decomposition.

Because the disks are aligned, each chamber inherits a Heegaard splitting, as described before Definition \ref{defin:Heegdecomp}. We consider $3$-balls to be (genus $0$) handlebodies and introduce the terminology:

\begin{defin}  \label{defin:empty} A Heegaard split handlebody chamber in $\calC$ is {\em empty} \index{Empty chamber} if the Heegaard splitting is trivial.  If the Heegaard splitting is non-trivial, the handlebody chamber is called {\em occupied}.\index{Occupied chamber}  A {\em flagged} chamber complex is a Heegaard split chamber complex in which each handlebody chamber is labelled either empty or occupied. 

Two flagged chamber complexes are the same as flagged chamber complexes if they are the same as chamber complexes and the flagging of each handlebody chamber is the same.  The Heegaard splittings of any given chamber are not necessarily isotopic, nor even of the same genus.  \index{Flagged chamber complex}
\end{defin}

\begin{defin} \label{defin:prefdisk}  \index{Preferred alignment} The alignment of a disk set $\calD$ in $\calC$ is a {\em preferred alignment} if, in each chamber $C$ of $\calC$, it has these properties:
\begin{enumerate}
\item If any remnant of $C$ is {\em not} a disky handlebody, then each disky handlebody remnant is empty.
\item If every remnant of $C$ is a disky handlebody, then, per Proposition \ref{prop:remnant}, $C$ is a handlebody.  In this case:
\begin{enumerate}  
\item If $C$ is empty, so is every remnant
\item If $C$ is occupied, then exactly one remnant of $C$ is occupied.
\end{enumerate} 
\end{enumerate}
\end{defin}

\begin{cor} \label{cor:prefdisk}  Suppose $\calD$ is a disk set that is in preferred alignment in a flagged chamber complex $\calC$.  Suppose $\calC$ has no occupied handlebody chambers.  Then, under the decomposition 
\[\calC \xrightarrow{\calD} \calC_{\calD}\] each handlebody chamber in $\calC_{\calD}$ is empty if and only if it is disky. 
\end{cor}

\begin{proof}  If a handlebody chamber in $\calC_{\calD}$ is not disky, then it is occupied per Proposition \ref{prop:Heegdisky2}, so the interest is in the other direction.  Suppose $C'$ is a disky handlebody chamber in $\calC_{\calD}$, a remnant of a chamber $C$ in $\calC$.  By hypothesis, $C$ is not an occupied handlebody so, per Definition \ref{defin:prefdisk}(2a), if every remnant of $C$ is a disky handlebody then every remnant, including $C'$, is empty.  On the other hand, if not every remnant of $C$ is a disky handlebody then per \ref{defin:prefdisk}(1) $C'$ is empty.  Thus in every case $C'$ is empty, as required.  
\end{proof}

We will first show that any disk set has a preferred alignment.  A critical fact from Heegaard theory is that any Heegaard splitting of a handlebody is standard, that is it is a stabilization of the trivial splitting. (See, for example, the discussion at the end of Section \ref{sect:sequence} or, in more detail, the proof of Proposition \ref{prop:occupycertify} below.)  In particular, if $H = A_H \cup_{T_H} B_H$ is a Heegaard split handlebody with non-trivial splitting then there is a bubble $\frb$ for the splitting so that if the bubble is destabilized (for example by replacing $\frb$ with a ball containing just a properly embedded equatorial disk), the splitting becomes trivial.  That is, the splitting surface becomes parallel to $\bdd H$.  Such a bubble will be called a {\em maximal bubble} in $H$. \index{Maximal bubble}

\begin{lemma}  \label{lemma:doubbub} Suppose \[\calC \xrightarrow{\calD} \calC_{\calD}\] is a Heegaard split chamber complex decomposition and $H = A_H \cup_{T_H} B_H$ is an occupied disky handlebody remnant of a chamber $C \in \calC$, with $C = A_C \cup_{T_C} B_C$.  Then there is an isotopy of $T_H$ in $H$ so that afterwards
\begin{itemize}
\item There is a disk $E \subset T_H$ so that $T_H - E \subset T_C$ and  
\item  There is a maximal bubble $\frb$, disjoint from $E$, in the splitting of $H$ that is also a bubble in the splitting of $C$.
\end{itemize}
\end{lemma}

\begin{proof}  With no loss assume $C$ and hence $H$ are $A$-chambers.  We briefly set terminology and review the results of Heegaard split chamber complex decomposition from Subsection \ref{subsec:diskdecomp}.  Let $\Sigma_C$ be a spine for $B_C$ consisting of the union of $\bdd C$ and a graph $\gamma_C \subset C$ incident to $\bdd C$ only in its ends.  We may as well focus on those disks in $\calD$ that are incident to $C$ and retreat to denoting these as $\calD$.  Since $\calD$ is aligned, each disk $D \in \calD$ intersects the spine $\Sigma_C$ only in $\bdd D$; the interior of $D$ lies either in $C$ or in a chamber adjacent to $C$.  In particular, $\calD$ is disjoint from the graph $\gamma_C$.  Except for scars left on $\bdd H$ by $\calD$, $\bdd H \subset \bdd C$.  If $D \in \calD$ leaves an external scar on $\bdd H$ then $D$ lies in $C$; if it leaves an internal scar then $\inter(D)$ lies in an adjacent chamber.  There is a spine $\Sigma_H$ for $B_H$ which consists of the union of $\bdd H$ with two graphs in $H$ (whose union we denote $\gamma_H$): the graph $\gamma_C \cap H$ and a graph $\gamma_I \subset H$ determined by the internal scars in $\bdd H$ and goneballs in the interior of $H$.  Each edge in $\gamma_I$ is dual to a disk in $\calD$ whose interior lies outside $C$.  More specifically, each end of an edge of $\gamma_I$ lies either on an internal scar in $\bdd H$ or on a vertex in $\gamma_I \cap \inter(H)$ that corresponds to a  goneball.  Because the compression body $B_H$ is connected, each component of $\gamma_H$ has at least one end on $\bdd H$.

We consider increasingly complicated cases:
\medskip

{\em Case 1:} $H$ is a ball and has no internal scars or goneballs.  That is $\gamma_I = \emptyset$.

 \begin{figure}
\labellist
\small\hair 2pt
\pinlabel  $T_{C}$ at 50 130
\pinlabel  $B_{C}$ at 25 100
\pinlabel  $\gamma_H\subset\gamma_C$ at 135 240
\pinlabel  $\bdd C$ at 0 30
\pinlabel  scars at 125 0
\pinlabel  $E_{\bdd}$ at 125 18
\pinlabel  $E_A$ at 125 70
\pinlabel  $T_{H}$ at 330 130
\pinlabel  $E$ at 400 45
\pinlabel  $B_{H}$ at 350 52
\pinlabel  $\bdd \frb$ at 550 215
\endlabellist
    \centering
    \includegraphics[scale=0.6]{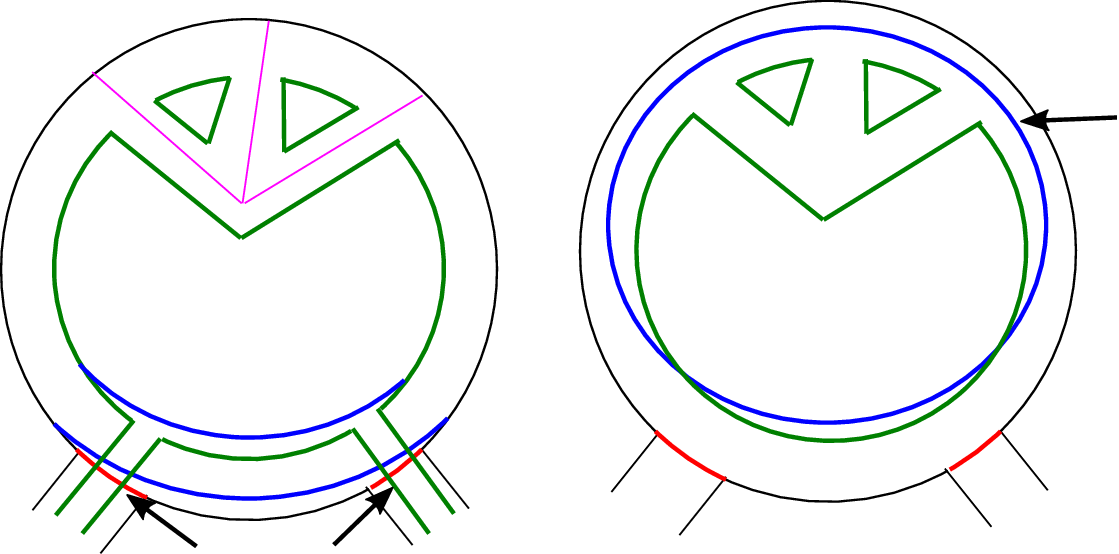}
     \caption{Case 1 of Lemma \ref{lemma:doubbub}} \label{fig:doubbub1}
    \end{figure}

Connect the (external) scars of $\calD$ on $\bdd H$ by a collection of arcs in $\bdd H$ which, by general position, can be taken to be disjoint from the ends of $\gamma_H$ in $\bdd H$. Viewing these arcs as edges and the scars as vertices of a graph in $\bdd H$, choose the arcs so that the resulting graph is a tree.  Let $E_{\bdd}\subset \bdd H$ be a (disk) regular neighborhood in $\bdd H$ of the union of the scars and arcs. Since that union is disjoint from the ends of $\gamma_H$ we can take $E_{\bdd}$ also to be disjoint from the ends of $\gamma_H$.  

The compression body $B_C$ is a regular neighborhood of the spine $\Sigma_C$.  Since $\calD$ is aligned and so disjoint from $\gamma_C$, $\calD$ intersects $B_C$ only in a collar of $\bdd B_C$.  Hence $B_C$  intersects each scar $\frs$ on $\bdd H$ in a collar of $\bdd \frs$ in $\bdd H$.  A regular neighborhood of $\Sigma_H$, to which we can isotope $B_H$, can then be constructed from $B_C$ in an obvious way: take the union of $B_C \cap H$ and a collar in $H$ of each scar on $\bdd H$.  See Figure \ref{fig:doubbub1}.  With this construction, 
$T_H$ lies in $T_C$ except for the collection of disks in $T_H$ that are parallel to the scars.  If we then take $E$ to be the disk in $T_H$ parallel to $E_{\bdd}$, we have $T_H - E \subset T_C$, verifying the first assertion of the lemma in this case.  

Since $E$ is disjoint from $\gamma_H$, the interior of $E$ can be pushed, rel $\bdd E$, into $A_H$.  Call the result $E_A$.  Since $H$ is a ball, the complement of $E_{\bdd}$ in $\bdd H$ is also a disk; let $E_B$ be a copy of the disk $\bdd H - E_{\bdd}$ pushed up into the compression body $B_H$ until its boundary lies on $\bdd E = \bdd E_A$ in $T_H$.  The union of $E_A$ and $E_B$ is a sphere bounding a ball in $H$ that contains all of $T_H - E$.  That ball is then a maximal bubble for $H$ that is also a bubble for the splitting of $C$, verifying the lemma in this case.  
\medskip

{\em Case 2:} $H$ is a ball that may contain internal scars and goneballs.  

Suppose $\frs$ is an internal scar on $\bdd H$, a scar  left by a disk in $\calD$ that lies in an adjacent chamber $C'$.  See Figure \ref{fig:doubbub2}. Since by assumption $H$ is a {\em disky} remnant, $\bdd \frs$ bounds a disk component $E_{\frs}$ of $\bdd C \cap \inter(H)$.  Since $H$ is irreducible, the disks $\frs$ and $E_{\frs}$ are isotopic rel $\bdd$ in $H$, and all ball components of $\hat{\calC}_{\calD}$ must be goneballs and therefore trivially split.  In particular the Heegaard splitting surface of the adjacent chamber $C'$ does not contribute any genus to $T_H$.   

Let $\bdd H_{\frs}$ be the copy of $\bdd H$ obtained by replacing each interior scar $\frs$ with the corresponding disk $E_{\frs} \subset \bdd C$.  Now apply the argument of Case 1 to the ball in $H$ bounded by $\bdd H_{\frs}$ instead of $\bdd H$, with the arcs in $\bdd H$ between external scars in that construction chosen to avoid also internal scars.  This completes the proof in this case.  (Note that with this construction, $T_H$ is disjoint from the ball in $H$ between the scar $\frs$ and the disk $E_{\frs}$.) 
\medskip

 \begin{figure}[ht!]
\labellist
\small\hair 2pt
\pinlabel  $E_{\frs}\subset\bdd C$ at 155 150
\pinlabel  $\bdd C$ at 125 28
\pinlabel  $C$ at 70 100
\pinlabel  $C'$ at 220 150
\pinlabel  $\frs$ at 245 160
\pinlabel  $T_{H}$ at 320 150
\endlabellist
    \centering
    \includegraphics[scale=0.5]{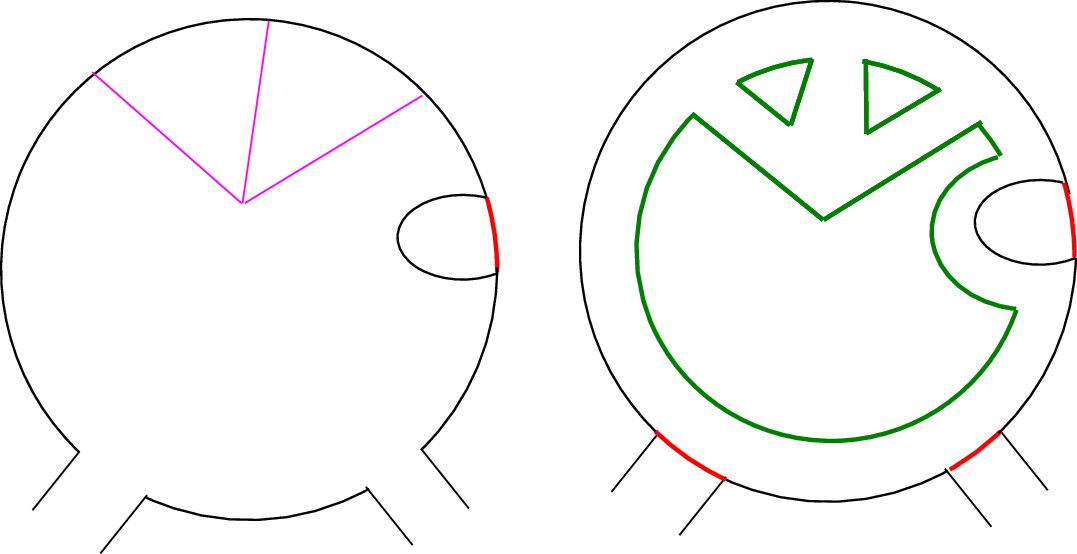}
     \caption{Case 2 of Lemma \ref{lemma:doubbub}} \label{fig:doubbub2}
    \end{figure}

{\em Case 3:} The general case.  

Define the disk $E \subset T_H$ parallel to a disk $E_{\bdd} \subset \bdd H$ as in Case 1, with arcs chosen in its construction to also be disjoint from internals scars.  Essentially the same argument as in Cases 1 and 2 shows that we can ensure that $T_H - E \subset T_C$.  So we proceed to the second assertion of the Lemma: there is a maximal bubble in $H$ that is disjoint from $E$ and is also a bubble for $C$.  

Let $\calD_+$ be a minimal complete collection of meridian disks in $H$.  That is, $\calD_+$ is a properly embedded collection of disks in $H$ so that the closed complement of a collar of $\calD_+$ in $H$ is a single ball $W$.  By general position, we may take $\bdd \calD_+ \subset \bdd H$ to be disjoint from $E$ and all internals scars, and per \cite{Sc1} we can take $\calD_+$ to be aligned with the Heegaard surface $T_H$. This means in particular that the graph $\gamma_H \subset \Sigma_H$ can be taken to be disjoint from the meridian disks $\calD_+$, so it lies entirely in the ball $W$.  

Let $E_{\bdd+}$ be the disk in $\bdd W$ obtained  by band summing $E_{\bdd}$ to each copy of $\calD_+$ in $\bdd W$. (For each disk $D \in \calD_+$ there are two copies $D_{\pm}$ in $\bdd W$.) See Figure \ref{fig:doubbub3}.  The complement in the sphere $\bdd W$ of $E_{\bdd+}$ is a disk that lies entirely in $\bdd H$.  In parallel fashion expand the disk $E_A$ (per Case 1 a copy of $E$ with interior pushed into $A$) to a disk $E_{A+}$ properly embedded in $A$ by band summing $E_A$ to each disk $D_{\pm} \cap A, D \in \calD_+$. Since $\bdd W - E_+$ is a disk in $\bdd H$, it follows as in Case 1 that $\bdd E_{A+} \subset T_H$ bounds a disk in $B_H$.  The union of the two disks is then a sphere parallel to $\bdd W$ and the ball $\frb$ it bounds in $W$ is disjoint from $E$ and contains all but the ends of $\gamma_H$.  So $\frb$ is a maximal bubble, as required.  
\end{proof}

 \begin{figure}[ht!]
\labellist
\small\hair 2pt
\pinlabel  $W$ at 100 110
\pinlabel  $\calD_+$ at 235 110
\pinlabel  $T_C$ at 40 110
\pinlabel  $T_H$ at 330 130
\pinlabel  $E_{A+}$ at 125 80
\pinlabel  $E_{\bdd+}$ at 120 30
\pinlabel  $E$ at 400 35
\pinlabel  $\bdd\frb$ at 430 90
\endlabellist
    \centering
    \includegraphics[scale=0.6]{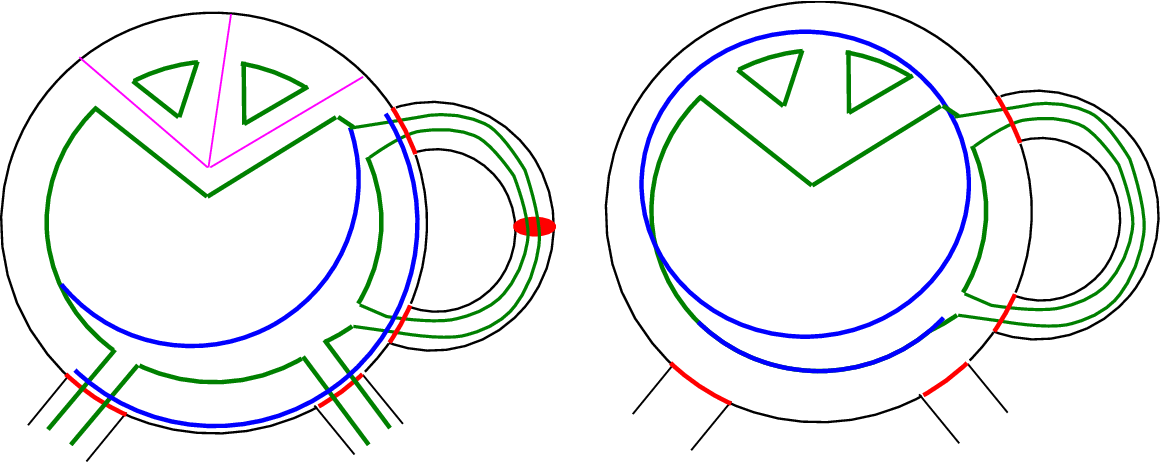}
     \caption{Case 3 of Lemma \ref{lemma:doubbub}} \label{fig:doubbub3}
    \end{figure}

\begin{prop} \label{prop:prefexist}
Suppose $\calC$ is a Heegaard split chamber complex in $M = A \cup_T B$ that supports $T$, and $\calD$ is a disk set in $\calC$.  Then $\calD$ has a preferred alignment.
\end{prop}

\begin{proof}  As discussed in Subsection \ref{subsec:diskdecomp}, $\calD$ can be aligned. The strategy will be to begin with any alignment and alter it to a preferred alignment.  

We first describe how to realign $\calD$ so as to achieve Definition \ref{defin:prefdisk}(1).  Suppose some remnant $C' \in \calC_{\calD}$ of a chamber $C \in \calC$ is not a disky handlebody, and let $H = \{H_1, ..., H_k\}$ be the collection of occupied disky handlebody remnants of $C$.  If $H = \emptyset$ there is nothing to prove.  Otherwise, let $\gamma = \{\gamma_1, ..., \gamma_k\}$ be a collection of arcs in $C$ with the following properties:
\begin{itemize}
\item Each $\gamma_i$ is a path in the Heegaard surface $T_C$ that is transverse to $\calD$ and runs from a point in $\inter(C')$ to a point in $\inter(H_i)$.
\item Among all such families of arcs, $|\gamma \cap \calD| \geq k$ is minimized.
\end{itemize}

Let $D \in \calD$ be the disk containing the closest intersection point on $\gamma_1$ to its endpoint in $H_1$.  Let $C''$ be the remnant that is on the other side of $D$.  Since $|\gamma \cap \calD|$ is minimized, $\gamma_1$ is otherwise disjoint from $D$ and $C'' \neq H_1$.  $H_1$ is assumed to be occupied; let $\frb$ be a maximal bubble for the splitting $H_1 = A_1 \cup_{T_1} B_1$ satisfying the conclusion of Lemma \ref{lemma:doubbub}.  That is, $\frb$ is also a bubble for the Heegaard splitting $C = A_C \cup_{T_C} B_C$.  

Take the endpoint of $\gamma_1$ in $H_1$ to be a point in the circle $c_{\frb} = \bdd \frb \cap T_C$, so the subpath $\gamma_H = \gamma_1 \cap H_1$ is an arc in $T_C$ from $D$ to the sphere $ \bdd \frb$.  Tube sum $D$ to $\bdd \frb$ along the path $\gamma_H$, creating a new disk $D'$.  Observe that $D'$ intersects $T_C$ in a single circle, namely the band sum along $\gamma_H$ of the circles $D \cap T_C$ and $\bdd \frb \cap T_C$.  Thus $D'$ is an aligned disk, and is isotopic to $D$ in $C$ because $\frb$ is a ball.  Moreover, the bubble $\frb$ lies on the same side of $D'$ as $C''$, not $H$.  Thus $D'$ is a realignment of $D$ that makes $H$ an empty handlebody.  If $C''$ is not a handlebody, or is a handlebody that was already occupied, then $k$ is reduced by 1.  If $C''$ was an empty handlebody then after the realignment $C''$ becomes an occupied handlebody, leaving $k$ unchanged.  But the path $\gamma_1$ now has its endpoint in $C''$ and no longer intersects the realigned $D'$, so $|\gamma_1 \cap \calD|$ (with $D$ realigned) is reduced by 1.  Continue the realignment of disks until $k = |\gamma \cap \calD| = 0$ as required.  

To establish Definition \ref{defin:prefdisk}(2a) note that if any remnant of $C$ is a disky occupied handlebody then Lemma \ref{lemma:doubbub} shows $C$ contains a bubble and so cannot have trivial splitting; it is occupied.  To establish \ref{defin:prefdisk}(2b), use the same argument as for \ref{defin:prefdisk}(1) above, but make an arbitrary choice of any remnant for $C'$. 
%
%
%
%
%
\end{proof}

\begin{defin} \label{defin:flagdecomp}
Suppose $\calC$ is a flagged chamber complex, $\calD$ is a disk set in $\calC$ with a preferred alignment, and $\hat{\calC}_{\calD}$ is the chamber complex obtained by surgery on $\calD$.  Let $\calC_{\calD}$ be the flagged chamber complex obtained by using Rule \ref{rule:diskyHeeg} to declare goneballs.  Then the Heegaard split chamber complex decomposition $$\calC \xrightarrow{\calD} \calC_{\calD}$$ is called a {\em flagged} chamber complex decomposition.  
\end{defin} 

For much of our argument, simply keeping track of the flagging of the chamber complexes (that is, whether a handlebody chamber is empty or occupied) will suffice. 

\begin{prop} \label{prop:flagprop} Suppose, for $\calD$ a disk set in $\calC$, $$\calC \xrightarrow{\calD} \calC_{\calD}$$ is a flagged chamber complex decomposition. 

\begin{enumerate}

\item Any ball chamber in $\calC_{ \calD}$ is occupied.  (That is, $\calC_{ \calD}$ is a Heegaard split chamber complex.)


\item If any remnant of a chamber $C$ in $\calC$ is an occupied disky handlebody then $C$ is an occupied handlebody and every remnant of $C$ in $\calC_{\calD}$ is a disky handlebody.

\item Suppose $\calD$ is given a different preferred alignment and  $\calC'_{\calD}$ is the resulting flagged chamber complex structure.  If $\calC'_{\calD}$ and $\calC_{\calD}$ differ as flagged chamber complexes, then $\calC$ contains an occupied handlebody chamber whose remnants in $\calC_{\calD}$ and $\calC'_{\calD}$ are all disky handlebodies.

%
%
\end{enumerate}
\end{prop}
 
\begin{proof}  

(1) If a ball chamber were not occupied, it would be empty so, by Definition \ref{defin:empty}, it would have trivial Heegaard splitting.  But then, by Rule \ref{rule:diskyHeeg}, it would have been a goneball.

(2) The proof is essentially the same as that of Corollary \ref{cor:prefdisk}:  If a remnant of $C$ is an occupied disky handlebody then from the contrapositive of Definition \ref{defin:prefdisk}(1) every remnant of $C$ is a disky handlebody.  Then the contrapositive of Definition \ref{defin:prefdisk}(2a) says that $C$ must be an occupied handlebody.

(3) In a flagged chamber complex decomposition, the flagging of the chambers after surgery, as described in Definition \ref{defin:prefdisk}, depends only on the disks $\calD$ and not on how they are aligned with the Heegaard surfaces in the chamber, except in satisfying Definition \ref{defin:prefdisk}(2b):  If $C$ is an occupied handlebody chamber of $\calC$ and each remnant of $C$ is a disky handlebody, then a choice of alignment is made to ensure that exactly one remnant (of possibly several) is occupied.  Hence if $\calC'_{\calD}$ and $\calC_{\calD}$ differ, so $\hat{\calC}_{\calD}$ and  $\hat{\calC'}_{\calD}$ differ, it must be because of a different such choice.  That is, in $\hat{\calC}_{\calD}$ one remnant is an occupied handlebody, and in $\hat{\calC'}_{\calD}$ a different remnant is.  Thus, per statement (2), $C$ is the required occupied handlebody chamber of $\calC$.
\end{proof}

Suppose, as described in the proof of Proposition \ref{prop:flagprop}(3), two flagged chamber complex decompositions $\calC \xrightarrow{\calD} \calC_{\calD}$ and $\calC \xrightarrow{\calD} \calC'_{\calD}$ differ only because, for some occupied handlebody chambers in $\calC$, different alignments of the disks $\calD$ in the chambers result in different remnants being occupied. 

\begin{defin} \label{defin:sibling}  The decompositions $\calC \xrightarrow{\calD} \calC_{\calD}$ and $\calC \xrightarrow{\calD} \calC'_{\calD}$ are  {\em sibling} decompositions, \index{Sibling decompositions} and the occupied handlebody chambers in $\calC$ are called {\em parent} chambers in the sibling decompositions.
\end{defin} 

Proposition \ref{prop:prefexist}. is an existence statement; we now move towards a parallel uniqueness statement.  

\begin{lemma} \label{lemma:bubinH}  Suppose $H = A \cup_T B$ is a Heegaard split handlebody.  Suppose $\frb^p$ and $\frb^q$ are not necessarily disjoint bubbles for $T$ of genus $p \leq q$ respectively.   Then
\begin{itemize}  
\item There is a genus $(q-p)$ bubble $\frb'$ for $T$ that is disjoint from $\frb^p$, and a homeomorphism $h: (H, T) \to (H, T)$ so that $h$ is the identity on $\bdd H$, and $\frb^q$ is the tube sum of $h(\frb^p)$ and $h(\frb')$.  
\item Under Assumption \ref{ass:inductive}, if $\genus(T) - \genus(H) \leq g-1$ then we may take $h$ to be an eyeglass move.  
\end{itemize} 
\end{lemma}

\begin{proof}  Pick an aligned disk set $\calD$ in $H$ so that $H - \eta(\calD)$ is a single ball.  This implies $|\calD| = \genus(H)$ and each disk is non-separating. Then surgery on $\calD$ gives a genus $g' = \genus(T) - \genus(H)$ Heegaard splitting of the ball which, by \cite{Wa}, is unique.   More explicitly, echoing the notation surrounding Figure \ref{fig:circlesinT}, there is a nested sequence of $g'$ bubbles $\frb_1 \subset ... \subset \frb_{g'}$ for $T$ so that each sphere $S_i = \bdd \frb_i$ intersects $T$ in a single circle $c_i$, and $T/\frb_{g'}$ is parallel to $\bdd H$. 

More specifically, align $\calD$ first with $T/\frb^q$ so that in the associated alignment of $\calD$ with $T$, $\frb^q$ is disjoint from $\calD$.  Then with no loss of generality we may assume that $\frb^q = \frb_q$.  For the first statement above, it then suffices to find a homeomorphism $h(H, T) \to (H, T)$ so that $h(\frb^p) = \frb_p$.  Denote this alignment of $\calD$ by $\calD^q$.  

Similarly, let $\calD^p$ be an alignment of $\calD$ found by first aligning with $T/\frb^p$ and then observing that the associated alignment of $\calD^p$ with $T$ is disjoint from $\frb^p$.  According to \cite{FS2} there is a sequence of bubble passes and eyeglass moves that will take $\calD^p$ to $\calD^q$ and, since each disk in $\calD$ is non-separating disk, it follows from Lemma \ref{lemma:nonseppass} that even the bubble passes are eyeglass moves. (To pass a bubble through a disk $D \in \calD$ choose the arc $\beta$ in the proof to be disjoint from all other disks in $\calD$.)  Thus there is an eyeglass move that takes $\calD^p$ to $\calD^q$.  Put another way, there is an eyeglass move $h_1: (H, T) \to (H, T)$ so that $h_1(\frb^p)$ lies in the ball $H - \eta(\calD^q)$.  By \cite{Wa} there is then a homeomorphism $h_2: (H, T) \to (H, T)$ so that $h_2(h_1(\frb^p)) = \frb_p$.  This completes the proof of the first statement, with $h = h_2 h_1$.

To prove the second statement, just note that $T$ induces a genus $g'$ splitting on $H - \eta(\calD^q)$, so if $g' \leq g-1$ then we may take $h_2$ to be an eyeglass move, so $h_2 h_1$ is also an eyeglass move.  
\end{proof}

\medskip

Throughout the remainder of this section $\calC$ is a flagged chamber complex in $M$ supporting a genus $g$ Heegaard splitting, with $\calD$ a disk set in $\calC$.  We further continue with Assumption \ref{ass:inductive}.  Let $\calD_{\pm}$ denote two preferred alignments of $\calD$ that result in the same flagged chamber complex decompositions $\calC \xrightarrow{\calD} \calC_{\calD_{\pm}}$.  That is, to repeat from Definition \ref{defin:empty}, $\calC_{\calD_{\pm}}$ are identical chamber complexes with identical flagging (handlebody chambers either empty or occupied), though the underlying Heegaard splittings of each chamber may differ.

\begin{prop} \label{prop:flagunique}  There is a sequence of preferred alignments of $\calD$ in $\calC$
\[\calD_- = \calD_0, \calD_1, \calD_2, \calD_3 ..., \calD_n = \calD_+\]
so that 
\begin{enumerate}[label=\alph*)]
\item for each $1 \leq i \leq n$, $\calD_i$ is obtained from $\calD_{i-1}$ by eyeglass moves and a single bubble pass, possibly through multiple disks of $\calD_{i-1}$.
\item The result $\calC_{\calD_i}$ of each flagged chamber complex decomposition $$\calC \xrightarrow{\calD_i} \calC_{\calD_{i}}$$ is the same flagged chamber complex as $\calD_{\pm}$.
\end{enumerate}
\end{prop}


%

\begin{proof}  The central theorem of \cite{FS2} is that there is such a sequence of alignments, but without the condition that each $\calD_i$ is in {\em preferred} alignment and that each resulting Heegaard split chamber complex $\calC_{\calD_i}$ has the same flagging.  We begin with such a sequence and alter it to achieve these conditions.  
\medskip

{\em Claim:} It suffices to find a sequence of (not necessarily preferred) alignments satisfying a) and b) so that each disky handlebody chamber of $\calC_{\calD}$ that is empty in $\calC_{\calD_-}$ is also empty in each $\calC_{\calD_i}$.  
\medskip

{\em Proof of claim:} We will {\em assume we have found such a sequence of alignments} and show how the proof of the proposition follows.

First observe that by Proposition \ref{prop:Heegdisky2} any handlebody chamber of $\calC_{\calD}$ that is not disky is occupied regardless of alignments, so the only difference in possible flaggings of the $\calC_{\calD_i}$ is in the flagging of disky handlebodies.   Since $\calC_{\calD_-}$ has a preferred alignment, it satisfies Definition \ref{defin:prefdisk}(1), namely that if any remnant of $C$ is not a disky handlebody, then each disky handlebody remnant is empty.  But by the assumption, each disky handlebody remnant is then empty in $\calC_{\calD_i}$, so each $\calC_{\calD_i}$ also satisfies Definition \ref{defin:prefdisk}(1).  

Similarly, if each remnant of a chamber $C$ is a disky handlebody (so $C$ is a handlebody, per Proposition \ref{prop:remnant}), then Definition \ref{defin:prefdisk}(2a) says that if $C$ is empty, every remnant is empty in $\calC_{\calD_-}$ and so, by assumption, also in each $\calC_{\calD_i}$.  Thus each $\calC_{\calD_i}$ satisfies Definition \ref{defin:prefdisk}(2a).  On the other hand, if $C$ is occupied, Definition \ref{defin:prefdisk}(2b) says that exactly one remnant, say $C' \in \calC_{\calD_-}$ is occupied, so the others are all empty.  By assumption, all remnants of $C$ in each $\calC_{\calD_i}$, except possibly $C'$, are then empty.  But since $C$ is occupied it has non-trivial splitting, that is the genus of its Heegaard splitting surface is greater than that of its boundary.  So decomposition cannot leave only trivially split handlebodies.  Thus some remnant of $C$ must be empty in each $\calC_{\calD_i}$, and $C'$ is the only possibility.  Thus exactly one remnant of $C$ (namely $C'$) in each $\calC_{\calD_i}$ is occupied, and so each $\calC_{\calD_i}$ also satisfies Definition \ref{defin:prefdisk}(2b), with the same flagging as $\calC_{\calD_-}$.  Thus each $\calD_i$ is in preferred alignment, as required.  
\medskip

Having established the claim, we proceed to show that in fact there is a sequence of alignments satisfying a) and b) so that each disky handlebody chamber of $\calC_{\calD}$ that is empty in $\calC_{\calD_-}$ is also empty in each $\calC_{\calD_i}$.  This follows from a min-max argument that we now describe.

Let $H$ be the collection of disky handlebodies in $\calC_{\calD}$ that are designated as empty in $\calC_{\calD_-}$ and so by assumption, also in $\calC_{\calD_+}$.  Since these are flagged chamber complexes, no chamber in $H$ is a ball.

Consider any sequence of (not necessarily preferred) alignments, given by \cite{FS2}, that begins with $\calD_-$ and ends with $\calD_+$ so that each alignment in the sequence differs from the previous alignment by eyeglass moves before and after a single bubble pass, possibly through multiple disks. For each $0 \leq i \leq n$, let $\sigma_i \geq 0$ be the difference between the sum of all the genera of the Heegaard surfaces for $H$ in $\calC_{\calD_i}$ and the sum of the genera of $\bdd H$.   If we can find a sequence so that each $\sigma_i = 0$, then each Heegaard splitting would be trivial, so each chamber in $H$ would continue to be empty in each $\calC_{\calD_i}$.  Following the claim above, this would conclude the proof of the lemma.  

In search of a sequence of alignments for which $\sigma_i = 0$, let \[\calD_- = \calD_0, \calD_1, \calD_2, \calD_3 ..., \calD_n = \calD_+\] be one that minimizes, in lexicographic order, the pair $(m, x)$, where $m$ is the maximum of $\sigma_i$ throughout the sequence, and $x$ is the number of integers $i$ for which $\sigma_i = m$. 
The aim is to show that $m = 0$.  Towards a proof by contradiction, suppose $m \geq 1$ and let $i$ be the smallest integer for which $\sigma_i = m$.  We know that $1 \leq i \leq n-1$ since $\sigma_0 = \sigma_n = 0$.  

So we have
\begin{enumerate}
\item $\sigma_{i-1} < m$
\item $\sigma_i = m$
\item $\sigma_{i+1} \leq m$.
\end{enumerate}

The first statement implies that the realignment of $\calD_{i-1}$ to $\calD_i$ moves a bubble from a remnant $R_1 \notin H$ to a remnant $R_2 \in H$. The third statement means that in the realignment of $\calD_i$ to $\calD_{i+1}$, which moves a bubble from  remnant $R_3$ to remnant $R_4$, we cannot have both $R_3 \notin H$ and $R_4 \in H$.  

Suppose that $R_2 \neq R_3$.  In this case, the bubble in $R_3$ could be passed to $R_4$ before the bubble passes from $R_1$ to $R_2$.  This does not affect $\sigma_{i-1}$ or $\sigma_{i+1}$ but may change $\sigma_i$, say to $\sigma'_i$.  Since we cannot have both $R_3 \notin H$ and $R_4 \in H$, $\sigma'_i \leq \sigma_{i-1}$. (1) then implies $\sigma'_i < m$.  Thus we have reduced $x$ (or $m$ if always $j > i \implies \sigma_j < m$).  

Next suppose $R_2 = R_3$, and just call it $R$.   We have already seen that this remnant must be in $H$, and so a disky handlebody. Since bubbles can only be passed between remnants of the same chamber, we deduce that $R_1, R, R_4$ are all remnants of the same chamber of $\calC$. In particular, it is possible to directly pass a bubble from $R_1$ to $R_4$.   Let $g_1, g_4$ be respectively the genera of the bubble $\frb_1$ that is moved into $R$ from $R_1$ and the bubble $\frb_4$ that is moved from $R$ into $R_4$.  There are two cases to consider:

{\em Case 1:} $g_1 \leq g_4$

Let $D_1 \subset \bdd R$ be the disk through which $\frb_1$ is passed into $R$ from $R_1$ as it lies in $\calD_{i-1}$.  We proceed as though $D_1$ is the only disk through which $\frb_1$ is passed, so it is passed from one remnant of $C$ to an adjacent one; if it is passed through multiple chambers the argument only needs to be altered by taking multiple parallel copies of $D_1$, corresponding to how the thin tube from $R$ to $\frb_1$ passes through other disks in $\calD_{i-1}$ in the bubble pass.  Similarly let $D_4 \subset \bdd R$ be the disk (or parallel copies of the disk) in $\calD_{i-1}$ through which $\frb_4$ is passed into $R_4$.  (Possibly $R_1 = R_4$ and $D_1 = D_4$.)

Let $(D_s, \bdd D_s)  \subset (R_1, \bdd D_1)$ be a disk parallel to $D_1$ obtained by tube-summing $\frb_1$ to $D_1$.  Similarly, let $H_+$ be the handlebody obtained from $R$ by the same tube-summing, replacing $D_1$ with $D_s$.  Then $H_+$ contains both $\frb_1$ and $\frb_4$.  According to Lemma \ref{lemma:bubinH} there is an eyeglass move $h: H_+ \to H_+$ and a bubble $\frb'$ of genus $g_4 - g_1$ so that $\frb_4$ is the tube sum of $\frb'$ and $h(\frb_1)$.  Replace by an eyeglass move $D_1 \in \calD_{i-1}$ with $D'_1 = h(D_1)$.  See the highly schematic Figure \ref{fig:flagunique}.

 \begin{figure}[ht!]
\labellist
\small\hair 2pt
\pinlabel  $R_1$ at 10 250
\pinlabel  $R$ at 60 250
\pinlabel  $R_4$ at 110 250
\pinlabel  $D_s$ at -10 280
\pinlabel  $D_1$ at 45 280
\pinlabel  $D_4$ at 100 280
\pinlabel  $\frb_1$ at 15 317
\pinlabel  $R$ at 60 250
\pinlabel  $\calD_{i-1}$ at 60 205
\pinlabel  $\sigma_{i-1}$ at 60 175
\pinlabel  $\calD_{i}$ at 220 215
\pinlabel  $\sigma_{i}$ at 220 195
\pinlabel  $\sigma_{i}-g_4$ at 220 155
\pinlabel  $\calD_{i+1}$ at 380 205
\pinlabel  $\sigma_{i+1}$ at 380 175
\pinlabel  $\frb_4$ at 220 340
\pinlabel  $h(\frb_1)$ at 220 240
\pinlabel  $\calD'_{i-1}$ at 60 -10
\pinlabel  $\calD'_{i}$ at 220 -10
\pinlabel  $\calD'_{i+1}=\calD_{i+1}$ at 380 -10
\pinlabel  $D'_1$ at 70 95

\endlabellist
    \centering
    \includegraphics[scale=0.75]{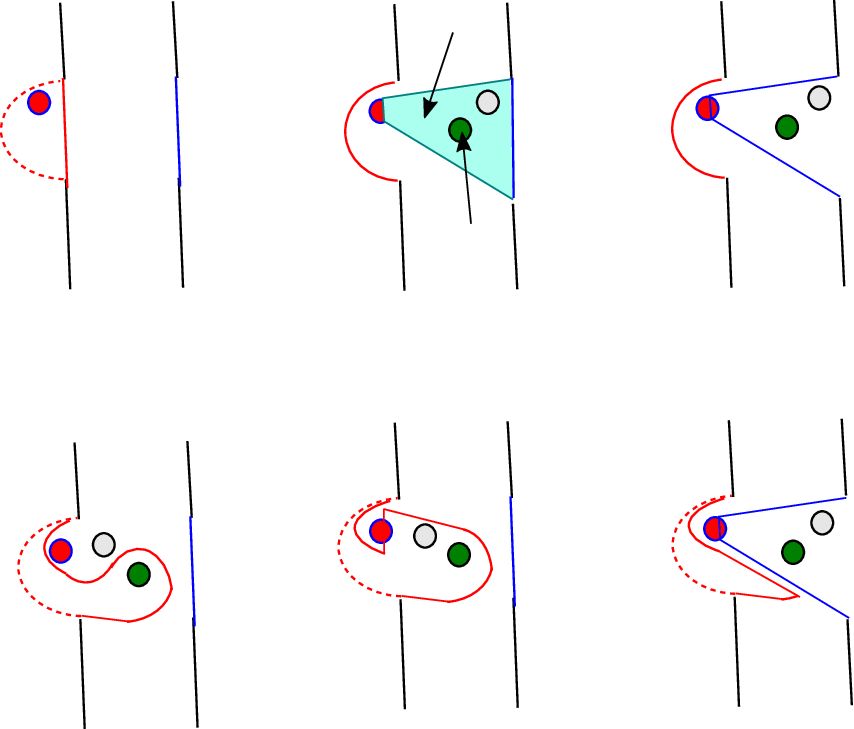}
     \caption{Case 1 
      when $R_2 = R_3 = R$} \label{fig:flagunique}
    \end{figure}

Do the transition from the alignment of $\calD_{i-1}$ to that of $\calD_{i+1}$ with these bubble passes instead of the given two:  
Tube sum $D'_1$ to $\frb'$, after which $\frb_4$ lies entirely in $R_1$.  This changes $\sigma$ to $\sigma_{i} - g_4$, since $R_1 \notin H$.
Then pass the bubble $\frb_4$ directly from $R_1$ into $R_4$.   This realignment leaves $D_4$ aligned so that $R_4$ contains $\frb_4$.  The upshot of this replacement of $D_1$ with $D'_1$ in $\calD_{i-1}$ is that the first occurrence of $\sigma_i = m$ in the sequence is replaced with $m - g_4$. 
Thus $x$ is reduced by one (or $m$ is reduced, if always $j > i \implies \sigma_j < m$).

{\em Case 2:} $g_4 \leq g_1$

The argument is much the same, using $H_+$ as defined above, with the following alteration:  According to Lemma \ref{lemma:bubinH} there is a homeomorphism $h: H_+ \to H_+$ and a bubble $\frb'$ of genus $g_1 - g_4$ so that $h(\frb_1)$ is the tube sum of $\frb'$ and $\frb_4$. 

Do the transition from the alignment of $\calD_{i}$ to that of $\calD_{i+1}$ with these bubble passes instead of the given two:  First replace $D_1$ with $h(D_1)$.  After this realignment 
$\frb_4 \subset h(\frb_1)$ lies entirely in $R_1$.  Next move $\frb_4$ directly to $R_4$.  This move does not raise $\sigma = \sigma_{i-1}$ and may lower it by $g_4$ if $R_4 \notin H$.  Next move $\frb'$ into $R$, returning the alignment to $\calD_{i+1}$.  This transition replaces the first occurrence of  $\sigma_i = m$ in the sequence with a term no larger than $\sigma_{i-1}$.   Thus again
$x$ is reduced by one (or $m$ is reduced, if always $j > i \implies \sigma_j < m$).

In any case we have contradicted the assumption that $(m, x)$ is minimal.  We conclude that $m = 0$, as required.
\end{proof}

\begin{defin} A flagged chamber complex is tiny if it is a tiny chamber complex, as in Definition \ref{defin:tiny1}, and, in the case that $F(\calC) \neq \emptyset$, each of the designated handlebodies is empty.
\end{defin}

In particular, if $F(\calC)$ is a single component dividing $M$ into two handlebodies, so either of them can play the role of designated handlebody, the flagged chamber complex is tiny unless {\em both} handlebodies are occupied.  

 \begin{prop}[Tinyness pulls back] \label{prop:tiny2} Suppose $$\calC \xrightarrow{\calD} \calC_{\calD}$$ is a flagged chamber complex decomposition.  If  $\calC_{ \calD}$ is tiny, so was $\calC$.
\end{prop}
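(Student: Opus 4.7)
My plan is to reduce the flagged statement to the unflagged Proposition \ref{prop:tiny1} and then use consistency rules (\ref{consistent:rule1}) and (3) of Definition \ref{defin:consistent} to promote tinyness back to the flagged setting.

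First, suppose $\calC_{\calD}$ is tiny as a flagged chamber complex, so there is a chamber $C'$ of $\calC_{\calD}$ with $M - C' = \bigsqcup_i W'_i$ a disjoint union of handlebody chambers, each empty. Consistency rule (\ref{consistent:rule1}) says an empty chamber must itself be a disky handlebody, so each $W'_i$ is disky. This is exactly the hypothesis of Proposition \ref{prop:tiny1}, and applying it yields that the underlying unflagged $\calC$ is tiny: there is a chamber $C$ of $\calC$ with $M - C = \bigsqcup_j X_j$ a disjoint union of handlebody chambers.

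Next I would pin down the remnant correspondence. The proof of Proposition \ref{prop:tiny1} shows that, after the isotopy which absorbs internal scars, $C$ is obtained from $C'$ by removing the regular neighborhood of a graph lying in $C'$; in particular $C \subset C'$, and the ``body'' of $C'$ prior to the $2$-handle attachments in the remnant description lies in $C$. Hence $C'$ is a remnant of $C$ and of no other chamber of $\calC$; symmetrically the $W'_i$'s (along with any goneballs inside them) are remnants of the various $X_j$'s.

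Finally I argue each $X_j$ is empty in $\calC$. Suppose for contradiction that some $X_j$ is occupied. By consistency rule (3), at least one remnant of $X_j$ in $\calC_{\calD}$ must be occupied. But those remnants lie among the $W'_i$'s, all of which are empty by hypothesis. This contradiction forces every $X_j$ to be empty, so $\calC$ is tiny as a flagged chamber complex.

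The main technical obstacle is tracking the remnant correspondence cleanly: one needs the unique occupied chamber $C'$ of $\calC_{\calD}$ to be a remnant of $C$ rather than of some $X_j$, and this is what the isotopy argument in the proof of Proposition \ref{prop:tiny1} supplies. The degenerate cases at the end of that proof ($F_{\calD}$ a single separating surface, or $F_{\calD} = \emptyset$) are handled in the same style, combining the flagged hypothesis on $\calC_{\calD}$ with rule (\ref{consistent:rule1}) to extract the empty-handlebody conclusion on $\calC$.
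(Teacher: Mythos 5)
Your proof is correct and follows essentially the same route as the paper's: use Rule~(\ref{consistent:rule1}) to deduce that the empty handlebody chambers of $\calC_{\calD}$ are disky, invoke Proposition~\ref{prop:tiny1} to get tinyness of $\calC$ as an unflagged complex, then use the remnant correspondence from that proof together with Rule~(3) to show the handlebody chambers of $\calC$ must be empty. The extra care you take in pinning down that $C'$ is a remnant of $C$ (not of any $X_j$) is a reasonable elaboration, but the underlying argument is the same one the paper gives.
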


\begin{proof} This is essentially a restatement of Proposition \ref{prop:tinyHeeg}.
\end{proof}

\begin{cor} \label{cor:tiny} Suppose 
 \[\calC_0 \xrightarrow{\calD_0} \calC_1 \xrightarrow{\calD_1} \calC_2 \xrightarrow{\calD_2} ... \xrightarrow{\calD_{n-1}}\calC_n\] 
  is a sequence of flagged chamber complex decompositions.  Then if $\calC_0$ is not tiny, neither is $\calC_n$.  In particular, $F(\calC_n) \neq \emptyset$.  
\end{cor}

\section{Occupied handlebodies and certification} \label{sect:certify}

In this section we demonstrate that the principal results from Section \ref{sect:sequence} continue to apply in the context of flagged chamber complexes, and with a broader version of certification.   Continue with Assumption \ref{ass:inductive}, that $G(S^3, T') = \calE$ whenever $\genus(T') \leq g-1$.

\begin{prop} \label{prop:occupycertify} Let $\calC$ be a flagged chamber complex for $S^3$ that supports the genus $g$ Heegaard splitting $(S^3, T)$.
Suppose $C = A_C \cup_{T_C} B_C$ is an occupied handlebody chamber of $\calC$.
\begin{enumerate}
\item  There is a reducing sphere $S$ for the Heegaard surface $T_C$ and a corresponding homeomorphism $h_C:(S^3; T) \to (S^3; T_g)$ so that $h_C(S) \in \{S_i, i = 1, ..., g-1\}$.  Moreover, the eyeglass equivalence class of $h_C$ is independent of $S$.
\item If $C'$ is another occupied handlebody chamber of $\calC$ then $h_C$ and $h_{C'}$ are eyeglass equivalent.
\item If $S$ is an incompressible sphere in a chamber of $\calC$ then $h_C$ is eyeglass equivalent to the homeomorphism $h_S$ given in Proposition \ref{prop:toyexist}.
\item Suppose $\calD$ is a 
 disk set in $\calC$ in preferred alignment, and $\calC \xrightarrow{\calD} \calC_{\calD}$ is the corresponding flagged chamber complex decomposition. 
Suppose all remnants of $C$ are disky handlebodies and $C'$ is the chamber  in $\calC_{\calD}$  that, per Definition \ref{defin:prefdisk}, is the unique remnant of $C$ that is occupied.  Then $h_C$ and $h_{C'}$ are eyeglass equivalent.
\item For any $\tau \in G(S^3, T)$ and $h_{\tau(C)}$ similarly defined, $h_{\tau(C)}\tau \sim h_C.$
\end{enumerate}
\end{prop}

\begin{proof}  We prove each statement in turn:
\begin{enumerate}
\item  Since $C$ is an occupied handlebody, $T_C$ is a non-trivial Heegaard splitting.  That is, $b = \genus(T_C) - \genus(\bdd C) \geq 1 $.  Following Lemma \ref{lemma:bubinH} there is a genus $b$ bubble $\frb$ in $T_C$ and, so long as $b < g$, $\frb$ is unique up to eyeglass moves.  

We have \[g = \genus(T) \geq \genus(T_C) = b + \genus(\bdd C)\] so indeed $b<g$ unless $C$ is a ball and $T_C$ is genus $g$.  But this is impossible, for  the complement of $C$ in $S^3$ would then be a trivially split ball, and so a goneball, so $\bdd C$ could not be a component of $F$.  We conclude that $\frb$ is unique up to eyeglass moves and take $\bdd \frb$ for the required reducing sphere.
\bigskip

\item This follows from Lemma \ref{lemma:preexistunique}.
\bigskip

\item This again follows from Lemma \ref{lemma:preexistunique}, since the handlebody chamber $C$ cannot be the same as the chamber with incompressible sphere.  
\bigskip

\item For the non-trivial Heegaard splitting of handlebody $C'$, let $\frb'$ be a bubble whose boundary defines $h_{C'}$, as we have just described.  According to Lemma \ref{lemma:doubbub}, $\frb'$ can be placed so that it is also a bubble for $T_C$.   According to Lemma \ref{lemma:bubinH} an eyeglass move will put $\frb'$ inside the bubble $\frb$ whose boundary defines $h_C$.  The result then again follows from Lemma \ref{lemma:preexistunique}.
%
\item This follows from the second statement in Proposition \ref{prop:toyexist}.  
\end{enumerate}
 \end{proof}

Motivated by Proposition \ref{prop:occupycertify}(1-3), we have

\begin{defin} \label{defin:flagcertify} A flagged chamber complex $\calC$ in $S^3$ {\em certifies} \index{Certify; certificate} if some chamber contains an incompressible sphere or some chamber is an occupied handlebody.  An incompressible sphere in such a chamber, or the boundary of a bubble in an occupied handlebody, is called {\em a certificate issued by $\calC$}.  
\end{defin}

\begin{cor} \label{cor:heegcertify} Let $\calC$ be a flagged chamber complex for $S^3$ that supports the genus $g$ Heegaard splitting $(S^3, T)$. If $\calC$ certifies then all homeomorphisms $(S^3, T) \to (S^3, T_g)$ determined by certificates in $\calC$ are eyeglass equivalent. 
\end{cor} 

Let $h_{\calC}: (S^3, T) \to (S^3, T_g)$ denote any homeomorphism given by such a certificate; by Corollary \ref{cor:heegcertify} $h_{\calC}$ is well defined up to eyeglass equivalence.  If $\calC'$ is another flagged chamber complex that certifies and the homeomorphisms $h_{\calC}$ and $h_{\calC'}$ are eyeglass equivalent, then we say that $\calC$ and $\calC'$ {\em cocertify} and write $\calC \sim \calC'$ . \index{cocertify}

\begin{cor}  \label{cor:calCcert} If $\calC$ certifies and $\tau \in G(S^3, T)$ then $h_{\tau(\calC)}\tau \sim h_{\calC}$. 
\end{cor}

\begin{proof} This follows from \ref{prop:occupycertify}(5).
\end{proof}

\bigskip

Suppose $S^3 = A \cup_T B$ is a genus $g$ Heegaard splitting of $S^3$, the Heegaard split chamber complex $\calC_0$ supports $T$, and we are given a sequence of 
flagged chamber complex decompositions
\[\vec{\calC}:\quad
\calC_0 \xrightarrow{\calD_0} \calC_1 \xrightarrow{\calD_1} \calC_2 \xrightarrow{\calD_2} ... \xrightarrow{\calD_{n-1}}\calC_n\]
Suppose further that both $\calC_0$ and $\calC_n$ certify.  

In analogy to Proposition \ref{prop:seqcertifyexist} we have.  

\begin{prop} \label{prop:flagcertifyexist}
For iteratively $0 \leq i \leq n$ there is a preferred alignment of each $\calD_i$ 
so that $\calC_0$ and $\calC_n$ cocertify.
\end{prop}

\begin{proof}  The proof is by induction on $n$.  
\medskip

{\em Initial Case: $n = 1$}

Following Proposition \ref{prop:prefexist}, take any preferred alignment of $\calD_0$ in $C_0$. 

One possibility is that in the flagged chamber complex decomposition $\calC_0 \xrightarrow{\calD_0} \calC_1$ there is an occupied handlebody chamber in $\calC_0$ for which every remnant in $\calC_1$ is a disky handlebody.  Then the result follows from Proposition \ref{prop:occupycertify}(4).  So assume that this is not the case.  Then by Proposition \ref{prop:flagprop}(2) a handlebody in $\calC_1$ is empty if and only if it is disky.  This implies that different choices of preferred alignment of $\calD_0$ 
have no effect on the flagging of $\calC_1$.  In particular, every disky ball in $\hat{\calC}_1$ is a goneball.  

Consider a chamber $C = A_C \cup_{T_C} B_C$ in $\calC_0$ that certifies.  There is a (reducing sphere) certificate $S \subset C$ for $T_C$ that is either incompressible in $C$ or, if $C$ is an occupied handlebody, bounds a bubble $\frb$ for $T_C$.  In either case there is a realignment of $\calD_0$ in $\calC$ that is disjoint from $S$, in the former case by the proof of Proposition \ref{prop:seqcertifyexist} and in the latter case by considering the Heegaard splitting $\calC_0/\frb$. 
We first need to show that this realignment of $\calD_0 \cap C$ can be done in a way that is still a preferred alignment and then show that the homeomorphisms $h_S$ and $h_{S'}$ determined by $S$ and some certificate $S'$ for $\calC_1$ are eyeglass equivalent.  

Consider the remnant $C'$ of $C$ in $\hat{\calC}_1$ that contains $S$. 
\medskip

{\em Subcase 1:} $C'$ is not a disky handlebody.  (In particular, $C'$ is not a goneball, and so $C'$ remains a chamber in $\calC_1$.)

For example, this is always the case if $S$ is incompressible in $C$ and so is either incompressible in $C'$ or bounds a ball containing components of $F$. 

The disk set $\calD_0 \cap C$ can be given a preferred alignment by emptying any remnant of $C$ that is a disky handlebody by passing bubbles to $C'$, see Definition \ref{defin:prefdisk}.  If $S$ is incompressible in $C'$ then it is a certificate for $\calC_1$ and we are done.   If there is a certificate $S'$ for $\calC_1$ in a chamber of $\calC_1$ other than $C'$, then $S$ and $S'$ are disjoint, so $h_S$ and $h_{S'}$ are eyeglass equivalent by Lemma \ref{lemma:preexistunique}.  This implies $h_{\calC_1} \sim h_{S'} \sim h_S \sim h_{\calC_0}$ and again we are done.

So we are reduced to the case where $C'$ is the only certifying chamber in $\calC_1$ and $S$ bounds a bubble $\frb$ in $C'$.  If $C'$ contains an incompressible sphere, then aligning it with the Heegaard surface for $C'$ in $\calC_1/\frb$ gives a certificate $S'$ for $\calC_1$ that is disjoint from $\frb$ and we are done as above.  If $C'$ does not contain an incompressible sphere then, since it contains a certificate for $\calC_1$, it must be an occupied handlebody in which $S'$ bounds a bubble.  By Lemma \ref{lemma:bubinH} the spheres $S$ and $S'$ can be made disjoint by an eyegelass move.  By Lemma \ref{lemma:preexistunique} this again implies $h_{\calC_1} \sim h_{S'} \sim h_S \sim h_{\calC_0}$ .  This concludes the argument in this subcase.
\medskip

{\em Subcase 2:}  $C'$ is a disky handlebody.

In this case, to obtain a preferred alignment of $\calD_0$ in $\calC$ we may need to empty $C'$, see Definition \ref{defin:prefdisk}.  Recall that this is done by finding a maximal bubble in $C'$, one that is also a bubble for $T_C$, and tube summing its boundary to a possible series of disks in $\calD_0$.  Lemma \ref{lemma:bubinH} shows that such a maximal bubble can be found that contains $\frb$ within it, so $\frb$ ends up in a remnant of $C$ that is not a disky handlebody.  Then Subcase 1 applies.  This concludes the argument for Subcase 2 and hence the Initial Case $n = 1$.  

\medskip

{\em The inductive step, $n \geq 2$:}

Inductively suppose the result is known for any sequence of length $\leq n-1$. Then we may as well assume that $\calC_i$ does not certify for $1 \leq i \leq n-1$, else we could break the sequence into two sequences of smaller length.  In particular, in none of the decompositions does an occupied handlebody have all remnants disky handlebodies, for when that occurs in a flagged chamber complex decomposition both the occupied handlebody and one of the remnants certify, see Definitions \ref{defin:flagdecomp} and \ref{defin:prefdisk}.  So, just as in the case $n = 1$, the alignment of each $\calD_i$ is preferred if and only if it has this property: each handlebody remnant is empty if and only if it is disky.  As a result, different choices of preferred alignment throughout the decomposition sequence have no effect on the flagging of the flagged chamber complexes.  

Since $\calC_1$ does not certify, no chamber  in $\calC_1$ contains an incompressible sphere. Also, no occupied handlebody in $\calC_0$ has only handlebody remnants in $\calC_1$, since at least one such remnant would certify.   There is a certificate for $\calC_0$ in some chamber $C$ of $\calC_0$.  As in the case $n = 1$ we can find such a certificate $S$ and realign $\calD_0 \cap C$ so that the sphere $S$ is disjoint from $\calD_0$.   Then, regardless of whether $S$ is an incompressible sphere or bounds a bubble in $C$, $S$ must compress in the chamber of $\calC_1$ that contains it, so it bounds a bubble $\frb$ in that chamber.

Following further the case $n = 1$, the disks $\calD_0 \cap C$ can be further realligned so that all disky handlebody remnants of $C$ are emptied, by bubble passes  to a remnant $C'$ of $C$ that is not a disky handlebody, and this can be done in a way that leaves the bubble $\frb$ intact.  The result is that the first decomposition again has a preferred alignment, but now with $\frb$ lying intact in the remnant $C' \in \calC_1$.  Finally note that $C'$ cannot be a handlebody at all: by construction it is not a disky handlebody and if it were not disky it would be occupied and so it would certify.  

Consider the flagged chamber complex $\calC_1/{\frb}$.  Since $\frb$ does not lie in a handlebody, there is no change in flagging.  Via Proposition 
 \ref{prop:prefexist} there is a preferred alignment for the disks $\calD_1$ so that  \[\calC_1/\frb \xrightarrow{\calD_1} \calC_2/\frb\]
 is a flagged chamber complex decomposition.  Since $C'$ is not a handlebody, not every remnant of $C'$ in $\calC_2$ is a disky handlebody and, unless $n = 2$, $\calC_2$ does not certify, so in fact not every remnant is a handlebody at all.  By isotoping $*$ in $\calC_1/\frb$ (or, equivalently, passing the bubble $\frb$ in $\calC_1$) we can ensure that $*$ lies in a remnant of $C'$ in $\calC_2/\frb$ that is not a handlebody.  Continue in this manner until $*$ reaches $\calC_n/\frb$ and lies in a chamber $C''$ that is not a disky handlebody (but may be an occupied handlebody).  The proof now concludes as it did for of $n = 1$, Subcase 1.
 \end{proof}
%

%
%

Just as Proposition \ref{prop:flagcertifyexist} for flagged decompositions parallels Proposition \ref{prop:seqcertifyexist}, we now work towards finding a parallel of Proposition \ref{prop:seqcertifyunique} for flagged decompositions.  The argument is mostly just a reprise of the previous argument.  We begin with an analogue of Lemma \ref{lemma:singlecertunique}.

\begin{lemma} \label{lemma:singleflagunique}  Suppose $\calC_0$ is a flagged chamber complex in $S^3$ that supports the splitting $S^3 = A \cup_T B$, and $$\calC_0 \xrightarrow{\calD_0} \calC_{1}$$ is a flagged chamber complex decomposition.  

Let $\calD^x_0$ and $\calD^y_0$ be possibly different preferred alignments of the disk set $\calD_0$ in $\calC_0$, with resulting flagged chamber complexes $\calC^x_1$ and $\calC^y_1$ respectively.
Then if $\calC_0$ and $\calC_1^x$ cocertify, so do $\calC_0$ and $\calC_1^y$.
\end{lemma}

\begin{proof}  If any occupied handlebody chamber $C$ in $\calC_0$ is decomposed entirely into disky handlebody chambers in $\calC_1$, then from Proposition \ref{prop:occupycertify}(4) $\calC_0$ and $\calC_1$ cocertify in any preferred alignment and we are done.  So we will assume there is no such decomposition of a chamber.  Then by Proposition \ref{prop:flagprop}(2) a handlebody in $\calC_1$ is empty if and only if it is disky.  This implies that $\calC_1^x$ and $\calC_1^y$ have the same flagging.

In the case that $\calC_1^x$ certifies because a chamber contains an incompressible sphere, the proof is the same as that of Lemma \ref{lemma:singlecertunique}, with this augmentation: Following Proposition \ref{prop:flagunique}, the various bubble passes that are needed can be done in such a way that at every stage the disk alignments are preferred and give the same flagging.  

In the case that $\calC_1^x$ certifies because a chamber is an occupied handlebody, the proof is similar.  From \cite{FS2} there is a sequence of bubble passes and eyeglass moves that change the alignment $\calD^x_0$ to $\calD^y_0$. From Proposition \ref{prop:flagunique} this can be done so that in the sequence the alignments are always preferred and the flagging never changes.  So it suffices to consider the case of a single bubble pass.  With no loss of generality then assume that $\calC_1^x$ and $\calC_1^y$ differ by a single bubble pass.  If the bubble pass does not involve an occupied handlebody, there is nothing to prove.  So assume that $\calC_1^y$ is obtained from $\calC_1^x$ by passing a bubble $\frb$ out of an occupied handlebody chamber $C'$ of $\calC_1$.  

Since the flagging remains the same after the bubble passes out of $C'$, $\frb$ is not a maximal bubble for $C'$ in $\calC_1^x$.  Following Lemma \ref{lemma:bubinH} there is a bubble $\frb^y$ in $C'$ that is disjoint from $\frb$ and the tube sum of the two bubbles $\frb^x$ is a maximal bubble for $C'$ in $\calC_1^x$.  In particular, $\frb^y$ is a maximal bubble for $C'$ in $\calC_1^y$.  Since the spheres $S^x = \bdd \frb^x$ and $S^y = \bdd \frb^y$ are disjoint, it follows from Lemma \ref{lemma:preexistunique}
 that the homeomorphisms $h_{S^x}, h_{S^y}: (S^3, T) \to (S^3, T_g)$ are eyeglass equivalent, so $\calC_1^x$ and $\calC_1^y$ cocertify, as required.
\end{proof}

Continue with the hypotheses of Proposition \ref{prop:flagcertifyexist}:

\begin{prop} \label{prop:flagcertifyunique} No matter which preferred alignment occurs at each step in $\vec{\calC}$, the flagged chamber complexes $\calC_0$ and $\calC_n$ cocertify.
\end{prop}

\begin{proof}  

The case $n = 1$ is Lemma \ref{lemma:singleflagunique}, so assume $n \geq 2$.  We can further inductively assume that none of the $\calC_i, 1 \leq i \leq n-1$ certify, so each handlebody chamber is empty.  In this case, the argument proceeds essentially as in Proposition \ref{prop:seqcertifyunique}: Proposition \ref{prop:seqbubblepass} and Lemma \ref{lemma:passcocert} remain true for flagged chamber complex decompositions, using Proposition \ref{prop:flagunique} to ensure that throughout the argument all decomposing disks are in preferred alignment. Then the proof of Proposition \ref{prop:seqcertifyunique} suffices if $\calC_n$ contains an incompressible sphere.  If instead $\calC_n$ certifies because it contains an occupied handlebody, the proof concludes as does the proof of Lemma \ref{lemma:singleflagunique}.
\end{proof}

Following Proposition \ref{prop:flagcertifyunique} it is possible to make the following definition:

\begin{defin} \label{defin:flagseqcertify} Suppose $(S^3, T)$ is a genus $g$ Heegaard splitting of $S^3$, and \[\vec{\calC}:\quad
\calC_0 \xrightarrow{\calD_0} \calC_1 \xrightarrow{\calD_1} \calC_2 \xrightarrow{\calD_2} ... \xrightarrow{\calD_{n-1}}\calC_n\] is a sequence of flagged
chamber complex decompositions supporting $T$.  If any chamber complex $\calC_i$ certifies we say that the sequence $\vec{\calC}$ certifies and let $h_{\vec{\calC}}: (S^3, T) \to (S^3, T_g)$ be (the eyeglass equivalence class of) any homeomorphism given by a certifying flagged chamber complex in the sequence.  

Two flagged chamber complex sequences $\vec{\calC}$ and $\vec{\calC}'$ cocertify (written $\vec{\calC} \sim \vec{\calC}'$) if both sequences certify and $h_{\vec{\calC}}  \sim h_{\vec{\calC}'}.$
\end{defin}

For example, suppose $\tau \in G(S^3, T)$ and $\vec{\calC}$ is a flagged chamber complex decomposition sequence supporting $T$ as above.  There is a natural way to define a similar flagged chamber complex decomposition sequence $\tau(\vec{\calC})$, namely
\[\tau(\vec{\calC}):\quad
\tau(\calC_0) \xrightarrow{\tau(\calD_0)} \tau(\calC_1) \xrightarrow{\tau(\calD_1)} \tau(\calC_2) \xrightarrow{\tau(\calD_2)} ... 
\xrightarrow{\tau(\calD_{n-1})} \tau(\calC_n)\] 

\begin{cor} \label{cor:vecCcert} If $\vec{\calC}$ certifies and $\tau \in G(S^3, T)$ then $h_{\tau(\vec{\calC})}\tau \sim h_{\vec{\calC}}$. 
\end{cor}

\begin{proof} This follows immediately from Corollary \ref{cor:calCcert}. 
\end{proof}

\section{Guiding spheres} \label{sect:guiding}

Suppose $\calC$ is a flagged chamber complex in $S^3$ with defining surface $F = F(\calC)$, and $S \subset S^3$ is an embedded sphere transverse to $F$.  In this and following sections we intend to associate to each such sphere $S$ a sequence $\overrightarrow{(\calC, S)}$ of flagged chamber complex decompositions so that:

\begin{enumerate}
\item For $\calC$ not tiny and $S_t, 0 \leq t \leq 1$ a sweep-out of $S^3$ by spheres, there is a non-empty interval $(a, b) \subset I$ so that, for each $t \in (a, b)$,  $\vec{\calC}_{S_t}$ certifies.
\item For any $t, t' \in (a, b)$ the sequences $\vec{\calC}_{S_t}$ and $\vec{\calC}_{S_{t'}}$ cocertify.
\item If, at any point $\calC_i \xrightarrow{\calD_i} \calC_{i+1}$ in the sequence $\overrightarrow{(\calC, S)}$, a disk $E$, disjoint from $S$, is added to $\calD_i$, the resulting sequence of flagged chamber complex decompositions and the sequence $\vec{\calC}_{S}$ cocertify.
\end{enumerate} 

\bigskip

Let $F = F(\calC)$ be the defining surface of $\calC$ and, in the standard way, associate to $S$ and $F$ a tree $Y$, in which each vertex corresponds to a component of $S - F$ and each edge corresponds to a (circle) component of $S \cap F$.  An edge associated to a circle $c \subset S - F$ connects the two vertices that correspond to the components of $S - F$ that are incident to $c$.  
Then the leaves of $Y$ correspond to the disk components of $S - F$, and the outermost edges in $Y$ correspond to innermost circles in $S$ of $F \cap S$.  See Figure \ref{fig:Tree1}.

\begin{figure}[ht!]
\labellist
\small\hair 2pt
\endlabellist
    \centering
    \includegraphics[scale=0.6]{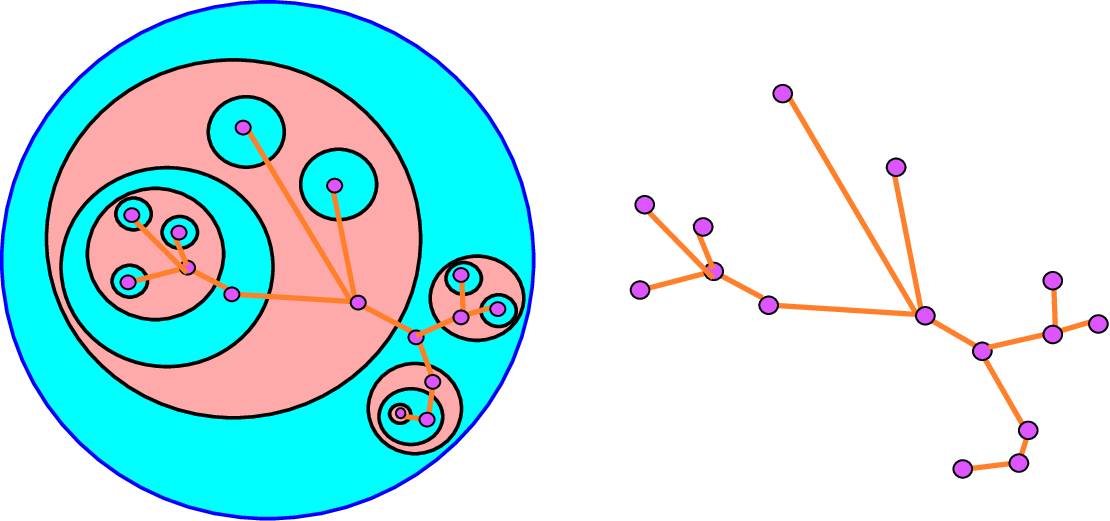}
     \caption{$S \cap F$ and its associated tree} \label{fig:Tree1}
    \end{figure}

\begin{defin} \label{defin:rhoe} For any edge $e \in Y$, with end vertices $v_{\pm}$, $Y - e$ consists of two trees $Y_{\pm}$, with $v_{\pm} \in Y_{\pm}$.  Let \[m_{\pm} = max\{d(v_{\pm}, v_\ell) | v_\ell \text{ a leaf in } Y \text{ lying in }Y_{\pm}\}\] and $\rho_Y(e) = min\{m_+, m_-\}$.  See Figure \ref{fig:Tree2}.
\end{defin}

\begin{figure}[ht!]
\labellist
\small\hair 2pt
\pinlabel  $e$ at 100 95
\pinlabel  $v_+$ at 130 70
\pinlabel  $v_-$ at 70 70
\pinlabel  $Y_-$ at 320 75
\pinlabel  $Y_+$ at 480 50
\endlabellist
    \centering
    \includegraphics[scale=0.6]{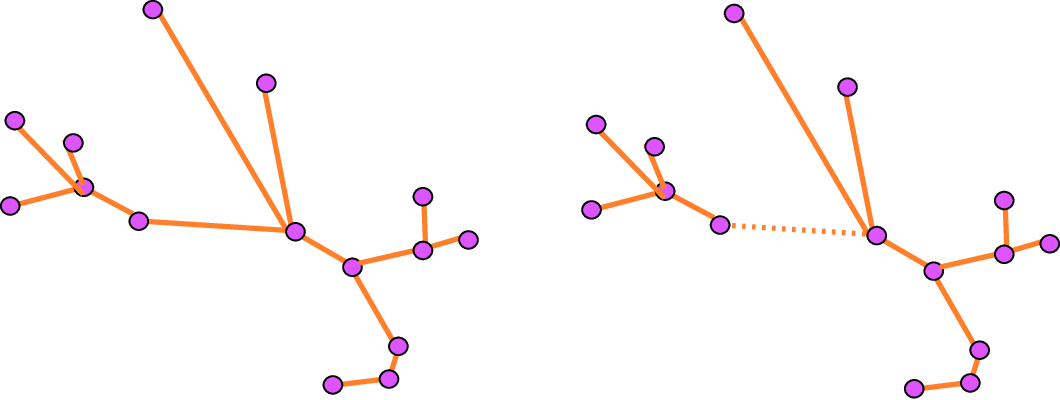}
     \caption{Edge $e$ and its trees $Y_{pm}$; $m_+ = 4, m=m_- = 2$ } \label{fig:Tree2}
    \end{figure}

For example, for $e$ an outermost edge, one of its incident vertices, $v_-$ say, is a leaf, so $Y_- = v_-$ and $\rho_Y(e) = m_- = 0$.   

\begin{lemma} \label{lemma:treetrim}  Suppose $e$ is an edge in a tree $Y$.
\begin{enumerate}
\item Suppose, with no loss of generality, that $\rho_Y(e) = m_- \leq m_+$ and $e'$ is an edge in $Y_-$.  Then $\rho_Y(e') < \rho_Y(e)$.  
\item Let $Y'$ be the tree obtained from $Y$ by trimming: that is, removing all leaves and outermost edges from $Y$.  For $e \in Y'$, $\rho_{Y'}(e) = \rho_Y(e) - 1$.
\item $0 \leq \rho_Y(e) \leq \lfloor \frac{\diamY - 1}{2} \rfloor$
\end{enumerate}
\end{lemma}

\begin{proof} We prove the items in turn:
\begin{enumerate}
\item Let $Y'_-$ be the component of $Y - e'$ that is contained in $Y_-$ and $v'_-$ the end of $e'$ in $Y'_-$.  For any leaf $v'_\ell$ of $Y'_-$ that lies in $Y$, the path from $v_-$ to $v'_\ell$ in $Y_-$ passes through $e'$, so \[\rho_Y(e) = m_- \geq d(v_-, v'_\ell) \geq d(v'_-, v'_\ell) + 1. \]
Since this is true for each leaf $v'_\ell$ of $Y'_-$, \[\rho_Y(e) \geq max\{d(v'_-, v'_\ell) | v'_\ell \text{ a leaf in } Y'_- \text{ lying in } Y\} + 1 \geq \rho_Y(e') +1 \]
\item Note that any leaf $v'_\ell$ in $Y'_+$ that lies in $Y'$ is necessarily incident to some outermost edges of $Y$, so $d(v_+, v'_\ell) \leq m_+ - 1$.  This implies that \[m'_+ = max\{d(v_+, v'_\ell) | v'_\ell \text{ a leaf in } Y'_+ \text{ lying in } Y'\} \leq m_+ - 1.\]  On the other hand, if $v_\ell$ is the most distant leaf from $v_+$ in $Y_+$ that lies in $Y$ then the outermost edge incident to $v_\ell$ is incident to a leaf $v'_\ell$ in $Y'_+$, so $m'_+ \geq m_+ -1$.  Together these imply $m'_+ = m_+ - 1$.  Similarly $m'_- = m_- - 1$.  Hence\[\rho_{Y'}(e) = min\{m'_+, m'_-\} = min\{m_+, m_- \} -1= \rho_Y(e) - 1,\] as required.

\item Repeat the process of trimming $\lfloor \frac{{\rm diam}(Y) - 1}{2} \rfloor$ times.  Each step reduces the diameter of the tree by 2 and, by conclusion (2), reduces the maximum of $\rho$ on any remaining edge by 1.  Thus it suffices to verify the case in which ${\rm diam}(Y) = 1$ or $2$, i. e. $Y$ is a single edge or a star graph, depending on the parity of ${\rm diam}(Y)$.  In both these cases, $\rho(e) = 0$ as required.  
\end{enumerate}
\end{proof}

Partition the collection of circles $S \cap F$ as follows: For $0 \leq i \leq \lfloor \frac{{\rm diam}(Y) - 1}{2} \rfloor$ let $\frc_i$ be the set of all circles for which the corresponding edge $e$ in $Y$ has $\rho_Y(e) = i$.  

\begin{lemma} \label{lemma:Didef}  Suppose $c \in \frc_i$.  Then $c$ bounds a disk $D$ in $S$ such that each circle $c' \subset \inter(D) \cap F$ lies in some $\frc_j, j<i$.
\end{lemma}

\begin{proof}  Let $Y$ be the tree corresponding to $S \cap F$ and apply Lemma \ref{lemma:treetrim}(1) to the edge in $Y$ corresponding to $c$.  The trees $Y_{\pm}$ are trees describing the circles $S \cap F$ lying in the complementary disks $D_{\pm}$ of $c$ in $S$, with $v_{\pm}$ corresponding to the components of $D_{\pm} - F$ that are adjacent to $c$.  With no loss of generality suppose $D_-$ corresponds to $Y_-$ with $i = \rho_Y(e) = m_-$.  Then  Lemma \ref{lemma:treetrim}(1) says that any component $c'$ of $\inter(D_-) \cap F$ has corresponding edge $e'$ in $Y'$ with $\rho_Y(e') < \rho_Y(e) = i$.  Setting $j =  \rho_Y(e')$ we have $c' \in \frc_j$.  
\end{proof}

Each circle in $\frc_0$ corresponds to a leaf in $Y$, so each $c \in \frc_0$ bounds a disk in $S - F$.  Let $\calD_0$ be the collection of disks, viewed as a disk set in $\calC$.  Choose a preferred alignment for $\calD_0$ and consider the flagged chamber complex decomposition $\calC \xrightarrow{\calD_0} \calC_1$.  Before the decomposition each circle in $\frc_1$ bounded a disk in $S$ that intersected $F$ only in components of $\frc_0$, by Lemma \ref{lemma:Didef}.  So after the disk decomposition along $\calD_0$, which surgers away all components of $\frc_0$, each circle in $\frc_1$ bounds a disk in $S$ that is disjoint from the defining surface $F_1 = F(\calC_1)$.  More correctly, this is true of each {\em remaining} circle in $\frc_1$, for some circles in $\frc_1$ may lie on the boundary of goneballs of the decomposition by $\calD_0$, and so not still appear in $F_1 \cap S$.  In any case, the collection of disks in $S - F_1$ bounded by (remaining) circles in $\frc_1$ will be denoted $\calD_1$ and a choice of preferred alignment defines a flagged chamber complex decomposition $\calC_1 \xrightarrow{\calD_1} \calC_2$.  Continue in this manner through $\calC_{n - 1} \xrightarrow{\calD_{n - 1}} \calC_n$, where $n-1 = \lfloor \frac{{\rm diam}(Y) - 1}{2} \rfloor$ or $n = \lfloor \frac{{\rm diam}(Y) + 1}{2} \rfloor$.  In the end, $F_n = F(\calC_n)$ is then disjoint from $S$.  

There are potentially two sources of ambiguity in this construction:

One ambiguity is this: in the last decomposition in the sequence, $\calC_{n - 1} \xrightarrow{\calD_{n - 1}} \calC_n$, it's possible that the defining surface $F_{n-1} = F(\calC_{n-1})$ intersects $S$ in a single circle, so $|\frc_{n-1}| = 1$.  This circle divides $S$ into two disks, one lying in the incident $A$-chamber of $\calC_{n - 1}$ and the other lying in the incident $B$-chamber.  The description above would allow either of these disks to be used as the disk set $\calD_{n-1}$ in the last decomposition.  Will it make a difference which one we use?  This issue will be addressed later - see Lemma \ref{lemma:ambig}.

The second potential source of ambiguity is that {\em prima facie} the construction above depends at each stage on the choice of preferred alignment of the disks $\calD_i$.  So the process is more accurately described as defining a {\em family} of decomposition sequences, which may differ at several points in the choice of preferred alignment.   We will now show that if any sequence in the family certifies, then they all cocertify.  

\begin{defin} \label{defin:Didef}  Given $\calC$ a flagged chamber complex in $S^3$, $S$ a sphere transverse to $F(\calC)$, and $Y$ the tree associated to the circles $F \cap S$ in $S$, a sequence of flagged chamber complex decompositions 
\[\vec{\calC}:\quad
\calC = \calC_0 \xrightarrow{\calD_0} \calC_1 \xrightarrow{\calD_1} \calC_2 \xrightarrow{\calD_2} ... \xrightarrow{\calD_{k-1}}\calC_k\]
as constructed above is said to be {\em guided by} the sphere $S$.  If $k = n = \lfloor  \frac{{\rm diam}(Y) + 1}{2} \rfloor$ the sequence is {\em complete}.  

The collection of all complete sequences will be denoted $\overrightarrow{(\calC, S)}$.
\end{defin}

Continue with Assumption \ref{ass:inductive}, that $G(S^3, T') = \calE$ whenever $\genus(T') \leq g-1$.

\begin{prop} \label{prop:guidecertify} If $\overrightarrow{(\calC, S)}$ contains more than one complete sequence of flagged chamber complex decompositions then each sequence in $\overrightarrow{(\calC, S)}$ certifies and all sequences cocertify.
\end{prop}

\begin{proof} Suppose  \[\vec{\calC}: \calC_0 \xrightarrow{\calD_0} \calC_1 \xrightarrow{\calD_1} ... \xrightarrow{\calD_{n-1}}\calC_n\] and
\[\vec{\calC'}: \calC_0 \xrightarrow{\calD_0} \calC'_1 \xrightarrow{\calD'_1} ... \xrightarrow{\calD'_{n-1}}\calC'_n\] are any two sequences in $\overrightarrow{(\calC, S)}$.
and $i \geq 1$ is the smallest value for which $\calC_i \neq \calC'_i$.   Then $\calC_{i-1} = \calC_{i-1}$ and $\calD_{i-1} = \calD'_{i-1}$ are the same as disk sets but are given different preferred alignments.  In particular, by Proposition \ref{prop:flagprop}(3),  the decompositions $\calC_{i-1} \xrightarrow{\calD_{i-1}} \calC_i$ and $\calC_{i-1} \xrightarrow{\calD'_{i-1}} \calC'_i$ contain sibling decompositions.  That is, there is at least one parent chamber $C$ in $\calC_{i-1}$ so that $C$ is an occupied handlebody and $\calD_{i-1}$ is given a different preferred alignment in $C$ for the two decompositions.  Then $C$ certifies for both $\vec{\calC}$ and $\vec{\calC'}$
\end{proof}

If $\vec{\calC} \in \overrightarrow{(\calC, S)}$ certifies, denote  a homeomorphism $h_{\vec{\calC}}: (S^3, T) \to (S^3, T_g)$ (see Definition \ref{defin:flagseqcertify}) by $h_{(\calC, S)}$.  By Proposition \ref{prop:guidecertify} $h_{(\calC, S)}$ is well-defined up to eyeglass equivalence; that is, it does not depend on the choice of decomposition sequence $\vec{\calC}\in \overrightarrow{(\calC, S)}$.  

\begin{cor} \label{cor:vScert} If $\vec{\calC} \in \overrightarrow{(\calC, S)}$ certifies and $\tau \in G(S^3, T)$ then $h_{(\tau(\calC), \tau(S))}\tau \sim h_{(\calC, S)}$. 
\end{cor}

\begin{proof} This follows immediately from Corollary \ref{cor:vecCcert}. 
\end{proof}

\section{Balanced and almost balanced spheres} \label{sect:balance}

\begin{defin} Suppose $S \subset S^3$ is a sphere transverse to $F = F(\calC)$ in a flagged chamber complex $\calC \subset S^3$, with $X$, $Y$ the complementary components of $S$.

Then $S$ is {\em balanced} \index{Balanced sphere} for $\calC$ if the compact surfaces $F \cap X$ and $F \cap Y$ are either both planar (planar balanced) or both non-planar (non-planar balanced). 

$S$ is {\em almost balanced} \index{Almost balanced sphere} if $\genus(F \cap X) = 0$ and $\genus(F \cap Y) = 1$ or vice versa.
\end{defin}

\begin{prop}  \label{prop:balance}. Suppose
\[\vec{\calC}: \quad \calC = \calC_0 \xrightarrow{\calD_0} \calC_1 \xrightarrow{\calD_1} ... \xrightarrow{\calD_{n-1}}\calC_n\]
is a complete flagged chamber complex decomposition sequence guided by $S \subset S^3$.  That is, $\vec{\calC} \in \overrightarrow{(\calC, S)}$.   Then
\begin{itemize}
\item If $S$ is planar (resp non-planar) balanced in any chamber complex in the sequence, then it is in every chamber complex in the sequence.  In this case, call $S$ planar (resp non-planar) balanced for the sequence.
\item If $S$ is balanced for the sequence and $\calC$ is not tiny,  then $\calC_n$ certifies, so $\vec{\calC}$ certifies.  
\end{itemize}
\end{prop}

\begin{proof}  For the first statement, note that the effect on $F \cap X$, say, of decomposing along $\calD_i$ is 2-fold:
\begin{enumerate}
\item Cap off some of boundary components of $F \cap X$ with disks.
\item Delete some spheres from $F$ (the boundaries of the goneballs). 
\end{enumerate}
Neither of these steps affects the genus of $F \cap X$ or $F \cap Y$.  

For the second statement, note that since $\calC$ is not tiny, $\calC_n$ is not empty (Corollary \ref{cor:tiny}).  Let $F = F(\calC_n) \neq \emptyset$ be the defining surface for $\calC_n$. When the sequence is non-planar balanced, there are (non-planar) closed components of $F$ in both $X$ and $Y$.  This implies that $S$ is a reducing sphere for the chamber of $C_n$ in which it lies, so $S$ itself is a certificate.  If the sequence is planar balanced, then each component of $F$ is a closed planar surface, i. e. a sphere.  Since there are no empty balls in a flagged chamber complex, an innermost sphere in $F$ bounds an occupied ball, which certifies (see Definition \ref{defin:flagcertify}).
\end{proof}

Similarly we have:

\begin{prop} \label{prop:almostbalance} Suppose
\[\vec{\calC}: \quad \calC = \calC_0 \xrightarrow{\calD_0} \calC_1 \xrightarrow{\calD_1} ... \xrightarrow{\calD_{n-1}}\calC_n\]
is a complete flagged chamber complex decomposition sequence guided by $S \subset S^3$.  That is, $\vec{\calC} \in \overrightarrow{(\calC, S)}$.   Then
\begin{itemize}
\item If $S$ is almost balanced in any chamber complex in the sequence, then it is in every chamber complex in the sequence.  In this case, call $S$ almost balanced for the sequence.
\item If $S$ is almost balanced for the sequence and $\calC$ is not tiny,  then $\calC_n$ certifies, so $\vec{\calC}$ certifies.  
\end{itemize}
\end{prop}

\begin{proof} The proof of the first statement is unchanged.  

For the proof of the second, observe that since the sequence is complete, $F_n = F(\calC_n)$ is disjoint from $S$, so both $F_n \cap X$ and $F_n \cap Y$ are closed surfaces.  Since $F_n$ is almost balanced, $F_n \cap X$ has genus $0$ and $F_n \cap Y$ has genus $1$, or vice versa.  In any case $F_n$ is the union of a torus and some spheres.  If there are spheres, at least one must bound an occupied ball and so certifies.  If $F$ is just a torus it bounds a solid torus $Y$ in $S^3$, or perhaps complementary solid tori $Y_{\pm}$.  Since $\calC$ is not tiny, $\calC_n$ is not tiny (Corollary \ref{cor:tiny}), so $Y$ cannot be empty nor, in the second case, can both solid tori $Y_{\pm}$ be empty.  Hence one of these is an occupied solid torus and so is a certificate for $\calC_n$.  
\end{proof}

We now briefly return to what was described, preceding Definition \ref{defin:Didef}, as the first source of ambiguity in the construction of a sequence of flagged chamber complex decompositions guided by $S$.  Suppose $\calC$ is a flagged chamber complex supporting the genus $g$ splitting $S^3 = A \cup_T B$ and $S$ is a sphere in $S^3$ intersecting $\calC$ in a single circle $c$ and dividing $S^3$ into two $3$-balls, $X$ and $Y$.  The circle $c$ divides $S$ into two disks $D_A$ lying in an $A$-chamber $C_A$ of $\calC$ and $D_B$ lying in a $B$-chamber $C_B$.  

\begin{lemma} \label{lemma:ambig} Let $\calC_A$ and $\calC_B$ be the flagged chamber complexes that result from the decompositions $\calC \xrightarrow{D_A} \calC_A$ and $\calC \xrightarrow{D_B} \calC_B$ respectively.  Suppose $S$ is balanced or almost balanced for $\calC$ so both $\calC_A$ and $\calC_B$ certify.  Then $\calC_A$ and $\calC_B$ cocertify.
\end{lemma}

\begin{proof} 
Let $c$ be the circle described before the statement of the lemma.  

{\em Case 1:}  $c$ is essential in $F = F(\calC)$.

Then $c$ divides $F$ into two non-planar components and $F$ is non-planar balanced.  We show that our rule for certifying a non-planar balanced chamber complex cocertifies $\calC_A$ and $\calC_B$:

As usual, align $\calD_A$ in $\calC_A$ and $\calD_B$ in $\calC_B$, via an appropriate choice of spine for the compression bodies $B_{\calC_A}$ in $\calC_A$ and $A_{\calC_B}$ in $\calC_B$.  Since $c$ is essential in $F$ it is essential in the Heegaard surface $T$ of $S^3$ that the chamber complex supports.  Hence $S$ is a reducing sphere for $T$.  In particular there is a homeomorphism $h: (S^3, T) \to (S^3, T_g)$ as described before Definition \ref{defin:eyeequiv} in which $h(S) = S_i$ for some $1 \leq i \leq g-1$.  But this coincides with the definition of the homeomorphism $h_S: (S^3, T) \to (S^3, T_g)$ in Proposition \ref{prop:toyexist}, and that is the homeomorphism which certifies $S$ in both $\calC_A$ and $\calC_B$.  So $\calC_A$ and $\calC_B$ cocertify in this case.
\medskip

{\em Case 2:}  $c$ bounds a disk $D_F \subset F \cap X$ but does not bound a disk in $F \cap Y$ (or vice versa).  

Let $S_A$ be the sphere $D_A \cup D_F$, a component of $F(\hat{\calC}_A)$ and, symmetrically, $S_B$ be the sphere $D_B \cup D_F$, a component of $F(\hat{\calC}_B)$.  One possibility is that $S_A$ is essential in $\calC_A \cap X$, perhaps because the ball $B_{X \cap A}$ in $X$ bounded by $S_A$ contains other components of $F(\calC_A)$ or because it is occupied, and so not a goneball in $\hat{\calC}_A$.  Then $S_A$, pushed slightly out of $B_{X \cap A}$, is also essential in $\calC_A$ because it separates $B_{X \cap A}$ from the (non-spherical) component of $F(\calC_A)$ that contains the circle $c$.  Thus it certifies for $\calC_A$.  See Figure \ref{fig:ambig}.

\begin{figure}[ht!]
\labellist
\small\hair 2pt
\pinlabel  $F$ at 130 15
\pinlabel  $A$ at 60 15
\pinlabel  $B$ at 200 15
\pinlabel  $D_B\subset S$ at 240 120
\pinlabel  $D_A\subset S$ at 5 120
\pinlabel  $c$ at 145 120
\pinlabel  $D_F\subset F\cap X$ at 240 200
\pinlabel  $S_A$ at -5 205
\pinlabel  $X$ at 185 85
\pinlabel  $Y$ at 210 70
\endlabellist
    \centering
    \includegraphics[scale=0.7]{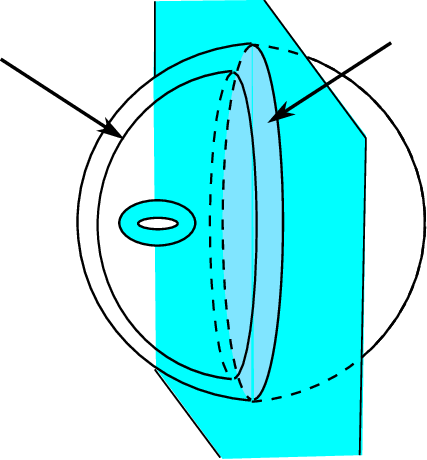}
     \caption{Case 2: $c$ bounds a disk in $F \cap X$ but not in $F \cap Y$} \label{fig:ambig}
    \end{figure}

 If $S_B$ similarly certifies for $\calC_B$ then $S_A$ and $S_B$ define disjoint reducing spheres for $T$ and the result follows from Lemma \ref{lemma:preexistunique}.  
On the other hand, if $S_B$ does not certify for $\calC_B$ then $B_{X \cap B}$ is a goneball.  In this case, decomposing by $D_B$ has no effect, that is $\calC_B$ and $\calC$ are isotopic, so $\calC$ also certifies.  By Proposition \ref{prop:flagcertifyunique}
, $\calC_A \sim \calC \sim \calC_B$ as required.  

Finally, if $S_A$ is inessential in $\calC_A \cap X$ and $S_B$ is inessential in $\calC_B \cap X$, so $B_{X \cap B}$ and $B_{X \cap A}$ are goneballs, then $\calC_A$ and $\calC_B$ are isotopic to $\calC$, so $\calC_A$ and $\calC_B$ again cocertify.  
\medskip

{\em Case 3:}  $c$ lies on a sphere component $S_F$ of $F$, dividing it into disks $D_X \subset X$ and $D_Y \subset Y$.  

Since $S_F$ does not bound a goneball in $\calC$, it either divides $S^3$ into two occupied balls or is parallel to an incompressible sphere in an adjoining chamber of $\calC$.  Thus $S_F$ is a certificate for $\calC$.  Then, per Proposition \ref{prop:flagcertifyunique} 
, $\calC_A \sim \calC \sim \calC_B$ as required.  
\end{proof}

This dispenses with the ``first ambiguity'' preceding Definition \ref{defin:Didef}.
\bigskip

Suppose $\calC \subset S^3$ is a flagged chamber complex, with $F = F(\calC)$ in general position with respect to the height projection $p: S^3 \to [-1, 1]$.  Recall that $S^3 - \{poles\}$ is swept out by the family of level spheres $S_s$, where $S_s = p^{-1}(s), s \in (-1, 1)$.   We will consider these level spheres as potential guiding spheres for decomposition of $\calC$.  

\begin{defin} \label{defin:gaandgb} Let $F = F(\calC)$, $\mathfrak{g} = \genus(F)$.  

For each $s \in (-1, 1)$ that is a regular value of $p|F$, let $F_{a(bove)}(s)$ be the part of $F$ lying above $S_s$, that is $F_a(s) =  F \cap p^{-1}([s, 1])$. Similarly, let $F_{b(elow)}(s)$ be the part of $F$ lying below $S_s$, that is $F_b(s) = F \cap p^{-1}([-1, s])$.  Finally, let  $g_a(s) = \genus(F_a(s))$ and $g_b(s) = \genus(F_b(s))$.

\end{defin}

\begin{lemma}  \label{lemma:gaandgb}
The functions $g_a, g_b$ have these properties:
\begin{enumerate}
\item As $s$ ascends from $-1$ to $1$, $g_b$ ascends from $0$ to $\mathfrak{g}$ and $g_a$ descends from $\mathfrak{g}$ to $0$.
\item As $s$ ascends through a critical point of $p|F: F \to [-1, 1]$, $g_b(s)$ may increase by $1$ but it will not  decrease.  Symmetrically, $g_a(s)$ may decrease by $1$ but it will not increase.
\item As $S_s$ ascends through a critical point of $p|F$, at most one of $g_a(s), g_b(s)$ will change.
\end{enumerate}
\end{lemma}

\begin{proof}  The first statement follows from the definition.

For the second and third statements, observe that passing through a critical point on $F$ of index 0 (a minimum) or 2 (a maximum) simply adds or subtracts a disk from $F_a$ and $F_b$, which has no effect on genus.  So the interest is in index 1 critical points, that is saddle points of tangency. 

Suppose, with little loss in generality, that as $s$ ascends through a saddle tangency, two circles in $F \cap S_s$ fuse into one. See Figure \ref{fig:saddle}. (If instead one circle is divided in two, just turn the following argument upside down.)  Thus a band is added to $F_b$ with its ends on two different boundary circles.  If the two circles lie on different components of $F_b$, the genus of $F_b$ does not change.  If the two lie on the same component, $g_b$ ascends by $1$.  This proves the second statement.  For the third statement, consider $F_a$: the effect on $F_a$ is to cut out a band between two boundary components, effectively replacing a pair of pants in $F_a$ with an annulus.  This has no effect on $g_a$.  This proves the third statement.
\end{proof}

\begin{figure}[ht!]
\labellist
\small\hair 2pt
\pinlabel  $g_a=2$ at 200 250
\pinlabel  $F_a(s)$ at 40 250
\pinlabel  $F_b(s)$ at 40 50
\pinlabel  $F_a(s+\epsilon)$ at 480 250
\pinlabel  $F_b(s+\epsilon)$ at 480 50
\pinlabel  $g_a=2$ at 300 250
\pinlabel  $S_s$ at 200 160
\pinlabel  $S_{s+\epsilon}$ at 300 190
\pinlabel  $g_b=1$ at 200 50
\pinlabel  $g_b=2$ at 300 50
\endlabellist
    \centering
    \includegraphics[scale=0.6]{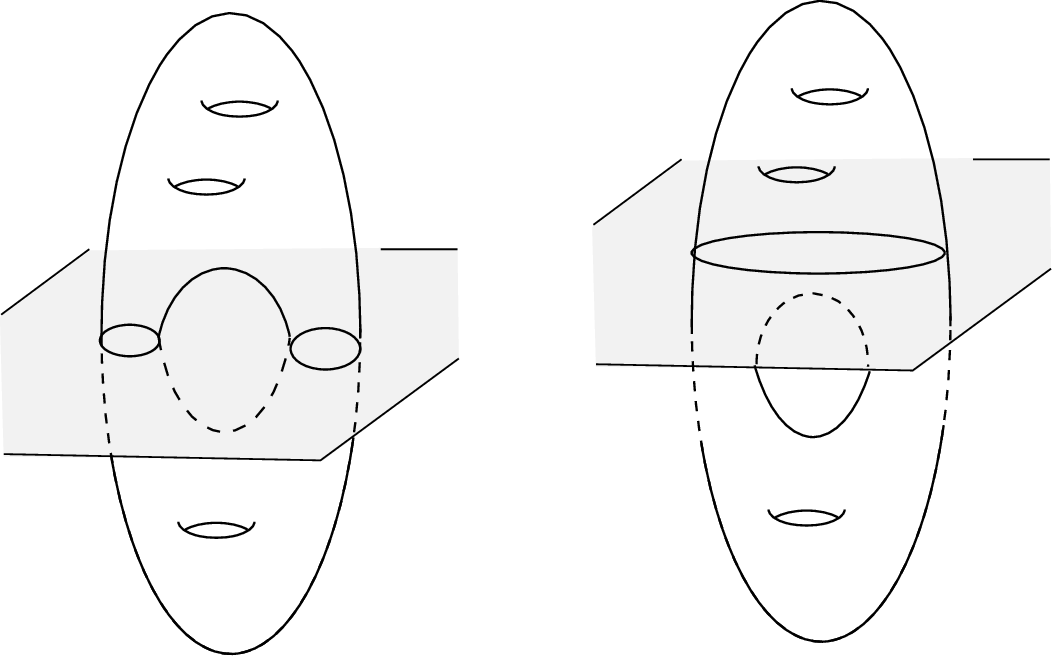}
     \caption{$s$ ascends through a saddle and two circles fuse} \label{fig:saddle}
    \end{figure}

\begin{cor} \label{cor:balinterval} Let
\[s_b = \sup\{s\in[0,1] | g_b(s) = 0\}\hspace{1cm} s_a = \inf\{s\in[0,1] | g_a(s) = 0\}.\]  Then
\begin{enumerate}
\item $s_a \neq s_b$.
\item If $s_a < s_b$ then for all $s_a < s < s_b$, $S_s$ is planar balanced.
\item If $s_b < s_a$ then for all $s_b < s < s_a$, $S_s$ is non-planar balanced.
\item If $s_a < s_b$ then for $s$ just below $s_a$ and $s$ just above $s_b$, $S_s$ is almost balanced.
\item If $s_b < s_a$ and $p|F$ has no index 1 critical points for  $s \in (s_b, s_a)$  then for $s$ just below $s_b$ and $s$ just above $s_a$, $S_s$ is almost balanced.
\end{enumerate}
\end{cor}

\begin{proof} All but the last two statements follow directly from the definitions and Lemma \ref{lemma:gaandgb}; the last two require little more:

It follows from Lemma \ref{lemma:gaandgb}(2) that for $s$ just above $s_b$, $g_b(s) = 1$ and, if $s_a < s_b$, $s \in \{s|g_s(a)=0\}$ so also $g_a(s) = 0$. So $S_s$ is almost balanced.  A symmetric argument shows that it is also almost balanced for $s$ just below $s_a$.  This proves (4).  

Suppose $s_b < s_a$.  Then for $s$ just above $s_b$, $g_b(s) = 1$ and $g_a(s) \geq 1$; similarly for $s$ just below $s_a$, $g_a(s) = 1$ and $g_b(s) \geq 1$.  Since $p|F$ has no index one critical points in $(s_b, s_a)$, neither $g_a$ or $g_b$ changes in that interval.  Hence for any $s \in (s_b, s_a)$, $g_a(s) = 1 = g_b(s)$.  It follows from Lemma \ref{lemma:gaandgb}(3) that for $s$ just above $s_a$, $g_a(s) = 0$ and $g_b(s) = 1$ so $S_s$ is almost balanced.  The symmetric argument shows that for $s$ just below $s_b$, $S_s$ is almost balanced.  This proves (5).
\end{proof}

Suppose that the chamber complex $\calC$ is not tiny.  Then for any $s$ between $s_a$ and $s_b$ it follows from Corollary \ref{cor:balinterval} and Propositions \ref{prop:balance} and  \ref{prop:guidecertify} 
that the homeomorphism \[h_s = h_{(\calC, S_s)}:(S^3, T) \to (S^3, T_g)\] is well-defined up to eyeglass equivalence.  What remains unclear is whether the eyeglass equivalence class of $h_s$ depends on $s$.  The central issue is whether the eyeglass equivalence class can change as $s$ passes through a critical point of $p|F$.
We will show (Theorem \ref{thm:balanceisotopy}) that in fact the eyeglass equivalence class does not change.  The proof is technically complex; it occupies the next five sections.  

\section{Deflation and bullseyes}  \label{sect:deflate}

Suppose $\calC \subset S^3$ is a flagged chamber complex supporting the genus $g$ Heegaard splitting $S^3 = A \cup_T B$.   We continue under the inductive Assumption \ref{ass:inductive}.

Let $H$ be an occupied handlebody chamber of $\calC$ that is not a ball.  In the following description we assume that $H$ is an $A$-chamber, but everything can also be taken symmetrically, see Definition \ref{defin:hbydeflate}.  
Let $C$ be the $B$-chamber adjacent to $H$ and $C_A$ be an $A$-chamber adjacent to $C$ so that $C_A \neq H$.  Since the associated Heegaard splitting of $C = A_C \cup_{T_C} B_C$ is pure, there is a spine for the compression body $A_C$ in which an edge $e$ is incident to $\bdd H$ on one end and $\bdd C_A$ on the other.  


Let $(D_e, \bdd D_e) \subset (A_C, T_C)$ be the disk dual to $e$ in $A_C$, that is a meridian of  the tube in $A_C$ that corresponds to a regular neighborhood of the edge $e$, so the tube 
runs from $H$ to $C_A$.  Alter $D_e$ by tube-summing with a maximal bubble $\frb$ in $H$ and let $\calC_d$ be the resulting flagged chamber complex. (More precisely, after amalgamation of $\calC$ to the original splitting, $D_e$ becomes a disk in $A$ and $\frb$ a bubble for $T$; it is these that are tube-summed to create the disk $D'_e$ that replaces $D_e$ in $\calC_d$.)  In effect, the bubble $\frb$ has been moved from $H$ to the chamber $C_A$.  See Figure \ref{fig:deflate1}.   
Since $\frb$ is maximal in $H$, $H$ changes from an {\em occupied} handlebody in $\calC$ to an {\em empty} handlebody in $\calC_d$.   It is natural to call this process a {\em deflation} of $H$.  That is, the flagged chamber complex $\calC_d$ is obtained from $\calC$ by deflation (of $H$ to $C_A$) \index{Deflation}.  Note that the process is well-defined up to eyeglass equivalence by a choice of $H$ and $C_A$, since a bubble move is an eyeglass move. 

\begin{figure}[ht!]
\labellist
\small\hair 2pt
\pinlabel  $e$ at 90 390
\pinlabel  $H$ at 190 400
\pinlabel  $\frb$ at 140 385
\pinlabel  $C$ at 90 340
\pinlabel  $C$ at 90 170
\pinlabel  $\calC$ at 90 270
\pinlabel  $\calC_d$ at 90 220
\pinlabel  $C_A$ at 20 390
\pinlabel  $D_e$ at 420 380
\pinlabel  $D'_e$ at 450 98
\endlabellist
    \centering
    \includegraphics[scale=0.5]{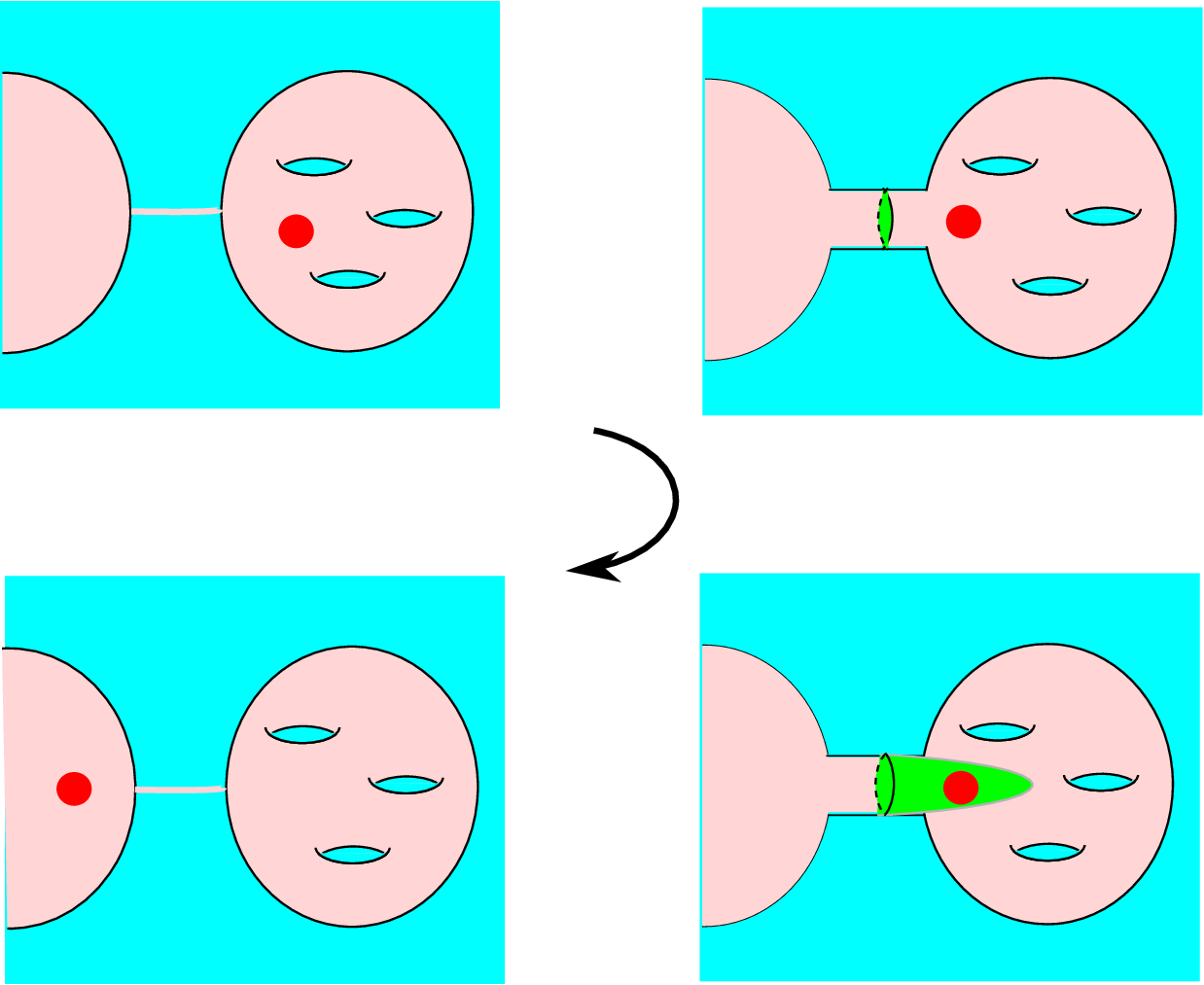}
     \caption{Simple deflation} \label{fig:deflate1}
    \end{figure}

More generally, the process can be applied to a collection of edges in a spine of $A_C$, each of which is incident to $\bdd H$ and an $A$-chamber adjacent to $C$, but not necessarily the same $A$-chamber for all the edges.  That is, redefine the meridian $A$-disks dual in $A_C$ to the collection of edges by tube-summing them with disjoint non-trivial bubbles in $H$, bubbles which together constitute a maximal bubble in $H$.  Generalizing further, bubbles from $H$ may be sent to further $A$-chambers $\{C_{A_i}\}$ throughout $\calC$, not just $A$-chambers adjacent to $C$, by further tube-summing to $A$-disks in $B$-chambers. See Figure \ref{fig:deflate2}. The only requirement is that in the resulting flagged chamber complex $\calC_d$, the chamber $H$ is empty.  In this general form we have:

\begin{figure}[ht!]
\labellist
\small\hair 2pt
\pinlabel  $H$ at 190 220
\pinlabel  $C_{A_1}$ at 490 380
\pinlabel  $C_{A_3}$ at 490 25
\pinlabel  $C_{A_2}$ at 340 200
\pinlabel  $\calC$ at 260 320
\pinlabel  $\calC_d$ at 320 320
\endlabellist
    \centering
    \includegraphics[scale=0.5]{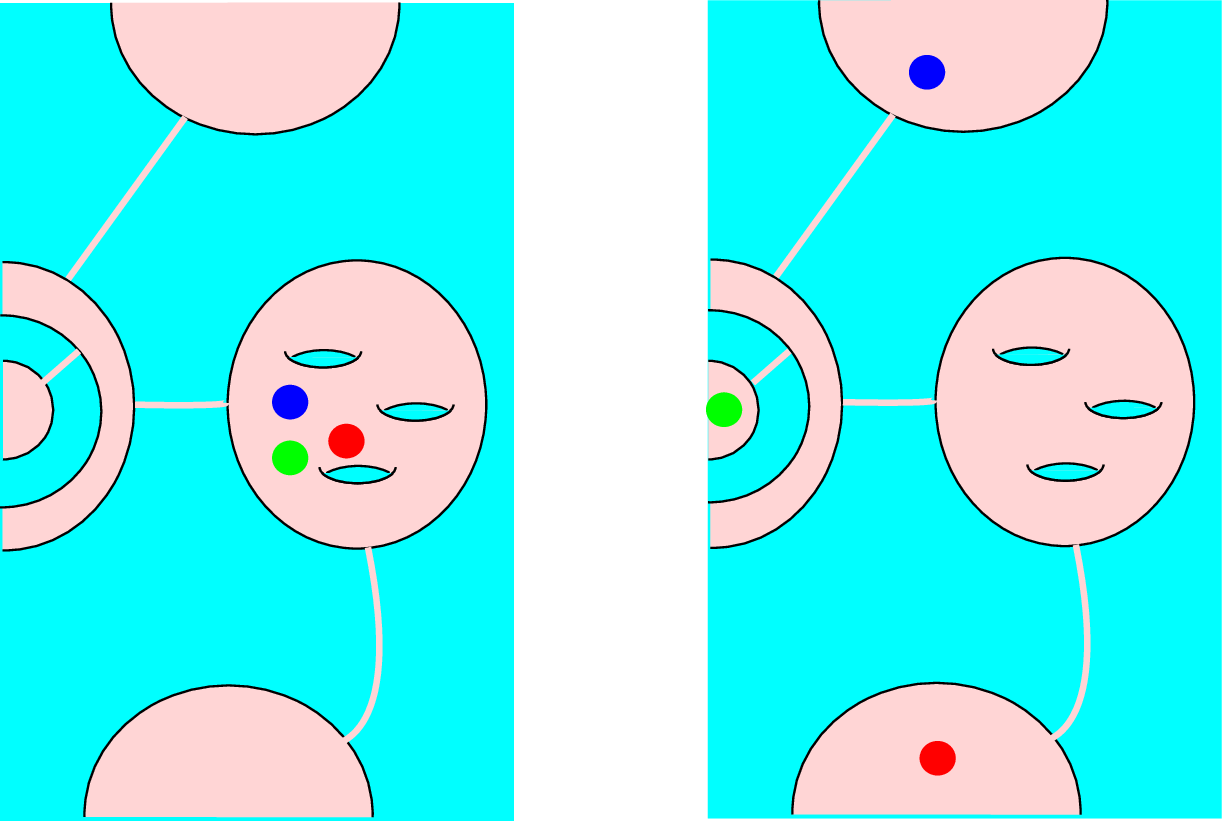}
     \caption{General deflation} \label{fig:deflate2}
    \end{figure}

\begin{defin} \label{defin:hbydeflate}  The flagged chamber complex $\calC_d$ is obtained from $\calC$ by {\em handlebody deflation} (of $H$ to the collection of chambers $\{C_{A_i}\}$).  Deflation of an occupied handlebody $B$-chamber of $\calC$ that is not a ball is defined symmetrically.
\end{defin}

As is true for $C_A$ a single chamber, the change of flagged chamber complex from $\calC$ to $\calC_d$ is well-defined up to eyeglass equivalence by a choice of $H$, the collection $\{C_{A_i}\}$, and the genus of the bubbles moved into each.  The effect of deflation on the flagging of $\calC$ is to change $H$ to an empty handlebody chamber in $\calC_d$ (hence the term deflation) and, for those chambers in $\{C_{A_i}\}$ that are empty handlebodies, to change the flagging to occupied.  

\begin{prop} \label{prop:hbydeflate} Suppose the flagged chamber complex $\calC_d$ is obtained from $\calC$ by deflating an occupied non-ball handlebody $H$.  If $\calC_d$ certifies then $\calC_d$ and $\calC$ cocertify.
\end{prop}

\begin{proof}  The occupied handlebody $H$ is a certificate for $\calC$ so $\calC$ certifies. With no loss assume, as above, that $H$ is an $A$-chamber deflated into the set of $A$-chambers $\{C_{A_i}\}$.  Suppose $\calC_d$ certifies and let $C_d$ be a certifying chamber of $\calC_d$. Since $H$ is empty in $\calC_d$, $C_d \neq H$.  The only other chambers whose flagging is affected by the deflation of $H$ are $\{C_{A_i}\}$ so, unless $C_d \in \{C_{A_i}\}$, the chamber $C_d$ also certifies for $\calC$, as required.  So suppose $C_d \in \{C_{A_i}\}$, i. e. after the bubble pass one or more of the chambers in $\{C_{A_i}\}$, call it $C_a$, certifies for $\calC_d$.  

The only change in $C_a$ from $\calC$ to $\calC_d$ is that a bubble $\frb$ from $H$ has been passed into it.  Since the chamber $C_a$ certifies in $\calC_d$, it is either an occupied handlebody or it contains an incompressible sphere.  If $\calC_a$ is an occupied handlebody then, per Proposition \ref{prop:occupycertify}(1), $\bdd \frb \subset \calC_a$ is a certificate for $\calC_d$ as well as $\calC$ completing the proof.  Suppose, on the other hand, that the chamber $C_a$ contains an incompressible sphere $S$.  By \cite{Sc1} $S$ can be aligned with the Heegaard surface $T_a$ for $C_a$ in $\calC$, so $S$ becomes a certificate for $\calC$.  In constructing $\calC_d$ pass the bubble $\frb$ into the chamber $C_a$ so that $\frb$ remains disjoint from the aligned $S$.  Then the sphere $S$ certifies for both $\calC$ and $\calC_d$, as required.
%
\end{proof}

The same construction can be done when $H$ is an occupied ball, but an important difference is that once the occupied ball is deflated, so $H$ has a genus 0 splitting, $H$ must be declared a goneball, else $\calC_d$ is not a flagged chamber complex.  That is, the sphere $\bdd H$ is removed from the defining surface $F$ and $H$ is absorbed into its neighboring chamber (the chamber $C$ in the construction above).  This difference requires a change in strategy to arrive at the equivalent of Proposition \ref{prop:hbydeflate}.  

A set of disjoint spheres in a ball $B^3$ is {\em nested} if only one complementary component is a ball.

\begin{defin} Suppose $F_{\frb}$ is a nested set of spheres in a 3-ball $B^3$, with $\bdd B^3$ the outermost sphere. Call the collar between any pair of spheres a {\em shell}.   A {\em bullseye} \index{Bullseye} is the flagged chamber complex in $B^3$ whose defining surface is $F_{\frb}$, each shell has a genus $0$ (pure) Heegaard splitting, and the ball $B_\frb$ bounded by the innermost sphere is occupied with maximal bubble $\frb$.  

A bullseye in a chamber complex $\calC$ is {\em maximal} if it is not contained in any larger bullseye.    
\end{defin}

We now extend the notion of deflation to include bullseyes, not just occupied non-ball handlebodies.  Suppose $F_{\frb}$ is a bullseye in a chamber complex $\calC$ and, with no loss of generality, the chamber adjacent to $F_{\frb}$ and outside it is a $B$-chamber C.  Let $\{C_{A_i}\}$ (resp. $\{C_{B_i}\}$) be a collection of $A$-chambers (resp. $B$-chambers) in $\calC$, other than those contained in $F_{\frb}$.  Use the same process as described above for deflating  a handlebody to deflate $\frb$ either into the chambers $\{C_{A_i}\}$, if the ball chamber of $F_{\frb}$ is an $A$-chamber, or into $\{C_{B_i}\}$, if the ball chamber in $F_{\frb}$ is a $B$-chamber.  (In the former case, the number of nested spheres in $F_{\frb}$ is odd; in the latter it is even.)  
After the deflation, each sphere in $F_{\frb}$ bounds a goneball, so the spheres in $F_{\frb}$ disappear in the resulting flagged chamber complex.

\begin{defin} \label{defin:bulldeflate}  The flagged chamber complex $\calC_d$ is obtained from $\calC$ by {\em bullseye deflation} (of $F_{\frb}$ to $\{C_{A_i}\}$ or $\{C_{B_i}\}$) or, informally, deleting a bullseye.
Similarly, we informally say that $\calC$ is obtained from $\calC_d$ by 
inserting a bullseye into $C$.
\end{defin}

\begin{prop} \label{prop:bulldeflate} Suppose the flagged chamber complex $\calC_d$ is obtained from $\calC$ by deflating a bullseye $F_{\frb}$.  If $\calC_d$ certifies then $\calC_d$ and $\calC$ cocertify.
\end{prop}

\begin{proof}  The proof proceeds as for Proposition \ref{prop:hbydeflate}, with one difference.  In the case of bullseye deflation, the chamber $C$ changes, because all the spheres in $F_{\frb}$ disappear.  That is, the bullseye becomes a nested set of goneballs that are absorbed into $C$ to become a chamber we denote $C_d$.  See Figure \ref{fig:deflate3}. The proof then proceeds much as in Proposition \ref{prop:hbydeflate} except, unlike the situation in Proposition \ref{prop:hbydeflate}, it is possible that $C$ turns into the lone certifying chamber $C_d$ for $\calC_d$.  We examine this possibility:

\begin{figure}[ht!]
\labellist
\small\hair 2pt
\pinlabel  $F_{\frb}$ at 190 245
\pinlabel  $\calC$ at 260 320
\pinlabel  $\calC_d$ at 320 320
\pinlabel  $C$ at 200 320
\pinlabel  $C_d$ at 380 320
\endlabellist
    \centering
    \includegraphics[scale=0.5]{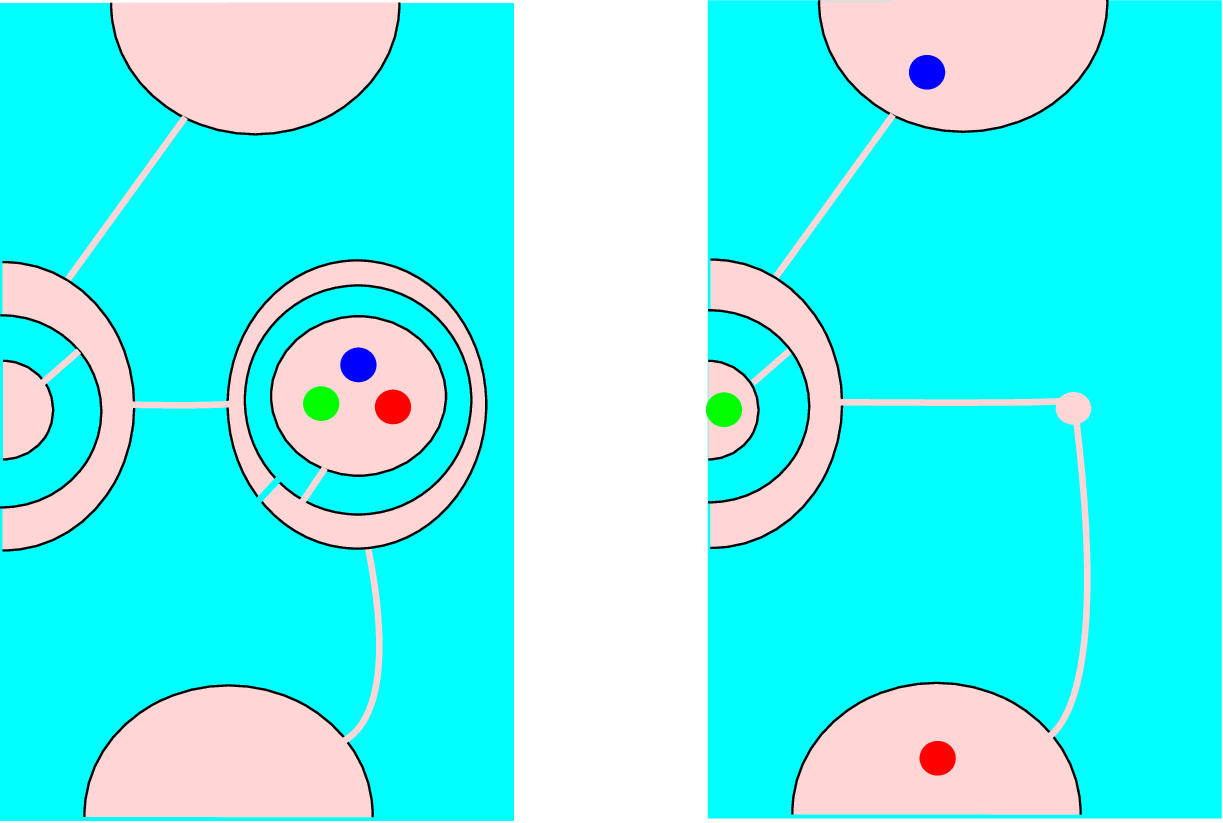}
     \caption{A nested set of goneballs absorbed into $C$} \label{fig:deflate3}
    \end{figure}


If $C_d$ certifies because it contains an incompressible sphere, let $S \subset C_d$ be such an incompressible sphere.  By general position (with a point in the center of the ball chamber of $F_{\frb}$) $S$ can be isotoped to be disjoint from $F_{\frb}$ and so lie in $C$.  Align $S$ with the Heegaard splitting of $C$.  Then it is aligned also with the Heegaard splitting of $C_d$ regardless of whether $C \in \{C_{B_i}\}$.
The sphere $S$ then certifies for both $\calC$ and $\calC_d$ as required.

%

Finally, suppose $C_d$ certifies because it is an occupied handlebody.  There are two possibilities: $C \in \{C_{B_i}\}$ and $C \notin \{C_{B_i}\}$.  If $C \in \{C_{B_i}\}$ then a sub-bubble $\frb'$ of $\frb$ becomes a bubble in $C_d$.  $\bdd \frb'$ is disjoint from $\bdd \frb$, which certifies for $\calC$, so by Lemma \ref{lemma:preexistunique} $\bdd \frb'$ also certifies for $\calC$.  After deflation, the sphere $\bdd \frb' \subset C_d$, may be taken to be disjoint from the boundary of the maximal bubble of $C_d$, by Lemma \ref{lemma:bubinH}, so $\bdd \frb'$ also certifies for $\calC_d$.  Hence $\calC$ and $\calC_d$ cocertify, as required.  

On the other hand, if $C \notin \{C_{B_i}\}$, so the entire bubble $\frb$ is dispersed away from $C_d$ and still $C_d$ is occupied, the maximal bubble $\frb_d$ of $C_d$ must have come from the chamber $C$ in $\calC$.  Since, in $\calC$, $\frb_d$ lies in $C$, it is disjoint in $C$ from $\frb$.   Hence, by Lemma \ref{lemma:preexistunique}, $\bdd \frb_d$ and $\bdd \frb$ cocertify, as required.  
\end{proof}

%
%
%

Suppose $\calC$ is a flagged chamber complex in $S^3$ and $\calC_d$ is a flagged chamber complex obtained by (non-ball) handlebody deflation.  
Suppose $\calD$ is a disk set in $\calC$ in preferred alignment and \[\calC \xrightarrow{\calD} \calC_{\calD}\] is the resulting flagged chamber complex decomposition.  Similarly construct the flagged chamber complex decomposition \[\calC_d  \xrightarrow{\calD} (\calC_d)_{\calD}\]

\begin{prop} \label{prop:hbydeflate2}   If $\calC_d$ does not certify, then either $(\calC_d)_{\calD} = \calC_{\calD}$ or $(\calC_d)_{\calD}$ is obtained from $\calC_{\calD}$ by handlebody deflation or by bullseye deflation. 
\end{prop}

\begin{proof}  With no loss of generality assume the deflated non-ball handlebody $H$ is an $A$-chamber and the chambers $C$ and $\{C_{A_i}\}$ are as described preceding Definition \ref{defin:hbydeflate}.  The chambers $\{C_{A_i}\}$ cannot contain a handlebody because when $H$ is deflated into $\{C_{A_i}\}$ the result would be an occupied handlebody in $\calC_d$, contradicting the assumption that $\calC_d$ does not certify.  So the only difference between $\calC$ and $\calC_d$ as flagged chamber complexes is that $H$ is occupied in the former but is empty in the latter.  Thus the difference between $\calC_{\calD}$ and $(\calC_d)_{\calD}$, if any, is in the flagging of the remnants of $H$, perhaps including the disappearance of remnants as goneballs.  So we examine these remnants:

Unless each remnant of $H$ is a disky handlebody, the flagging of the remnants in both $\calC_{\calD}$ and $(\calC_d)_{\calD}$ is determined by Definition \ref{defin:prefdisk}: Each handlebody remnant of $H$ is empty if and only if it is disky.  Hence in this case $\calC_{\calD} = (\calC_d)_{\calD}$ as flagged chamber complexes.  On the other hand, if each remnant of $H$ is a disky handlebody then, per Definition \ref{defin:prefdisk},  in $\calC_{\calD}$ there is exactly one handlebody remnant $H'$ (possibly a ball) that is occupied, while in $(\calC_d)_{\calD}$ each remnant is empty.  We examine this situation more closely:

Since $\{C_{A_i}\}$ contains no handlebodies, some remnant in $\calC_{\calD}$ of each $C_{A_i}$ is not a disky handlebody, by Proposition \ref{prop:remnant}.  Let $R$ be such a remnant of some $C_{A_i}$.  Consider the flagged chamber complex obtained from $\calC_{\calD}$ by deflating $H'$ into $R$.  Since $R$ is not a disky handlebody it is either not a handlebody or it is an occupied handlebody; in either case moving a bubble into $R$ will not change the flagging of $R$.  Thus this deflation changes $\calC_{\calD}$ to the same flagged chamber complex as $(\calC_d)_{\calD}$, as required.  

If $H'$ is non-ball handlebody then the deflation is a handlebody deflation.  If $H'$ is a ball then its deflation turns it into a goneball, so $(\calC_d)_{\calD}$ is a bullseye deflation of $\calC_{\calD}$, where the bullseye is centered on the ball $H'$.  (In fact, it is not hard to see that the bullseye consists only of $H'$.)  
\end{proof}

There is a similar result for bullseye deflation, but it is a bit more difficult to state and prove.  Suppose $\calC$ is a flagged chamber complex in $S^3$ and $\calC_d$ is the flagged chamber complex obtained by deflating a bullseye $F_{\frb}$ in $\calC$.  Suppose $\calD$ is a disk set in $\calC$ in preferred alignment and \[\calC \xrightarrow{\calD} \calC_{\calD}\] is the resulting flagged chamber complex decomposition.  Similarly construct the flagged chamber complex decomposition \[\calC_d  \xrightarrow{\calD'} (\calC_d)_{\calD'},\] where $\calD' \subset \calD$ is the subcollection of disks not incident to any sphere in $F_{\frb}$.  (Since the spheres in $F_{\frb}$ disappear in $\calC_d$ it would not make sense to include disks incident to them among the decomposing disk set in $\calC_d$.)   

\begin{prop} \label{prop:bulldeflate2}   Continue with the notation just given. 
If $\calC_d$ does not certify, then either $(\calC_d)_{\calD'} = \calC_{\calD}$ or $(\calC_d)_{\calD'}$ is obtained from $\calC_{\calD}$ 
by bullseye deflation. 
\end{prop}

\begin{proof}  As usual, without loss of generality, assume the chamber in $\calC$ adjacent to $F_{\frb}$ is a $B$-chamber $C$ which, after deflation and absorption of the nested set of goneballs into $C$, becomes a chamber we denote $C_d \in \calC_d$.   Let $\calD_{\frb} = \calD - \calD'$, the subcollection of disks that are incident to spheres in $F_{\frb}$.  The boundary $\bdd D$ of any disk $D \in \calD_{\frb}$ divides the sphere $S \subset F_{\frb}$ in which it lies into two disks $E_{\pm}$.  If $D$ lies in the occupied ball or a shell of $F_{\frb}$ then it is parallel to one or both of $E_{\pm}$ and so is inessential in $C$.  If $(D, \bdd D) \subset (C, \bdd F_{\frb})$ and neither sphere $D \cup E_{\pm}$ bounds a ball in $C$, then the two spheres (which become parallel spheres in $C_d$, where $F_{\frb}$ is gone) would be essential in $C_d$,  and so would certify for $\calC_d$, contradicting our assumption.  We conclude that each disk in $\calD_{\frb}$ is inessential in the chamber of $\calC$ in which it lies.  
\medskip

{\em Claim:} The remnants of $F_{\frb}$ in $\calC_{\calD}$ consist of a bullseye $F'_{\frb}$ that intersects $C$ only in a collar of $\bdd F_{\frb}$.  The occupied ball component of  $F'_{\frb}$ still has $\frb$ as a maximal bubble.

The proof of the claim is straightforward but a bit tedious - here is a sketch of the major steps:  Since each disk of $\calD$ that is incident to $\bdd F_{\frb}$ and lies in $C$ is inessential, there is a collar of $\bdd F_{\frb}$ that contains them all.  Then each remnant of $F_{\frb}$ in $\hat{\calC}_{\calD}$ lies in a ball $F_{\frb}^+$, the union of $F_{\frb}$ and the collar, and is bounded by spheres, so it is a punctured ball.  Among the remnants of each shell in $F_{\frb}$ is one containing an incompressible sphere, so that remnant is not a handlebody; similarly the adjacent remnant of the chamber $C$ is not a handlebody.  Hence, per Definition \ref{defin:prefdisk}, each ball remnant in $\calC_{\calD}$ of each shell and of $C$ is a goneball.  This implies that each remnant of the ball $B_\frb$ in $\calC_{\calD}$ is a ball and, per Definition \ref{defin:prefdisk}, exactly one is occupied, and so must contain all of $\frb$.  

Each component of $F_{\calD}$ that  lies in $F_{\frb}^+$ is a sphere; the ball it bounds in $F_{\frb}^+$ is a goneball in $\calC_{\calD}$ unless it contains $\frb$.  Hence the components of $F(\calC_{\calD})$ that lie in $F_{\frb}^+$ consist of a collection of nested spheres centered on a ball containing $\frb$; in other words, they define a bullseye $F'_\frb$, as claimed.  See Figure \ref{fig:bull2bull}.
\medskip

\begin{figure}[ht!]
\labellist
\small\hair 2pt
\pinlabel  $\calD_{\frb}\subset F_{\frb}^+\subset \calC$ at 90 -15
\pinlabel  $\hat{\calC}_{\calD}$ at 300 -15
\pinlabel  $\calC_{\calD}$ at 500 -15
\endlabellist
    \centering
    \includegraphics[scale=0.6]{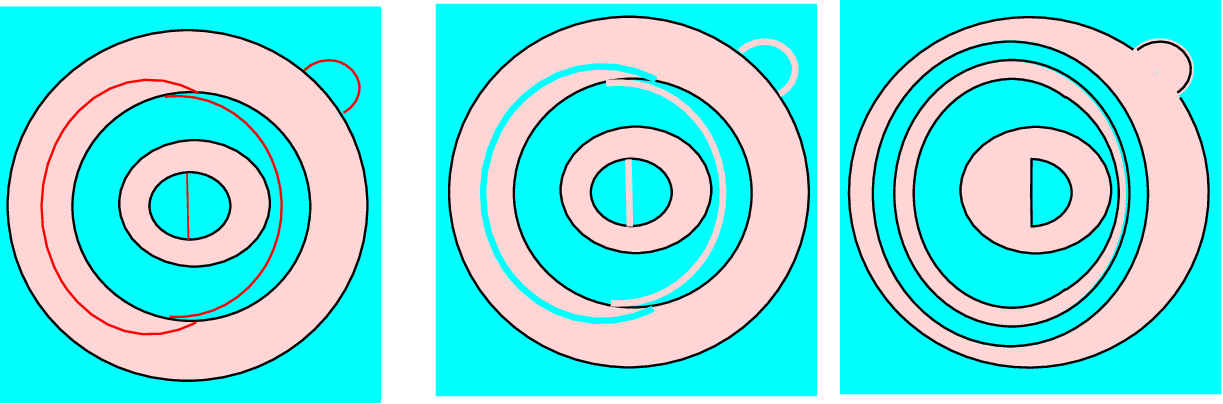}
     \caption{From bullseye to bullseye} \label{fig:bull2bull}
    \end{figure}


Since each ball remnant of $C$ in $F'_{\frb}$ is a goneball, the Heegaard surface $T_C$ for $C$ (as well as the splittings of other chambers in $\calC$) is unaffected by decomposition along $\calD_{\frb}$.  So the original preferred alignment of $\calD$ in $\calC$ remains an alignment of $\calD'$ in $\calC$. That alignment is easily made into a preferred alignment of $\calD'$ in $\calC_d$ as follows: $\calD'$ is already in preferred alignment in chambers not $C_d$ nor among the chambers $\{C_{A_i}\}$ (say) into which $\frb$ is deflated, since these are the only chambers changed by the deflation.  Whichever chamber, in $\{C_{A_i}\}$ or $C_d$, that $\frb$ is deflated into cannot be a handlebody, since $\calC_d$ does not certify, so there is some remnant of that chamber that is not a disky handlebody; choose the alignment of $\calD'$ in that chamber so that $\frb$ appears in a remnant that is not a disky handlebody.  This does not change the flagging and thereby ensures that $\calD'$ is in preferred alignment with $\calC_d$.

Now consider the effect of $\calD$ surgery {\em away} from the bullseye $F_{\frb}$, that is surgery via $\calD'$ on the surface $F - F_{\frb}$.  The resulting surface $\hat{F}$ can be viewed in two ways: Since $F - F_{\frb}$ is also the defining surface for $\calC_d$, $\hat{F}$ can be described as the first (surgery) step in the construction of the decomposition $\calC_d  \xrightarrow{\calD'} (\calC_d)_{\calD'}$.  And $\hat{F}$ is also that part of the surface $F_{\calD}$ that lies outside the new bullseye $F'_{\frb}$. 

It remains only to understand the effect of the second step in disk decomposition - eliminating goneballs after surgery.  We have already determined that eliminating goneballs from surgery on $F_{\frb}$ results in the bullseye $F'_{\frb} \subset \calC_{\calD}$.  So we need only consider sphere components of $\hat{F}$.
 Any such sphere bounds a ball (in fact two balls) in $S^3$.  If such a ball is a goneball in $\calC_{\calD}$ then that ball is unaffected by a deflation of $F'_{\frb}$ (since $F'_{\frb}$ is necessarily outside the goneball) so it remains a goneball in $(\calC_d)_{\calD'}$.  

Consider the symmetric question: if a sphere component of $\hat{F}$  bounds a goneball in $(\calC_d)_{\calD'}$, does it bound a goneball in $\calC_{\calD}$? The goneball in $(\calC_d)_{\calD'}$ may or may not contain the deflated $F'_{\frb}$.  If the goneball does not contain $F'_{\frb}$ then the goneball lies also in $\calC_{\calD}$ and so the sphere does not appear in either $\calC_{\calD}$ or $(\calC_d)_{\calD'}$.  Suppose, on the other hand, that  there is a sphere component of $\hat{F}$ that bounds a goneball in $(\calC_d)_{\calD'}$ and that goneball does contain the deflated $F'_{\frb}$.  Let $S$ be an innermost one.  Since the ball it bounds contains $F'_{\frb}$ it is not a goneball in $\calC_{\calD}$, so $S$ persists as a sphere in the defining surface of $\calC_{\calD}$.  On the other hand, observe this: since a goneball must have genus 0 splitting, the region between $S$ and $\bdd F'_{\frb}$ in $\calC_{\calD}$, a collar of $\bdd F'_{\frb}$, must have genus 0 splitting.  Hence the region is a genus 0 shell.  We can then regard $S$ as yet another nested sphere component in $F'_{\frb}$ and continue the argument.  In the end, the only components of $\hat{F}$ that appear in $\calC_{\calD}$ but not in $(\calC_d)_{\calD'}$ are spheres that just add shells to $F'_{\frb}$.  In particular, they are all eliminated by a bullseye deflation, in this case of of a bullseye that properly contains $F'_{\frb}$.  So $(\calC_d)_{\calD'}$ is obtained from $\calC_{\calD}$ by bullseye deflation, as required. 
\end{proof}

Digressing momentarily, Proposition \ref{prop:bulldeflate2} gives an easy and early example of a {\em guiding set of disks}.  These will be further discussed in Section \ref{sect:guidingisk}.

\begin{defin} \label{defin:guidedisk}  Suppose $\calC$ is a flagged chamber complex in $S^3$ and $\ocalD$ is a finite set of disjoint disks in $S^3$ transverse to $F = F(\calC)$ so that $F \cap \ocalD = \bdd \calD$, for some $\calD \subset \ocalD$.  Then $\ocalD$ is a {\em guiding set of disks} for the flagged chamber complex decomposition $\calC \xrightarrow{\calD} \calC_{\calD}$.  Slightly abusing notation we can then write $\calC \xrightarrow{\ocalD} \calC_{\calD}$. \index{Guiding set of disks}
\end{defin}

For example, in Proposition \ref{prop:bulldeflate2}, $\calD$ is a disk set for the decomposition $\calC \xrightarrow{\calD} \calC_{\calD}$ and $\calD$ is also a {\em guiding disk set} for the decomposition $\calC_d \xrightarrow{\calD'} (\calC_d)_{\calD}$.  So, with no loss of meaning, we could also write the latter $\calC_d \xrightarrow{\calD} (\calC_d)_{\calD}$.
\medskip

In general we will write $\calC' \dashrightarrow  \calC$ to denote that $\calC'$ is obtained from $\calC$ by either (non-ball) handlebody deflation or bullseye deflation. (The direction of the arrow is meant to indicate that the defining surface $F(\calC')$ embeds in $F(\calC)$.) \index{$\dashrightarrow$} Thus, in both Propositions  \ref{prop:hbydeflate2} and \ref{prop:bulldeflate2} above, $\calC_d \dashrightarrow  \calC$.  Applying Definition \ref{defin:guidedisk} these two  propositions, together with Propositions \ref{prop:hbydeflate} and \ref{prop:bulldeflate}, can then be summarized as follows:


\begin{cor} \label{cor:diagrdeflate}  Suppose $\calC, \calC'$ are flagged chamber complexes for which  $\calC' \dashrightarrow \calC$ and suppose $\calD$ is a disk set in $\calC$. 

If $\calC'$ certifies, then $\calC'$ and $\calC$ cocertify. 

 If $\calC'$ does not certify, then the following square of flagged chamber complexes commutes:
 \[\xymatrix {\calC'  \ar[r]^{\calD} \ar @{-->} [d] & \calC'_{\calD} \ar @{-->} [d]\\ 
\calC \ar[r]^{\calD}   & \calC_{\calD}   }\]
\end{cor}

Broaden now to sequences of decompositions:

\begin{defin}. Suppose 
\[\vec{\calC}: \quad \calC_0 \xrightarrow{\calD_0} \calC_1 \xrightarrow{\calD_1} ... \xrightarrow{\calD_{n-1}}\calC_n\] and
\[\vec{\calC'}: \quad \calC'_0 \xrightarrow{\calD_0} \calC'_1 \xrightarrow{\calD_1} ... \xrightarrow{\calD_{n-1}}\calC'_n\]
are flagged chamber complex decomposition sequences 
so that for each $i \geq 0$, $\calC'_i \dashrightarrow \calC_i$. 
Then the decomposition sequence $\vec{\calC}'$ is a {\em deflation} of the decomposition sequence $\vec{\calC}$. \index{Deflation of decomposition sequence}
\end{defin}

Suppose 
\[\vec{\calC}: \quad \calC_0 \xrightarrow{\calD_0} \calC_1 \xrightarrow{\calD_1} ... \xrightarrow{\calD_{n-1}}\calC_n\] is a flagged chamber complex decomposition sequence 
 and $\calC'_0$ is a flagged chamber complex such that $\calC'_0 \dashrightarrow \calC_0$.  Iteratively define a flagged chamber complex decomposition sequence 
\[\vec{\calC}'^{m(ax)}: \quad \calC'_0 \xrightarrow{\calD_0} \calC'_1 \xrightarrow{\calD_1} ... \xrightarrow{\calD_{m-1}}\calC'_m\]
by the following process: If $\calC'_i \dashrightarrow \calC_i$
take $\calC'_{i+1}$ to be the result of flagged chamber complex decomposition of $\calC'_i$ by (the guiding set of disks) $ \calD_i$.   Continue until either $m = n$ or $\calC'_{m+1}$ is not a deflation of $\calC_{m+1}$.  The sequence $\vec{\calC}'^{m}$ is a deflation of the sequence
\[\calC_0 \xrightarrow{\calD_0} \calC_1 \xrightarrow{\calD_1} ... \xrightarrow{\calD_{m-1}}\calC_m\] so the following definition is natural:

\begin{defin} $\vec{\calC}'^{m}$ is the maximal deflationary subsequence of $\vec{\calC}'$. \index{Maximal deflationary subsequence}
\end{defin}

\begin{cor} \label{cor:longdeflate}  
Suppose \[\vec{\calC}: \quad \calC_0 \xrightarrow{\calD_0} \calC_1 \xrightarrow{\calD_1} ... \xrightarrow{\calD_{n-1}}\calC_n\] is a flagged chamber complex decomposition sequence supporting $S^3 = A \cup_T B$ 
 and $\calC'_0$ is a flagged chamber complex such that $\calC'_0 \dashrightarrow \calC_0$. Let $\vec{\calC}'^{m}$ be the maximal deflationary subsequence of $\vec{\calC}'$. 
 \begin{itemize}
 \item If $\vec{\calC}'^{m}$ does not certify then $m = n$.
 \item If $\vec{\calC}'^{m}$ does certify then  
 $\vec{\calC'}$ and $\vec{\calC}$ cocertify.
 \end{itemize}
\end{cor}

\begin{proof}   Iteratively apply Corollary \ref{cor:diagrdeflate}.
\end{proof}

\section{Reordering disks} \label{sect:addisk}

\subsection{Parallel vs sequential flagged chamber complex decompositions} \label{subsect:parallelorseq}  
Suppose $\calC$ is a flagged chamber complex supporting $S^3 = A \cup_T B$, with $F = F(\calC)$ its defining surface.  Suppose $\calD_x, \calD_y$ are disjoint disk sets in $\calC$.  In this section we compare the results of flagged chamber complex decomposition using these disk sets in series vs in parallel.  That is, we compare the two flagged chamber complexes that appear in the bottom row of this diagram:

\[\xymatrix {\calC  \ar[d]_{\calD_x \cup \calD_y} \ar[r]^{\calD_x} & \calC_x \ar[d]^{\calD_y} \\ 
\calC_{x+y} &  \calC_{xy} } \]

In general the answer is complex: for starters, notice that some disks in $\calD_y$ may be incident to the boundary of goneballs in $\calC_x$, so the disks $\calD_y$ become not a disk set for the decomposition $\calC_x \xrightarrow{\calD_y} \calC_{xy}$ but a guiding disk set.  In light of such difficulties, eventually we will restrict our interest to the case in which $\calD_x$ or $\calD_y$ consists of a single disk.  See Subsection \ref{subsect:single}.

We first identify circumstances in which $\calC_{x+y} = \calC_{xy}$.  To that end:

\begin{defin} \label{defin:coherent}
Suppose $H$ is a handlebody in $S^3$ such that $ \bdd H \subset F_{\calD_x \cup \calD_y}$, the surface obtained from $F$ by surgery on $\calD_x \cup \calD_y$.  We say that $H$ is {\em coherent} for the decompositions by $\calD_x \cup \calD_y$ in the diagram above when 
\begin{itemize}  
\item $H$ is a chamber in $\calC_{x+y}$ if and only if it is a chamber in $\calC_{xy}$ and
\item if $H$ is a chamber in both $\calC_{x+y}$ and $\calC_{xy}$ then it has the same flagging.
\end{itemize}
\end{defin}

Here is an easy example:

\begin{lemma} \label{lemma:nonplaninH} Suppose there is a component of $F_{\calD_x \cup \calD_y}$ that lies in $\inter(H)$ and is not a sphere.  Then $H$ is not a chamber in either $\calC_{x+y}$ or $\calC_{xy}$.  Hence $H$ is coherent.
\end{lemma}

\begin{proof} Let $F_{x+y}$ be a component of $F_{\calD_x \cup \calD_y}$ that lies in $\inter(H)$ and is not a sphere.  Since $F_{x+y}$ is not a sphere it is not contained in a goneball of $\calC_{x+y}$ (cf Lemma \ref{lemma:disky}(2)) so it is a component of the defining surface $F(\calC_{x+y})$ that lies inside $H$.  Hence $H$ is not a handlebody chamber in $\calC_{x+y}$.

The complement of the scars in $F_{x+y}$ is a non-planar connected surface contained in a closed component $F_0 \subset F$.  Similarly the complement of just the scars of $\calD_y$ in $F_{x+y}$ is contained in a non-planar closed surface $F_x$ of $F_{\calD_x}$.  Since both $F_0$ and $F_x$ contain a non-planar subsurface they are not spheres.  Since $F_x$ is not a sphere it is not contained in a goneball of $\calC_x$, again by Lemma \ref{lemma:disky}(2).  In particular it is a component of the defining surface $F(\calC_x)$.  $F_{x + y}$ is a component of the surface that results from surgery by $\calD_y$ on $F_x$.  Since it is not a sphere it also is not contained in a goneball of $\calC_{xy}$.  Hence $F_{x+y}$ is a closed component of the defining surface for $\calC_{xy}$ lying in the interior of $H$.  Thus $H$ is not a handlebody chamber in $\calC_{xy}$.
\end{proof}

Returning to the definition of coherent, note that $H$ could fail to be a chamber in $\calC_{x+y}$ for two reasons:  $H$ contains a component of the defining surface $F(\calC_{x+y})$ in its interior, as in Lemma \ref{lemma:nonplaninH}, or $H$ could be a goneball.  Similarly, $H$ could fail to be a chamber in $\calC_{xy}$ because it has a component of the defining surface $F(\calC_{xy})$ in its interior, or  because $H$ is a goneball under the 
sequence of decompositions 
\[\calC \xrightarrow{\calD_x} \calC_x \xrightarrow{\calD_y} \calC_{xy}.\] 
Suppose the latter.  $H$ could become a goneball under the second decomposition, so the complement $\bdd H_{-}$ of the scars of $\calD_y$ in $\bdd H$ is part of a surface in $F(\calC_x)$.  On the other hand, $\bdd H_{-}$ may not be part of  a surface in $F(\calC_x)$ if $\bdd H_{-}$ is part of a sphere $G \subset F = F(\calC)$ that bounds a goneball in $\calC_x$.  In the last case, if $H$ also has no other components of $\calC_{xy}$ in its interior, it is natural to call $H$ a goneball 
of the decomposition sequence $\calC \xrightarrow{\calD_x} \calC_x \xrightarrow{\calD_y} \calC_{xy}$.  As we will see, such a ball $H$ need not be a goneball in $\calC_{x+y}$.  

The following lemma explains why coherence is useful: 

\begin{lemma}  \label{lemma:cohertoequal} Suppose every handlebody in $S^3$ whose boundary is in $F_{\calD_x \cup \calD_y}$ is coherent.  Then $\calC_{x+y} = \calC_{xy}$ as flagged chamber complexes.
\end{lemma}

\begin{proof}  Let $F_0$ be a component of $F_{\calD_x \cup \calD_y}$.  If $F_0$ is not a sphere, then, as in the proof of Lemma \ref{lemma:nonplaninH}, $F_0$ is a component of both defining surfaces $F(\calC_{x+y})$ and $F(\calC_{xy})$.   If $F_0$ is a sphere, then it divides $S^3$ into two 3-balls, $H_{\pm}$.  By assumption both $H_{\pm}$ are coherent, so each is a goneball in $\calC_{x+y}$ if and only if it is a goneball in $\calC_{xy}$.  In particular, if either $H_{\pm}$ is a goneball in one of $\calC_{x+y}$ or $ \calC_{xy}$ then it is a goneball in both and $F_0$ is not in either $F(\calC_{x+y})$ and $F(\calC_{xy})$. On the other hand, neither of $H_{\pm}$ is a goneball in $\calC_{x+y}$ if and only if neither is a goneball in $ \calC_{xy}$ and, in this case, $F_0$ does lie in both $F(\calC_{x+y})$ and $F(\calC_{xy})$.  Thus, ignoring flagging, $\calC_{x+y} = \calC_{xy}$ as chamber complexes.

Now consider flagging of a handlebody chamber $H$ in one of $\calC_{x+y}$ or $\calC_{xy}$ and so in both.  Suppose $H$ is a handlebody chamber in one of $\calC_{x+y}$ or $ \calC_{xy}$.  Since $H$ is coherent, it is then a handlebody chamber in both  $\calC_{x+y}$ and $\calC_{xy}$, and the flagging of $H$ is the same.  Thus $\calC_{x+y} = \calC_{xy}$ as flagged chamber complexes.
\end{proof}

Here is a helpful example of incoherence in a handlebody:  
Let $C$ be a chamber in $\calC$ that a disk $D$, properly embedded in $C$, divides into two components: $U$, not a handlebody, and $V$, the complement in a handlebody $H$ of a regular neighborhood $\eta$ of a properly embedded arc $\alpha \subset H$.  Let $E$ be a meridian disk in $\eta$, so $E$ is properly embedded in $\eta$ and meets $\alpha$ in a single point.   Note that $F \cap \inter(H)$ is the annulus $\bdd \eta$.  See Figure \ref{fig:paraldisk}.    

\begin{figure}[ht!]
\labellist
\small\hair 2pt
\pinlabel  $\calC$ at 90 300
\pinlabel  $D$ at 90 380
\pinlabel  $E$ at 140 360
\pinlabel  $\calC_D$ at 90 530
\pinlabel  $U$ at 45 520
\pinlabel  $V$ at 135 520
\pinlabel  $\calC_E$ at 90 50
\pinlabel  $\calC_{DE}$ at 390 530
\pinlabel  $\calC_{D+E}$ at 390 300
\pinlabel  $\calC_{ED}$ at 390 50
\pinlabel  empty at 450 100
\pinlabel  occupied at 450 350
\pinlabel  occupied at 450 580
\endlabellist
    \centering
    \includegraphics[scale=0.5]{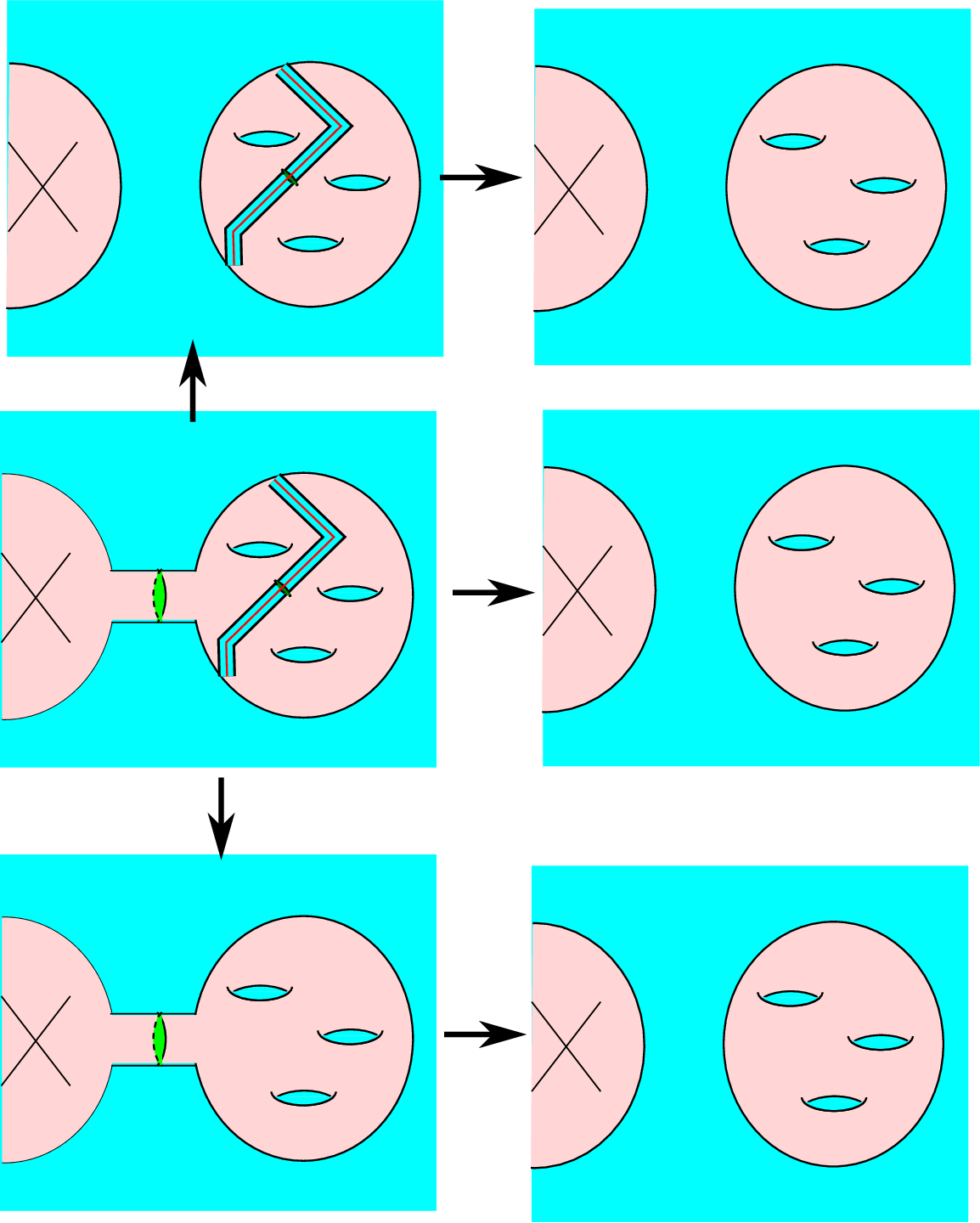}
   \caption{$\calC_{DE} = \calC_{D+E} \neq \calC_{ED}$} \label{fig:paraldisk}
    \end{figure}

Since $F \cap \inter(H)$ is an annulus and not a collection of disks, $H$ is occupied after the decomposition $\calC \xrightarrow{D \cup E} \calC_{D + E}$ and after the sequence of decompositions \[\calC \xrightarrow{D} \calC_D \xrightarrow{E} \calC_{DE}.\]  This means that $H$ is a coherent handlebody for $\calC_{D + E}$ and $\calC_{DE}$.  On the other hand, after the decomposition $\calC \xrightarrow{E} \calC_E$ the disk $D$ divides $\calC_E$ into $U$ and the handlebody $H$.  For the next decomposition $\calC_E \xrightarrow{D} \calC_{ED}$, Definition \ref{defin:prefdisk} requires that $E$ be aligned so that $H$ is empty in $\calC_{ED}$.  Thus $H$ is not a coherent handlebody for $\calC_{D + E}$ and $\calC_{ED}$. 

Replacing $D$ in this example of incoherence by a disk family consisting of $D$ and a disk $D'$ parallel to $E$ has no effect on the above argument: For $\calD = {D \cup D'}$,  $H$ is coherent for $\calC_{\calD + E}$ and $\calC_{\calD E}$ but incoherent for $\calC_{\calD + E}$ and $\calC_{E\calD}$.  This leads to the following observation, using analogous notation:

\begin{lemma} \label{lemma:paraldisk}  Let $\calD \cup E$ be a disk family in a flagged chamber complex $\calC$ in $S^3$.  Suppose the disk $E$ is parallel to a disk $D \in \calD$ within a chamber $C$ of $\calC$.  Then
\begin{enumerate}

\item Unless the chamber $C_D$ of $\calC_\calD$ containing $E$ is an occupied handlebody, $\calC_\calD = \calC_{\calD E}$.  If $C_D$ is an occupied handlebody then for one of the two resulting siblings in $\calC_{\calD E}$, $\calC_{\calD} = \calC_{\calD E}$. 

\item Unless the chamber $C$ is an occupied handlebody with only disky handlebody remnants in $\calC_\calD$, the chamber complex $\calC_\calD = \calC_{\calD + E}$. The same is almost true in the remaining case: If $C$ is an occupied handlebody with only disky handlebody remnants in $\calC_\calD$ then $C$ has sibling remnants in $\calC_{\calD + E}$ and deleting a single such sibling gives the remnants of $C$ in $\calC_\calD$.

\item either $\calC_{\calD + E} = \calC_{\calD E}$ or $C_D$ is an occupied handlebody chamber in $\calC_{\calD}$ and, in one or both of two resulting siblings in $\calC_{\calD E}$, $\calC_{\calD + E} = \calC_{\calD E}$.  
\end{enumerate}   
\end{lemma}

\begin{proof}   Since $D$ and $E$ are parallel, after surgery on $\calD \cup E$ there is a sphere $G$ in $F_{\calD \cup E}$ bounding a disky ball $B_G$ with one scar of each of the disks $D$ and $E$ on $G$. Moreover the other remnant $C_- = C_D - B_G$ of $C_D$ in $\calC_{\calD E}$ is homeomorphic to $C_D$, so it is a handlebody if and only if $C_D$ is.  If $C_D$ is not a handlebody, so also $C_-$ is not a handlebody, or if $C_D$ is an empty handlebody, then per Definition \ref{defin:prefdisk} the ball $B_G$ is empty and so a goneball in  $\calC_{\calD E}$.  Then $\calC_{\calD} = \calC_{\calD E}$ (with $C_-$ replacing $C_D$).  If $C_D$ is occupied then the same statement is true for one of the two siblings of $C_D$.  This proves 1).

The remnants of $C$ in $\calC_{\calD + E}$ are the same as in $\calC_{\calD}$, with the addition of the disky ball $B_G$.  If $B_G$ is a goneball, we are done as in the proof of 1).  The only way in which $B_G$ could not be a goneball,  per Definition \ref{defin:prefdisk}, is if $C$ is an occupied handlebody with only disky remnants.  In the latter case, only the sibling in which $B_G$ is occupied does not also occur among the sibling remnants of $C$ in $\calC_{\calD}$. This proves 2).   

The proof of 3) follows almost immediately.  Following 1), we may as well assume that $\calC_\calD = \calC_{\calD E}$.  The result follows from 2), unless $C$ is an occupied handlebody with only disky handlebody remnants in $\calC_\calD$.  In this case, $C_-$ is also disky, so $C$ also has only disky handlebody remnants in $\calC_{\calD + E}$.  Then per Definition \ref{defin:prefdisk} the remnants of $C$ in both $\calC_{\calD + E}$ and $\calC_{\calD E}$ have the same description of siblings, one corresponding to exactly one of the remnants of $\calC_{\calD +E}$ being occupied, with remnant $B_G$ a goneball except when it is the occupied remnant.  
\end{proof}

Note: Switching the order of $\calD$ and $E$ makes a difference.  The example preceding Lemma \ref{lemma:paraldisk} shows that there may be a handlebody in $S^3$ whose boundary is in $F_{\calD \cup E}$ and which is not even coherent for $\calC_{\calD + E}$ and $\calC_{E\calD}$.
\bigskip

Return now to the discussion of decomposition by $\calD_x$ and $\calD_y$.

\begin{defin} \label{defin:incdisk}  Suppose $H$ is a handlebody in $S^3$ so that $\bdd H \subset F_{\calD_x \cup \calD_y}$.  Then a disk $D \in \calD_x \cup \calD_y$ {\em is incident to $H$} if $D \subset \inter(H)$ or $D$ leaves a scar on $\bdd H$. 
The handlebody $H$ is an {\em $x$-handlebody} (resp. $y$-handlebody) if $H$ is incident only to disks in $\calD_x$ (resp. $\calD_y$).  
An {\em $x$-handlebody} (resp. $y$-handlebody) $H$ is an {\em $x$-handlebody} (resp. $y$-handlebody) {\em chamber} in $\calC_{xy}$ if it is a chamber there.  That is, $\bdd H$ is in the defining surface $F(\calC_{xy})$ and no other chamber lies in $\inter(H)$. \index{x-handlebody} \index{y-handlebody}
\end{defin}

\begin{lemma} \label{lemma:incdisk} If $H$ is an $x$-handlebody chamber in $\calC_{xy}$ 
then $H$ is also a chamber of $\calC_x$.  
\end{lemma}

\begin{proof}  Let $C$ be the chamber of $\calC_x$ which has $H$ as a remnant.  If any disks in $\calD_y$ are incident to $C$ then each remnant of $C$ in $\hat{\calC}_{xy}$ has a scar from $\calD_y$, so the same is true after getting rid of goneballs.  Since $H$ has no scars from $\calD_y$, no disks from $\calD_y$ are incident to $C$, and  $C$ is unaffected by the decomposition by $\calD_y$.  That is, $C = H$ as required.
\end{proof}

Suppose $\calC$ contains no occupied handlebody chambers.  Then by Corollary \ref{cor:prefdisk} a handlebody chamber in $\calC_x$ or a handlebody chamber in $\calC_{x+y}$ is empty if and only if it is disky.  

 \begin{lemma} \label{lemma:FH1} Suppose $\calC$ contains no occupied handlebody chambers. Then: 
 \begin{itemize}
 \item Each $x$-handlebody chamber of $\calC_{xy}$ is empty if and only if it is a disky chamber under the decomposition $\calC \xrightarrow{\calD_x} \calC_{x}$.  
 \item If $\calC_x$ contains no occupied handlebody chambers then each $y$-handlebody chamber of $\calC_{xy}$ is empty if and only if it is disky under the decomposition $\calC_x \xrightarrow{\calD_y} \calC_{xy}$.  
 \item If $\calC_x$ does contain occupied handlebody chambers, there is a preferred alignment of $\calD_y$ in $\calC_x$ so that each $y$-handlebody chamber of $\calC_{xy}$ is empty if and only if it is disky under the decomposition $\calC_x \xrightarrow{\calD_y} \calC_{xy}$.  Any other preferred alignment of $\calD_y$ gives a sibling decomposition $\calC_x \xrightarrow{\calD_y} \calC_{xy}$. 
 \end{itemize}
\end{lemma}
%

\begin{proof}  The first statement follows from Corollary \ref{cor:prefdisk} and Lemma \ref{lemma:incdisk}.  The second statement follows from Corollary \ref{cor:prefdisk}. 

The third statement is more difficult to prove.  Observe first that by the argument in Corollary \ref{cor:prefdisk}, any handlebody remnant of a chamber in $\calC_x$ that is not an occupied handlebody is empty if and only if it is disky.  Moreover the argument extends, via Definition \ref{defin:prefdisk}, even to occupied handlebody chambers in $\calC_x$ so long as at least one remnant in $\calC_{xy}$ is not a disky handlebody.  So we need only describe how to align those disks in $\calD_y$ that are incident to each chamber $C_x$ in $\calC_x$ of the following type: $C_x$ is an occupied handlebody and every remnant of $C_x$ in $\calC_{xy}$ is a disky handlebody.  
%
%
%
%

This is how it is done:  
 Since $C_x$ is occupied, by Corollary \ref{cor:prefdisk} $C_x$ is not disky.  So there is some component $F_x$ of $F \cap \inter(C_x)$ that is not a disk. $F_x$ must be incident to some disk in $\calD_x$ else it would be a subsurface of $F(\calC_x)$ and not lie in the interior of $C_x$.
 
  The subcollection of disks in $\calD_y$ that are incident to $\bdd C_x$ is disjoint from $F_x$, since $\inter(\calD_y)$ is disjoint from $F$.  Hence 
 the surface $F_x$ will lie in the interior of some remnant  $C_{xy}$ of $C_x$ in $\calC_{xy}$, so that remnant too is incident to $\calD_x$.  In particular, $C_{xy}$ is not a $y$-handlebody.  
Following Definition \ref{defin:prefdisk}, choose a preferred alignment where every (disky) handlebody remnant of $C_x$ other than $C_{xy}$ is empty.  Then in particular each $y$-handlebody remnant is both disky and empty, as required. 
\end{proof}

 \begin{lemma} \label{lemma:FH2} Suppose $\calC$ does not certify 
and $H$ is a handlebody in $S^3$ so that $\bdd H \subset F_{\calD_x \cup \calD_y}$. If $H$ is an $x$-handlebody it is coherent.  If $H$ is a $y$-handlebody then it is coherent in one of the sibling decompositions of $\calC_x \xrightarrow{\calD_y} \calC_{xy}$.  
\end{lemma}

\begin{proof}  If appropriate (i. e. if $\calC_x$ contains occupied handlebody chambers) choose the alignment for $\calD_y$ in $\calC_x$ given in Lemma \ref{lemma:FH1} to ensure that each $y$-handlebody chamber of $\calC_{xy}$ is empty if and only if it is disky under the decomposition $\calC_x \xrightarrow{\calD_y} \calC_{xy}$.  Also from Lemma \ref{lemma:FH1} we have that each $x$-handlebody chamber of $\calC_{xy}$ is empty if and only if it is disky under the decomposition $\calC \xrightarrow{\calD_x} \calC_x$.

{\em Case 1: $x$-handlebodies}

 The proof will be by contradiction.  Suppose there is an $x$-handlebody that is not coherent and let $H$ be an innermost one that is not coherent.  Suppose first that there are no components of $F_{\calD_x \cup \calD_y}$ contained in the interior of $H$.  Then $\calC \xrightarrow{\calD_x} \calC_x$ is disky (for $F \cap \inter(H)$ has no components at all) and hence empty (possibly a goneball) in $\calC_{xy}$.  Similarly $\calC \xrightarrow{\calD_x \cup \calD_y} \calC_{x+y}$ is disky and hence $H$ is empty in $\calC_{x+y}$.  Since $H$ is empty in both $\calC_{xy}$ and $\calC_{x+y}$ it is coherent, a contradiction.  
 
We deduce then that there must be components of $F_{\calD_x \cup \calD_y}$ in the interior of $H$. 
Suppose a subsurface of $F_{\calD_x \cup \calD_y}$ in the interior of $H$ becomes in $\calC_x$ a component $F_x$ of $F(\calC_x)$.  Then $H$ is not a chamber in $\calC_x$ (since it contains a component of $F(\calC_x)$ in its interior) and, since $H$ is incident to no disks in $\calD_y$, $H$ remains not a chamber in $\calC_{xy}$.  Since no disk in $\calD_y$ is incident to $F_x$, $F_x$ is also a component of $\calC_{x+y}$ unless it is a goneball there.  But if it were a goneball there, and not in $\calC_{xy}$ it would be a further in handlebody that is not coherent, contradicting our choice of $H$.  We conclude that $F_x$ remains as a surface in $\calC_{x+y}$ so $H$ is not a chamber there either.  But this implies $H$ is coherent, a contradiction.  

We deduce then that $F_{\calD_x \cup \calD_y} \cap \inter (H)$ must become a collection of spheres bounding goneballs in $\calC_x$, hence in $\calC_{xy}$.  Then, by our consistency hypothesis, the spheres bound goneballs also in $\calC_{x+y}$.  Hence in each of $\calC_{xy}$ and $\calC_{x+y}$, $H$ is either a chamber or is itself a goneball.  Whether $H$ is occupied or empty (so perhaps a goneball) is determined in $\calC_{x+y}$ by whether the components of $F \cap  \inter(H)$ consist entirely of disks, by Corollary \ref{cor:prefdisk}.  By the first statement of Lemma \ref{lemma:FH1} the same is true in $\calC_{xy}$.  Hence $H$ is coherent a final contradiction.

{\em Case 2: $y$-handlebodies}

 
The proof will be by contradiction, similar to Case 1.  Suppose there is a $y$-handlebody that is not coherent and let $H$ be an innermost one that is not coherent.  Suppose there is a component $F_0$ of $F_{\calD_x \cup \calD_y}$ that lies in $\inter(H)$ and remains in $F(\calC_{xy})$, so $H$ is not a chamber in $\calC_{xy}$.  By Lemma \ref{lemma:nonplaninH} $F_0$ is a sphere, so it bounds a ball in $H$. The ball it bounds is not a goneball in $\calC_{xy}$ so, by the consistency hypothesis, $F_0$ does not bound a goneball in $\calC_{x+y}$.  This implies that $H$ is not a chamber in either $\calC_{xy}$ or $\calC_{x+y}$, contradicting our assumption that $H$ is not coherent. 

We deduce then that $\inter(H)$ is disjoint from $F(\calC_{xy})$ so $H$ is a chamber in $\calC_{xy}$.  A symmetric argument shows that $H$ is also a chamber in $\calC_{x+y}$.  Suppose $F \cap \inter(H)$ has a component $F_0$ that is not a disk, so $H$ is an occupied handlebody in $\calC_{x+y}$.
  
{\em Subcase 2a:} $F_0 \subset \inter(H)$ is a closed surface.

Since $H$ is a $y$-handlebody, $F_0$ is unaffected by surgery on $\calD_x$ so it persists as a component of $F_{\calD_x}$. 
Since $F_0$ does not bound a goneball in $\calC$, it does not bound one in $\calC_s$, so 
$F_0$ persists as a component of $F(\calC_{x})$.  In particular, $H$ is not disky under the decomposition $\calC_{x} \xrightarrow{\calD_y} \calC_{xy}$.  This implies that $H$ is an occupied handlebody in $\calC_{xy}$, contradicting the hypothesis that $H$ is not coherent.

{\em Subcase 2b:} $\bdd F_0 \neq \emptyset$.

Let $\bdd_- H = \bdd H \cap F$, that is $\bdd H$ with all scars removed.  Note that since $F_0$ is not a disk it is either non-planar or has more than one boundary component; in either case $F_0 \cup \bdd_- H$ is non-planar.  In particular, the subsurface of $F_{\calD_x}$ that contains it cannot be a sphere, so $F_0 \cup \bdd_- H$ persists as a subsurface of $F(\calC_x)$. In particular, $F_0$ persists, and, just as in Subcase 2a this implies that $H$ is an occupied handlebody in $\calC_{xy}$, contradicting the hypothesis that $H$ is not coherent.

We are thus reduced to the case that $F \cap \inter(H)$ consists entirely of disks, so $H$ is disky in $\calC_{x+y}$.  Since we are assuming $\calC$ does not certify, and therefore has no occupied handlebody chambers, this implies that $H$ is empty (and so a goneball if $H$ is a ball) in $\calC_{x+y}$.  It's possible that in $\calC_{x}$ the surface $\bdd_- H$ is part of a sphere $G \subset F$ that bounds a goneball in $\calC_{x}$.  This would imply that $\bdd H$ is planar, so $H$ is a ball, and would make $H$ a goneball of the decomposition sequence $\calC \xrightarrow{\calD_x} \calC_x \xrightarrow{\calD_y} \calC_{xy}$, as described before Lemma \ref{lemma:cohertoequal}.  

On the other hand, if $F \cap \bdd H$ does remain in $F(\calC_{D_x})$, what we can conclude is that $H$ is disky in the decomposition $\calC_x \xrightarrow{\calD_y} \calC_{xy}$ and so, per our choice of preferred alignment of $\calD_y$, $H$ is empty.  (In particular, it is again a goneball if $H$ is a ball.)  In either case, $H$ is then coherent for $\calC_{xy}$ and $\calC_{x+y}$, a final contradiction.  
\end{proof}

\subsection{When a disk set is a singleton} \label{subsect:single} We take the discussion above one step further:

\begin{defin} \label{defin:excdisk}  
Suppose $H$ is a handlebody in $S^3$ so that $\bdd H \subset F_{\calD_x \cup \calD_y}$.  $H$ is an $xe$-handlebody (resp. $ey$-handlebody) \index{xe-handlebody} \index{ey-handlebody} if $H$ is incident only to disks in $\calD_x$ (resp. $\calD_y$) except for a single disk $E(xeptional)$ which is also incident to $H$ but lies in $\calD_y$ (resp. $\calD_x$).  
\end{defin}

We proceed to two lemmas that are parallel in spirit and whose proofs will use the same figures.  To keep their use disentangled, in Figures  \ref{fig:addiskv2_2}, \ref{fig:addiskv2_3} and \ref{fig:addisk2v_6} the subscripts $xe, x+e$ will refer to the case in which $\calD_y = E$ and the subscripts $ey, e+y$ will refer to the case in which $\calD_x = E$.  

\begin{lemma} \label{lemma:ey}  Suppose $\calC$ does not certify, $H$ is an $ey$-handlebody and, if $\calC_x$ contains an occupied handlebody, $\calD_y$ is given the preferred alignment in $\calC_x$ of Lemma \ref{lemma:FH1}.  Then either $H$ is coherent or 
\begin{itemize}
\item $E$ lies in $\inter(H)$ and
\item either $\bdd E$ is non-separating in $F - \bdd \calD_y$ or $\bdd E$ is non-separating in $F$ and $E$ is parallel to a disk in $\calD_y$ and
\item $H$ lies in a chamber\footnote{The phrase `$H$ lies in a chamber' means that either $H$ is itself a chamber, or $H$ is a goneball in a chamber.} of $\calC_{xy}$ and $H$ is either 
\begin{itemize} 
\item empty (and so a goneball if $H$ is a ball), the deflation of a handlebody chamber in $\calC_{x+y}$ or 
\item the deflation of a single bullseye of $\calC_{x+y}$ in $H$. (Possibly $H$ is part of the bullseye.)
\end{itemize}
\end{itemize}
\end{lemma}

\begin{proof}  As usual, let $F_{E \cup \calD_y}$ be the surface obtained from $F = F(\calC)$ by surgery on $E \cup \calD_y$.  Following Lemma \ref{lemma:nonplaninH} we will assume that each component of $F_{E \cup \calD_y}$ that lies in $\inter(H)$ is a sphere and so bounds a ball in $H$.  Let $\mathfrak{S}$ be this set of spheres
\medskip

{\em Special case:}  Each ball in $H$ bounded by a sphere in $\mathfrak{S}$ is coherent.   

{\em Claim:} In this special case, $H$ lies entirely in a chamber of $\calC_{x+y}$ if and only if it lies entirely in a chamber of $\calC_{xy}$.

{\em Proof of claim:} For both $\calC_{x+y}$ and $\calC_{xy}$, $H$ is contained in a chamber if and only if each ball in $H$ bounded by a sphere in $\mathfrak{S}$ is a goneball, for this determines whether there is any chamber contained in $\inter(H)$.  But in this special case, each ball bounded by a sphere in $\mathfrak{S}$ is coherent, so the ball it bounds is a goneball in $\calC_{x+y}$ if and only if it is a goneball in $\calC_{xy}$.  This proves the claim.
\medskip

Following the claim, but still in the special case, we will assume that $H$ lies entirely in a chamber (possibly it is itself an entire chamber) of both $\calC_{x+y}$ and $\calC_{xy}$ and need to determine under what circumstances $H$ is not coherent.  That is, 
\begin{itemize}
\item  If $H$ is a ball, when is it a goneball in one of $\calC_{x+y}$ or $\calC_{xy}$ but not the other?
\item  If $H$ is not a ball, so it is itself a chamber, when is it empty in one of $\calC_{x+y}$ or $\calC_{xy}$ but not the other?
\end{itemize}

We examine possible subcases.  For all but the last, $H$ will turn out to be coherent:
\medskip

{\em Subcase 1a:} There is a component $F_0$ of $F \cap \inter(H)$ that is not a disk and not incident to $E$.  

Since $F_0$ is not a disk, $H$ is not a disky handlebody in the decomposition $\calC \to \calC_{x+y}$.  Since $F_0$ is not incident to $E$, $F_0$ is not affected by the decomposition $\calC \xrightarrow{E} \calC_x$.  This implies that $H$ is also not a disky handlebody in the decomposition $\calC_x \to \calC_{xy}$.  $H$ is then an occupied handlebody in both $\calC_{x+y}$ or $\calC_{xy}$, so $H$ is coherent.  

{\em Subcase 1b:}  A component $F_e$ of $F \cap \inter(H)$ is incident to $E$ and surgery on $E$ (in the decomposition $\calC \xrightarrow{E} \calC_x$) does not turn $F_e$ into the union of disks and spheres.

Essentially the same argument applies: $F_e$ can't be a disk, so $H$ is occupied in $\calC_{x+y}$; some non-disk component of the surgered $F_e$ remains as a component of $F(\calC_x)$ so $H$ is occupied in $\calC_{xy}$.  
\medskip

{\em Subcase 2:} $E$ does not lie in $\inter(H)$, so it leaves either one or two external scars on $\bdd H$. 

Following Subcase 1a, we can assume every component of $F \cap \inter(H)$ is a disk, hence $H$ is disky and so, by Corollary \ref{cor:prefdisk} $H$ is an empty chamber in $\calC_{x+y}$.  $H$ is also a disky handlebody remnant of the decomposition $\calC_x \to \calC_{xy}$ 
but this does not immediately imply that $H$ is empty in $\calC_{xy}$: per Definition \ref{defin:prefdisk} $H$ might be the remnant of an occupied handlebody $C_x$ in $\calC_x$ all of whose other remnants are also disky handlebodies.    We explore this possibility further:

Suppose this were the case, and $C$ is the chamber in $\calC$ of which $C_x$ is the occupied handlebody remnant.  If the disk $E$ lies outside $C$ then the scars of $E$ in $\bdd C_x$ would be internal scars.  By the hypothesis of this subcase we may therefore assume that $E$ lies in the interior of $C$.  The one or two remnants of $C$ in $\calC_x$ are then subsets of $C$, so $F$ is disjoint from these remnants and the remnant $C_x$ in particular is disky.  Per Definition \ref{defin:prefdisk} the only way that $\calC_x$ can be a disky remnant of $C$ and still be occupied is if $C$ is an occupied handlebody with all remnants disky.  But a hypothesis of the Lemma is that $\calC$ does not certify, so $C$ is not an occupied handlebody.  From the contradiction we conclude that $H$ is empty in $\calC_{xy}$ and so $H$ is coherent.

\medskip
Following Subcase 2, we henceforth assume that $E \subset \inter(H)$.

{\em Subcase 3:}  $F \cap \inter(H)$ consists of disks, so $H$ is either a goneball or an empty chamber in $\calC_{x+y}$.

Let $F_e$ be the component of $F \cap \inter(H)$ on which $\bdd E$ lies.   Since $H$ is irreducible, $E$ is parallel in $H$ to a subdisk $F_0$ of $F_e$.  Then the decomposition $\calC \xrightarrow{E} \calC_x$ has essentially no effect on $\calC$: after removing the goneball bounded by $E \cup F_0$, all that has changed is a proper isotopy of $F_e$ that replaces $F_0$ with $E$ (and the removal of any disks in $\calD_y$ that were incident to $F_0$, but these only give rise to goneballs in $\calC_{x+y}$ and $\calC_{xy}$). Then the decomposition $\calC_x \to \calC_{xy}$ also leaves $H$ either a goneball or an empty chamber in $\calC_{xy}$, so $H$ is coherent.  
\medskip

{\em Subcase 4:} All that remains: $F \cap \inter(H)$ has a single non-disk component $F_e$,  $F_e$ is incident to $E$, and surgery on $E$ turns $F_e$ into the union of disks and spheres. 

$F_e$ could be a torus bounding an empty solid torus, a once-punctured torus, or an annulus, and in each case $E$ is a meridian disk. (See the left column in Figures \ref{fig:addiskv2_2} and \ref{fig:addiskv2_3}.) It follows that $\bdd E$ is non-separating in $F$ and, unless $\calD_y$ also contains a disk whose boundary is parallel to $\bdd E$, $\bdd E$ is also non-separating in $F - \bdd \calD_y$.  If $\calD_y$ does contain a disk $D$ whose boundary is parallel to $\bdd E$ then $D$ and $E$ are parallel, since otherwise between them would lie a closed component of $F \cap \inter(H)$, contradicting the hypothesis of this subcase.  So, in the end, we are in the situation described in the statement of the Lemma: {\bf $\bdd E$ is non-separating in $F - \bdd D_y$ or $\bdd E$ is non-separating in $F$ and $E$ is parallel to a disk in $\calD_y$}.   In this case $H$ is an occupied handlebody in $\calC_{x+y}$, since $F_e$ is not a disk, but may be empty in $\calC_{xy}$, since $\calC_x \xrightarrow{\calD_y} \calC_{xy}$ is disky.  Since $H$ could be incoherent, we explore this situation further:

Consider the chamber $C_x$ in $\calC_x$ which has $H$ as a remnant.  If every other remnant of $C_x$ is also a disky handlebody, then $C_x$ is a handlebody and it must be occupied because $F_e$ is not a disk.  Since $E \subset \inter(H)$, each other remnant of $C_x$ is a disky $y$-handlebody, so per Lemma \ref{lemma:FH1}, each is empty.  This implies that $H$ must be occupied in $\calC_{xy}$, and $H$ is coherent.  Finally, if any remnant of $C_x$ in $\calC_{xy}$ 
is not a disky handlebody then Definition \ref{defin:prefdisk} says that a preferred alignment of $\calD_y$ leaves $H$ deflated to such a remnant, an outcome allowed in Lemma \ref{lemma:ey}.  This concludes the proof in the special case.  
\medskip

%

{\em The general case:}  

Following the special case, the remaining possibility is that there is a sphere in $\mathfrak{S}$ bounding a ball in $H$ that is not coherent.  Let $G$ be an innermost such sphere.  By the special case, $E$ lies in the ball $B_G$ that $G$ bounds in $H$ and $B_G$ is occupied in $\calC_{x+y}$ but deflates into another chamber in $\calC_{xy}$.  Since $B_G$ is occupied in $\calC_{x+y}$, it is a chamber, so $H$ does not lie inside a chamber in $\calC_{x+y}$.  If any sphere in $\mathfrak{S}$ does not bound a goneball in $\calC_{xy}$ then $H$ is also not a chamber in $\calC_{xy}$ and so is coherent.  See Figure \ref{fig:eybull}.

\begin{figure}[ht!]
 \labellist
\small\hair 2pt
\pinlabel  $\calC$ at 15 350
\pinlabel  $\calC_x$ at 230 350
\pinlabel  $\frb$ at 300 265
\pinlabel  $\frb$ at 75 75
\pinlabel  $\calC_{ey}$ at 240 150
\pinlabel  goneballs at 300 80
\pinlabel  $\calD_y$ at 80 390
\pinlabel  $E$ at 80 270
\pinlabel  $\calC_{e+y}$ at 20 150
\endlabellist
    \centering
    \includegraphics[scale=0.7]{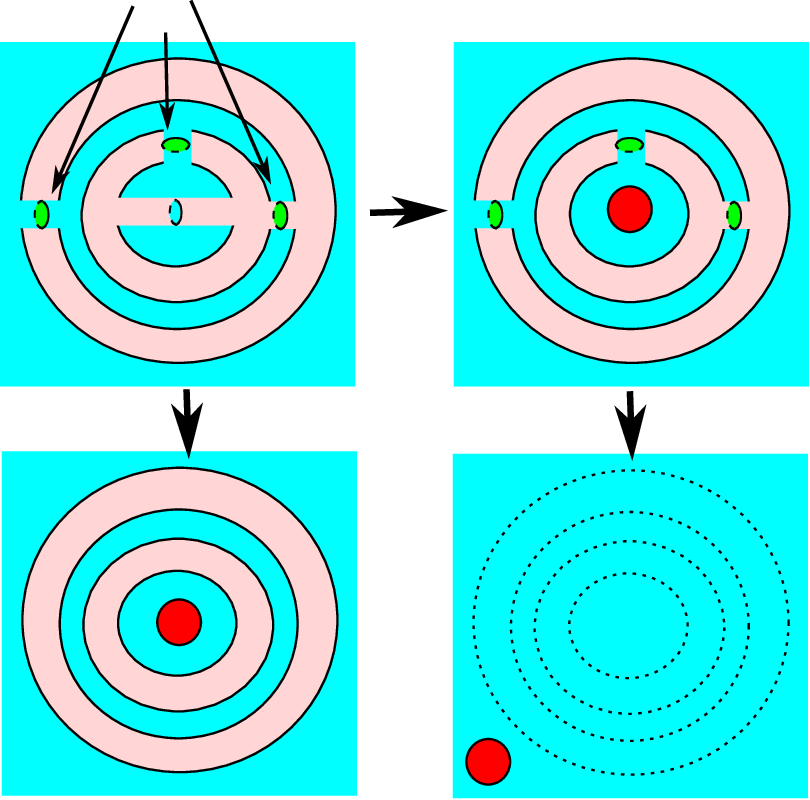}
    \caption{Bullseye deflation $\calC_{ey} \dashrightarrow \calC_{e+y}$}  \label{fig:eybull}
    \end{figure}

We are left with the case that every sphere in $\mathfrak{S}$ bounds a goneball in $\calC_{xy}$.  Any such sphere not containing $G$ in the interior of the ball it bounds will be a $y$-handlebody, since $E \subset B_G$, so it follows from Lemma \ref{lemma:FH2} that the ball will also be a goneball in $\calC_{x+y}$.  So all spheres in $\mathfrak{S}$ that don't bound goneballs in $\calC_{x+y}$ are nested around $G$.  Let $G'$ be an outermost sphere in this nested collection of spheres $\mathfrak{S}_e \subset \mathfrak{S}$ bounding goneballs in $\calC_{xy}$ but not in $\calC_{x+y}$, or $\bdd H$ itself if $H$ is a ball.  Since $G'$ bounds a goneball in $\calC_{xy}$, the ball has trivial Heegaard splitting.  It follows that if $\mathfrak{S}_e$ has more than one sphere, the collar between any two has trivial splitting.  Thus $\mathfrak{S}_e$ is a bullseye in $\calC_{x+y}$, the final possibility allowed by Lemma \ref{lemma:ey}.
\end{proof}

 \begin{figure}[ht!]
 \labellist
\small\hair 2pt
\pinlabel  $\calC$ at 220 560
\pinlabel  $\calC_x$ at 220 320
\pinlabel  $E$ at 380 320
\pinlabel  $\calC_{ey}$ at 230 80
\pinlabel  goneball at 120 80
\pinlabel  $\calD_y$ at 110 200
\pinlabel  $\calD_x$ at 250 500
\pinlabel  $\calC_{x}$ at 450 500
\pinlabel  $E$ at 100 430
\pinlabel  $E$ at 130 480
\pinlabel  $\calC_{xe}=\calC_{x+e}=\calC_{e+y}$ at 540 210
\endlabellist
    \centering
    \includegraphics[scale=0.6]{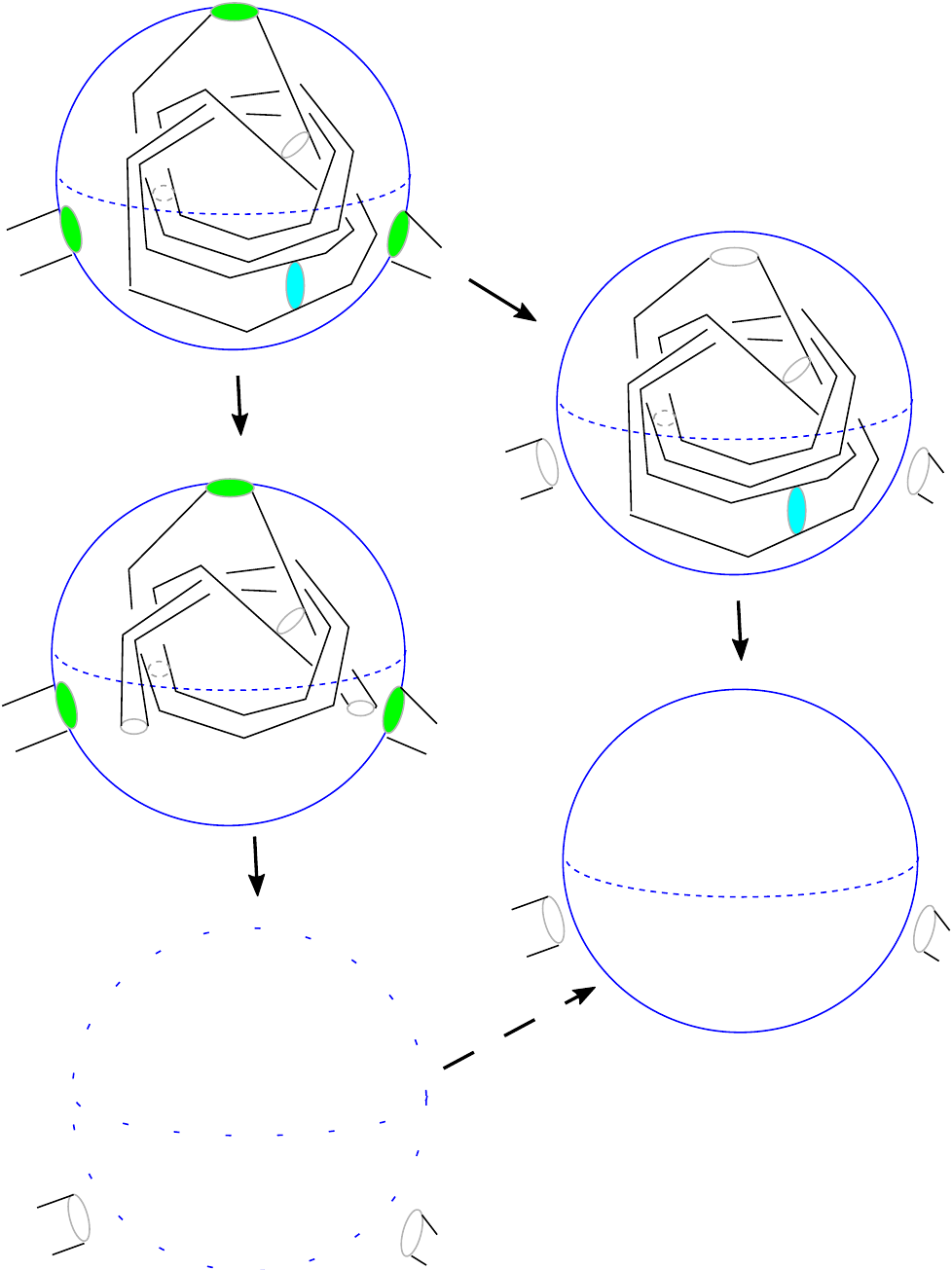}
    \caption{$H$ a ball, $F_e$ a punctured torus}  \label{fig:addiskv2_2}
    \end{figure}

\begin{figure}[ht!]
 \labellist
\small\hair 2pt
\pinlabel  $\calC$ at 220 560
\pinlabel  $\calC_x$ at 220 320
\pinlabel  $E$ at 380 320
\pinlabel  $\calC_{ey}$ at 240 90
\pinlabel  goneball at 120 80
\pinlabel  $\calD_y$ at 110 200
\pinlabel  $\calD_x$ at 250 500
\pinlabel  $\calC_x$ at 450 500
\pinlabel  $E$ at 100 430
\pinlabel  $E$ at 130 480
\pinlabel  $\calC_{xe}=\calC_{x+e}=\calC_{e+y}$ at 550 200
\endlabellist
    \centering
    \includegraphics[scale=0.5]{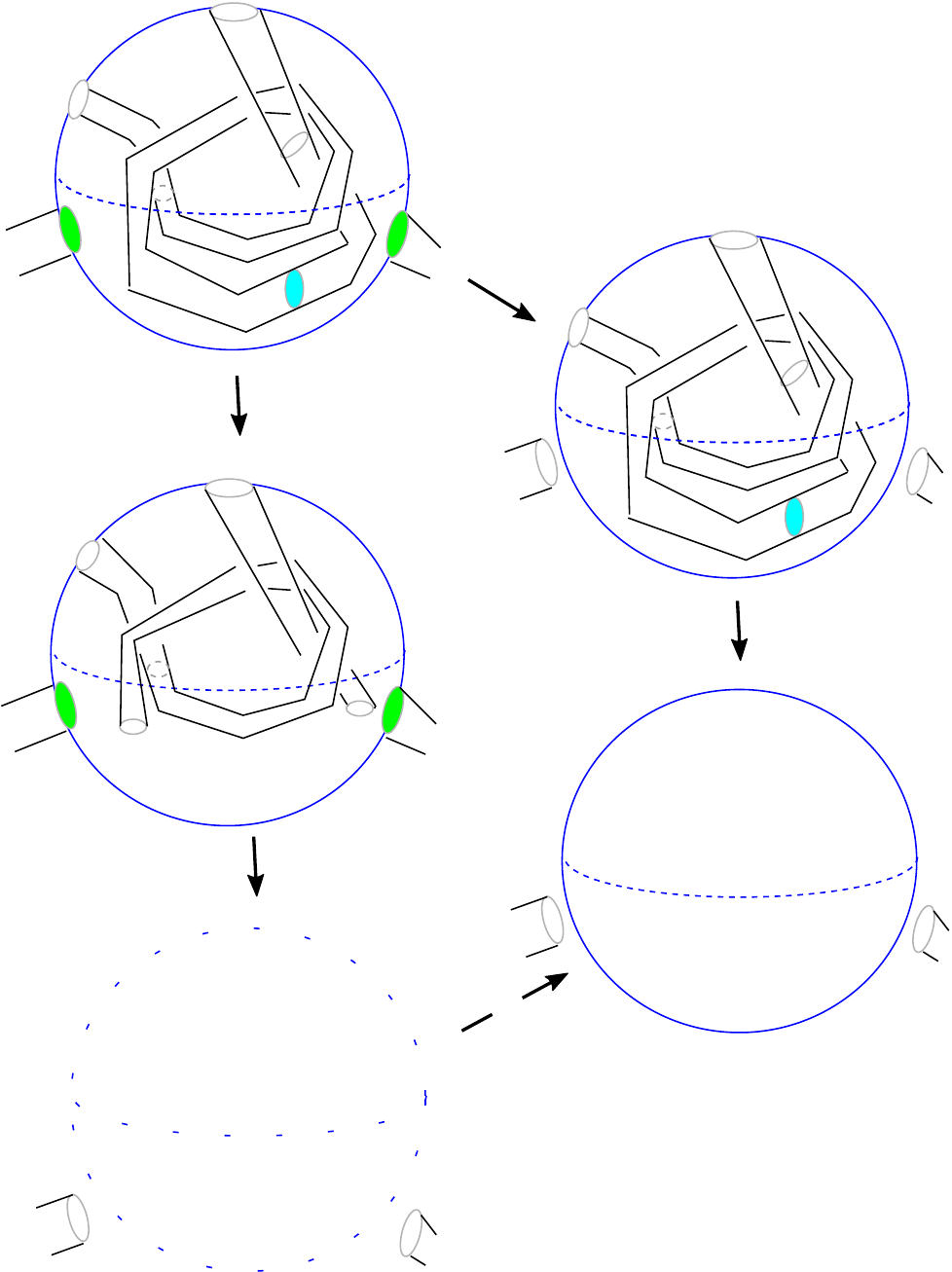}
    \caption{$H$ a ball, $F_e$ an annulus}  \label{fig:addiskv2_3}
    \end{figure}

The next lemma is more complicated to state, mostly because it has to accomodate the possibility that even if $\calD_y = E$, there may be two $xe$-handlebody chambers in $\calC_{x+y}$ and $\calC_{xy}$.  Namely if $E$ divides a handlebody chamber $C_x \subset \calC_x$ into two handlebodies, then they each are $xe$-handlebody chambers, since $E$ leaves an external scar on both of them.  According to Definition \ref{defin:prefdisk} if $C_x$ is an occupied handlebody, there are two preferred alignments of $E$: $H$ is empty and $H'$ is occupied, or vice versa.  The different alignments produce sibling flaggings of $\calC_{xy}$. 
One sibling is obtained from the other by deflation of $H'$ into $H$ (which is a bullseye deflation if $H'$ is then a goneball) or vice versa. In this case it is natural to call replacing the flagging of the chambers $H$ and $H'$ in $\calC_{x+y}$ with either of the flaggings in $\calC_{xy}$ a {\em sibling deflation}. 

 
\begin{lemma} \label{lemma:xe}  Suppose $\calC$ does not certify and $H$ is an $xe$-handlebody. 
Then either 
\begin{itemize}
\item $H$ is coherent, or 
\item $\calC_x$ has an occupied handlebody chamber containing $E$ and, in one of the two sibling deflations, $H$ is coherent, or
\item $H$  is a chamber in $\calC_{xy}$, either the deflation of an occupied chamber in $\calC_{x+y}$ or the result of the deflation of a single bullseye of $\calC_{x+y}$ in $H$.

\end{itemize}
Furthermore, in the third instance, when neither of the first two outcomes applies, 
\begin{itemize}
\item $E$ leaves exactly one scar on $\bdd H$,
\item $\bdd E$ is separating in a component of $F - \bdd \calD_x$, and
\item $E$ is not parallel to any disk in $\calD_x$.
\end{itemize}
\end{lemma}

\begin{proof}  If $E$ is parallel to a disk in $\calD_x$, Lemma \ref{lemma:paraldisk} says one of the first two outcomes occurs, and we are done.  So henceforth we assume that $E$ is not parallel to a disk in $\calD_x$.

Here are some preliminary observations:  
\begin{itemize}
\item If $H$ (or $H'$ in the case in which sibling deflation may arise) is coherent we are done.  So we assume $H$ (and $H'$, if relevant) is not coherent.
\item Following Lemma \ref{lemma:nonplaninH}, unless every component of $F_{\calD_x \cup E}$ lying in $\inter(H)$ is a sphere, $H$ is not a chamber in either $\calC_{xy}$ nor $\calC_{x+y}$ and so is coherent.  
So we henceforth assume that every component of $F_{\calD_x \cup E}$ lying in $\inter(H)$ is a sphere.  Let $\mathfrak{S}$ denote the collection of all such spheres.  
\item  Sibling deflation may arise when $E$ is a separating disk in an occupied handlebody chamber of $\calC_x$.  If that chamber has $H$ as one of the two remnants, each ball bounded by a sphere in $\mathfrak{S}$ is an $x$-handlebody and so, per Lemma \ref{lemma:FH2} the sphere is coherent. 
\end{itemize}

On the last point, sibling deflation, there is more to say:  
\medskip

{\em The Sibling Claim:} Suppose that $E$ is a separating disk in an occupied handlebody chamber $C_x$ of $\calC_x$.  In at least one of the two sibling deflations of $C_x$ in $\calC_{xy}$, either 
\begin{itemize}
\item $H$ is coherent, or 
\item $H$ is the deflation of an occupied chamber in $\calC_{x+y}$ or
\item the only remnant of $C_x$ in the interior of $H$ is a coherent ball.  
\end{itemize}. 

{\em Proof of the Sibling Claim:}  Since $\calC$ has no occupied handlebodies, the handlebody $\calC_x$ is occupied if and only if $F \cap \inter(C_x)$ contains a non-disk component, by Corollary \ref{cor:prefdisk}. Hence one or both of the handlebody chamber remnants $C_x - E$ are occupied in $\calC_{x+y}$.  It follows that either chamber (but not necessarily both) can be made coherent by a choice of sibling deflation in $\calC_{xy}$.  This proves the Sibling Claim if either of these chambers is $H$.  Another possibility is that both handlebody chambers lie in the interior of $H$, and so are necessarily balls.  We have seen then that at least one ball is occupied in $\calC_{x+y}$ and exactly one in $\calC_{xy}$.  Thus $H$ is not a chamber in either $\calC_{x+y}$ or $\calC_{xy}$ so $H$ is coherent.  

The final possibility is that one of the handlebody chamber remnants is a ball $B_H$ in the interior of $H$ and the other lies outside $H$ (in fact it is then the complement $S^3 - H$ since its boundary is connected).  $B_H$ is either empty or occupied in $\calC_{x+y}$.  By choosing the right sibling deflation we can match that status for $B_H$ in $\calC_{xy}$.  This makes $B_H$ coherent and so proves the Sibling Claim.

\medskip

It is possible that a sphere in $\mathfrak{S}$ bounds a ball that is not coherent - that is, it is a goneball in one of $\calC_{xy}$ or $\calC_{x+y}$ but not the other (regardless of the choice of sibling deflation, when $E$ lies in an occupied handlebody of $\calC_x$).  If this occurs, let $B_G$ be an innermost such ball, 
$B_G$ can't be an $x$-handlebody, by Lemma \ref{lemma:FH2} so $E$ is incident to $B_G$.  By the Sibling Claim, we may  assume that if $E$ lies in  an occupied handlebody in $\calC_x$, $B_G$ is not one of its remnants in $\calC_{xy}$.


In view of these possibilities, in the discussion below we will use the term {\em target manifold} \index{Target manifold} to refer either to $B_G$, if there is a $B_G$ as above, or to $H$ itself if there is no $B_G$.  In either case, any handlebody contained in the interior of the target manifold is a coherent ball.

Let $F_x = F(\calC_x)$ denote the defining surface of $\calC_x$. 
We consider possible scars that $E$ might leave on the boundary of the target manifold in $F_{\calD_x \cup E}$.  Only in the last case, when there is exactly one external scar on the boundary of the target manifold, does the possibility of sibling deflations arise:
\medskip

{\em Claim 1:}  There is a scar of $E$ on the boundary of the target manifold (either $G = \bdd B_G$ or $\bdd H$.)

The argument is essentially the same for either target manifold, so we assume here the target manifold is $B_G$.  If there is no scar on $G$ then $G$ is among the components of $F_{\calD_x}$, the surface obtained by $\calD_x$ surgery on $F$.  Notice that $B_G$ must be a chamber in both $\calC_{x+y}$ and $\calC_{xy}$, for if there are occupied balls in the interior of one, they would be occupied in the other (because $B_G$ is innermost among the non-coherent), making $B_G$ not a chamber and therefore coherent, contradicting hypothesis.   

If $F \cap \inter(B_G)$ consists of disks, then $B_G$ is disky and hence a goneball in both $\calC_{x+y}$ and $\calC_x$, by Corollary \ref{cor:prefdisk}.  Hence $B_G$ is also a goneball in $\calC_{xy}$, again contradicting that  $B_G$ is not coherent.  On the other hand, if $F \cap \inter(B_G)$ is not all disks.  Then $B_G$ is an occupied ball in both $\calC_{x+y}$ and $\calC_x$.  The latter implies that $B_G$ is also an occupied ball in $\calC_{xy}$, since $E$ is not incident to $B_G$; $B_G$ is its own and only remnant in $\calC_{xy}$.  This would make $B_G$ coherent, a final contradiction that proves Claim 1.

%
    
    \medskip

{\em Claim 2:} $E$ does not leave two external scars on the target manifold.  

Again the argument for target manifold $B_G$ applies also to $H$, so we can suppose that $B_G$ is the target manifold and consider what happens if $E$ leaves two external scars on $G$.    Then the chamber $C_x$ in $\calC_x$ of which $B_G$ is a remnant is obtained from $G$ by attaching a $1$-handle dual to $E$, and so is a handlebody in $\calC_x$.  $B_G$ is the sole remnant of $C_x$ under the decomposition $\calC_x \to \calC_{xy}$ and its interior is disjoint from $F_x$, so $B_G$ is occupied in $\calC_{xy}$ if and only if $C_x$ is occupied in $\calC_x$.  Since $\calC$ has no occupied handlebodies, the handlebody $\calC_x$ is occupied if and only if $F \cap \inter(C_x)$ contains a non-disk component, by Corollary \ref{cor:prefdisk}.  But $F \cap \inter(C_x)$ contains a non-disk component if and only if $F \cap \inter(B_G)$ contains a non-disk component and the latter is equivalent to $B_G$ being occupied in $\calC_{x+y}$.  Hence $B_G$ is occupied in $\calC_{xy}$ if and only if it is occupied in $\calC_{x+y}$.  Hence $B_G$ is coherent, a contradiction that proves the claim.

%
    
    \medskip

{\em Claim 3:} $E$ does not leave two internal scars on the target manifold.

The case when the target manifold is $B_G$ is representative.  If $E$ leaves two internal scars on $G$ then the belt annulus is a non-disk component of both $F \cap \inter(B_G)$ and $F_x \cap \inter(B_G)$, so $B_G$ is occupied in both $\calC_{x+y}$ and $\calC_{xy}$.  Hence $B_G$ is coherent, a contradiction that proves the claim.



    \medskip

{\em Claim 4:} $E$ does not leave one internal scar on the the target manifold

This is the most difficult claim to verify, for the argument involves some unusual computation.  To avoid the danger of over-simplifying we will assume that the more complicated $H$ is the target manifold; the argument when $B_G$ is the target manifold is then just a special case.  Suppose that $E$ leaves one internal scar on $\bdd H$.  Then the other scar must be on a component of $F_{\calD_x \cup E}$ that lies in the interior of $H$ and so is a sphere $S \in \mathfrak{S}$.  Put another way, in $F_{\calD_x}$ the belt annulus of $E$ connects the scar  of $E$ on $\bdd H$ to the disk complement in $S$ of the scar of $E$ on $S$.  So the scar of $E$ on $\bdd H$ is parallel in $H$ to a disk component $F_0$ of $F_x \cap \inter(H)$ (the union of the belt annulus of $E$ and a subdisk of $S$).  Since $E$ is the only disk in the decomposition $\calC_x \to \calC_{xy}$, the disk $F_0$ is the only component of $F_x \cap \inter(H)$.  

By assumption the ball $B_S$ that $S$ bounds in $H$ is coherent: unless it is a goneball in both, it persists in both $\calC_{x+y}$ and $\calC_{xy}$, so $H$ would not be a chamber in either and therefore is coherent, a contradiction.  So we henceforth assume that $B_S$ is a goneball in both $\calC_{x+y}$ and $\calC_{xy}$.  The former means that $F \cap \inter(B_S)$ consists of disks, and the latter means that the chamber $C_x$ of $\calC_x$ of which $H$ is a remnant is just the complement in $H$ of the ball (parallel to $B_S$) between the scar of $E$ in $\bdd H$ and $F_0$. That is, $H$ is parallel to $C_x$ via this ball.  In particular, $H$ is the only remnant of $C_x$ so, via Definition \ref{defin:prefdisk}, $H$ is occupied in $\calC_{xy}$ if and only if $C_x$ is occupied in $\calC_x$.

We wish now to examine $F \cap \inter(H)$.  $F_0$ is a disk containing scars of $\calD_x$; the complement of the scars is a subset of $F$.  In particular, if $F \cap \inter(H)$ contains any closed components they would be disjoint from $F_0$.  A closed component of $F$ can't lie in $B_S$, since $B_S$ is empty in $\calC_{x+y}$, so it would have to lie in the chamber $C_x$ of $\calC_x$, making $C_x$, hence $H$, occupied in $\calC_{xy}$ as well as $\calC_{x+y}$.  Then $H$ would be coherent, a contradiction.  So we need only consider the case in which $F \cap \inter(H)$ contains no closed components.  

%
%

Here is a useful observation about compact orientable surfaces with no closed components:  For any closed connected orientable surface $Q$, $\chi(Q) \leq 2$ with equality if and only if $Q$ is a sphere.  It follows that for any compact connected surface $Q$, $\chi(Q) + |\bdd Q| \leq 2$, with equality if and only if $Q$ is planar, and so, when $\bdd Q \neq \emptyset$, $\chi(Q) - |\bdd Q| \leq 0$ with equality if and only if $Q$ is planar and has a single boundary component, i. e. is a disk.  More generally, this is then true for any compact orientable surface, so long as there are no closed (in fact no sphere) components.  We have thus verified this claim:

{\em Claim:}  Suppose $Q$ is a compact orientable surface with no closed components.  Then $\chi(Q) - |\bdd Q| \leq 0$ with equality if and only if $Q$ consists entirely of disks.  

\begin{figure}[ht!]
 \labellist
\small\hair 2pt
\pinlabel  $C_x$ at 180 100
\pinlabel  $H$ at 450 120
\pinlabel  $B_S$ at 120 75
\pinlabel  $F\cap \inter(B_S)$ at 455 90
\pinlabel  $F\cap \inter(C_x)$ at 310 125
\pinlabel  $E$ at 140 30
\pinlabel  $F_0$ at 155 60
\pinlabel  $F\cap F_0$ at 435 60
\endlabellist
    \centering
    \includegraphics[scale=0.65]{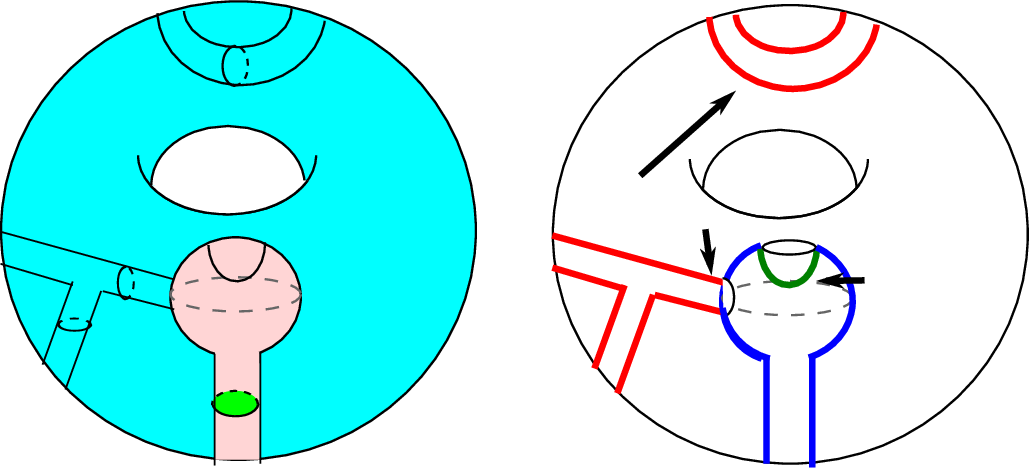}
    \caption{$F \cap \inter(H) = (F \cap \inter(C_x)) \cup (F \cap F_0) \cup (F \cap \inter(B_S))$}  \label{fig:xe}
    \end{figure}

To apply the Claim, begin with the observation 
\[F \cap \inter(H) = (F \cap \inter(C_x)) \cup (F \cap F_0) \cup (F \cap \inter(B_S)). \tag{1} \] See Figure \ref{fig:xe}.
For the purpose of this computation, include all boundary circles on each surface, so each surface is compact. We then have 
\[\chi(F \cap \inter(H)) - \chi(F \cap \inter(C_x)) = \chi(F \cap F_0) + \chi(F \cap \inter(B_S)). \tag{2}\] 
Note that $F \cap F_0$ is the disk $F_0$ with all the scars of $\calD_x$ that lie in $F_0$ removed.  Hence $ \chi(F \cap F_0) = 1 - |\text{scars on } F_0|$.  These scars on $F_0$ are of two types: those that come from components of $F$ lying in $\inter(B_S)$ (all of them disks, since $B_S$ is a goneball) and those coming from components of $F$ lying in $\inter(C_x)$.  Each disk in $F \cap \inter(B_S)$ contributes 1 to $\chi(F \cap \inter(B_S))$ and -1 to $\chi(F \cap F_0)$ so 
\[\chi(F \cap F_0) + \chi(F \cap \inter(B_S)) = 1 - |\text{scars on } F_0 \text{ left by }F \cap \inter(C_x)| \tag{3}.\]

Another way to characterize the number of scars left on $F_0$ by $F \cap \inter(C_x)$ is that it is the difference of the number of scars left on $\bdd C_x$ and those left on $\bdd H$.  Furthermore, the total number of scars left on $\bdd H$ is one more than the number left by $\bdd C_x$, since it includes $\bdd E$.  We then obtain:
\[1 - |\text{scars on } F_0 \text{ left by }F \cap \inter(C_x)| = |\bdd(F \cap \inter(H))| - |\bdd (F \cap \inter(C_x))| \tag{4}.\]

Combining (2), (3) and (4) we have 
\[\chi(F \cap \inter(H)) - \chi(F \cap \inter(C_x)) = |\bdd(F \cap \inter(H))| - |\bdd (F \cap \inter(C_x))| \tag{5}\]
which implies 
\[\chi(F \cap \inter(H)) - |\bdd(F \cap \inter(H))| =  \chi(F \cap \inter(C_x))- |\bdd (F \cap \inter(C_x))| \tag{6}.\]

Now apply the Claim: The left side of (6) is zero if and only if $H$ is disky and hence empty; the right side of (6) is zero if and only if $C_x$ is disky and hence empty.  Thus $H$ is empty in $\calC_{x+y}$ if and only if it is empty in $\calC_{xy}$.  That is, $H$ is coherent, a contradiction that proves Claim 4.

\medskip

{\em Claim 5:}   If the target manifold contains exactly one external scar of $E$ then $H$ satisfies one of the three outcomes listed in Lemma \ref{lemma:xe}.

The argument for this claim depends on whether $H$ or $B_G$ is the target manifold.  

{\em Case 1:} The target manifold is $B_G$.   

Let $C_x$ be the chamber in $\calC_x$ of which $B_G$ is a remnant.  Since $E$ leaves exactly one external scar on $G$, $E$ separates $C_x$ into $B_G$ and another remnant $R_x$, so 
$B_G$ is disky under the decomposition $\calC_x \to \calC_{xy}$.  
\medskip


{\em Subcase 1a:} $C_x$ is a handlebody.  

We first show that $C_x$ cannot be an empty handlebody.   Since $\calC$ contains no occupied handlebodies, $C_x$ can only be empty if $F \cap \inter(C_x)$ consists entirely of disks, by Corollary \ref{cor:prefdisk}.  Since $\inter(B_G) \subset \inter(C_x)$ this implies that $F \cap \inter(B_G)$ consists entirely of disks, so $B_G$ is empty in $\calC_{x+y}$.  Since $C_x$ is empty in $\calC_x$, Definition  \ref{defin:prefdisk} says $B_G$ is also empty in $\calC_{xy}$.  This contradicts the requirement that $B_G$ is not coherent.  

We deduce that $C_x$ is an occupied handlebody.  In this case, apply the Sibling Claim: the first two outcomes of that claim imply the first two outcomes of the Lemma; the third only arises when the target manifold is $H$, and then it contradicts Claim 4, completing the proof of the Lemma in this subcase.
%
%
%
\medskip

{\em Subcase 1b:} $C_x$ is not a handlebody.

In this case $R_x$ cannot be a handlebody and in particular cannot be a ball, and so it can't be a chamber in $\inter(H)$.  Then $R_x \subset S^3 - H$.  Moreover,
Definition  \ref{defin:prefdisk} says that in this case the preferred alignment of $E$ leaves $B_G$ empty in $\calC_{x+y}$ and so a goneball.  Our assumption is that $B_G$ is not coherent, so $B_G$ must be occupied in $\calC_{x+y}$.  The difference then between $H$ in $\calC_{x+y}$ and $\calC_{xy}$ is that the latter is obtained from the former by deflation of $B_G$.  This can be viewed as a bullseye deflation, the third outcome allowed.  
%
%
 See the right side of Figure \ref{fig:addisk2v_6}.
 \medskip
 
{\em Case 2:}  The target manifold is $H$.
 
 The proof is highly analogous to Case 1, so we merely sketch: 
 As in Case 1, the component $C_x$ of $\calC_x$ of which $H$ is a remnant is divided by $E$ into two chambers, $H$ and $R_x$.  If $C_x$ is an empty handlebody then, just as in Case 1, $H$ is empty in both $\calC_{x+y}$ and $\calC_{xy}$ and so is coherent, completing the proof.  If $\calC_x$ is an occupied handlebody then the Sibling Claim applies; the first two outcomes of that Claim imply the first two claims of the Lemma, and the third outcome contradicts Claim 4.   Finally, if $C_x$ (so also $R_x$) is not a handlebody, then the preferred alignment given by Definition  \ref{defin:prefdisk} leaves $H$ empty in $\calC_{xy}$.  Thus if $H$ is empty in $\calC_{x+y}$ it is coherent; if it is occupied then $\calC_{xy}$ is obtained from $\calC_{x+y}$ by deflation into $R_x$, implying the third outcome of the Lemma. 
 
 \begin{figure}[ht!]
 \labellist
\small\hair 2pt
\pinlabel  $\calC$ at 220 560
\pinlabel  $\calC_E$ at 220 320
\pinlabel  $E$ at 380 320
\pinlabel  $\calC_{ey}=\calC_{e+y}=\calC_{x+e}$ at 290 80
\pinlabel  goneball at 350 180
\pinlabel  $\calD_y$ at 110 200
\pinlabel  $\calD_x$ at 250 500
\pinlabel  $\calC_x$ at 450 500
\pinlabel  $E$ at 100 430
\pinlabel  $E$ at 30 520
\pinlabel  $\calC_{xe}$ at 480 250
\endlabellist
    \centering
    \centering
    \includegraphics[scale=0.6]{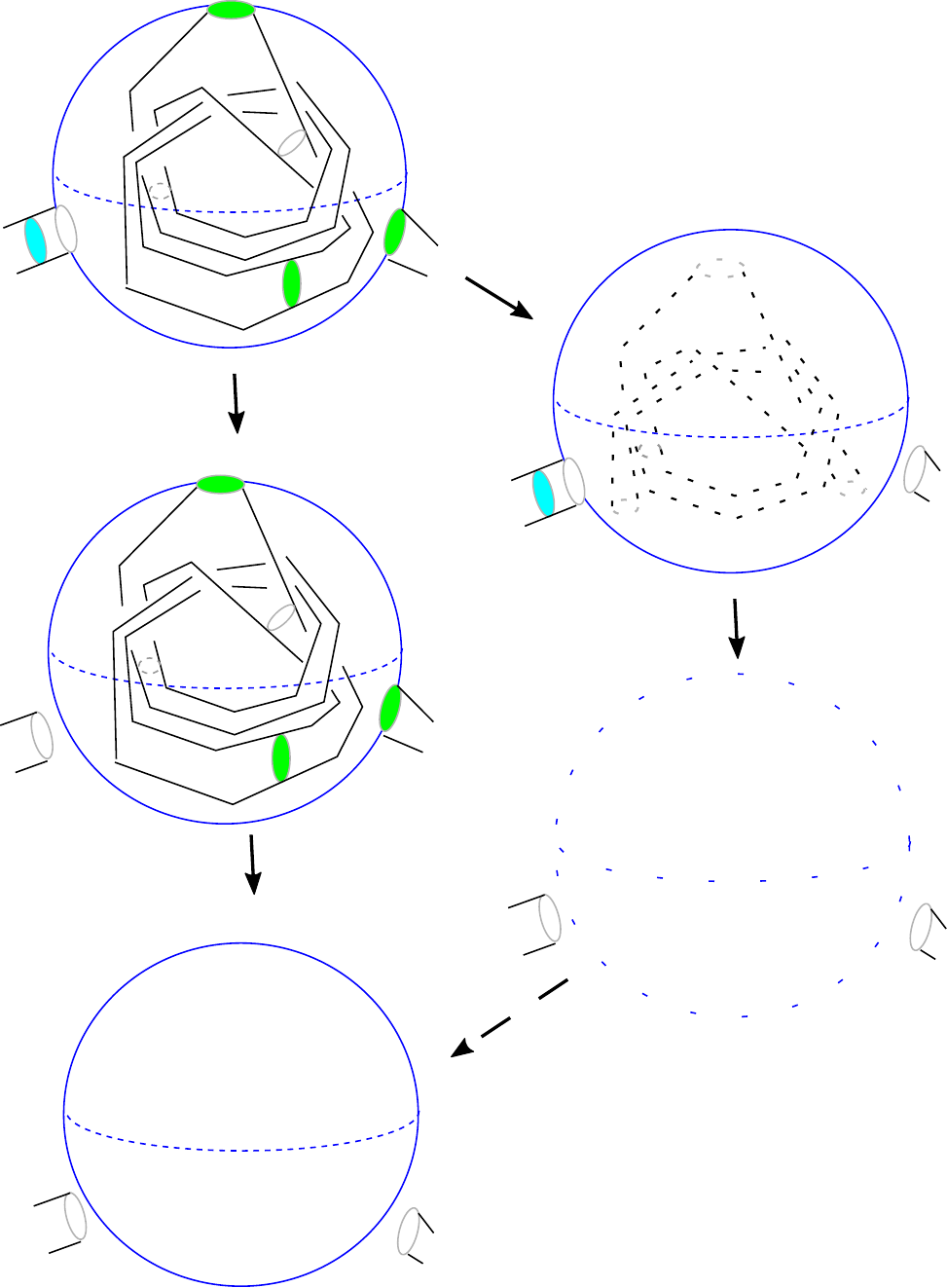}
    \caption{$E$ leaves one external scar}  \label{fig:addisk2v_6}
    \end{figure}
    
    \bigskip

\medskip

Having established the claims, we now prove the Lemma:  Suppose a ball $B_G \subset \inter(H)$ presents itself, as described before Claim 1.  Claim 1 says that $E$ leaves scars on $G$, claims 2 and 3 show that $E$ leaves exactly one scar on $G$, and Claim 4 says that it cannot be an internal scar.  Thus $E$ must leave one external scar on $G$, in which case Claim 5 shows that one of the three outcomes of Lemma \ref{lemma:xe} occurs.

On the other hand, if no $B_G$ satisfies the criteria described before Claim 1 then Claims 1, 2, 3 and 4 show that, unless $\bdd H$ contains a single external scar of $E$, $H$ is coherent.  Claim 5 shows that when $\bdd H$ does contain a single external scar of $E$, one of the three outcomes of Lemma \ref{lemma:xe} occurs. 
\medskip

We now examine when neither of the first two outcomes occurs, to verify the final three claims of Lemma \ref{lemma:xe}.  We have already concluded that $E$ is not parallel to any disk in $\calD_x$.  The proof so far shows also that the target manifold must have a single external scar, and then Claim 5 shows that in this case $\bdd H$ contains exactly one scar of $E$, an internal scar when the target manifold is $B_G$ and an external scar when the target manifold is $H$. 
All that remains is to show that $\bdd E$ is separating in $F - \bdd \calD_x$:

The definition of $F_x$ implies that the complement of the scars of $\calD_x$ in $F_x$ is a collection of components of $F - \bdd \calD_x$.  Not removing the scars of the disks in $\calD_x$, that is, adding disks back, does not affect the number of components in the surface, so to show that $\bdd E$ is separating in a component of $F - \bdd D_x$ it suffices to show that $\bdd E$ is separating in a component of $F_x$.  But that follows immediately from the fact that $E$ leaves only one scar on $\bdd H$: $\bdd H$ is a component of $F_x - \bdd E$ that is distinct from the component on which the other scar of $E$ lies.  
\end{proof}

\begin{prop} \label{prop:x+yvsxy}  Suppose $\calC$ is a flagged chamber complex supporting $S^3 = A \cup_T B$, $\calC$ does not certify and $\calD_x, \calD_y$ are disjoint disk sets in $\calC$. 
If either $|\calD_x|$ or $|\calD_y| = 1$, then, after an appropriate choice of sibling if $\calC_x$ contains occupied handlebody chambers, either $\calD_{xy} = \calD_{x+y}$ or $\calD_{xy} \dashrightarrow \calD_{x+y}$.  

In more detail:  Suppose $\calD \cup E$ is a disk set in $\calC$, where $E$ is a single disk.  Let \begin{itemize}
\item $\calC_{DE}$ be the flagged chamber complex given by the composition 
$\calC \xrightarrow{\calD} \calC_{\calD} \xrightarrow{E} \calC_{DE}$ 
\item $\calC_{ED}$ be the flagged chamber complex given by the composition 
$\calC \xrightarrow{E} \calC_{E} \xrightarrow{\calD} \calC_{ED}$
and $\calD$  the preferred alignment described in Lemma \ref{lemma:FH2}
\item $\calC_{D+E}$ be the flagged chamber complex given by the decomposition 
$\calC \xrightarrow{\calD \cup E} \calC_{D+E}$.
\end{itemize}
(Note that here $D$ in the subscripts refers not to a single disk but to the family $\calD$ of disks.)

Then either
\begin{itemize}
\item $\calC_{DE} = \calC_{D+E} = \calC_{ED}$ or
\item $\calC_{DE} = \calC_{D+E} \dashleftarrow \calC_{ED}$ or
\item $\calC_{DE} \dashrightarrow \calC_{D+E} = \calC_{ED}$ or
\item $E$ lies in an occupied handlebody in $\calC_{\calD}$ and its sibling $\calC'_{DE}$ satisfies 
$\calC'_{DE} = \calC_{D+E} = \calC_{ED}$
\end{itemize}
If $\calC_{D+E} \neq \calC_{ED}$ then either $\bdd E$ is non-separating in $F - \bdd \calD$ or $\bdd E$ is non-separating in $F$ and $E$ is parallel to a disk in $\calD$.
\end{prop}  

\begin{proof}  
The proof breaks naturally into two cases:

 {\em Case 1:}  $\calC_{ED} \neq \calC_{D+E}$.  
 
By Lemma \ref{lemma:cohertoequal} there is a handlebody $H$ in $S^3$ whose boundary is in $F_{\calD \cup E}$ and which is not coherent in $\calC_{ED}$ and $\calC_{D+E}$.  Then applying Lemma \ref{lemma:ey} to $H$, either $\bdd E$ is non-separating in $F - \bdd \calD$ or $\bdd E$ is non-separating in $F$ and $E$ is parallel to a disk in $\calD$.  Then by Lemma \ref{lemma:xe} every handlebody in $S^3$ whose boundary is in $F_{\calD \cup E}$ is coherent for $\calC_{DE}$ and $\calC_{D+E}$. So, by Lemma \ref{lemma:cohertoequal}, $\calC_{DE}  = \calC_{D+E}$. Thus what is left to show in this case is that $\calC_{ED} \dashrightarrow \calC_{D+E}$.  

There cannot be two disjoint handlebodies in $S^3$ whose boundaries are in $F_{\calD \cup E}$ and which are not coherent for $\calC_{ED}$ and $\calC_{D+E}$ since Lemma \ref{lemma:ey} says that the disk $E$ lies in the interior of each such handlebody.  Then there is a maximal such handlebody - that is, one that contains any other - so without loss take $H$ to be the maximal one.  By Lemma \ref{lemma:ey}  $H$ is contained in a chamber of $\calC_{ED}$ and is obtained from a chamber of $\calC_{D+E}$ by deflation of a chamber or of a bullseye in $\calC_{D+E}$.  All chambers of $\calC_{D+E}$ outside of $H$ are the same in $\calC_{D+E}$ and $\calC_{ED}$ by the same proof as that of Lemma \ref{lemma:cohertoequal}.  Thus $\calC_{ED} \dashrightarrow \calC_{D+E}$ as required.

 {\em Case 2:}  $\calC_{DE} \neq \calC_{D+E}$
 
The proof is analogous to that of Case 1, but a bit more complicated.   By Lemma \ref{lemma:cohertoequal} there is a handlebody $H$ in $S^3$ whose boundary is in $F_{\calD \cup E}$ and which is not coherent in $\calC_{DE}$ and $\calC_{D+E}$.  Then according to Lemma \ref{lemma:xe} $E$ is not parallel to a disk in $\calD$ and $\bdd E$ is separating in $F - \bdd \calD$.  Then by Lemma \ref{lemma:ey} every handlebody in $S^3$ whose boundary is in $F_{\calD \cup E}$ is coherent for $\calC_{ED}$ and $\calC_{D+E}$ so by Lemma \ref{lemma:cohertoequal} $\calC_{ED}  = \calC_{D+E}$. Thus what is left to show in this case is that $\calC_{DE} \dashrightarrow \calC_{D+E}$.  

According to Lemma \ref{lemma:xe} $E$ leaves exactly one scar on $\bdd H$.  If it is an internal scar, then per Claim 5 in the proof of Lemma \ref{lemma:xe}, $E$ lies in the interior of a chamber $C_x$ of $\calC_x$ dividing $C_x$ into a ball in $\inter(H)$ and a chamber $R_x$ adjacent to and outside of $H$, as in Figure \ref{fig:xe}.  Unless $R_x$ is also a handlebody, the choice of preferred alignment of $E$ has no effect on the flagging of chambers outside of $H$. In particular, there can be no handlebody chamber incoherent for $\calC_{DE}$ and $\calC_{D+E}$ that is disjoint from $H$.    In this case, the argument concludes as for Case 1, using Lemma \ref{lemma:xe} instead of Lemma \ref{lemma:ey}.  On the other hand, if $R_x$ is a handlebody (and so, as noted in Lemma \ref{lemma:xe} Claim 5, Subcase 1a, the only chamber of $\calC_{x+y}$ that lies outside of $H$) then the choice of alignment of $E$ in $C_x$ does affect the flagging of $R_x$.  The argument in this case is much like the argument in the case that $E$ leaves an external scar on $H$, so we turn to that case, noting that the argument applies here as well.  

 If $E$ leaves an external scar on $H$ then it is possible that there is a disjoint handlebody $H'$ in $S^3$ just like $H$: $\bdd H'$ is in $F_{\calD \cup E}$, $H'$ is not coherent for $\calC_{DE}$ and $\calC_{D+E}$, and $E$ also leaves an external scar on $\bdd H'$. Then, just as is true for $H$, $H'$ lies in a handlebody chamber in $\calC_{DE}$.  Thus there is a handlebody chamber $C_x \in \calC_{\calD}$ containing $E$, with $E$ dividing $C_x$ into $H$ and $H'$. 
 
 We now reprise part of the argument of Claim 4a in Lemma \ref{lemma:ey}:  $C_x$ must be an occupied handlebody, for if $C_x$ is empty then $H$ and $H'$ would both be empty in $\calC_{D+E}$  and $\calC_{DE}$, contradicting the assumption that $H$ is not coherent.  Then, per Definition \ref {defin:prefdisk}, $C_x$ is the parent of two siblings in $\calC_{DE}$: one in which $H$ is occupied and $H'$ is empty, the other in which $H'$ is occupied and $H$ is empty.  
 
Since $C_x$ is occupied, $F \cap \inter(C_x)$ has non-disk components by Corollary \ref{cor:prefdisk}. There are three cases to consider, depending on where these non-disk components lie:
\begin{itemize}
\item If some non-disk components lie in $H$ and some in $H'$, then $H$ and $H'$ are both occupied in $\calC_{D+E}$.  Since $H$ is not coherent, it must then be empty in $\calC_{DE}$.  In this case the sibling for $C_{DE}$ is the one in which $H$ is empty and $H'$ is occupied, that is the one that results from an alignment for $E$ which deflates $H$ into $H'$, leaving $H'$ occupied in $\calC_{DE}$ as well.  Thus $\calC_{DE} \dashrightarrow \calC_{D+E}$, an allowed outcome.  
\item If each component of $F \cap \inter(H)$ is a disk, then $H$ is empty in $\calC_{D+E}$.  Since $H$ is not coherent it  is then occupied in $\calC_{DE}$.  But in this case the sibling $C'_{DE}$ of $C_{DE}$ is one in which $H$ is empty and $H'$ is occupied.  Then in this sibling   $\calC'_{DE} = \calC_{D+E}$,  an allowed outcome.
\item If each component of $F \cap \inter(H')$ is a disk then $H'$ is empty and $H$ is occupied in $\calC_{D+E}$.  Since $H$ is not coherent, it is empty in $\calC_{DE}$.   In this case the sibling $C'_{DE}$ of $C_{DE}$ has $H$ occupied and $H'$ empty, just as is true for $\calC_{D+E}$.  Thus $\calC'_{DE} = \calC_{D+E}$, an allowed outcome.
\end{itemize}
\end{proof}

\begin{cor} \label{cor:delaydisk}
Suppose $\calC$ is a flagged chamber complex supporting $S^3 = A \cup_T B$, $\calC$ does not certify and  $\calD \cup E$ is a disk set in $\calC$, where $E$ is a single disk.  In the notation of Proposition \ref{prop:x+yvsxy}, one of these is true (for some choice of sibling, if siblings exist):
\begin{itemize}
\item  $\calC_{DE} = \calC_{ED}$,
\item $\calC_{DE} \dashrightarrow \calC_{ED}$ or
\item $\calC_{DE} \dashleftarrow \calC_{ED}$
\end{itemize}
\end{cor}

We denote the three possibilities above by $\calC_{DE} \dashleftrightarrow \calC_{ED}$. \index{$\dashleftrightarrow$} Then one way of expressing the conclusion of Corollary \ref{cor:delaydisk} is to say that this diagram commutes:   

\[\xymatrix {\calC_0  \ar[dd]_{E} \ar[r]^{\calD} & \calC_{\calD} \ar[d]^{E} \\ 
&  \calC_{DE} \ar@{<-->}[d] \\
\calC_E  \ar[r]^{\calD}&\calC_{ED}    } \]

\section{Guiding disk sets and disk addition} \label{sect:guidingisk}

Recall from Section \ref{sect:deflate} the notion of guiding disks (Definition  \ref{defin:guidedisk}) for flagged chamber decompositions.  The example given there involved decomposition of two chamber complexes that are related by bullseye deflation.   Here is another example:  Suppose $\calD \cup E$ is a disk set in a flagged chamber complex $\calC$.  Let $\calC'$ be the result of the flagged chamber complex decomposition $\calC \xrightarrow{E} \calC'$ and let $\calD'$ be the set of disks in $\calD$ whose boundaries lie on $\calC'$.  The decomposing set $\calD'$ is contained in $\calD$, but may not be all of $\calD$, because it does include those disks in $\calD$ that lie on goneballs of the decomposition $\calC \xrightarrow{E} \calC'$.  Nevertheless $\calD$ remains as a guiding set of disks for the decomposition $\calC' \xrightarrow{\calD'} \calC'_{\calD'}$. 

We have already encountered another example of guiding disk sets, in the context of guiding spheres in Section \ref{sect:guiding}.  Suppose $\calC$ is a flagged chamber complex in $S^3$ with defining surface $F = F(\calC)$, and $S \subset S^3$ is an embedded sphere transverse to $F$.  Consider a sequence $\vec{\calC}$ of flagged chamber complex decompositions  guided by $S$, as given in Definition \ref{defin:Didef} and described after the proof of Lemma \ref{lemma:Didef}:  The circles $F \cap S$ are partitioned into sets $\mathfrak{c}_i, 0 \leq i \leq n$ and to each $\frc_i$ a collection of disjoint disks $\ocalD_i \subset S$ is assigned.      
The associated sequence $\vec{\calC}$ of flagged chamber complex decompositions guided by $S$ has, as each disk set $\calD_i$ those disks in $\ocalD_i$ that remain incident to $F(\calC_i)$ after all goneballs in earlier decompositions have been removed.  Thus each set of disks $\ocalD_i$ is a guiding set of disks for the decomposition $\calC_i \xrightarrow{\calD_i} \calC_{i+1}$ not the disk set itself.  Consistent with the notation of Definition \ref{defin:guidedisk} we could then write the sequence in Definition \ref{defin:Didef} as 
\[
\calC = \calC_0 \xrightarrow{\ocalD_0} \calC_1 \xrightarrow{\ocalD_1} \calC_2 \xrightarrow{\ocalD_2} ... \xrightarrow{\ocalD_{k-1}}\calC_k\]


Joining this example with the previous one, suppose $E$ is a properly embedded disk in $\calC$ that is disjoint from the guiding sphere $S$ and, as above, let $\calC'$ be the result of the flagged chamber complex decomposition $\calC \xrightarrow{E} \calC'$.  Then it is natural to construct a sequence $\vec{\calC'}$ of flagged chamber complex decompositions
\[\vec{\calC'}:\quad
\calC' = \calC'_0 \xrightarrow{\calD'_0} \calC'_1 \xrightarrow{\calD'_1} \calC'_2 \xrightarrow{\calD'_2} ... \xrightarrow{\calD'_{n-1}}\calC'_n \tag{\ref{sect:guidingisk}.1}\] by setting each $\calD'_i$ to be the collection of disks in $\calD_i$ (and so in $\ocalD_i$) whose boundaries lie on $\calC'_i$.  Then, just as for $\vec{\calC}$, each decomposition $\calC'_i \xrightarrow{\calD'_i} \calC'_{i+1}$ has guiding disk set $\ocalD_i$.   So the sequence could also be written
\[\vec{\calC'}:\quad
\calC' = \calC'_0 \xrightarrow{\ocalD_0} \calC'_1 \xrightarrow{\ocalD_1} \calC'_2 \xrightarrow{\ocalD_2} ... \xrightarrow{\ocalD_{n-1}}\calC'_n.\]
%
%

One way to describe the relation between the sequences above is that the sequence $\vec{\calC}'$ is what the sequence $\vec{\calC}$ would be if we had inserted decomposition with the disk $E$ before beginning the sequence.    Generalizing, we could decompose by $E$ at a later stage of the decomposition sequence.  That is, for each $0 \leq k \leq n$, define a flagged chamber complex decomposition sequence 
\[\vec{\calC}_E^k: \calC_0 \xrightarrow{\calD_0} \calC_1 \xrightarrow{\calD_{1}} ... \xrightarrow{\calD_{k-1}} \calC_k \xrightarrow{E} \calC^k_k 
\xrightarrow{\calD^k_k} \calC^k_{k+1} \xrightarrow{\calD^k_{k+1}} ... \xrightarrow{\calD^k_{n-1}}\calC^k_n \tag{\ref{sect:guidingisk}.2}\] where the disks $\calD^k_i \subset \ocalD_i, k \leq i \leq n$ are those disks whose boundaries lie on $F(\calC^k_i)$.  In particular, $\vec{\calC'}$ of Sequence \ref{sect:guidingisk}.1 is $\vec{\calC}_E^0$ with the first decomposition $\calC \xrightarrow{E} \calC'$ dropped.  As usual, a simplified way of writing each sequence is
\[\vec{\calC}_E^k: \calC_0 \xrightarrow{\ocalD_0} \calC_1 \xrightarrow{\ocalD_{1}} ... \xrightarrow{\ocalD_{k-1}} \calC_k \xrightarrow{E} \calC^k_k 
\xrightarrow{\ocalD_k} \calC^k_{k+1} \xrightarrow{\ocalD_{k+1}} ... \xrightarrow{\ocalD_{n-1}}\calC^k_n \]

\begin{lemma} \label{lemma:coguide}  Suppose $\calC$ and $\calC'$ are flagged chamber complexes and their defining surfaces have the property that $F(\calC') \subset F(\calC)$.  Suppose $\calC \xrightarrow{\calD} \calC_{\calD}$ and $\calC' \xrightarrow{\calD'} \calC'_{\calD'}$ have a guiding disk set $\ocalD$ in common.  Then $\calD$ is a guiding disk set for $\calD'$.
\end{lemma}

\begin{proof} Each $D' \in \calD'$ is a disk in $\ocalD$ whose boundary lies on $F(\calC')$.  Since $F(\calC') \subset F(\calC)$, $\bdd D'$ also lies on $F(\calC)$, so $D' \in \calD$.  Thus $D'$ is a disk in $\calD$ whose boundary lies in $F(\calC')$.  Conversely, suppose $D \in \calD$ has its boundary on $F(\calC')$.  Since $D \in \calD$, $D \in \ocalD$ and, since $\ocalD$ is a guiding disk set for $\calD'$ and $\bdd D \subset F(\calC')$, $D \in \calD'$.  Thus $\calD'$ consists exactly of those disks in $\calD$ whose boundaries lie on $F(\calC')$, as required.
\end{proof}

\begin{prop} \label{prop:Eaddcocert}  Let $\calC$ be a flagged chamber complex that supports $S^3 = A \cup_T B$ and \[\vec{\calC}: \calC = \calC_0 \xrightarrow{\calD_0} \calC_1 \xrightarrow{\calD_1} ... \xrightarrow{\calD_{n-1}}\calC_n\] be a flagged chamber complex decomposition sequence.  Suppose
\begin{itemize}
\item there is a guiding disk set $\ocalD_i$ for each decomposition $\calC_i \xrightarrow{\calD_i} \calC_{i+1}$.  
\item $E$ is a proper disk in a chamber of $\calC$, and $E$ is disjoint from all disks in all $\ocalD_i$
\item $\vec{\calC}'$ is the sequence defined in Formula \ref{sect:guidingisk}.1 above, so each $\ocalD_i$ is a guiding disk set for $\calC'_i \xrightarrow{\calD'_i} \calC'_{i+1}$
\item for each $0 \leq k \leq n$ and decomposition sequence $\vec{\calC}_E^k$ defined in \ref{sect:guidingisk}.2 above each $\ocalD_i$ is also a guiding disk set for the corresponding decomposition in $\vec{\calC}_E^k$. 
\end{itemize} 
If $\calC_n$, $\calC'_n$ and all $\calC_n^k, 0 \leq k \leq n$
certify, then the sequences $\vec{\calC}$ and $\vec{\calC}'$ cocertify.  
\end{prop}

\begin{proof}  
{\em Case 1:}  $\calC$ certifies.

Then the sequence $\vec{\calC}$ and the sequence 
\[\vec{\calC}_E^0: \calC \xrightarrow{E} \calC' = \calC'_0 \xrightarrow{\calD'_0} \calC'_1 \xrightarrow{\calD'_1} ... \xrightarrow{\calD'_{n-1}}\calC'_n\] cocertify because the chamber complex $\calC$ certifies for both. (The same argument shows that all sequences $\vec{\calC}_E^k$ cocertify.)  Similarly the sequences $\vec{\calC}_E^0$ and $\vec{\calC}'$ cocertify because, by hypothesis, $\calC'_n$ certifies and, as noted above, the latter sequence is contained in the former sequence (by dropping $\calC \xrightarrow{E} \calC'$).  Thus we have  $\vec{\calC}\sim \vec{\calC}_E^0 \sim \vec{\calC}'$ as required.
\bigskip

{\em Case 2:} $\calC$ does not certify, but $\calC_1$ does.   

Since $\calC$ does not certify then per Corollary \ref{cor:delaydisk}, with some choice of siblings if there are siblings, the following diagram commutes:
\[\xymatrix {\calC  \ar[dd]_{E} \ar[r]^{\calD_0} & \calC_{1} \ar[d]^{E}  \ar[r]^{\calD_1} & ... & \ar[r]^{\calD_{n-1}} & \calC_n & = \vec{\calC}\\ 
&  \calC^1_1 \ar@{<-->}[d]  \ar[r]^{\calD^1_1} & ... & \ar[r]^{\calD^1_{n-1}}& \calC^1_n & = \vec{\calC}_E^1 \\
\calC'  \ar[r]^{\calD'_0}&\calC'_1  \ar[r]^{\calD'_1} & ... &  \ar[r]^{\calD'_{n-1}}& \calC'_n & = \vec{\calC}'  } \]
We henceforth ignore the issue of choosing siblings, since different choices of sibling give cocertifying sequences, via the certifying parent chamber.  (See the proof of Proposition \ref{prop:guidecertify}.)

We would like to apply Corollary \ref{cor:longdeflate} to the sequences $\vec{\calC'}$ and $\vec{\calC}_E^1$.  The Corollary requires that if $\calC'_1$, say, deflates to $\calC^1_1$ (i. e. $\calC^1_1 \dashrightarrow \calC'_1)$  that the disks in $\calD_1^1$ are a subset of the disks in $\calD'_1$, namely those that have their boundaries on the defining surface for $\calC_1^1$.  In our language, $\calD'_1$ needs to be a guiding disk set for $\calD^1_1$. Moreover, it is required that the same true for each $\calD'_i$ and $\calD^1_i$ throughout the maximal deflationary sequence. But since, by hypothesis, $\ocalD_i$ is a guiding set of disks for both $\calD'_i$ and $\calD^1_i$, this is true, via Lemma \ref{lemma:coguide}.  

So we can apply Corollary \ref{cor:longdeflate}: Since $\vec{\calC}^1_E$ certifies by the hypothesis that $\calC_n^1$ certifies, $\vec{\calC}^1_E$ and $\vec{\calC}'$ cocertify.  A symmetric argument leads to the same conclusion in the case that $\calC^1_1$ deflates to $\calC'_1$ or, most simply, when $\calC^1_1 = \calC'_1$. 
Since, in this case, we are assuming that $\calC_1$ certifies, it certifies for the sequences that contain it, namely $\vec{C}$, and $\vec{\calC}_E^1$.  So we have $\vec{\calC} \sim \vec{\calC}_E^1 \sim \vec{\calC}'$ , as required.  
\medskip

We are left with the possibility that $\calC_1$ does not certify.  The argument just concluded suggests an inductive approach, which we now pursue:   

{\em Inductive Step:} Suppose the sequence $\vec{\calC'}$ and, for $0 \leq k \leq \ell$, all $\vec{\calC}_E^k$ cocertify.  (The proof of Cases 1 and 2 show that this is true for $\ell = 1$.) Then either $\vec{\calC} \sim \vec{\calC}'$ and we are done, or the same statement is true with $\ell$ replaced by $\ell +1$.  

If $\calC_\ell$ certifies, then it certifies for sequences that contain it, including $\vec{\calC}$, and $\vec{\calC}_E^\ell$.  Thus $\vec{\calC} \sim \vec{\calC}_E^\ell$.  By inductive hypothesis,  $\vec{\calC}_E^\ell \sim \vec{\calC}'$.  Hence $\vec{\calC} \sim \vec{\calC}'$, as required.

If $\calC_\ell$ does not certify, Corollary \ref{cor:delaydisk} says the following diagram commutes:

\[\xymatrix {\calC \ar[r]^{\calD_0} &\calC_1 \ar[r]^{\calD_1}&...\ar[r]^{\calD_{\ell-1}}&\calC_\ell  \ar[dd]_{E} \ar[r]^{\calD_\ell} & \calC_{\ell+1} \ar[d]^{E}  \ar[r]^{\calD_{\ell+1}} & ... & = \vec{\calC}\\ 
&&&&  \calC^{\ell+1}_{\ell+1} \ar@{<-->}[d]  \ar[r]^{\calD^{\ell+1}_{\ell+1}} & ... & = \vec{\calC}_E^{\ell+1} &\\
&&&\calC^\ell_\ell  \ar[r]^{\calD^\ell_\ell}&\calC^\ell_{\ell+1}  \ar[r]^{\calD^\ell_{\ell+1}} & ... & = \vec{\calC}_E^{\ell}  & } \]

Applying Corollary \ref{cor:longdeflate} much as in Case 2, $\vec{\calC}_E^{\ell}$ and $\vec{\calC}_E^{\ell+1}$ cocertify.  Hence $\vec{C}'$ and, for $0 \leq k \leq \ell+1$, all $\vec{\calC}_E^k$ cocertify, completing the inductive step.
\medskip

Continue to iterate the inductive step, showing that $\vec{\calC'}$ and each $\vec{\calC}_E^{k}$ cocertify, until a $k$ for which $\calC_k$ certifies is reached.  We know that we will eventually reach such a $k$, since $\vec{\calC}$ certifies.  At that point,  $\vec{\calC}_E^{k}$ cocertifies with both $\vec{\calC}$ and $\vec{\calC'}$, so $\vec{\calC}$ and $\vec{\calC}'$ cocertify, as required.
\end{proof}

\begin{thm}   \label{thm:addisk} Suppose $\calC$ is a flagged chamber complex in $S^3$ that is not tiny, and $S$ is a balanced or almost balanced sphere transverse to $F(\calC)$.  Let $E$ be a disk disjoint from $S$ properly embedded and essential in a chamber of $\calC$ that is not an empty torus.  Let $\calC'$ be the flagged chamber complex obtained from $\calC$ by decomposition along $E$.  Suppose also that $S$ is balanced or almost balanced for $\calC'$.  Then any sequence in $\overrightarrow{(\calC, S)}$ and any sequence in $\overrightarrow{(\calC', S)}$ cocertify.


\end{thm}

Note: the requirements that $E$ be essential in its chamber and that $C$ not be an empty torus will be removed later, see Corollary \ref{cor:addisk}.

\begin{proof}  Let $F, F'$ denote the defining surfaces $F(\calC), F(\calC')$ respectively.  The circles $F \cap S$ and $F' \cap S$ determine  decomposition sequences $\vec{\calC}$ and $\vec{\calC'}$ as described in Section \ref{sect:guiding}.  
Since $E$ is essential and not the meridian of an empty torus, surgery on $E$ creates no new goneballs, so in fact $F \cap S = F' \cap S$.  Thus the guiding disk sets $\ocalD_i$ for the two decomposition sequences $\vec{\calC}$ and $\vec{\calC'}$ are the same. Import the notation for $\vec{\calC}$ and $\vec{\calC'}$ from the paragraph that precedes Proposition \ref{prop:Eaddcocert} to these decomposition sequences $\vec{\calC}$ and $\vec{\calC'}$.

We are given that the chamber complexes $\calC$ and $\calC'$, related by the decomposition $\calC \xrightarrow{E} \calC'$, are each balanced or almost balanced.  
As noted in the proof of Proposition \ref{prop:balance} during the decomposition sequence the genus of the defining surface above $S$ and below $S$ does not change. Surgery on $E$ may or may not lower by one the genus of the defining surface on the side of $S$ in which $E$ lies, but we are given that $S$ is balanced or almost balanced for $\calC'$ as well as for $\calC$.  It follows that for any $0 \leq k \leq m$ the chamber complex $\calC^k_k$ defined by the decomposition $\calC_k \xrightarrow{E} \calC^k_k$ in $\vec{\calC}_E^k$ is also balanced or almost balanced.  

Then Propositions \ref{prop:balance} and \ref{prop:almostbalance} imply that the last term in each of the sequences $\vec{\calC},  \vec{\calC}$ and each $\vec{\calC}_E^k$ certifies.  Proposition \ref{prop:Eaddcocert} then says that $\vec{\calC}$ and $\vec{\calC'}$ cocertify, as claimed. 
\end{proof}

\section{Delaying a disk} \label{sect:diskdelay}

In this section we consider what happens in a sequence of two flagged chamber complex decompositions if, when this is possible, a decomposition disk in the first decomposing disk set is transferred to the second decomposing disk set.

Suppose $\calC_0$ is a flagged chamber complex supporting $S^3 = A \cup_T B$, and \[\calC_0 \xrightarrow{\calD_0} \calC_1 \xrightarrow{\calD_1} \calC_2\] is a sequence of flagged chamber complex decompositions.  Suppose $E$ is a properly embedded disk in  a chamber of $\calC_0$ and $E$ is disjoint from both $\calD_0$ and $\calD_1$.  There are a variety of decomposition sequences that can be defined in this situation.  Here is the notation we will use for them: 
\begin{itemize}
\item $\calC_0 \xrightarrow{\calD_0} \calC_1 \xrightarrow{E} \calC_{0E}$
\item $\calC_1 \xrightarrow{E} \calC_E \xrightarrow{\calD'_1} \calC_{E1}$
\item $\calC_0 \xrightarrow{\calD_0 \cup E}  \calC_{0+E}$
\item $\calC_1 \xrightarrow{E \cup \calD_1}  \calC_{E+1}$
\end{itemize}

In the second of these sequences $\calD'_1$ has the usual meaning: those disks in $\calD_1$ whose boundaries lie on the defining surface of $\calC_E$ (and so do not lie on the boundary of goneballs after decomposition by $E$).  Note that if $\bdd E$ lies on the boundary of a goneball in $\hat{\calC}_1$, then it does not lie on the defining surface for $\calC_1$, so decomposing on a disk set containing $E$ is the same as decomposing on the disk set without $E$.  So in this situation $\calC_{E1} = \calC_{E+1} = \calC_2$.  

\begin{lemma}  \label{lemma:delayE1}  Suppose neither $\calC_0$ nor $\calC_1$ certify.  Then, up to a choice of siblings, if siblings exist, either
\begin{enumerate}
\item $\calC_{0E} = \calC_{0+E}$ and  $\calC_{E1} = \calC_{E+1}$ or
\item $\calC_{0E} \dashrightarrow \calC_{0+E}$ and  $\calC_{E1} = \calC_{E+1}$ or
\item $\calC_{0E} = \calC_{0+E}$ and  $\calC_{E1} \dashrightarrow \calC_{E+1}$.
\end{enumerate}
\end{lemma}

\begin{proof}  Let $F_0, F_1$ be the defining surfaces for respectively $\calC_0, \calC_1$.  Suppose first that $\bdd E$ is separating in a component of $F_0 - \bdd \calD_0$.  Then it is separating in a component of $F_1$, because $F_1$ is obtained from $F_0 - \bdd \calD_1$ simply by adding (scar) disks to $\bdd \calD_1$ and perhaps deleting some spheres.  Then $\bdd E$ (if it is not on the boundary of a goneball) is also separating in a component of $F_1 - \bdd D_1$.  It follows from the last sentence of Proposition \ref{prop:x+yvsxy} 
that $\calC_{E1} = \calC_{E+1}$ and, further applying Proposition \ref{prop:x+yvsxy}, that outcome (1) or (2) of Lemma \ref{lemma:delayE1} occurs.  

On the other hand, if $\bdd E$ is non-separating in $F_0 - \bdd \calD_0$ then by Lemma \ref{lemma:xe} $\calC_{0E} = \calC_{0+E}$. Then by Proposition \ref{prop:x+yvsxy}, outcome (1) or (3) occurs. 
\end{proof}

We now put this in the context of decomposition sequences.  Suppose $\ocalD_1, ..., \ocalD_n$ are a sequence of guiding disk sets in $S^3$ so that $\ocalD_1$ determines the decomposition $\calC_1 \xrightarrow{E \cup \calD_1}  \calC_{E+1}$ above.  Then the guiding disk sets $\ocalD_i$ determine extensions of the three sequences that we label as follows:

\[\vec{\calC}^{0+E}: \calC_0 \xrightarrow{\calD_0 \cup E} \calC_{0+E} \xrightarrow{\ocalD_1} \calC_2^{0+E} \xrightarrow{\ocalD_2} \calC^{0+E}_3 \xrightarrow{\ocalD_3}... \xrightarrow{\ocalD_{n-1}} \calC^{0+E}_n\]
\[\vec{\calC}^{E1}: \calC_0 \xrightarrow{\calD_0} \calC_{1} \xrightarrow{E} \calC_{0E} \xrightarrow{\ocalD_1} \calC_2^{0E} \xrightarrow{\ocalD_2} \calC^{0E}_3 \xrightarrow{\ocalD_3}... \xrightarrow{\ocalD_{n-1}} \calC^{0E}_n\]
\[\vec{\calC}^{E+1}: \calC_0 \xrightarrow{\calD_0} \calC_1 \xrightarrow{E \cup \ocalD_1} \calC_{E+1} \xrightarrow{\ocalD_2} \calC^{E+1}_3 \xrightarrow{\ocalD_3}... \xrightarrow{\ocalD_{n-1}} \calC^{E+1}_n
\]

\begin{lemma} \label{lemma:3diagrams}  Suppose all three decomposition sequences certify.  Then all three decomposition sequences cocertify.
\end{lemma}

\begin{proof}  If $\calC_0$ certifies, this is obvious:  $\calC_0$ is a term in all three sequences, so if it certifies then all three sequences cocertify.

If $\calC_1$ certifies, then $\vec{\calC}^{E+1}$ and $\vec{\calC}^{E1}$ cocertify for the same reason: $\calC_1$ is in each sequence. If further $\calC_0$ does not certify, we can apply Proposition \ref{prop:x+yvsxy} to conclude that either $\calC_{0E} = \calC_{0+E}$ or $\calC_{0E} \dashrightarrow \calC_{0+E}$.  In the former case the sequences
$\vec{\calC}^{E1}$ and $\vec{\calC}^{0+E}$ also cocertify because, starting with $\calC_{0E} = \calC_{0+E}$, they are the same sequence and before that in $\vec{\calC}^{0+E}$ is only $\calC_0$, which does not certify.  In the latter case, when $\calC_{0E} \dashrightarrow \calC_{0+E}$, they cocertify by Corollary \ref{cor:longdeflate}. 

Suppose neither $\calC_0$ nor $\calC_1$ certify.  Then following Lemma \ref{lemma:delayE1} the other decompositions can be arranged towards the left in exactly one of the following three diagrams, which differ only in which of two possible deflations (shown as vertical dashed arrows) are in fact equalities:

\[\xymatrix {\calC_0  \ar[ddr]_{\calD_0 \cup E} \ar[r]^{\calD_0} &\calC_1 \ar[d]^{E}   \ar[r]^{\ocalD_1}&\calC_{E+1} \ar[r]^{\ocalD_2} & \calC^{E+1}_3  \ar[r]^{\ocalD _3} & ... & = \vec{\calC}^{E+1}\\ 
&  \calC_{0E} \ar@{-}[d]^{= }  \ar[r]^{\ocalD_1} &\calC_{E1} \ar[r]^{\ocalD_2} \ar@{-}[u]_{= }&... && = \vec{\calC}^{E1} &\\
&\calC_{0+E}  \ar[r]^{\ocalD_1}&\calC^{0+E}_1  \ar[r]^{\ocalD_2} & ... && = \vec{\calC}^{0+E}  &  } \tag{\ref{lemma:delayE1}.1}\]

\[\xymatrix {\calC_0  \ar[ddr]_{\calD_0 \cup E} \ar[r]^{\calD_0} &\calC_1 \ar[d]^{E}   \ar[r]^{\ocalD_1}&\calC_{E+1} \ar[r]^{\ocalD_2} & \calC^{E+1}_3  \ar[r]^{\ocalD _3} & ... & = \vec{\calC}^{E+1}\\ 
&  \calC_{0E}  \ar@{-->}[d] \ar[r]^{\ocalD_1} &\calC_{E1} \ar[r]^{\ocalD_2} \ar@{-}[u]_{= }&... && = \vec{\calC}^{E1} &\\
&\calC_{0+E}  \ar[r]^{\ocalD_1}&\calC^{0+E}_1  \ar[r]^{\ocalD_2} & ... && = \vec{\calC}^{0+E}  &  } \tag{\ref{lemma:delayE1}.1}\]

\[\xymatrix {\calC_0  \ar[ddr]_{\calD_0 \cup E} \ar[r]^{\calD_0} &\calC_1 \ar[d]^{E}   \ar[r]^{\ocalD_1}&\calC_{E+1} \ar[r]^{\ocalD_2} & \calC^{E+1}_3  \ar[r]^{\ocalD _3} & ... & = \vec{\calC}^{E+1}\\ 
&  \calC_{0E} \ar@{-}[d]^{= }  \ar[r]^{\ocalD_1} &\calC_{E1} \ar[r]^{\ocalD_2} \ar@{-->}[u]&... && = \vec{\calC}^{E1} &\\
&\calC_{0+E}  \ar[r]^{\ocalD_1}&\calC^{0+E}_1  \ar[r]^{\ocalD_2} & ... && = \vec{\calC}^{0+E}  &  } \tag{\ref{lemma:delayE1}.1}\]

In the first diagram it is obvious that all three sequences cocertify: because of the equalities, all three sequences are in fact the same sequence after the terms $\calC_{E+1}, \calC_{E1}$ and $\calC_1^{0+E}$.  Similarly the sequences $\vec{\calC}^{E+1}$ and $\vec{\calC}^{E1}$ become the same sequence after those terms in the second diagram and so cocertify; the same is true (even one step earlier) for $\vec{\calC}^{E1}$ and $\vec{\calC}^{0+E}$ in the third diagram.  

Finally, by Corollary \ref{cor:longdeflate}, both $\vec{\calC}^{E1}$ and $\vec{\calC}^{0+E}$ cocertify in the second diagram and $\vec{\calC}^{E+1}$ and $\vec{\calC}^{E1}$ cocertify in the third.
\end{proof}

\section{Ghost circles and timing of disks} \label{sect:ghostcircles}

Let $\calC$ be a flagged chamber complex in $S^3$ supporting $A \cup_T B$, and $S \subset S^3$ be a sphere transverse to $\calC$.  It was shown in Section \ref{sect:guiding} how $S$ defines a flagged chamber complex decomposition sequence for $\calC$. We briefly recall the process, filling in further description, assisted by augmented notation: For $F$ the defining surface for $\calC$, the innermost in $S$ circles $\frc_0$ of $F \cap S$ bound disks in $S$ which we regard as a disk set $\calD_0$ for $\calC$. 
The set of circles $\ofrc_1$ in $F \cap S$ (denoted simply $\frc_1$ in Section \ref{sect:guiding}) consists of those circles which bound disks in $S$ whose interiors may contain circles in $\frc_0$ but otherwise are disjoint from $F$.  Call this set of disks $\ocalD_1$. Proceeding in this manner partition the entire collection of circles $F \cap S$ into sets $\ofrc_0 = \frc_0, \ofrc_1 ..., \ofrc_n$, where $n = \lfloor  \frac{{\rm diam}(Y) + 1}{2} \rfloor$, so that each circle in $\ofrc_k$ bounds a disk $D$ in $S$, and any circle of $F \cap S$ that lies in the interior of $D$ lies in some $\ofrc_i, i < k$.  Denote the collection of such disks in $S$ bounded by the circles in $\ofrc_k$ by $\ocalD_k$.  

The decomposition sequence \[\vec{\calC}:\quad
\calC = \calC_0 \xrightarrow{\calD_0} \calC_1 \xrightarrow{\calD_1} \calC_2 \xrightarrow{\calD_2} ... \xrightarrow{\calD_{n-1}}\calC_n\] is then iteratively defined, the first step being the decomposition $\calC = \calC_0 \xrightarrow{\calD_0} \calC_1$.  After this decomposition two important things happen to the sets of circles $\ofrc_i, 1 \leq i \leq n$: 
\begin{itemize}
\item The circles $\frc_0$ have been surgered away, so the interior of each disk in $\ocalD_1$ is disjoint from the defining surface $F_1$ for $\calC_1$.   
\item Some of the circles in $\ofrc_1$ may lie on the goneballs of the decomposition, and so no longer lie in $S \cap F_1$.  
\end{itemize} 
Then let $\calD_1 \subset \ocalD_1$ be the subset of disks that are left, i. e. the subset of disks whose boundaries lie on $F_1$.  

Since, after the decomposition by $\calD_0$, the interiors of the disks $\calD_1$ are disjoint from $F_1$, they are a disk set in $\calC_1$.  Use these disks to define the decomposition $\calC_1 \xrightarrow{\calD_1} \calC_2$.  Continue in this manner to get the entire decomposition sequence, noting that each $\calD_i$ in the construction consists of those disks in $\ocalD_i$ whose boundaries lie on the defining surface $F_i = F(\calC_i)$, and not on the boundaries of goneballs from earlier decompositions in the sequence.  In other words:

\begin{lemma} For each $1 \leq i \leq n-1$, the disk set $\ocalD_i$ is a guiding disk set for the decomposition $\calC_i \xrightarrow{\calD_i} \calC_{i+1}$.  \qed
\end{lemma} 

\begin{defin} \label{defin:ghostcircle} For each $1 \leq i \leq n-1$, the circles $\ofrc_i - \frc_i$ (that is $\bdd \ocalD_i - \bdd \calD_i$) are called the {\em ghost circles} \index{Ghost circles} of such a decomposition sequence in $\overrightarrow{(\calC, S)}$.  The circles $\frc_i = F_i \cap S$, that is the circles that bound the disks $\calD_i \subset S$, will be called the $F$-circles of the decomposition sequence.  \index{F-circles}
\end{defin}

The disk set $\calD_i$ may well contain ghost circles in its interior; 
these indicate where $\calD_i$ intersects spheres bounding goneballs in earlier decompositions, and  so can be ignored.  Nevertheless, the ghost circles do play a role in the construction of the decomposition sequence, namely they are part of what determines, at the beginning of the decomposition sequence, before they are ghost circles, where in the sequence of decompositions a particular sub-disk of $S$ appears.   

Following up on this idea, it is reasonable to consider how introducing ghost circles in $S$ at the beginning of the decomposition process (when the sets of circles $\frc_i$ for the decomposition sequence are being defined) might affect the decomposition sequence.  
For example, if a ghost circle $c'$ were to be added, at the beginning of the decomposition sequence, parallel to a circle $c \in \frc_{i-1}$ and just inside the disk $D \in \calD_{i-1}$ that $c$ bounds, one effect would be to redefine $D$ as a disk in $\calD_{i}$, because $c$ is no longer $i$-th innermost in $F \cap S$ but $(i+1)$-st innermost in the collection of circles $\frc \cup c'$.  There could well be a similar delay to the use of other disks in the following $\calD_k, i < k < n$ in a manner that may be a bit difficult to describe - see Figure \ref{fig:ghostcircle}.   Our next goal is to understand to some extent how adding or removing ghost circles (and thereby changing the location of the decomposing disks in the sequence of disk decompositions) affects the certification of a decomposition sequence.  The news is good: eventually we will show that adding or removing ghost circles may alter the decomposition sequence, as we have seen, but the new decomposition sequence cocertifies with the old.  
 \begin{figure}[ht!]
 \labellist
\small\hair 2pt
\pinlabel  \text{ghost circle} at 30 260
\pinlabel  \text{center edge} at 350 320
\pinlabel  \text{center vertex} at 350 50
\pinlabel  $\in\frc_2$ at 130 360
\pinlabel  $\in\frc_3$ at 140 80
\pinlabel  0 at 315 410
\pinlabel  0 at 350 410
\pinlabel  0 at 325 375
\pinlabel  1 at 355 380
\pinlabel  2 at 400 365
\pinlabel  3 at 465 380
\pinlabel  2 at 495 340
\pinlabel  1 at 355 100
\pinlabel  2 at 385 120
\pinlabel  3 at 410 115
\pinlabel  3 at 465 100
\endlabellist
    \centering
    \centering
    \includegraphics[scale=0.5]{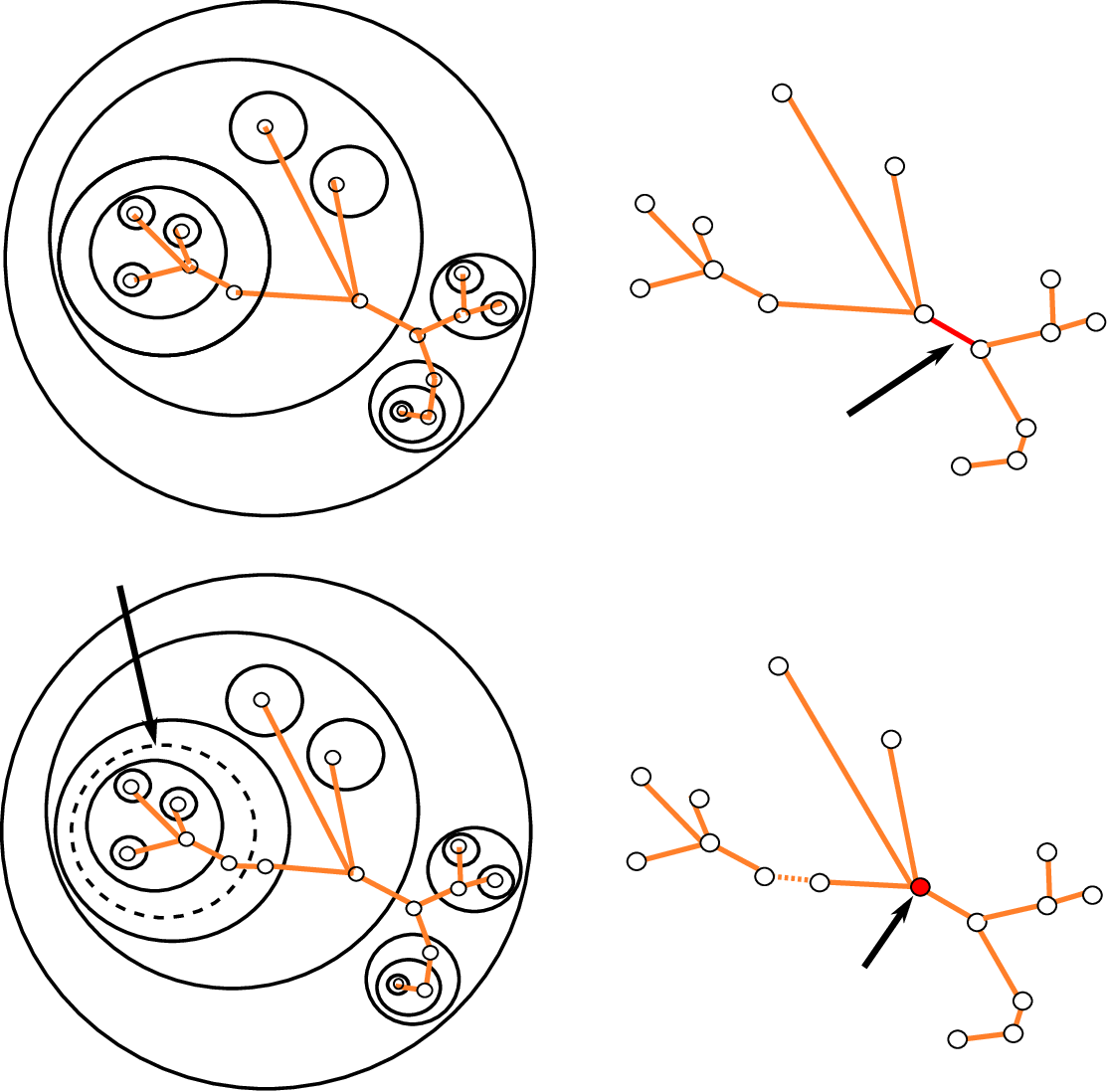}
    \caption{Adding a ghost circle, $i=3$}  \label{fig:ghostcircle}
    \end{figure}

To this end we generalize the discussion of Section \ref{sect:guiding} to spheres in $S$ transverse to the defining surface $F$ of a flagged chamber complex $\calC$ in $S^3$ and consider a finite collection of circles $\ofrc$ in $S$ containing all of the circles $\frc = F \cap S$ (which we have distinguished as {\em $F$-circles}) but augmented by other possible circles $\overline{\frc} - F$ in $S$ called ghost circles. The collection of circles $\overline{\frc}$ determines a sequence of flagged chamber complex decompositions guided by disk sets $\ocalD_i$, where each circle in $\ofrc$ is the boundary of a disk in some $\ocalD_i$, as described above and in Section \ref{sect:guiding}.  (This setting naturally arises in the middle of a flagged chamber complex decomposition sequence, as we have seen.)

As in Section \ref{sect:guiding}, let $Y$ be the tree associated to the circles $\ofrc \subset S$, and, as in Definition \ref{defin:Didef}, let 
\[\vec{\calC}:\quad
\calC = \calC_0 \xrightarrow{\calD_0} \calC_1 \xrightarrow{\calD_1} \calC_2 \xrightarrow{\calD_2} ... \xrightarrow{\calD_{n-1}}\calC_n\]
be the complete sequence of flagged chamber complex decompositions guided by the sphere $S$, where 
$n = \lfloor  \frac{{\rm diam}(Y) + 1}{2} \rfloor$.  It's perfectly possible that for some values of $i$, $\ofrc_i$ consists entirely of ghost circles, so none of the disks $\ocalD_i$ have their boundaries on $F$ and so $\calD_i = \emptyset$.  In this case the decomposition $ \calC_i \xrightarrow{\calD_i} \calC_{i+1}$ is just the identity: $\calC_i = \calC_{i+1}$.  

Recalling Definition \ref{defin:rhoe}, each edge $e$ in $Y$ (corresponding to a circle $c \in \ofrc$) is assigned a value $0 \leq \rho_Y(e) \leq  \lfloor \frac{{\rm diam}(Y) - 1}{2} \rfloor$.  For each vertex $v \in Y$ we can similarly assign $\rho_Y(v) = max\{\rho_Y(e)\; |\;  e \text{ an edge incident to } v\}$.  
 
Here are some elementary observations.  For $e$ an edge in $Y$, let $m^e_{\pm}$ be the numbers defined for $e$ in Definition \ref{defin:rhoe}.    

\begin{lemma}  \label{lemma:Yodd} There is an edge $f$ in $Y$ such that $m^f_+ = m^f_-$ if and only if ${\rm diam}(Y)$ is odd.  In this case $\rho_Y(f) = \frac{{\rm diam}(Y) - 1}{2}$ and $f$ is unique: it is the mid-edge of each path in $Y$ whose length is ${\rm diam}(Y)$.  Call $f$ the {\em center edge} \index{center edge of an odd diameter tree} of the tree $Y$.  See Figure \ref{fig:ghostcircle}.
\end{lemma}

\begin{proof} Let $e$ be any edge in $Y$, with endpoints $v_+$ and $v_-$ belonging to the trees $Y_+$ and $Y_-$ respectively in $Y - e$, as discussed in Definition \ref{defin:rhoe}.  If there is a leaf $v_{\ell}$ of $Y$ that belongs to $Y_+$ and is a distance $m > \frac{{\rm diam}(Y) - 1}{2}$ from $v_+$, then $v_{\ell}$ is a distance $m+1 > \frac{{\rm diam}(Y) +1}{2}$ from $v_-$.  It follows from the definition of diameter that no leaf of $Y$ that belongs to $Y_-$ can be a distance more than \[{\rm diam}(Y) - m - 1 = \frac{{\rm diam}(Y) - 1}{2} + \frac{{\rm diam}(Y) + 1}{2} - m -1 < \frac{{\rm diam}(Y) - 1}{2}\] from $v_-$.  Hence $m_- \leq  \frac{{\rm diam}(Y) - 1}{2} <  m \leq m_+$ and $e$ is not as $f$ is described in the Lemma.

Thus an edge $f$ as described in the Lemma has the property that any leaf of $Y$ in $Y_{\pm}$ has distance at most $\frac{{\rm diam}(Y) - 1}{2}$ from respectively the vertices $v_{\pm}$.  This implies that any path in $Y_{\pm}$ can have length at most ${\rm diam}(Y) - 1$, so in particular no path $\gamma$ in $Y$ of length ${\rm diam}(Y)$ can lie entirely in either of $Y_{\pm}$.  This implies that $f$ must be in $\gamma$ and in fact be the mid-edge of $\gamma$, defining $f$ uniquely and establishing that $\rho_Y(f) = m^f_{\pm} = \frac{{\rm diam}(Y) - 1}{2}$.

Conversely, suppose ${\rm diam}(Y)$ is odd and $f$ is the mid-edge of a path $\gamma$ in $Y$ of length ${\rm diam}(Y)$.  If any leaf in $Y$ lying in $Y_+$ (resp $Y_-$) is a distance greater than $\frac{{\rm diam}(Y) - 1}{2}$ from $v_+$ (resp $v_-$) then conjoining a path from that leaf to $v_+$ with the part of $\gamma - v_+$ containing $v_-$ would give a path of length greater than ${\rm diam}(Y)$, a contradiction.  Hence both $m^f_{\pm} = \frac{{\rm diam}(Y) - 1}{2}$ as required.
%
%
\end{proof}

Quite similarly we have:

\begin{lemma} \label{lemma:Yeven} There is a vertex $w$ in $Y$ that is incident to more than one edge $e$ with $\rho_Y(e) = \rho_Y(w)$ if and only if ${\rm diam}(Y)$ is even.  In this case $w$ is unique and $\rho_Y(w) = \frac{{\rm diam}(Y)}{2} - 1$.  Call $w$ the {\em center vertex} \index{center vertex of an even diameter tree} of the tree $Y$. See Figure \ref{fig:ghostcircle}.
\end{lemma}
\begin{proof}  Consider any vertex $v$ in $Y$ and suppose there is a path $\gamma$ in $Y$ from $v$ to a leaf of $Y$ and that the length of $\gamma$ is $m > \frac{{\rm diam}(Y)}{2}$.  Let $e$ be the edge of $\gamma$ that is adjacent to $v$ and observe that, by definition of diameter, any path from $v$ to a leaf of $Y$ that is longer than ${\rm diam}(Y) - m < m$ must pass through $e$.  Definition \ref{defin:rhoe} then implies that $\rho_Y(e)$ is the length of the longest path in $Y$ {\em not passing through $e$} from $v$ to a leaf of $Y$, and 
any edge $e'$ incident to $v$ other than $e$ has $\rho_Y(e') \leq \rho_Y(e) - 1$.  In particular, $e$ is the only edge incident to $v$ with $\rho_Y(e) = \rho_Y(v)$, so $e$ is not an edge as described in the lemma.

It follows that if $w$ is as described in the lemma, then the length of any path from $w$ to a leaf in $Y$ has length $\leq \frac{{\rm diam}(Y)}{2}$.  So, by definition of diameter, any path in $Y$ whose length is ${\rm diam}(Y)$ must pass through $w$ and have its length bisected by $w$.  Thus ${\rm diam}(Y)$ is even and the distance from $w$ to a most distant leaf will be $\frac{{\rm diam}(Y)}{2}$.  In particular, any edge $e$ incident to $w$ will find the vertex at its other end a distance of at most $\frac{{\rm diam}(Y)}{2} - 1$ from a leaf.  Thus $\rho_Y(e) \leq \frac{{\rm diam}(Y)}{2} -1$.  Since this is true for all edges incident to $w$, $\rho_Y(w) \leq \frac{{\rm diam}(Y)}{2} -1$.  Furthermore, since any path in $Y$ whose length is ${\rm diam}(Y)$ must pass through $w$, there are at least two edges $e_{\pm}$ incident to $w$ lying on such a path, and then $\rho_Y(e_{\pm}) = \frac{{\rm diam}(Y)}{2} -1$.  It follows that for both edges $\rho_Y(e_{\pm}) = \frac{{\rm diam}(Y)}{2} -1 = \rho_Y(w)$ as required.

Conversely, if ${\rm diam}(Y)$ is even, $\gamma$ is a path in $Y$ of length ${\rm diam}(Y)$, and $w$ is the vertex that bisects $\gamma$, the same argument shows that for $e$ either of the two edges in $\gamma$ incident to $w$, $\rho_Y(e) = \rho_Y(w)$.   
%
%
\end{proof}

\begin{lemma} \label{lemma:PDk} Suppose $P \subset S - \ofrc$ is the planar surface corresponding to a vertex $v \in Y$.  If $P \subset D \in \ocalD_k$, then $\rho_Y(v) \leq k$.
\end{lemma}

\begin{proof}  For $D \in \ocalD_k$, $\bdd D \in \ofrc_k$ and from Lemma \ref{lemma:treetrim} each circle in $\ofrc \cap \inter(D)$ lies in some $\ofrc_j, j < k$.  Hence if $P \subset D$, each circle $c \in \bdd P$ lies in some $\ofrc_j, j \leq k$.  Corresponding to this, we have that each edge $e$ in $Y$ that is incident to $v$ has $\rho_Y(e) \leq k$.  Hence by definition $\rho_Y(v) \leq k$ as required.
%
\end{proof}
 \begin{lemma}  \label{lemma:addghostvert} Suppose a ghost circle is added to $S$ in a planar surface $P \subset S - \ofrc$ corresponding to a vertex $v \in Y$ and \[\vec{\calC'}:\quad
\calC = \calC_0 \xrightarrow{\calD'_0} \calC'_1 \xrightarrow{\calD'_1} \calC'_2 \xrightarrow{\calD'_2} ... \]
is the resulting sequence of flagged chamber complex decompositions.  Suppose $\rho_Y(v) \geq 1$.  Then for each $1 \leq i \leq \rho_Y(v)$, $\calD'_{i - 1} = \calD_{i-1}$ and $\calC'_i = \calC_i$.  
\end{lemma}

\begin{proof} If $i \leq \rho_Y(v)$ then $\rho_Y(v) \nleq i-1$.  It follows from Lemma \ref{lemma:PDk} that any disk $D \in \ocalD_{i-1}$ does not contain $P$ and so is unaffected by the addition of the ghost circle.  Thus, for each $1 \leq i \leq \rho_Y(v)$, $\ocalD'_{i-1} = \ocalD_{i-1}$.  Now argue iteratively: Since $C'_0 = C_0$ and $\ocalD'_0 = \ocalD_0$ it follows that $\calD'_0 = \calD_0$. Hence $\calC'_1 = \calC_1$, so if $\ocalD'_1 = \ocalD_1$ then $\calD'_1 = \calD_1$.  The argument continues iteratively until $i = \rho_Y(v)$: For each $1 \leq i \leq \rho_Y(v)$, $\calC'_{i-1} = \calC_{i-1}$ and $\ocalD'_{i-1} = \ocalD_{i-1}$ imply $\calD'_{i-1} = \calD_{i-1}$, so $\calC'_i = \calC_i$, as required.  
\end{proof}

\begin{cor} \label{cor:addghosteven} Suppose $\diamY$ is even and a ghost circle is added to $S$ in the planar surface $P \subset S - \ofrc$ corresponding to the center vertex $v \in Y$.  Then $\vec{\calC'} = \vec{\calC}$.  
\end{cor}

\begin{proof}  Since ${\rm diam}(Y)$ is even, the complete sequence $\vec{\calC}$ has last term $\calC_n$ with $n = {\rm diam}(Y)/2 = \rho_Y(v)$. If $\rho_Y(v) = 0$ then $Y$ is a star graph, each circle in $\ofrc$ bounds a unique disk in $\ocalD_0$ and both $\vec{\calC'}$ and $\vec{\calC}$ are simply the sequence $\calC_0 \xrightarrow{\calD_0} \calC_1$.  Indeed, after this decomposition the only circle remaining in $S$ is the added ghost circle, so $S \cap F(\calC_1) = \emptyset$ and there is no further decomposition in the sequence.    If, on the other hand, $\rho_Y(v) \geq 1$ then by Lemma \ref{lemma:addghostvert},  for all $0 \leq i \leq n$, $\calC'_i = \calC_i$ and after these decompositions, the only circle that might remain is again the added ghost circle. 
\end{proof}

\begin{cor} \label{cor:addghostodd} Suppose $\diamY$ is odd and a ghost circle is added to $S$ in a planar surface $P \subset S - \ofrc$ corresponding to a vertex at the end of the center edge $e \in Y$.  Then if $\vec{\calC}$ certifies, $\vec{\calC'}$ and $\vec{\calC}$ cocertify.  
\end{cor}

\begin{proof} One possibility is that $\diamY = 1$, that is $Y$ is the single edge $e$.  In this case $\ofrc$ is a single circle $c$ (corresponding to $e$) dividing $S$ into two disks $D_A$ and $D_B$, as in Lemma \ref{lemma:ambig}.  Say the ghost circle is added to $D_A = P$.  If $c$ is also a ghost circle, then $F$ is disjoint from $S$ and there is no further decomposition. In particular, $\vec{\calC'} = \vec{\calC}$. If, on the other hand, $c$ is an $F$-circle, so $c = F \cap S$ then it is shown in Lemma \ref{lemma:ambig} that if $\vec{\calC}$ certifies, the choice of whether to decompose $\calC$ along $D_A$ or $D_B$ makes no difference: the two results cocertify.  Adding a ghost circle to $D_A$ has the sole effect of declaring that in $\calC'$ the decomposition will be along $D_B$ (since $D_A$ is no longer a disk component of $S - \ofrc$).  Hence $\vec{\calC'}$ and $\vec{\calC}$ cocertify.

If, on the other hand, $\diamY \geq 3$ then, per Lemma \ref{lemma:Yodd} $\rho_Y(v) = \rho_Y(e) = \frac{\diamY - 1}{2} \geq 1$.  Then according to Lemma \ref{lemma:addghostvert} the decomposition sequences $\vec{\calC}$ and $\vec{\calC'}$ are identical through $\calC'_{\frac{\diamY - 1}{2}} = \calC_{\frac{\diamY - 1}{2}}$.  But for $\calC_{\frac{\diamY - 1}{2}}$, $\ofrc$ has shrunk to a single circle, and $Y$ is a single edge.  Thus we can apply the argument just given for this case and conclude that $\vec{\calC'} \sim \vec{\calC}$ as required.
\end{proof}

\begin{prop} \label{prop:addghost}  Suppose $\calC$ is a flagged chamber complex in $S^3$ that is not tiny.  Let $S \subset S^3$ be a sphere, transverse to the defining surface $F = F(\calC)$, that is either balanced or almost balanced.  Let $\ofrc$ be a collection of circles in $S$ containing the set $\frc = F \cap S$ and let $\ofrc'$ be the union of $\ofrc$ and a disjoint (ghost) circle $c'$ in $S$.  Let $\vec{\calC}$ and $\vec{\calC'}$ be the sequences of flagged chamber complex decompositions determined by $\ofrc$ and $\ofrc'$ respectively.  Then $\vec{\calC}$ and $\vec{\calC'}$ cocertify.
\end{prop}

\begin{proof}  
Let $Y$ be the tree associated to $\ofrc$, $P \subset S - \ofrc$ be the planar surface component to which the ghost circle $c'$ is added, and $v$ be the vertex in $Y$ corresponding to $P$.  The proof is by induction on the distance between $v$ and the center vertex or edge of $Y$.  Corollaries \ref{cor:addghosteven} and \ref{cor:addghostodd} show that the proposition is true when that distance is $0$.  

Following Lemma \ref{lemma:Yodd}, as used in the proof of Corollary \ref{cor:addghosteven}, the decomposition sequences are identical until the tree has been shrunk to the point that $v$ is a leaf.  So we may as well assume that $v$ is a leaf of $Y$, so $P$ is an innermost disk of $S - \ofrc$.  Let $\ofrc''$ be the collection of circles $\ofrc \cup c''$, where $c''$ is a circle parallel to $\bdd P$ but just {\em outside} $P$, that is in the component $P''$ of $S - \ofrc$ that is adjacent to $P$.  See Figure \ref{fig:moveghost}.  Then the vertex $v''$ corresponding to $P''$ is closer to the center edge or vertex of $Y$ so, by inductive assumption, the sequence of flagged chamber complex decompositions $\vec{\calC''}$ determined by $\ofrc''$ and $\vec{\calC}$ cocertify.  So all that remains is to show that $\vec{\calC''}$ and $\vec{\calC'}$ cocertify.

 \begin{figure}[ht!]
 \labellist
\small\hair 2pt
\pinlabel  $c''$ at 60 260
\pinlabel  $c'$ at 50 520
\pinlabel  $\vec{\calC}'$ at 350 320
\pinlabel  $\vec{\calC}''$ at 350 50
\pinlabel  $P$ at 110 410
\pinlabel  $P''$ at 160 80
\pinlabel  $\bdd P$ at 123 95
\pinlabel  $v$ at 400 385
\pinlabel  $\bdd P$ at 410 355
\pinlabel  $\bdd P$ at 400 115
\pinlabel  $v''$ at 465 110
\endlabellist
    \centering
    \centering
    \includegraphics[scale=0.5]{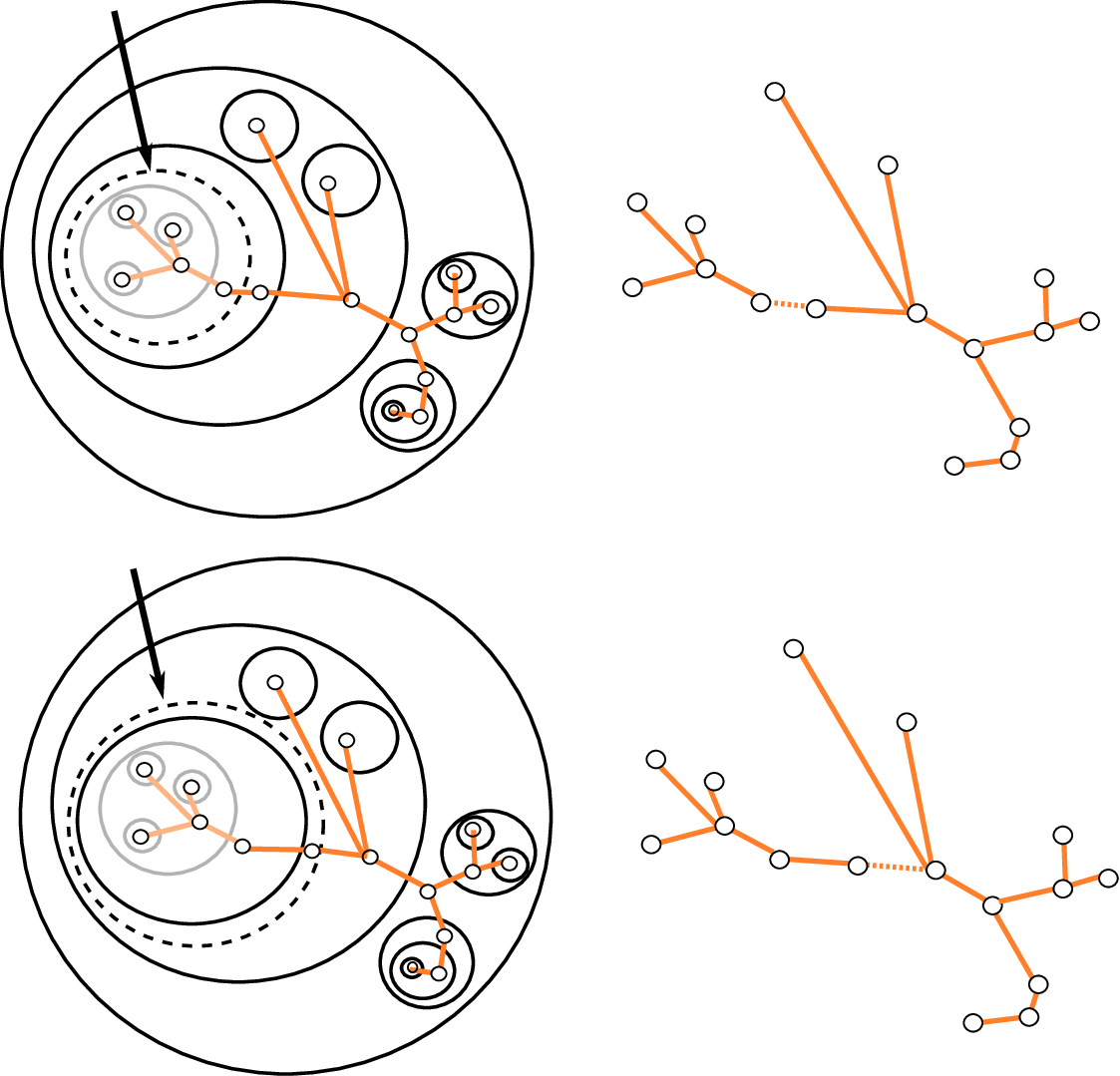}
    \caption{Moving a ghost circle, so $\bdd P \in \frc'_3 \to \bdd P \in \frc''_2$}  \label{fig:moveghost}
    \end{figure}

If $\bdd P$ is a ghost circle, then $\ofrc'' = \ofrc'$ so in fact $\vec{\calC''} = \vec{\calC'}$ and we are done.  So assume that $\bdd P$ is an $F$-circle in $\ofrc$ and note that the only difference between $\ofrc''$ and $\ofrc'$ is the side on which a ghost circle is added.  This switch in sides has no effect on the tree and so no effect on the disks in the decomposition sequence  $\vec{\calC'}$ except this:  In $\vec{\calC}''$ the disk $P$ is in $\calD_0$, since $P$ is an innermost disk, whereas in $\calC'$ the disk $P$ is in $\calD_1$ because the disk bounded by $\bdd P$ contains the single added ghost circle $c'$.  

Apply Lemma \ref{lemma:3diagrams} to this situation, where here the disk $P$ plays the role of $E$ in that lemma, $\vec{\calC}''$ here corresponds to $\vec{\calC}^{0+E}$ there and $\vec{\calC}'$ here corresponds to $\vec{\calC}^{E+1}$ there.  Since $S$ is balanced or almost balanced, both sequences $\vec{\calC''}$ and $\vec{\calC'}$ certify, as does the sequence $\vec{\calC}^{P1}$, defined for $P$ as $\vec{\calC}^{E1}$ is defined for $E$ in the preamble to Lemma \ref{lemma:3diagrams}.  
The conclusion of Lemma \ref{lemma:3diagrams} then says that $\vec{\calC}''$ and $\vec{\calC'}$ cocertify, so $\vec{\calC} \sim \vec{\calC}'' \sim \vec{\calC'}$, as required.
\end{proof}

\begin{cor}  \label{cor:addghost} Suppose $\calC$ is a flagged chamber complex in $S^3$ that is not tiny and $S \subset S^3$ is a sphere transverse to $F= F(\calC)$ which is either balanced or almost balanced for $\calC$.  Let $\ofrc$ and $\ofrc'$ be collections of circles in $S$, each containing the set $\frc = F \cap S$.   Let $\vec{\calC}$ and $\vec{\calC'}$ be the sequences of flagged chamber complex decompositions determined by $\ofrc$ and $\ofrc'$ respectively.  Then $\vec{\calC}$ and $\vec{\calC'}$ cocertify.
\end{cor}

\begin{proof}  It suffices to prove the Corollary in the case that $\ofrc = \frc$.  In this case $\ofrc'$ is obtained from $\ofrc$ by adding some number $m$ of ghost circles.  But this case follows by applying Proposition \ref{prop:addghost} $m$ times.
\end{proof}

\begin{cor}  \label{cor:addisk} Theorem \ref{thm:addisk} remains true even when the chamber $C$ containing $E$ is an empty torus, or  when $E$ is inessential.
\end{cor}

\begin{proof} Referring to the proof of Theorem \ref{thm:addisk}, the requirement that $E$ be essential and $C$ not be an empty torus is to ensure that the original decomposition $C = C_0 \xrightarrow{E} C'$ creates no goneballs.  The boundaries of such goneballs would intersect $F = F(\calC)$ in a family of circles $\ofrc_G$ in $F \cap S$ that continue to determine which circles in $F \cap S$ bound disks in each $\calD_i$ during the subsequent decomposition \[\vec{\calC'}:\quad
\calC' = \calC'_0 \xrightarrow{\calD'_0} \calC'_1 \xrightarrow{\calD'_1} \calC'_2 \xrightarrow{\calD'_2} ... \xrightarrow{\calD'_{n-1}}\calC'_n \tag{\ref{sect:guidingisk}.1}\] used in the proof of Theorem \ref{thm:addisk}. 

In contrast, it is the collection of circles $F(\calC') \cap S$ that determines a decomposition sequence in $\overrightarrow{(\calC', S)}$ and the goneballs of the decomposition $C = C_0 \xrightarrow{E} C'$ do not appear in the definition of $F' = F(\calC')$.  In other words, $F \cap S = (F' \cap S) \cup \ofrc_G$, so $\ofrc_G$ acts as a collection of ghost circles in the decomposition sequence $\vec{\calC'}$.  Corollary \ref{cor:addghost} says that the addition of such ghost circles makes no difference in certification.  That is, we may as well assume that $F \cap S = F' \cap S$ and proceed with the proof of Theorem \ref{thm:addisk} as written.  
\end{proof}

\section{Isotopies of guiding spheres} \label{sect:sphereisotope}

\begin{defin} \label{defin:simseq}
Suppose $\calC$ is a flagged chamber complex in $S^3$ and $S$, $S'$ are spheres transverse to the defining surface $F = F(\calC)$.  If any $\vec{\calC} \in \overrightarrow{(\calC, S)}$ and $\vec{\calC'} \in \overrightarrow{(\calC, S')}$ cocertify, then we say $\overrightarrow{(\calC, S)}$ and  $\overrightarrow{(\calC, S')}$ cocertify and write $ \overrightarrow{(\calC, S)} \sim  \overrightarrow{(\calC, S')}$.  
\end{defin}

Our goal in this section is to prove:

\begin{thm} \label{thm:balanceisotopy} Suppose $\calC$ is a flagged chamber complex in $S^3$ that is not tiny and supports the Heegaard splitting $S^3 = A \cup_T B$.  Suppose further that $S_0$ and $S_1$ are balanced or almost balanced spheres for $\calC$ that are isotopic in $S^3$ through balanced or almost balanced spheres $S_t, 0 \leq t \leq 1$.  Then $ \overrightarrow{(\calC, S_0)} \sim  \overrightarrow{(\calC, S_1)}$. 
\end{thm}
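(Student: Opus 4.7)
My plan is to take a smooth generic isotopy $\{S_t\}_{t\in[0,1]}$ with $S_0=S$ and $S_1=S'$ that runs entirely through balanced spheres. By transversality, $S_t$ meets $F$ transversely for all $t$ except at finitely many critical times $t_1<\cdots<t_n$, at each of which the failure of transversality is codimension one: either a tangency (a circle of $S_t\cap F$ is born or dies) or a saddle (a band move on the $1$-manifold $S_t\cap F$). Between consecutive critical times the isotopy is carried by an ambient isotopy preserving each chamber of $\calC$ set-wise, so the disks in each disk set $\calD_i$ simply move by an isotopy of the ambient chamber; the guided decomposition sequence $\overrightarrow{(\calC,S_t)}$ is constant, as an abstract decorated sequence, on each regular interval. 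It therefore suffices to produce an equivalence $\overrightarrow{(\calC,S_{t_i-\epsilon})}\sim\overrightarrow{(\calC,S_{t_i+\epsilon})}$ across each critical time and then concatenate.

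At a tangency the combinatorial effect is that the initial disk set $\calD_1$ gains or loses a single disk $E$, namely the small disk of $S_{t_i}$ bounded by the appearing or disappearing circle; this $E$ is parallel to a subdisk of $F$. The disk $E$ is disjoint from both of the guiding spheres $S_{t_i\pm\epsilon}$ (indeed it is cut out of the common sphere near the tangency), so Theorem~\ref{thm:addisk} applies to the pair. The isotopy runs through balanced spheres, so by Theorem~\ref{thm:balance} both terminal chamber complexes certify, and the first bullet of Corollary~\ref{cor:addisk} then yields the required equivalence $\sim$.

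Saddle critical times are more delicate because the disk set changes by a band move rather than a single disk insertion. My plan is to factor a saddle as two tangency moves connected by an intermediate balanced guiding sphere $S''$: produce $S''$ from $S_{t_i-\epsilon}$ by a local ``finger'' perturbation that introduces a small cancelling circle along the saddle band, after which $S_{t_i+\epsilon}$ is obtained from $S''$ by a cancelling death on the other side of the band. Each of the two intermediate moves is a tangency and so is handled by the previous paragraph; composing two $\sim$ relations gives the result at this critical time. Iterating over all $t_i$ produces $\overrightarrow{(\calC,S)}\sim\overrightarrow{(\calC,S')}$.

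The main obstacle I expect is precisely the saddle step, specifically verifying that the intermediate $S''$ can be chosen to remain in the balanced locus so that Theorem~\ref{thm:balance} still certifies both halves of the argument. A local tangency does not change the topology of $F\cap X$ or $F\cap Y$ beyond adjoining or removing a disk, so the planar versus non-planar balance is preserved; but checking this stability in the borderline cases, and verifying that the two tangencies do in fact reconstruct the given saddle move without introducing new critical events, is the delicate piece that will need the most care.
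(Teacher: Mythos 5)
Your reduction to a single critical time, and the observation that regular intervals preserve the guided decomposition, match the paper's opening move. Your treatment of a center (max/min) tangency is in the right spirit (the paper cites Proposition~\ref{prop:easy} for the immediate consequence that the two decompositions differ only by an initial goneball, whereas you route through Theorem~\ref{thm:addisk} and Corollary~\ref{cor:addisk}; both work).

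The genuine gap is the saddle step. You propose to factor a saddle crossing into a birth followed by a death: push a finger of $S_{t_i-\epsilon}$ through $F$ to create a small trivial circle, and then kill a small trivial circle on the other side of the band to arrive at $S_{t_i+\epsilon}$. But this factorization does not exist. A birth or death only adds or removes a circle that bounds a small disk in both $S$ and $F$; a saddle performs a band move, changing the isotopy class of the intersection $1$-manifold in $F$ in a way that a birth followed by a death cannot reproduce. Concretely, if $S_{t_i-\epsilon}\cap F$ contains circles $\alpha,\beta$ that the saddle fuses into $\gamma$, then after a finger push $S''\cap F$ is $\{\alpha,\beta,c_0\}$ with $c_0$ trivial, and a subsequent death of a trivial circle cannot yield $\{\gamma\}$. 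In Cerf-theoretic terms, moving from a saddle tangency to a center tangency requires passing through a degenerate (codimension two) tangency, which cannot be avoided by a small perturbation of the path. So the saddle is not a detail to be verified: it is a genuinely different phenomenon that your scheme cannot reach.

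The paper instead analyzes the saddle directly. It lets $S_0$ be the tangent sphere, identifies the three nearby circles $c,c_1,c_2$ of $F$ forming a pair of pants around the figure-eight, and compares the decompositions guided by the two resolutions $S_a,S_b$. The key observation is that the two guided sequences give \emph{identical} guidance until one of $c,c_1,c_2$ first bounds a disk in $S_a$ or $S_b$ away from $F$; at that point the paper surgers on that disk to reach a chamber complex $\calC'$ for which $S_a$ and $S_b$ coincide as guides, and Corollary~\ref{cor:addisk} supplies the equivalences needed to close the chain. There is also a second case, when the relevant circles are nested, handled by a further reduction to a ``special situation.'' None of this is a birth/death factorization, and your proof needs to be rebuilt around a direct saddle analysis of this kind.
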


As usual, let $F$ denote the defining surface $F = F(\calC)$ and put the isotopy in general position so that generically $S_t$ is transverse to $F$, but for certain discrete values of $t$, $S_t$ is tangent to $F$ at a single point, a non-degenerate critical point of index zero ($F$ has a local max at $v$ in a collar of $S$), two (a local min) or one (a saddle tangency).  

\begin{lemma} \label{lemma:maxormin}
Theorem \ref{thm:balanceisotopy} is true if the isotopy $S_t$ passes through only a single critical point, of index zero or two (a max or a min).
\end{lemma}

\begin{proof}   The cases of index zero or index two are symmetric.  Let $S_{m(iddle)}$ be the sphere on which the critical point $v$ occurs, and, in a collar of $S_m$, we may suppose $F$ has a minimum.  Call the bottom of the collar $S_{b(elow)}$ and the top $S_{a(bove)}$.  We can assume that other than the local minimum, $F$ intersects the collar in vertical cylinders.  Then $F \cap S_a$ is the union of $F \cap S_b$ and a single circle $c$, where $c$ bounds a disk $D_F$ in $F - S_a$ (the disk in $F$ containing $v$) and a parallel disk $D_S$ in $S_a - F$.  Let $\vec{\calC}_b$ (resp $\vec{\calC}_a$) be the sequence of flagged chamber complex decompositions determined by $\calC$ and $S_b$ (resp. $S_a$).  Thus $\vec{\calC}_b \in \overrightarrow{(\calC, S_1)}$ (say) and $\vec{\calC}_a \in \overrightarrow{(\calC, S_0)}$.  See Figure \ref{fig:SaSb0}.
Augment the collection of circles $F \cap S_b$ with the single (ghost) circle $c'$ parallel in $S_b$ to $c \subset S_a$ and call the resulting sphere with ghost circle $S_c$.  Let $\vec{\calC}_c$ be a sequence of flagged chamber complex decompositions determined by $\calC$ and $S_c$.  Then by Corollary \ref{cor:addghost} $\vec{\calC}_b \sim \vec{\calC}_c$.

 \begin{figure}[ht!]
 \labellist
\small\hair 2pt
\pinlabel  $(\calC_a)_0$ at 0 350
\pinlabel  $(\hat{\calC}_a)_1$ at 600 370
\pinlabel  $(\calC_a)_1$ at 600 160
\pinlabel  $(\calC_c)_0$ at 0 280
\pinlabel  $(\calC_c)_1$ at 0 120
\pinlabel  $S_a$ at 280 380
\pinlabel  $S_m$ at 280 320
\pinlabel  $S_b,S_c$ at 280 270
\pinlabel  $S_c$ at 280 50
\pinlabel  {\huge $\sim$} at 300 120
\pinlabel  $v$ at 160 320
\pinlabel  $c$ at 190 400
\pinlabel  $D_S$ at 150 385
\pinlabel  $D_F$ at 210 350
\pinlabel  $c'$ at 170 280
\pinlabel  $B_G$ at 450 350
\endlabellist
    \centering
    \centering
    \includegraphics[scale=0.55]{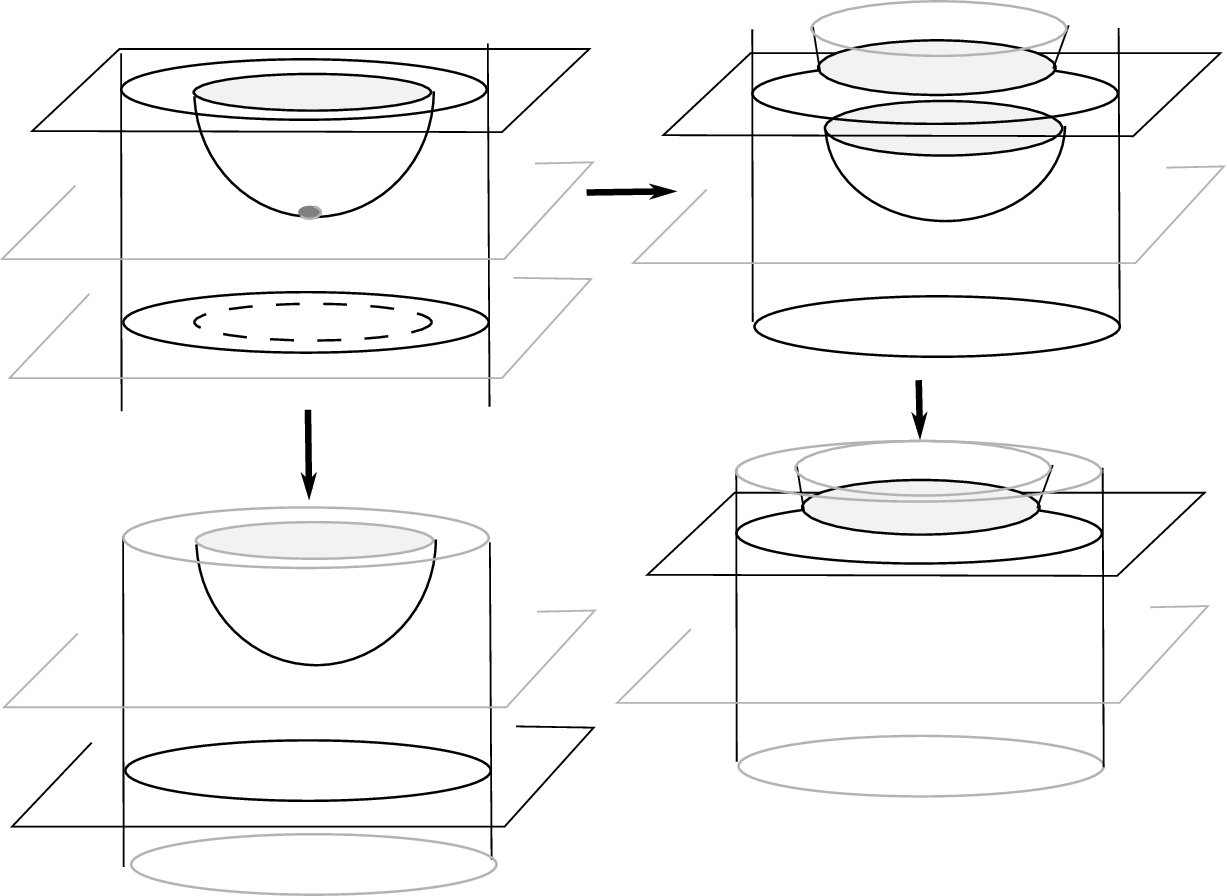}
    \caption{Passing $S$ through a minimum}  \label{fig:SaSb0}
    \end{figure}

Now compare $\vec{\calC}_a$ with $\vec{\calC}_c$. The pattern of circles in $F \cap S_a$ is the same as in $F \cap S_c $ except that the $F$-circle $c$ in $S_a$ appears as a ghost circle $c'$ in $S_c$.  Thus the  decomposition $\calC_0 \xrightarrow{\calD_0} \calC_{1}$ is the same in each sequence, except in $\vec{\calC}_c$ the ghost circle $c'$ is just removed, whereas in $\vec{\calC}_a$, $F$ is surgered by $D_S$, creating an extra ball chamber $B_G$ bounded by the sphere $D_S \cup D_F$.
But our convention for flagging declares the handlebody $B_G$ to be empty and therefore a goneball, so the result of the flagged chamber complex decomposition $\calC_{1}$ is the same in both cases, as is the remaining pattern of circles $S_a \cap F(\calC_{1}) = S_c \cap F(\calC_{1})$.  It follows that the entire ensuing flagged chamber complex decomposition sequence is the same, so $\vec{\calC}_a \sim \vec{\calC}_c$.  Combining, we have $\vec{\calC}_a \sim \vec{\calC}_c \sim \vec{\calC}_b$ so $ \overrightarrow{(\calC, S_0)} \sim  \overrightarrow{(\calC, S_1)}$ as required.  
\end{proof}

%

Next suppose the isotopy $S_t$ has a single critical point that is a saddle tangency with $F$ at a point $v \in F$.
As in the proof of Lemma \ref{lemma:maxormin} we consider how $F$ intersects a collar of $S_m$, the sphere of the tangency at $v$, as shown in Figure \ref{fig:SaSb1}.  A neighborhood in $F$ of  $S_m \cap F$ (a figure eight) is a pair of pants with three disjoint circles: $c_b$, whose parallel in $S_m$ is incident to $v$ in two points, and $c_1$, $c_2$ each of whose parallels in $S_m$ is incident to $v$ in a single point.  With $S_a$ and $S_b$ as in Lemma \ref{lemma:maxormin}, we can assume that $c_1$ and $c_2$ both lie in $S_a$, and $c$ lies in $S_b$. 

 \begin{figure}[ht!]
 \labellist
\small\hair 2pt
\pinlabel  $c_b$ at 260 310
\pinlabel  $c_1$ at 210 310
\pinlabel  $c_2$ at 110 310

\pinlabel  $c_b$ at 210 40
\pinlabel  $c_1$ at 210 170
\pinlabel  $c_2$ at 110 170

\pinlabel  $c_2$ at 500 175
\pinlabel  $c_1$ at 410 175
\pinlabel  $c_b$ at 370 40

\pinlabel  $c_b$ at 395 275
\pinlabel  $c_1$ at 450 275
\pinlabel  $c_2$ at 340 275
\pinlabel  $S_{a}$ at 295 180
\pinlabel  $S_{m}$ at 295 110
\pinlabel  $S_{b}$ at 295 40
\endlabellist
    \centering
    \centering
    \includegraphics[scale=0.6]{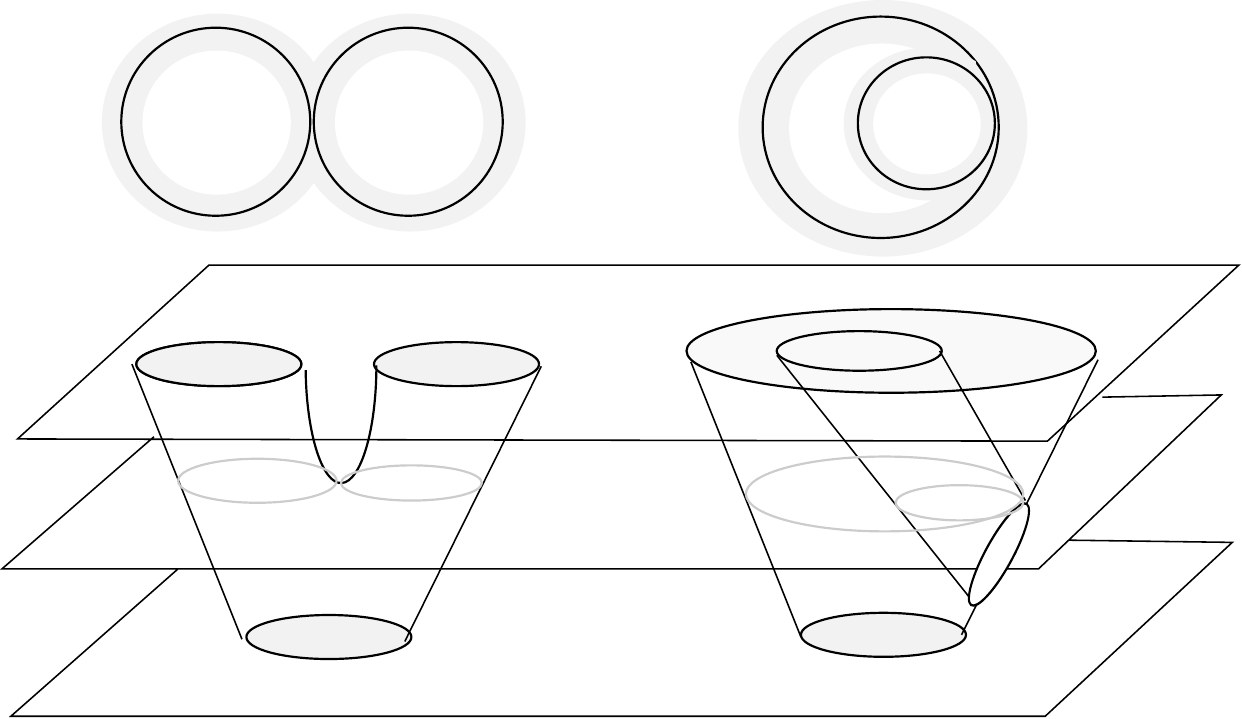}
    \caption{A neighborhood of critical point $v$: two viewpoints}  \label{fig:SaSb1}
    \end{figure}

As in Section \ref{sect:guiding} let $Y_a$ (resp $Y_b$) be the tree defined by the collection of circles $F \cap S_a$ (resp $F \cap S_b$), $e_1, e_2$ be the edges in $Y_a$ corresponding to $c_1, c_2$ and $e_b$ be the edge in $Y_b$ corresponding to $c_b$.  As defined in \ref{defin:rhoe}, let $\rho_i = \rho_{Y_a}(e_i), i = 1, 2$ and $\rho_b = \rho_{Y_b}(e_b)$.  Put another way, in the decomposition sequences of $\calC$ guided by $S_a$ and $S_b$, each of the two circles $c_i, i = 1, 2$ bounds a disk $D_i \in \calD_{\rho_i}$ in the decomposition $\calC_{\rho_i} \xrightarrow{\calD_{\rho_i}} \calC_{\rho_i +1}$ and $c_b$ bounds a disk $D_b \in \calD_{\rho_b}$ in the decomposition $\calC_{\rho_b} \xrightarrow{\calD_{\rho_b}} \calC_{\rho_b +1}$.  With no loss of generality, assume $\rho_1 \leq \rho_2$ (i. e. $\calD_{\rho_1}$ does not appear after $\calD_{\rho_2}$ in the decomposition sequence).  There are two possibilities for the disks $D_1, D_2 \subset S_a$,  and the two viewpoints in Figure \ref{fig:SaSb1} are chosen so that in either case each $D_i$ is on the bounded side of $c_i$ in $S_a$: either $D_1$ and $D_2$ are disjoint, as shown in the left side of the figure, or $D_1 \subset D_2$ as shown in the right side.  That is, the two viewpoints in the figure are chosen so that in each the $D_i$ appear as bounded regions.  Similarly, $D_b$ can either lie on the side of $c_b$ that contains $v$ (the bounded side in the left of the figure and unbounded side in the right) or the other side.  We have:

\begin{lemma} \label{lemma:vside}  If $D_b$ lies on the side of $c_b$ that contains $v$, then $\rho_1 \leq \rho_b$.
\end{lemma}
\begin{proof} Recall that, aside from the component that contains $v$, $F$ intersects the collar of $S$ only in vertical annuli.  Hence the interior of the bounded side of $c_b$ on the left of Figure \ref{fig:SaSb1} (or the the interior of the unbounded side of $c_b$ on the right) contains a parallel copy of each circle of $F \cap \inter(D_1)$.  
\end{proof}

\begin{lemma} \label{lemma:saddle}
Theorem \ref{thm:balanceisotopy} is true if the isotopy $S_t$ passes through only a single critical point, of index one (a saddle tangency).
\end{lemma}

\begin{proof}  We make two claims:
\medskip

{\em Claim 1:} With no loss of generality, we can assume that $\rho_1 \leq \rho_b$.

{\em Proof of Claim 1:} By Lemma \ref{lemma:vside} this is true when $D_b$ lies on the side of $c_b$ that contains $v$, so suppose $D_b$ lies on the other side.  That is, suppose $D_b$ lies on the outside (unbounded side) of $c_b$ in the left of Figure \ref{fig:SaSb1} or on the inside (bounded side) of $c_b$ on the right.  Suppose $\rho_b < \rho_1$ and consider what happens if we add $\rho_1 - \rho_b$ ghost circles to $D_b$, parallel to $\bdd D_b$.  This raises $\rho_b$ so that $\rho_b = \rho_1$.  Since there is a copy of the pattern of circles $F \cap D_1$ on the other side of $c_b$ in $S_b$, that is in $S_b - D_b$, (see left panel in Figure \ref{fig:SaSb4}), it remains true that the maximal distance in $Y_b$ from $e_b$ to a leaf of $Y_{\pm}$ is still minimized in $D_b$, so we do not change the side of $c_b$ on which $D_b$ lies.  Let $S_b^+$ denote $S_b$ with these ghost circles added, and denote by $\vec{\calC}_b^+$ be the sequence of flagged chamber complex decompositions on $\calC$ determined by $S_b^+$.  By Corollary \ref{cor:addghost} it suffices to show that $\vec{\calC}_b^+$ and $\vec{\calC}_a$ cocertify.  This proves Claim 1.
\medskip

 \begin{figure}[ht!]
 \labellist
\small\hair 2pt
\pinlabel  $D_1$ at 260 120
\pinlabel  $D_b$ at 105 15
\pinlabel  $F'$ at 450 50
\pinlabel  $S_{a}$ at 180 140
\pinlabel  $S_{b}$ at 200 30
\endlabellist
    \centering
    \centering
    \includegraphics[scale=0.6]{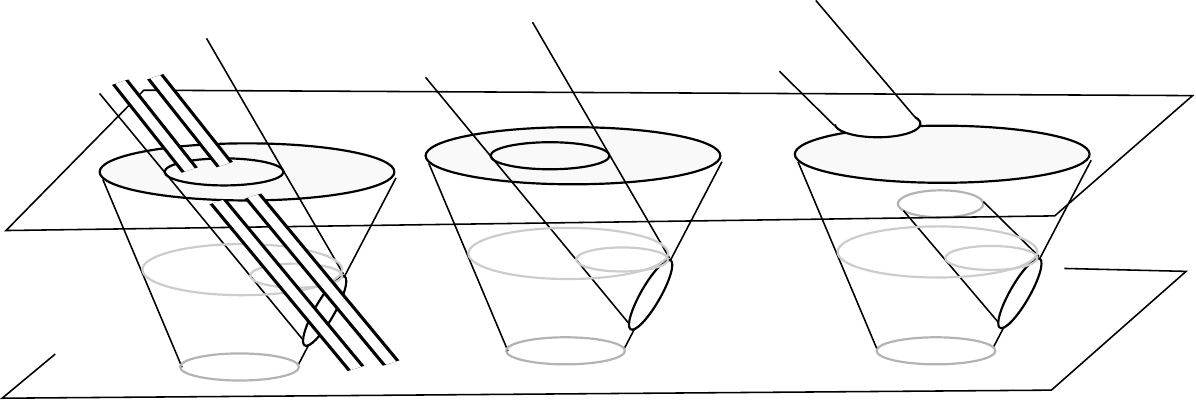}
    \caption{When $D_b$ lies on the side of $c_b$ not containing $v$}  \label{fig:SaSb4}
    \end{figure}

Since, per the claim, $\rho_1 \leq \rho_b$ and by definition $\rho_1 \leq \rho_2$ there is no difference between the decomposition sequences for $\vec{\calC}_a$ and $\vec{\calC}_b$ until we reach the decomposition $\calC_{\rho_1} \xrightarrow{\calD_{\rho_1}} \calC_{\rho_1 +1}$.  $F_{\rho_1} = F(\calC_{\rho_1})$ still appears as in Figure \ref{fig:SaSb1}, but with the added information that $\inter(D_1)$ contains neither ghost circles nor $F$-circles.  In particular, it is possible to define a new chamber complex $\calC'$ obtained from $\calC_{\rho_1}$ by decomposing along $D_1$ alone.  That is \[\calC_{\rho_1} \xrightarrow{D_1} \calC'.\] 
\medskip

{\em Claim 2:} Both $S_a$ and $S_b$ are still balanced or almost balanced for $\calC'$.

{\em Proof of Claim 2:} Let $F'$ be the defining surface $F(\calC')$.  See Figure \ref{fig:SaSb4}.  We name 4 regions and 6 genera:
\begin{itemize}
\item $X$ is the region below $S_b$ and $g_X$ is the genus of $F_{\rho_1} \cap X = F' \cap X$.
\item $X^+ \supset X$ is the region below $S_a$, $g_X^+$ is the genus of $F_{\rho_1} \cap X^+$, and $g'^+_X$ is the genus of $F' \cap X^+$.  
\item $Y$ is the region above $S_a$, $g_Y$ is the genus of $F_{\rho_1} \cap Y$, and $g'_Y$ is the genus of $F' \cap Y$
\item $Y^+ \supset Y$ is the region above $S_b$, 
and $g'^+_Y$ is the genus of $F' \cap Y^+$
\end{itemize}

Surgery on $D_1 \subset S_a$ will not affect the genus of the part of $F_{\rho_1}$ lying below or above $S_a$,  so
\medskip

(0)  $g'^+_X = g^+_X$, $g'_Y = g_Y$ and $S_a$ is balanced or almost balanced for $\calC'$.  
\medskip

To prove the claim, we then need only focus on $S_b$.  Here are two further observations:
\begin{enumerate}
\item Since $F_{\rho_1} \cap X^+$ is obtained from $F_{\rho_1} \cap X$ by attaching a (genus 0) pair of pants on a single boundary component, $g^+_X = g_X$.  
\item Since $F' \cap Y^+$ is obtained from $F' \cap Y$ by attaching a (genus 0) annulus on a single boundary component, $g'^+_Y = g'_Y$.  
\end{enumerate}

 By definition $S_b$ is balanced or almost balanced for $\calC'$ if and only if the pair of numbers $\{g_X, g'^+_Y\} \neq \{0, k\}, k \geq 2$.  Indeed, by definition, the pair is $\{0, 0\}$ if and only if $S_b$ is planar balanced, it is $\{0, 1\}$ if and only if $S_b$ is almost balanced, and the pair is $\{j, k\}, j, k \geq 1$ if and only if $S_b$ is non-planar balanced.  Similarly, since $S_a$ is balanced or almost balanced for $\calC_{\rho_1}$, we have $\{g^+_X, g_Y\} \neq \{0, k\}, k \geq 2$.  From (1) we have $g_X = g^+_X$ and combining (0) and (2) gives $g'^+_Y = g_Y$.  In particular, $\{g_X, g'^+_Y\} = \{g^+_X, g_Y\}\neq \{0, k\}, k \geq 2$, completing the proof of Claim 2.
%
\medskip

 \begin{figure}[ht!]
 \labellist
\small\hair 2pt
\pinlabel  $D_1$ at 230 185
\pinlabel  $F$ at 200 105
\pinlabel  $F'$ at 450 105
\pinlabel  $S_{a}$ at 100 220
\pinlabel  $S_{b}$ at 100 40
\endlabellist
    \centering
    \centering
    \includegraphics[scale=0.6]{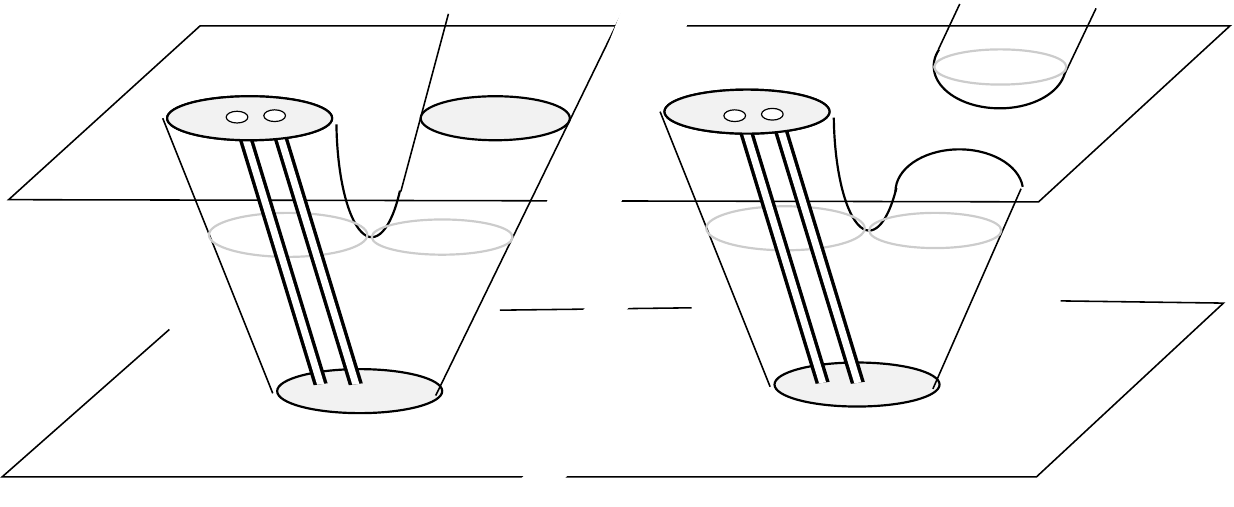}
    \caption{Only annuli between the planes}  \label{fig:SaSb2}
    \end{figure}

$F'$ intersects the collar between $S_a$ and $S_b$ entirely in spanning annuli (see Figure \ref{fig:SaSb2} and right panel of Figure \ref{fig:SaSb4}), so the decomposition sequence on $\calC'$ determed by $S_a$ and $S_b$ is the same; we denote it $\vec{\calC'}$. As in the proof of Lemma \ref{lemma:maxormin} let $\vec{\calC}_b$ (resp $\vec{\calC}_a$) be the sequence of flagged chamber complex decompositions determined by $\calC$ and $S_b$ (resp. $S_a$).  Thus $\vec{\calC}_b \in \overrightarrow{(\calC, S_1)}$ (say) and $\vec{\calC}_a \in \overrightarrow{(\calC, S_0)}$. Now apply, for both $S_a$ and $S_b$, Theorem \ref{thm:addisk} as augmented by Corollary \ref{cor:addisk} to conclude
\[\vec{\calC}_a \sim \vec{\calC'} \sim  \vec{\calC}_b \quad \text{so} \quad \overrightarrow{(\calC, S_0)} \sim \overrightarrow{(\calC, S_1)} \]
\end{proof}

\begin{proof}[Proof of Theorem \ref{thm:balanceisotopy}]  As noted before Lemma \ref{lemma:maxormin} a generic isotopy $S_t$ from $S_0$ to $S_1$ consists of a sequence of isotopies, each containing a single critical point and hence one to which either Lemma \ref{lemma:maxormin} or Lemma \ref{lemma:saddle} applies.
\end{proof}
    
 \section{All balanced or almost balanced spheres cocertify} \label{sect:balisotopy}

In this section we show that the requirement in Theorem \ref{thm:balanceisotopy} that $S_0$ and $S_1$ are isotopic through balanced or almost balanced spheres is superfluous.  In particular we show:

\begin{prop} \label{prop:graphicisotopy} Suppose $\calC$ is a flagged chamber complex in $S^3$.  Then any pair  of balanced or almost balanced spheres for $\calC$ are isotopic in $S^3$ through balanced or almost balanced spheres for $\calC$. 
\end{prop}

We defer the proof in order to quickly observe some consequences:  Following Theorem \ref{thm:balanceisotopy} this immediately implies:

\begin{cor} \label{cor:graphicisotopy} Suppose $\calC$ is a flagged chamber complex in $S^3$ that supports the genus $g$ Heegaard splitting $(S^3, T)$ and is not tiny.  Let $S^x$ and $S^y$ be balanced or almost balanced spheres for $\calC$.  Then $ \overrightarrow{(\calC, S^x)} \sim  \overrightarrow{(\calC, S^y)}$.  \qed
\end{cor}

In particular, any such flagged chamber complex $\calC$ in $S^3$ determines a unique certificate.  That is, the homeomorphism $h_{(\calC, S)}: (S^3, T) \to (S^3, T_g)$, for $S$ a balanced or almost balanced sphere (provided, say, by Corollary \ref{cor:balinterval}), does not in fact depend on the choice of $S$, up to eyeglass equivalence.  We will denote (a choice of) such a homeomorphism simply $h_{\calC}: (S^3, T) \to (S^3, T_g)$.  This expands the use of the notation, first introduced following Corollary \ref{cor:heegcertify}, beyond just those flagged chamber complexes that themselves certify, to all flagged chamber complexes that are not tiny.  Following Proposition \ref{prop:weaknottiny}, it applies to flagged chamber complexes obtained from a Heegaard surface by weak reduction.  It also allows the following natural definition and notation: 

\begin{defin} \label{defin:sim}   Suppose $\calC, \calC'$ are flagged chamber complexes in $S^3$ that support the genus $g$ Heegaard splitting $(S^3, T)$ and are not tiny.  Then $\calC$ and $\calC'$ cocertify (written $\calC \sim \calC'$) if 
the homeomorphisms $h_{\calC}, h_{\calC'}: (S^3, T) \to (S^3, T_g)$ (as just defined above) are eyeglass equivalent.
\end{defin}

\begin{cor}  \label{cor:calCcert2} With this expanded definition of $h_{\calC}$, Corollary \ref{cor:calCcert} remains true.  That is, if $\calC$ certifies and $\tau \in G(S^3, T)$ then $h_{\tau(\calC)}\tau \sim h_{\calC}$. 
\end{cor}

\begin{proof} This follows immediately from Corollary \ref{cor:vScert}.
\end{proof}

The proof of Proposition \ref{prop:graphicisotopy} is brief (it appears just before Proposition \ref{prop:singlesweep}) but the background needed is extensive.  Since it makes use of elementary Morse and Cerf Theory [Mi], [Ce], we will work in the smooth category.  That is, we will take the defining surface $F = F(\calC)$ to be a smooth submanifold of $S^3$, and the  pair of spheres $S^x$ and $S^y$, to be smooth submanifolds, smoothly transverse to $F$.  Let $p: S^3 \to [-1, 1]$ be the standard height function, denote by $S^2$ the standard equator $p^{-1}(0)$, and call the points $p^{-1}(1), p^{-1}(-1)$ the north and south poles of $S^3$.  Denote by $B_{\epsilon}$ a pair of small ball neighborhoods of the poles, that is $B_{\epsilon} = p^{-1}([-1, -1 + \epsilon] \cup [1-\epsilon, 1])$.  

Choose a pair of points $q_n, q_s$ (to correspond to north and south poles) disjoint from $F \cup S^x \cup S^y$ so that $q_n$ and $q_s$ lie on opposite sides of each of the spheres $S^x$ and $S^y$.  This is easily done: If $S^x$ and $S^y$ are disjoint, choose a generic point in the interior of each of the two ball components of $S^3 - (S^x \cup S^y)$.  If, on the other hand, $S^x$ and of $S^y$ intersect, observe that a regular neighborhood of a curve $c$ of intersection intersects $S^x \cup S^y$ in a copy of $\mathsf{X} \times c$.   (Here $\mathsf{X}$ means two crossed lines, e. g. the $x$- and $y$- axes in $\R^2$.)  Then set $q_n, q_s$ to be generic points in opposite quadrants of some $\mathsf{X} \times \{point\}$.  

By the Schoenflies Theorem there are diffeomorphisms $\phi^x, \phi^y:(S^3; q_n, q_s) \to (S^3; p^{-1}(1), p^{-1}(-1))$ so that $\phi^x(S^x) = \phi^y(S^y) = S^2 \subset S^3$.  Choose a small neighborhood $B_\epsilon$ of the poles in $S^3$ that is disjoint from $\phi^x(F) \cup \phi^y(F)$ and adjust $\phi^x, \phi^y$ near $q_n, q_s$ so that $p_x$ and $p_y$ coincide on a neighborhood $U = (p^x)^{-1}(B_\epsilon) = (p^y)^{-1}(B_\epsilon)$ of $q_n, q_s$.  It is elementary in this case (or use \cite{Lau} and the fact that the space of orientation preserving diffeomorphisms of the ball is connected, indeed contractible) that there is an isotopy $\theta: (S^3, U) \times I \to (S^3, B_\epsilon)$ between $\phi^x$ and $\phi^y$, fixed on $U$.  That is, for $\theta_t: (S^3, U) \to (S^3, B_\epsilon)$ defined by $\theta_t(z) = \theta(z, t)$, we have $\theta_0 = \phi^x$ and $\theta_1 = \phi^y$ and for all $t$, and each $u \in U$, $\theta_t(u) = \phi^x(u) = \phi^y(u)$.  

Now consider the function $p^x: F \to [-1, 1]$ defined by $p^x = p\phi^x|F$ and similarly $p^y = p\phi^y|F$.  The functions $p^x, p^y$ are homotopic via the homotopy $p_t = p\theta_t|F$.   Morse and Cerf theory tell us that for a generic copy of $F \subset S^3$ arbitrarily near the original $F$ (a copy which we henceforth take as $F$), the functions $p^x, p^y$ are Morse functions and the homotopy $p_t$ between them is Cerf.  Here are the salient points of what is meant by this:
\medskip

{\em $p^x$ (and $p^y$) are Morse:}  The function $p^x: F \to [-1, 1]$ has only a finite number of critical values $\{s_i\} \subset (-1, 1)$ and each critical value is the image of a single non-degenerate critical point in $F$.  Non-degenerate means that the Hessian of the function is non-singular (see \cite{Mi}).  Put another way, each sphere $S^x_s = (p\phi^x)^{-1}(s)$ is transverse to $F$ for each regular value of $s$,  and, for each critical value $s_i$, $S^x_{s_i}$ is transverse to $F$ except at a single point of tangency, where $F$ intersects a neighborhood of the tangency point in $S^x_{s_i}$ as either a maximum, a minimum, or a saddle point.  
\medskip

{\em The homotopy $p_t$ is Cerf:}  For generic $t$ the function $p_t: F \to [-1, 1]$ is Morse.  At a finite set $\tau = \{t_j\} \subset I$ the function $p_t$ is not Morse, solely because either
\begin{itemize}
\item near a single exceptional critical point on $z \in F$, $p_t$ has a "birth-death" singularity given by the local model $p_t(z_1, z_2) = p_t(z) + z_1^3 \pm tz_1 \pm z_2^2$ or
\item there are two non-degenerate critical points of $p_t$ on $F$ with the same critical value
\end{itemize}
Beyond this, and perhaps clarifying it, is a description of the ``Cerf graphic" in the square $(t, s) \in I \times [-1, 1]$ (see, for example, introductory remarks in \cite{GK}, from which Figure \ref{fig:graphic1} is taken):  There are a finite collection $\Gamma$ of curves in $I \times [-1, 1]$ so that each $\gamma \in \Gamma$ is the graph of a smooth function to $[-1, 1]$ on a closed interval in $I$ whose end points are among the set $\tau \cup \bdd I$. See green in Figure \ref{fig:graphic1}.  Each generic vertical arc $t \times [-1, 1]$ is transverse to the graphs $\Gamma$ and intersects each curve $\gamma \in \Gamma$ in at most one point.   Each vertical arc $t_i \times [-1, 1], t_i \in \tau$ has the same property, with a single exceptional point $s \in [-1, 1]$ where either
\begin{itemize}
\item $(t_i, s)$ is a left end point of exactly two such graphs, a birth point. (So $t_i \times [-1, 1] \in $ red vertical lines in Figure \ref{fig:graphic1})
\item $(t_i, s)$ is a right end point of exactly two such graphs, a death point. (So $t_i \times [-1, 1] \in $ red vertical lines in Figure \ref{fig:graphic1})
\item At $(t_i, s)$ two graphs cross transversally.  (So $t_i \times [-1, 1] \in $ blue vertical lines in Figure \ref{fig:graphic1})
\end{itemize}
The collection $\Gamma$ of graphs constitute all points where the functions $p_t$ have critical points.  That is, $(t, s) \in I \times [-1, 1]$ lies in $\Gamma$ if and only if $s$ is a critical value for $p_t$.  In particular, each complementary component $R$ of $\Gamma$ in $I \times [-1, 1]$ (called a {\em region} of the graphic) has the property that for any $(t, s) \in R$, $s$ is a regular value for $p_t$.  

 \begin{figure}[ht!]
 \labellist
\small\hair 2pt
\pinlabel  $s$ at -20 150
\pinlabel  $t\to$ at 200 -10
\pinlabel  $t_i$ at 285 -5
\pinlabel  $0$ at 5 -7
\pinlabel  $1$ at 420 -7
\pinlabel  $-1$ at -12 10
\pinlabel  $1$ at -10 280

\endlabellist
    \centering
    \centering
    \includegraphics[scale=0.6]{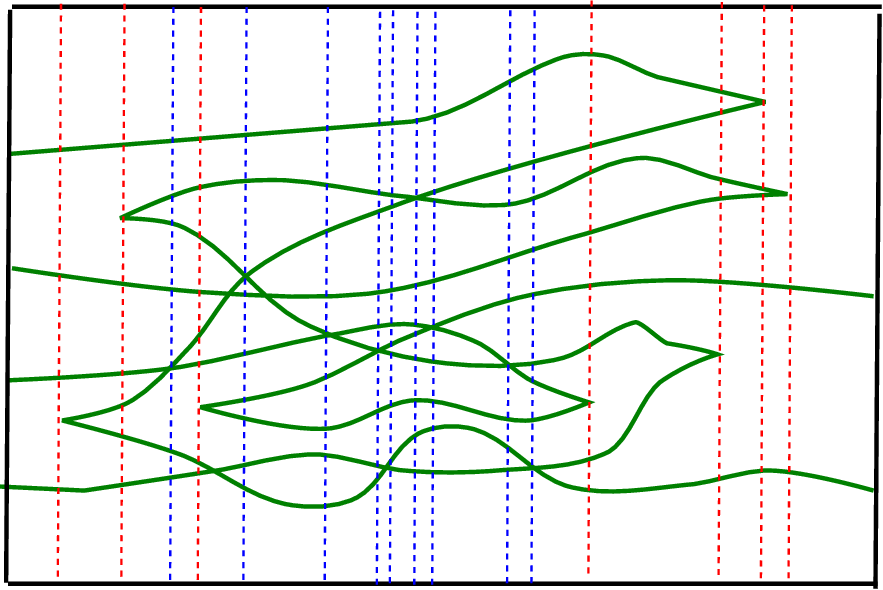}
    \caption{Graphic (green) with $t_i \in \tau$}  \label{fig:graphic1}
    \end{figure}

Given this as background, observe that the natural sweep-out of $S^3 - \{poles\}$ by the $2$-spheres $S_s = p^{-1}(s), s \in (-1, 1)$, in which $S_0$ is the equator $S^2$ induces a $t$-parameterized smooth family of sweep-outs $S^t_s = \theta_t^{-1}(S_s)$ beginning with a sweep-out $(\phi^x)^{-1}(S_s)$ that contains $S^x = (\phi^x)^{-1}(S^2)$ and ending with a sweep-out $(\phi^y)^{-1}(S_s)$ that contains $S^y = (\phi^y)^{-1}(S^2)$.  Taking this sweep-out point of view, we can now incorporate the ideas of Section \ref{sect:balance}, in particular Definition \ref{defin:gaandgb} and Lemma \ref{lemma:gaandgb}, as we now describe. 

Suppose $(t, s) \in R$, where $R$ is a region of the graphic.  Since $s$ is a regular value of $p_t$, the sphere $S^t_s$ is transverse to $F$, and, since $R$ is connected, any other sphere $S^{t'}_{s'}, (t', s') \in R$ is isotopic to $S^t_s$ through spheres transverse to $F$.  Following Definition \ref{defin:gaandgb} we can then assign a label $\frac{a_R}{b_R}$ to $R$, where the non-negative integer $a_R$ is the genus of the part of $F$ lying {\em a}bove $S^t_s$ and similarly $b_R$ is the genus of the part of $F$ lying {\em b}elow $S^t_s$.  Here are some elementary observations:

\begin{enumerate}
\item $S^t_s$ is balanced or almost balanced for $\calC$ if one of these is true:
\begin{itemize}
\item $0 \leq a_R, b_R \leq 1$
\item Both $a_R, b_R \geq 1$
\end{itemize}
In this case we say that the region itself is balanced or almost balanced.
\item Hence $S^t_s$ is {\em not} balanced or almost balanced for $\calC$ (say $S^t_s$ is {\em akilter}) \index{Akilter sphere for $F$} if one of these is true:
\begin{itemize}
\item $a_R = 0$ and $b_R \geq 2$ (say then $+$akilter) or 
\item $b_R = 0$ and $a_R \geq 2$ (say then $-$akilter).
\end{itemize}
In this case we say that the region itself is respectively $+$akilter or $-$akilter.

\item Suppose $R$ and $R'$ are adjacent regions in the graphic, both incident to a birth singularity $(t_0, s_0)$.  That is, the regions are separated near $(t_0, s_0)$ by the pair of curves in $\Gamma$ that end at $(t_0, s_0)$.  Then a generic sweep-out to the right of $(t_0, s_0)$ (that is via $S^{(t_0 +\epsilon)}_s, s \in (-1, 1)$), passing briefly through $R'$, say,  differs from a sweep-out just to the left of $(t_0, s_0)$, passing through $R$, by the brief introduction of a minimum (or maximum), followed by a cancelling saddle tangency.  The consequent  brief transfer of a disk in $F$ from one side of $S^t_s$ to the other has no effect on the genus of $F$ on each side, so $\frac{a_R}{b_R} = \frac{a_{R'}}{b_{R'}}$.  
\item The same is true for $R$ and $R'$ adjacent regions in the graphic, both incident to a death singularity $(t_0, s_0)$. 
\end{enumerate}

We now interpret Lemma \ref{lemma:gaandgb} in this context.  

\begin{lemma} \label{lemma:graphic}  Suppose, in the setting described above, $\genus(F) \geq 2$ and $t_0$ is a generic value of $t$, that is $t_0 \in I - \tau$.  The vertical arc $\mathfrak{v} = t_0 \times [-1, 1]$ has the following properties:
\begin{enumerate}
\item There is a unique curve $\gamma_- \subset \Gamma$, intersecting $\mathfrak{v}$ in a point $(t_0, s_-)$, so that the region just below $\gamma_-$ is $-$akilter and the region just above $\gamma_-$ is balanced or almost balanced.
\item There is a unique curve $\gamma_+ \subset \Gamma$ intersecting $\mathfrak{v}$ in a point $(t_0, s_+)$, so that the region just below $\gamma_+$ is balanced or almost balanced and the region just above $\gamma_+$ is $+$akilter.
\item $s_- < s_+$ and the subinterval $t_0 \times [s_-, s_+]$ of $\mathfrak{v}$ intersects at least three different regions of the graphic.  
\end{enumerate}
\end{lemma}

\begin{proof}  We will apply Lemma \ref{lemma:gaandgb} to the sweepout of $F$ by the spheres $S^{t_0}_s$.  (The lemma is applicable because it is equivalent to considering the sweep-out of $\theta_{t_0}(F)$ by the spheres $S_s$.) Note first that the bottom regions of the graphic (those incident to $I \times \{-1\}$) are all labeled $\frac{genus(F)}{0}$ and the top regions are all labeled $\frac{0}{genus(F)}$.  Since $\genus(F) \geq 2$ the regions at the bottom are $-$akilter and those at the top are $+$akilter.  

As noted in Lemma \ref{lemma:gaandgb} as $s$ rises in $\mathfrak{v}$, passing from region to region, $a_R$ cannot rise and $b_R$ cannot fall.  To progress under these rules from the label $\frac{a}{0}, a \geq 2$ to the label $\frac{0}{b}, b \geq 2$ there is a first transition from a region $R$ that is $-$akilter to a region $R'$ that is balanced or almost balanced, either because $\frac{a_R}{b_R} = \frac20$ and $\frac{a_{R'}}{b_{R'}} = \frac10$ or because $\frac{a_R}{b_R} = \frac{a}{0}, a \geq 2$ and $\frac{a_{R'}}{b_{R'}} = \frac{a}{1}$.  Moreover, once that transition is made, there is no way to transition back to $-$akilter without either lowering the denominator in $\frac{a}{1}$ or raising the numerator in $\frac10$, neither of which is allowed.  So this transition defines the curve $\gamma_-$ and so $s_-$.  The symmetric argument identifies the curve $\gamma_+$ and shows that it lies above $\gamma_-$, that is $s_+ > s_-$.  See Figure \ref{fig:graphic2}.  

 \begin{figure}[ht!]
 \labellist
\small\hair 2pt
\pinlabel  $s_-$ at 310 35
\pinlabel  $s_+$ at 310 160
\pinlabel  $\frac{genus(F)}{0}\implies -$akilter at 200 20
\pinlabel  $\frac{0}{genus(F)}\implies +$akilter at 200 250
\pinlabel  $t_0$ at 295 -5
\pinlabel  $\mathfrak{v}$ at 305 190
\pinlabel  $\gamma_-$ at 370 70
\pinlabel  $\gamma_+$ at 370 160
\pinlabel  $f_-$ at 50 60
\pinlabel  $f_+$ at 50 120
\endlabellist
    \centering
    \centering
    \includegraphics[scale=0.6]{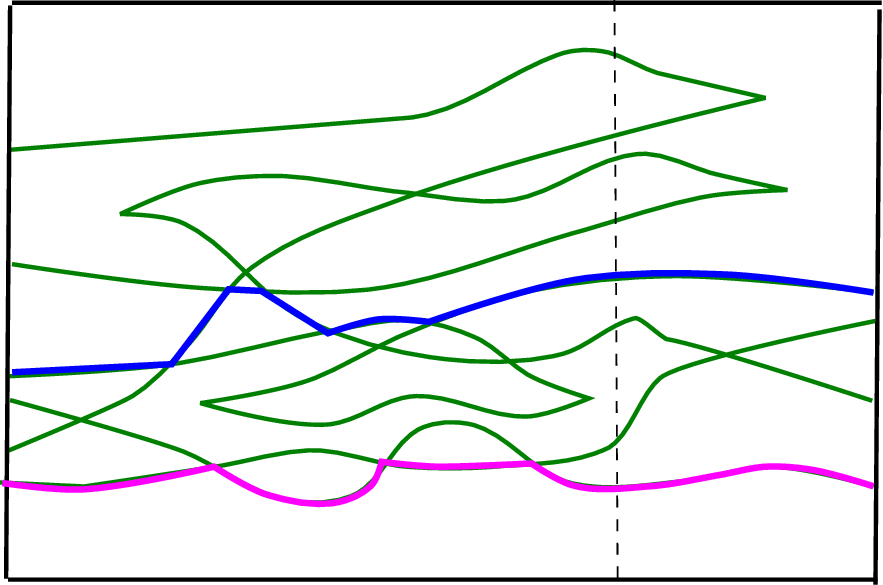}
    \caption{Spanning arcs $\gamma_+$, $\gamma_-$ and the gap between}  \label{fig:graphic2}
    \end{figure}

More subtly, notice that according to Lemma \ref{lemma:gaandgb} as $s$ passes upward through a curve $\gamma$ from region $R$ to region $R'$ the difference $a_R - b_R \in \zed$ does not increase (for this would require either $a_R$ to rise or $b_R$ to fall) and can decrease by at most one (because at most one of these occurs: $a_R$ drops by 1 or $b_R$ rises by 1).  That is, $(a_{R'} - b_{R'}) \leq (a_R - b_R) \leq (a_{R'} - b_{R'}) + 1$.  So it takes at least 4 transitions to go from a $-$akilter region, where $a_R - b_R \geq 2$, to a $+$akilter region, where $a_R - b_R \leq -2$. So, not counting the end points, there are at least two graphs in $\Gamma$ that cross the interior of $t_0 \times [s_-, s_+]$ and so 3 distinct regions through which $t_0 \times [s_-, s_+]$ passes.
\end{proof}

\begin{lemma} \label{lemma:2graphs} Suppose, in the setting described above, $\genus(F) \geq 2$. Then there are continuous (indeed, piecewise smooth) functions $f_-, f_+: I \to [-1, 1]$ so that
\begin{itemize}
\item For all $t$, $f_-(t) < f_+(t)$.
\item The graphs of both $f_{\pm}$ lie in $\Gamma$.
\item Regions below the graph of $f_-$ are all $-$akilter.
\item Regions above the graph of $f_+$ are all $+$akilter.
\item Regions between the two graphs are all balanced or almost balanced.
\item $f_-(0) < 0 < f_+(0)$ and $f_-(1) < 0 < f_+(1)$.
\end{itemize}
\end{lemma}

\begin{proof}   For $t_0 \notin \tau$ define $f_{\pm}(t_0) = s_{\pm}$ as given in Lemma \ref{lemma:graphic}.  The functions $f_{\pm}$ are smooth near $t_0$ because each curve $\gamma \in \Gamma$ is smooth.  The same argument applies for $t_0 \in \tau$ with some $(t_0, s)$ a birth-death point since, as we have seen above, $\frac{a_R}{b_R}$ does not change as $s$ passes through such a point.  

The situation is only a bit more complicated for $t_0 \in \tau$ with two curves in $\Gamma$ crossing at (exactly one) point $(t_0, s_0)$ in $t_0 \times [-1, 1]$.  In this case, the region below $(t_0, s_0)$ could be $-$akilter and the region above $(t_0, s_0)$ either balanced or almost balanced, and then it is natural to define $f_-(t_0) = s_0$.  By examining generic nearby values of $t$ we see that the interval $t_0 \times (s_0, s_+)$ must still pass through at least two distinct regions, so $s_0 < s_+$ and $f_-(t_0) < f_+(t_0)$ as required.  The graph of $f_-$ may not be smooth at $t_0$ since values of $f_-(t)$ for $t$ just less than $t_0$ may lie on one of the two crossing curves in $\Gamma$ and  values of $f_-(t)$ for $t$ just more than $t_0$ lie on the other.  Still the graph of $f_-$ is piecewise smooth at $t_0$.  Symmetric comments apply when the region above $(t_0, s_0)$ is $+$akilter and the region below $(t_0, s_0)$ either balanced or almost balanced.  

Finally, our initial definition was that the balanced or almost balanced sphere $S^x = S^0_0$ so $f_-(0) < 0 < f_+(0)$ and the balanced or almost balanced sphere $S^y= S^1_0$ so $f_-(1) < 0 < f_+(1)$.
\end{proof}

\begin{proof}[Proof of Proposition \ref{prop:graphicisotopy}]  
If $\genus(F) \leq 1$ let $f:[0, 1] \to [-1, 1]$ be any smooth function whose graph is transverse to the curves $\Gamma$ in the graphic and for which $f(0) = f(1) = 0$.  If  $\genus(F) \geq 2$ pick such a function so that, in addition, 
$f_- < f < f_+$
Then the 1-parameter family of spheres $S^t_{f(t)}, 0 \leq t \leq 1$ is an isotopy of $S^x$ to $S^y$.  Moreover each sphere $S^t_{f(t)}$ is balanced or almost balanced for $\calC$, since the graph of $f$ passes only through balanced or almost balanced regions of the graphic.
\end{proof}

There is a related proposition about single sweep-outs, whose proof is similar to 
that of Lemma \ref{lemma:graphic}:

\begin{prop} \label{prop:singlesweep}  Suppose $\calC$ is a flagged chamber complex in $S^3$ that is not tiny and $S_s, s \in [-1, 1]$ is a generic sweep-out of $S^3$ by spheres.  Suppose, $E$ is a properly embedded disk in a chamber of $\calC$ so that, for some generic $-1 < e < 1 $, $\bdd E$ lies entirely in $S_e$.  That is, $\bdd E$ is a circle component of $S_{e} \cap F(\calC)$.  Put $E$ in preferred alignment and let $\calC'$ be the flagged chamber complex obtained from $\calC$ by decomposition along $E$.  Then $\calC' \sim \calC$.
\end{prop}
%

\begin{proof}  Since $S_s$ is a generic sweep-out, there is a finite set $\sigma \subset [-1, 1]$ so that for $s \notin \sigma$, $S_s$ is transverse to $F = F(\calC)$, and for each $s \in \sigma$, $F$ and $S_s$ have a single non-degenerate point of tangency.  Apply the notational convention for regions introduced before Lemma \ref{lemma:graphic} to the (interval) components of $[-1, 1] - \sigma$.  That is, for each such interval $\mathfrak{i} \subset [-1, 1] - \sigma$ pick $s \in \mathfrak{i}$ and let $a_{\fri} \geq 0$ be the genus of the part of $F$ lying above $S_s$ and $b_{\fri}$ be the genus of the part of $F$ lying below $S_s$,  and then assign to $\fri$ the symbol $\frac{a_{\fri}}{b_{\fri}}$.  Following Lemma \ref{lemma:gaandgb}, and similar to the argument used for a generic value of $t$ in the proof of Lemma \ref{lemma:graphic}, we have:
\begin{itemize}
\item The lowest interval in $[-1, 1] - \sigma$ is assigned $\frac{genus(F)}{0}$
\item The highest interval $[-1, 1] - \sigma$ is assigned $\frac{0}{genus(F)}$
\item As $s$ rises through each critical level $s \in \sigma$ the integer $a_{\fri} - b_{\fri}$ does not increase and can decrease by at most 1.
\end{itemize}
Since, as we ascend through the interval components of $[-1, 1] - \sigma$, the difference $a_{\fri} - b_{\fri}$ begins at $\genus(F)$ and ends at $-genus(F)$ it follows that there is an interval $\fri_0$ for which the difference is zero, that is $a_{\fri_0} = b_{\fri_0}$.  In particular, for $s \in \fri_0$, $S_s$ is balanced for $\calC$, either planar balanced (if 
$a_{\fri_0} = b_{\fri_0} = 0$) or non-planar balanced (if $a_{\fri_0} = b_{\fri_0} \geq 1$).  Pick a generic $s_0 \in \fri_0$ ($s_0 \neq e$) so that $S_{s_0}$ is transverse to $E$.

{\em Claim:} The sphere $S_{s_0}$ is balanced or almost balanced for $\calC'$ as well as for $\calC$. 

{\em Proof of Claim:}. Define the pair of whole numbers $a', b'$ in direct analogy to the pair $a_{\fri_0}, b_{\fri_0}$: Namely, $a'$ is the genus of the part of $F'$ lying above $S_{s_0}$ and $b'$ is the genus of the part of $F'$ lying below $S_{s_0}$
We will now show by construction that there is a close relation between the pairs $a', b'$ and $a_{\fri}, b_{\fri}$.

 We can assume, with no loss, that $s_0 > e$. (If $s_0 < e$ reverse the rolls of $a$ and $b$ in the following argument.). Let $\frc \subset S_{s_0}$ be the collection of circles $S_{s_0} \cap E$.  Decomposing $\calC$ along $E$ to obtain $F' = F(\calC')$ can be viewed as a 3-stage process: 
\begin{itemize}
\item first remove a small annular neighborhood $\eta(\bdd E)$ of $\bdd E$ from $F$, then 
\item attach a pushed-off copy of $E$ to each boundary component of $F - \eta(\bdd E)$; call this added pair of disks $2E$.  The resulting surface $F_{\eta}$ intersects the level sphere $S_{s_0}$ in the curves $(F \cap S_{s_0}) \cup 2 \frc$, 
where $2 \frc$ 
denotes two parallel copies of $\frc$, one copy for each copy of $E$ in $2E$. 
\item Finally, remove any resulting sphere components of $F_{\eta}$ that bound goneballs.
\end{itemize}


Observe the behavior of the genus through each of these three stages of the construction.  We consider the stages in reverse order:
Removing spheres from $F_{\eta}$ does not change the genus of the part of the surface lying above $S_{s_0}$ or below, since each component removed is a subsurface of spheres and is therefore planar.  Similarly, adding $2E$ does not affect the genus of the surface above or below, since each component added is a subsurface of $2E$ and therefore planar, and the ends of the annulus $\eta(\bdd E)$ are attached to different planar subsurfaces of $2E$.  Finally, removing  $\eta(\bdd E)$ from $F$ does not affect the genus of the part of $F$ above  $S_{s_0}$, and, though it may lower the genus of the part of $F$ below $S_{s_0}$ (if it is non-separating in the component in which it lies), it lowers it by at most one.


We conclude that $a' = a_{\fri_0}$ and $b_{\fri_0} - 1 \leq b'  \leq b_{\fri_0}$.
In particular
\begin{itemize}
\item If $\frac{a_{\fri_0}}{b_{\fri_0}} = \frac00$ then $\frac{a'}{b'} = \frac00$.
\item If $\frac{a_{\fri_0}}{b_{\fri_0}} = \frac11$ then $\frac{a'}{b'}  = \frac11$ or $\frac10$.
\item If $\frac{a_{\fri_0}}{b_{\fri_0}} = \frac{m}{m}, m \geq 2$ then $\frac{a'}{b'} = \frac{m}{m}$ or $\frac{m}{m-1}$.
\end{itemize}
In any case, we see that the sphere $S_{s_0}$ is balanced or almost balanced for $\calC'$, proving the Claim.  
\medskip

The proof now proceeds by induction on $|\frc|$, the number of curves of intersection of $E$ with $S_{s_0}$.  If $E$ is disjoint from $S_{s_0}$, so $|\frc| = 0$ the result follows from Theorem \ref{thm:addisk}, as augmented by Corollary \ref{cor:addisk}.  

Assume then that $|\frc| \geq 1$ and let
\[\overrightarrow{(\calC, S_{s_0})}: \quad \calC \xrightarrow{\calD_0} \calC_1 \xrightarrow{\calD_1} ... \xrightarrow{\calD_{n-1}}\calC_n\] and
\[\overrightarrow{(\calC', S_{s_0})}: \quad \calC' \xrightarrow{\calD'_0} \calC'_1 \xrightarrow{\calD'_1} ... \xrightarrow{\calD'_{n-1}}\calC'_n\]
be complete flagged chamber complex decomposition sequences guided by $S_{s_0}$ for $\calC$ and $\calC'$ respectively.  (First add the circles $\frc$ as ghost circles for the decomposition sequence for $\calC$, so that both sequences use the same set of guiding disks throughout, and so run in parallel. By Corollary \ref{cor:addghost} the addition of these ghost circles does not change how $\calC$ certifies.)  Focus attention on the first decomposition $ \calC'_i \xrightarrow{\calD'_i} \calC'_{i+1}$ in which a circle in $\frc$ appears as the boundary of a disk in $\calD'_i$.  Let $\frc_0 \in \frc$ be such a circle, chosen among all candidates to be innermost in $2E$; let $D_0 \in \calD'_i$ be the disk that $\frc_0$ bounds in $S_{s_0}$; and let $E_0 \subset 2E$ be the disk that $\frc_0$ bounds in $2E$.  

Although $E_0$ may intersect $S_{s_0}$ in other components of $\frc$, by construction none bound disks in any $\calD'_j, j \leq i$, so $E_0$ remains intact as part of the defining surface  $F'_{i} = F( \calC'_{i})$.  Consider whether $D_0$ is an essential disk in the chamber of  $ \calC'_{i} $ in which it lies: 

If $D_0$ is an essential disk, then the ball $B_0 \subset S^3$ that the sphere $S = D_0 \cup E_0$ bounds (on the side in $S^3$ not containing $\bdd E$) contains components of $F'_{i}$.  By construction, $B_0$ is disjoint from $2E$ so these components of $F'_{i}$ must also be components of the defining surface $F_{i} = F(\calC_i)$.  Thus $S$ is an incompressible sphere in a chamber of both $\calC'_i$ and $\calC_i$.  This implies that $\calC'_i \sim \calC_i$, hence $\overrightarrow{(\calC, S_{s_0})} \sim \overrightarrow{(\calC', S_{s_0})}$ and so, following Definition \ref{defin:sim},  $\calC \sim \calC'$ as required.

On the other hand, if $D_0$ is an inessential disk, then $\inter(B_0)$ is disjoint from $F'_{i}$ so $B_0$ is disky in  $\hat{\calC}'_{i+1}$ and so (perhaps with an appropriate choice of sibling) becomes a goneball in $\calC'_{i+1}$.  In other words, after decomposition by $\calD'_i$ (and the similar elimination of the parallel copy of $E_0$ in $2E$ at the next stage of the decomposition sequence) we obtain the same chamber complexes as if $E_0$ had been replaced by $D_0$ in the disk $E$.  But this replacement lowers $|\frc|$ by at least 1.  By our inductive assumption on $|\frc|$, $\calC \sim \calC'$, as required.

To provide more detail in this last case, when $D_0$ is inessential, let $E'$ be the disk that results from replacing the disk $E_0 \subset E$ by $D_0$ as just described.  Recall the following diagram for the inductive step in the proof of Proposition \ref{prop:Eaddcocert}, which in turn enables the proof of Theorem \ref{thm:addisk}:

\[\xymatrix {...\ar[r]^{\calD_{i-1}}&\calC_i  \ar[dd]_{E} \ar[r]^{\calD_i} & \calC_{i+1} \ar[d]^{E}  \ar[r]^{\calD_{i+1}} & ... & = \vec{\calC}\\ 
&&  \calC^{i+1}_{i+1} \ar@{<-->}[d]  \ar[r]^{\calD^{i+1}_{i+1}} & ... & = \vec{\calC}_E^{i+1} &\\
&\calC^i_i  \ar[r]^{\calD^i_i}&\calC^i_{i+1}  \ar[r]^{\calD^i_{i+1}} & ... & = \vec{\calC}_E^{i}  & } \]

As the argument there would ultimately be applied here, the decomposing disks are all guided by $S_{s_0}$.  (One consequence is that the notation for the decomposing disks in the diagram can all be simplified to those for the guiding disks in the top row.)  The problem with further use of that inductive step here is that for $j > i$ the decomposing disks $\calD_j$ might pass through $E$.  This means that their interiors may intersect $F(\calC'_j)$, making later inductive steps potentially nonsensical.  However, since $B_0$ is a goneball in the decomposition $\calC'_i \xrightarrow{\calD_i} \calC'_{i+1}$ nothing is lost by replacing $E$ with $E'$ to get the simplified diagram:

\[\xymatrix {...\ar[r]^{\calD_{i-1}}&\calC_i  \ar[d]_{E'} \ar[r]^{\calD_i} & \calC_{i+1} \ar[r]^{\calD_{i+1}} & ... & = \vec{\calC}\\ 
&\calC^i_i  \ar[r]^{\calD^i_i}&\calC^i_{i+1}  \ar[r]^{\calD^i_{i+1}} & ... & = \vec{\calC}_E^{i}  & } \]
Then observe that since $|E' \cap S_{s_0}|< |E \cap S_{s_0}| = |\frc|$ the inductive hypothesis on $|\frc|$ implies that $\overrightarrow{(\calC_i, S_{s_0})} \sim \overrightarrow{(\calC^i_i, S_{s_0})}$.  This eliminates the need for the further inductive steps used in the proof of Proposition \ref{prop:Eaddcocert}.  
\end{proof}

Because Proposition \ref{prop:singlesweep} requires that $\calC$ not be tiny, it cannot be directly applied to the flagged chamber complex defined by a Heegaard surface $T$, since both chambers are empty handlebodies.  But it does tell us something crucial about flagged chamber complexes that are obtained from $T$ by weak reduction:


\begin{prop} \label{prop:heegaddisk}   Suppose $S^3 = A \cup_T B$ is a Heegaard splitting and $S_s, s \in [-1, 1]$ is a generic sweep-out of $S^3$ by spheres. Suppose $\calD$ is a weakly reducing collection of disks for $T$ and $E$ is a properly embedded disk disjoint from $\calD$, lying in either $A$ or $B$.  Suppose further that, for some generic $-1 < e < 1 $, $\bdd E$ lies entirely in $S_e$. Let $\calC$ be the flagged chamber complex obtained from $T$ by decomposition along $\calD$ and $\calC'$ be the flagged chamber complex obtained from $T$ by decomposition along $\calD \cup E$. Then $\calC' \sim \calC$.
\end{prop}

%

\begin{proof}  Let $\calC_{DE}$ denote the flagged chamber complex obtained from $\calC$ by decomposition along $E$.  Proposition \ref{prop:singlesweep} does apply to this decomposition since, by Proposition \ref{prop:weaknottiny}, $\calC$ is not tiny.  Thus $\calC \sim \calC_{DE}$.  It remains to prove a similar relation between $\calC'$ and $\calC_{DE}$.

We are in a position to apply Proposition \ref{prop:x+yvsxy}, where the Heegaard splitting here plays the role of $\calC$ in Proposition \ref{prop:x+yvsxy}; 
$\calC'$ here corresponds to $\calC_{D+E}$ there; and the notation $\calC_{DE}$ here was chosen to be consistent with the same notation there.  Then, according to Proposition \ref{prop:x+yvsxy}, either $\calC_{DE} = \calC'$ or $\calC_{DE} \dashrightarrow \calC'$.  In the former case we are done, so we consider the case $\calC_{DE} \dashrightarrow \calC'$, and turn to Corollary \ref{cor:longdeflate}.  

Following Corollary \ref{cor:balinterval} let $S_{s_0}$ be a sphere in the sweep-out that is balanced for $\calC'$ and let 
\[\vec{\calC'}: \quad \calC' \xrightarrow{\calD_0} \calC'_1 \xrightarrow{\calD'_1} ... \xrightarrow{\calD'_{n-1}}\calC'_n\] 
be a complete flagged chamber complex decomposition sequence guided by $S_{s_0}$.  Similarly, let $\vec{\calC}_{DE}$ be a complete flagged chamber complex decomposition sequence for $\calC_{DE}$ guided by $S_{s_0}$.  Since $\calC_{DE} \dashrightarrow \calC'$ their defining surfaces $F_{DE}$ and $F'$ differ by at most the insertion of a bullseye.  Since the components of the bullseye are all spheres, the insertion does not affect the genus of the parts of the surface above and below $S_{s_0}$, so $S_{s_0}$ is balanced for $\calC_{DE}$ as well.  In particular, by Proposition \ref{prop:balance}, both $\vec{\calC'}$ and $\vec{\calC}_{DE}$ certify.

Apply Corollary \ref{cor:longdeflate} to $\calC_{DE} \dashrightarrow \calC'$: Let $\vec{\calC}^m_{DE}$ be the maximal deflationary sequence of $\calC_{DE}$.  If $\vec{\calC}^m_{DE} = \vec{\calC}_{DE}$ so the entire sequence is a deflationary sequence, then by the second outcome of Corollary \ref{cor:longdeflate}, $\vec{\calC'}$ and $\vec{\calC}_{DE}$ cocertify.  If, on the other hand, $\vec{\calC}^m_{DE}$ is not the entire sequence $\vec{\calC}_{DE}$ then by the first outcome of Corollary \ref{cor:longdeflate}, $\vec{\calC}^m_{DE}$ certifies, so by the second outcome, again $\vec{\calC'}$ and $\vec{\calC}_{DE}$ cocertify.  In other words, $ \overrightarrow{(\calC_{DE}, S_{s_0})} \sim  \overrightarrow{(\calC', S_{s_0})}$ or, following Definition \ref{defin:sim}, $\calC_{DE} \sim \calC'$ as required.
\end{proof}

\section{The Goeritz group is the eyeglass group} \label{sect:finale}

Suppose $(S^3, T_g)$ is the standard genus $g \geq 2$ Heegaard splitting of Section \ref{sect:motiv} and $\tau$ is an element of the Goeritz group $G(S^3, T_g)$, as described in \cite{JM}.  Let $T_{\theta} \subset S^3, 0 \leq \theta \leq 2\pi$ be a representative of $\tau$ in $\pi_1(\Img(S^3, T))$, with $S^3 = A_{\theta} \cup_{T_{\theta}} B_{\theta}$ and $T_0 = T_{2\pi} = T_g$.   

Put another way, $\tau$ is represented by an isotopy $\Theta_\theta: S^3 \to S^3, 0 \leq \theta \leq 2\pi$ with $\Theta_0$ the identity and $\Theta_{2\pi}(T_g) = T_g$. $T_\theta$ then denotes the surface $\Theta_\theta(T_g) \subset S^3$ and $\Theta_{2\pi}$ represents $\tau:(S^3, T_g) \to (S^3, T_g)$.  Similarly $A_\theta, B_\theta$ denote respectively $\Theta_\theta(A), \Theta_\theta(B)$.  Finally, for any $\theta, \theta' \in [0, 2\pi]$, denote the composition $\Theta_\theta\Theta_{\theta'}^{-1}$ by $\Theta^{\theta'}_{\theta}: (S^3, T_{\theta'}) \to (S^3, T_{\theta})$.  This definition implies that, for any $\theta'' \in [0, 2\pi]$, $\Theta^{\theta''}_{\theta} \Theta^{\theta'}_{\theta''} = \Theta^{\theta'}_{\theta}$.

We briefly review some of the results of \cite{FS1}, where $S^3$ is swept out by level spheres $S_s$ of the standard height function $p: S^3 \to [-1, 1]$.   

There are values $0 < \theta_1 < \theta_2 < ... <\theta_n < 2\pi$ so that for each $\theta \notin \{\theta_i, 1 \leq i \leq n\}$ there is a pair of weakly reducing disks $(a_{\theta}, b_{\theta})$ associated to $T_\theta$ so that:

\begin{itemize}
\item $a_{\theta} \subset A_{\theta}$ and $b_{\theta} \subset B_{\theta}$
\item The isotopy class of the pair $(a_{\theta}, b_{\theta})$ does not change throughout each interval in $[0, 2\pi] - \{\theta_i, 1 \leq i \leq n\}$.  By this we mean, if $\theta, \theta'$ are in the same interval of $[0, 2\pi] - \{\theta_i, 1 \leq i \leq n\}$ then the pair of disks $(\Theta^{\theta'}_{\theta}(a'_\theta), \Theta^{\theta'}_{\theta}(b'_\theta))$ is properly isotopic to the pair  $(a_\theta, b_\theta)$ in $(S^3, T_\theta)$. 

A shorthand notation for this is $(a_\theta, b_\theta) = (a_{\theta'}, b_{\theta'})$.  
\item $(a_{2\pi}, b_{2\pi})$ = $(a_{0}, b_{0})$.  That is, the pair of disks $(a_{2\pi}, b_{2\pi})$ is properly isotopic to the pair $(a_{0}, b_{0})$ in $(S^3, T_g)$.

\item For every $0 \leq \theta \leq 2\pi$ each of the disks $a_{\theta}$ and $b_{\theta}$ have level boundaries.  That is, each the circles $\bdd a_{\theta}$ and $\bdd b_{\theta}$ lies in a single sphere of the sweep-out by $S_s$.
\end{itemize}

The last property is used for the application of Proposition \ref{prop:heegaddisk} in the proof of Lemma \ref{lemma:globalfix} below.  It follows from the construction described in \cite[Appendix]{FS1}, as applied in \cite[Subsection 4.5]{FS1}. The disks $a_{\theta}$ and $b_{\theta}$ are chosen in \cite{FS1} by how their boundaries lie  in $T_\theta \cap S$, for $S$ one of the level spheres $S_s$. (The argument in \cite{FS1} requires that $\genus(T) \geq 2$.)  Note that, unlike the collection of disks used in disk decomposition sequences that we have been long discussing, it is not part of the construction for the sweep-outs in \cite{FS1} that the entire disk $a_{\theta}$ or $b_{\theta}$ lies in a single sphere of the sweep-out, only that the boundary of each disk does.  

The third property above, that  $(a_{2\pi}, b_{2\pi})$ = $(a_{0}, b_{0})$ (the notation here differs somewhat from that in \cite{FS1}) follows from the fact that $T_{2\pi} = T_0 = T_g$.  Indeed, we might as well take the sweep out by copies of the Heegaard surface used in the construction to be the same in both cases, which means that the weakly reducing pair of disks arising from the construction will be given by the same rule.  

For each $\theta \notin \{\theta_i, 1 \leq i \leq n\}$, define the flagged chamber complex $\calC_{\theta}$ as that obtained from $T_{\theta}$ by weak reduction along the pair of disks $(a_{\theta}, b_{\theta})$. 

Observe some properties:
\begin{itemize}
\item By Proposition \ref{prop:weaknottiny} no chamber complex $\calC_\theta$ is tiny.  
\item  Suppose $\theta, \theta'$ are in the same interval of $[0, 2\pi] - \{\theta_i, 1 \leq i \leq n\}$.  Since the pair of disks $(\Theta^{\theta'}_{\theta}(a_\theta'), \Theta^{\theta'}_{\theta}(b_\theta'))$ is properly isotopic to the pair  $(a_\theta, b_\theta)$, the chamber complex $\Theta^{\theta'}_{\theta}(\calC_{\theta'})$ is also properly isotopic to the chamber complex $\calC_{\theta}$ in $(S^3, T_\theta)$.  
\item Since $(a_{2\pi}, b_{2\pi})$ = $(a_{0}, b_{0})$ we have $ \calC_{2\pi} = \calC_0$.
\end{itemize}

Finally, for each $\theta \notin \{\theta_i, 1 \leq i \leq n\}$ define $h_\theta: (S^3, T_{\theta}) \to (S^3, T_g)$ to be  $h_{\calC_{\theta}}$ as defined following Corollary \ref{cor:graphicisotopy}. (Technically, $h_\theta$ is only defined up to eyeglass equivalence, since that is the case for $h_{\calC_{\theta}}$.)   We have 

\begin{lemma} \label{lemma:intervalfix} Suppose $\theta, \theta'$ are in the same interval of $[0, 2\pi] - \{\theta_i, 1 \leq i \leq n\}$. Then $h_{\theta} \sim h_{\theta'}{\Theta^{\theta}_{\theta'}}$.  
\end{lemma}

\begin{proof} As just observed  the chamber complex $\Theta^{\theta'}_{\theta}(\calC_{\theta'})$ is properly isotopic to the chamber complex $\calC_{\theta}$ in $(S^3, T_\theta)$ so $h_\theta = h_{\calC_{\theta}} = h_{\Theta^{\theta'}_{\theta}(\calC_{\theta'})}$.  Now apply Corollary \ref{cor:calCcert2}: $ h_{\Theta^{\theta'}_{\theta}(\calC_{\theta'})}  \Theta^{\theta'}_{\theta} \sim h_{\calC_{\theta'}}$ so $h_\theta \sim h_{\calC_{\theta'}}(\Theta^{\theta'}_{\theta})^{-1} = h_{\theta'}{\Theta^{\theta}_{\theta'}} $
\end{proof}

Further following \cite{FS1}, for each $1 \leq i \leq n$ there are three disjoint disks, either 
\begin{itemize}
\item $a_{\theta_i} \subset A_{\theta_i}, b^\pm_{\theta_i} \subset B_{\theta_i}$  with the pairs $(a_{\theta_i},b^+_{\theta_i})$ and $(a_{\theta_i},b^-_{\theta_i})$ each weakly reducing or
\item symmetrically $a^\pm_{\theta_i} \subset A_{\theta_i}, b_{\theta_i} \subset B_{\theta_i}$  with the pairs $(a^+_{\theta_i},b_{\theta_i})$ and $(a^-_{\theta_i}, b_{\theta_i})$ each weakly reducing.  
\end{itemize}

with the property that (respectively for the two cases) for small $\epsilon$ 
\begin{itemize}
\item  $\Theta^{\theta_i}_{\theta_i \pm \epsilon}(a_{\theta_i}) = a_{\theta_i \pm \epsilon}$ and $\Theta^{\theta_i}_{\theta_i \pm \epsilon}(b^{\pm}_{\theta_i}) = b_{\theta_i \pm \epsilon}$ 
\item symmetrically  $\Theta^{\theta_i}_{\theta_i \pm \epsilon}(a^{\pm}_{\theta_i}) = a_{\theta_i \pm \epsilon}$ and $\Theta^{\theta_i}_{\theta_i \pm \epsilon}(b_{\theta_i}) = b_{\theta_i \pm \epsilon}$
\end{itemize}

\begin{lemma} \label{lemma:globalfix} Suppose $\theta, \theta'$ are any two points in $[0, 2\pi] - \{\theta_i, 1 \leq i \leq n\}$. Then $h_{\theta} \sim h_{\theta'}{\Theta^{\theta}_{\theta'}}$.  
\end{lemma}

\begin{proof}  It suffices to prove the case in which $\theta, \theta'$ are in adjacent intervals, for then we can just proceed around the circle.  (See Figure \ref{fig:Gchamber} for a highly schematic picture.)  So suppose that they are on adjacent intervals sharing the end point $\theta_i \in (0, 2\pi)$.  Following Lemma \ref{lemma:intervalfix} it suffices to prove the case in which $\theta = \theta_i - \epsilon$ and $\theta' = \theta_i + \epsilon$, for small $\epsilon$.  

With no loss of generality assume that the three weakly reducing disks in $(S^3, T_i)$ are $a_{\theta_i} \subset A_{\theta_i}, b^\pm_{\theta_i} \subset B_{\theta_i}$ and let $\calC_i$ be the (not tiny) chamber complex obtained by weakly reducing $T_{\theta_i}$ along the three disks $a_{\theta_i}, b^\pm_{\theta_i}$.  Let $\calC^+_i$ be that obtained by weakly reducing along just the pair $a_{\theta_i}, b^+_{\theta_i}$ and $\calC^-_i$ be that obtained by weakly reducing along just  $a_{\theta_i}, b^-_{\theta_i}$.  Apply Proposition \ref{prop:heegaddisk} to deduce that $\calC^-_i \sim \calC_i$.  Similarly, $\calC^+_i \sim \calC_i$, so as a result $\calC^+_i \sim \calC^-_i$.

The argument of Lemma \ref{lemma:intervalfix} further shows that $h_{\calC^+_i} \sim h_{\theta_i + \epsilon}{\Theta^{\theta_i}_{\theta_i + \epsilon}}$ and $h_{\calC^-_i} \sim h_{\theta_i - \epsilon}{\Theta^{\theta_i}_{\theta_i - \epsilon}}$ so, combining, $h_{\theta_i - \epsilon}{\Theta^{\theta_i}_{\theta_i - \epsilon}} \sim h_{\theta_i + \epsilon}{\Theta^{\theta_i}_{\theta_i + \epsilon}}$.  Hence $h_{\theta_i - \epsilon} \sim h_{\theta_i + \epsilon}{\Theta^{\theta_i}_{\theta_i + \epsilon}}{\Theta^{\theta_i-\epsilon}_{\theta_i}} \sim   h_{\theta_i + \epsilon}{\Theta^{\theta_i-\epsilon}_{\theta_i + \epsilon}}$ as required.  
\end{proof}

\begin{figure}[ht] 
  \centering
  \begin{tikzpicture}
   \draw (1.9, 0) -- (2.1, 0);
\draw (0,0) circle (2cm);
\draw (30:1.9)--(30:2.1);
\node at (30:1.6) {$\theta_1$};
\draw (80:1.9)--(80:2.1);
\node at (80:1.6) {$\theta_2$};
\draw (-30:1.9)--(-30:2.1);
\node at (-30:1.6) {$\theta_n$};
\draw (-80:1.9)--(-80:2.1);
\node at (-80:1.6) {$\theta_{n-1}$};
\draw (120:1.9)--(120:2.1);
\node at (120:1.6) {$\theta_{3}$};
\node at (2.6, 0) {$0, 2\pi$};
\node at (-15:3) {$\calC_{\theta_{n} + \epsilon} \sim \calC_{2\pi}$};
\node at (-45:3) {$\calC_{\theta_{n-1} + \epsilon} \sim \calC_{\theta_n - \epsilon}$};
\node at (15:3) {$\calC_0 \sim \calC_{\theta_1 - \epsilon}$};
\node at (40:3) {$\calC_{\theta_1 + \epsilon} \sim \calC_{\theta_2 - \epsilon}$};
\node at (100:2.5) {$\calC_{\theta_2 + \epsilon} \sim \calC_{\theta_3 - \epsilon}$};
\node at (190:2.5) {$\cdot$};
\node at (195:2.5) {$\cdot$};
\node at (200:2.5) {$\cdot$};
\node at (185:1.5) {$\cdot$};
\node at (195:1.5) {$\cdot$};
\node at (205:1.5) {$\cdot$};

  \end{tikzpicture}
  \caption{Schematic for Lemmas \ref{lemma:intervalfix} 
  and \ref{lemma:globalfix}} \label{fig:Gchamber}
\end{figure}

\begin{cor} $\tau \in \calE$
\end{cor}

\begin{proof}  Choose, in Lemma \ref{lemma:globalfix} the values $\theta = 0, \theta' = 2\pi$.  Then, per that lemma, $h_0 \sim h_{2\pi} \Theta_{2\pi}\Theta_0^{-1}$. Furthermore, since $\calC_0 = \calC_{2\pi}$, Corollary \ref{cor:heegcertify} has $h_0 \sim h_{\calC_0} \sim h_{\calC_{2\pi}} \sim h_{2\pi}$, and so  $\tau = \Theta_{2\pi} \sim \Theta_0 = id_{(S^3, T_g)}$.
\end{proof}

{\bf Remark:}  It is perhaps not surprising that it is possible to position $T_g$ with respect to the height function on $S^3$ so that $h_0$ and $h_{2\pi}: (S^3, T_g) \to (S^3, T_g)$ are not just eyeglass equivalent, but in fact are both the identity (under the standard inductive Assumption \ref{ass:inductive}.  We do not need this here.

\begin{cor} \label{cor:finale} $G(S^3, T) = \calE$.  
\end{cor}

\printindex

\end{document}